\def\P{\mathbb{P}}
\def\I{\mathbf{I}}
\newcommand{\wis}[1]{{\text{\em \usefont{OT1}{cmtt}{m}{n} #1}}}
\def\PG{\mathbf{PG}}
\newcommand{\rosetlt}{\begin{turn}{-45}$\leadsto$\end{turn}}
\newcommand{\rosotlt}{\begin{turn}{-90}$\leadsto$\end{turn}}
\newcommand{\roswtlt}{\begin{turn}{-135}$\leadsto$\end{turn}}
\newcommand{\Spec}{\wis{Spec}}
\newcommand{\Proj}{\wis{Proj}}
\newcommand{\hL}{\mathbf{L}}
\newcommand{\hT}{\mathbf{T}}
\newcommand{\A}{\mathbb{A}}
\newcommand{\Z}{\mathbb{Z}}
\newcommand{\Fun}{\mathbb{F}_1}
\newcommand{\fK}{\mathbb{K}}
\newcommand{\mD}{\mathscr{D}}
\newcommand{\GL}{\mathbf{GL}}
\newcommand{\Aut}{\mathrm{Aut}}
\newcommand{\K}{\mathbb{K}}
\newcommand{\mC}{\mathscr{C}}
\newcommand{\mF}{\mathscr{F}}
\newcommand{\hF}{\mathbf{F}}
\newcommand{\mP}{\mathscr{P}}
\newcommand{\mB}{\mathscr{B}}
\newcommand{\mL}{\mathscr{L}}
\newcommand{\mO}{\mathscr{O}}
\newcommand{\mA}{\mathscr{A}}
\newcommand{\mG}{\mathscr{G}}
\newcommand{\F}{\mathbb{F}}
\newcommand{\fa}{\mathfrak{a}}
\newcommand{\fp}{\mathfrak{p}}
\newcommand{\id}{\mathbf{1}}
\newcommand{\hM}{\mathbf{M}}
\newcommand{\hN}{\mathbf{N}}
\def\doubleprod#1#2{\ooalign{$#1\prod$\cr$#1\coprod$\cr}}
\DeclareMathOperator*{\Rprod}{\mathpalette\doubleprod\relax}
\newtheorem{theorem}{Theorem}[section]
\newtheorem{corollary}[theorem]{Corollary}
\newtheorem{lemma}[theorem]{Lemma}
\newtheorem{proposition}[theorem]{Proposition}
\newtheorem{conjecture}[theorem]{Conjecture}
\theoremstyle{definition}
\newtheorem{remark}[theorem]{Remark}
\title[The combinatorial-motivic nature of $\mathbb{F}_1$-schemes]{The combinatorial-motivic nature of $\mathbb{F}_1$-schemes}
\author[Koen Thas]{Koen Thas}
\begin{document}
\setcounter{page}{87}

\begin{abstract}
We review Deitmar's theory of monoidal schemes to start with, and have a detailed look at the standard examples. It is explained how one can combinatorially study such schemes through a generalization of graph theory.
In a more general setting we then introduce the author's version of $\Fun$-schemes (called {\em $\Upsilon$-schemes} here), after which we study Grothendieck's motives in some detail in order to pass to ``absolute motives''. Throughout several considerations about absolute zeta functions are written.
In a final part of the chapter, we describe the approach of Connes and Consani to understand the ad\`{e}le class space through hyperring extension theory, in which a marvelous connection is revealed with certain group actions on projective spaces, and brand new results in the latter context are described.
Many questions are posed, conjectures are stated and speculations are made.
\end{abstract}

\begin{classification}
14A15, 14G15, 11S40, 14C15, 20N20, 05E18.
\end{classification}

\begin{keywords}
$\mD_0$-scheme, (loose) graph, hyperstructure, monoid, motive, scheme, Singer group, Singer field, $\Upsilon$-scheme, zeta function.
\end{keywords}

\maketitle

\tableofcontents

\newpage
\section{Introduction}

In the category of commutative unital rings, the tensor product $\Z \otimes_{\Z} \Z$ is isomorphic to $\Z$, so in the opposite category of affine Grothendieck schemes, 
\begin{equation}
\Spec(\Z) \times \Spec(\Z) \cong \Spec(\Z).
\end{equation}

It has been speculated that there might be larger sites than that of $\Z$-schemes, in which $\Spec(\Z)$ is not a final object anymore, such that this tensor product (over a deeper base) could become a fundamental object which behaves as a surface over the new base.

\medskip
\subsection{Numbers and polynomials}

There are many striking and deep analogies between numbers (integers) and polynomials over finite fields in one variable. 
One of the main questions we want to address in this chapter is, after passing to geometry, as to whether there exists a large site containing Grothendieck schemes, in which one can define ``absolute Descartes powers''
\begin{equation}
\Spec(\Z) \times_{\Upsilon} \Spec(\Z) \times_{\Upsilon} \cdots \times_{\Upsilon} \Spec(\Z),
\end{equation}
which would give a geometric interpretation through a generalization of the aforementioned analogy between these Cartesian powers and polynomials in multiple variables over finite fields.
We want to give a meaning to this expression in such a way that $\Spec(\Z)$ behaves as a curve over some deeper base ``$\Upsilon$'' than $\Z$

\begin{equation}
\Upsilon \longrightarrow \Z
\end{equation}
so that
$\Spec(\Z) \times_{\Upsilon} \Spec(\Z)$ becomes a surface, etc. (In much the same way as a commutative ring is an algebra over $\Z$, the latter must become an algebra over $\Upsilon$.)\\

To be more precise (or rather, more concrete), let $\mC$ be a nonsingular absolutely irreducible algebraic curve over the finite field $\F_q$; its zeta function is
\begin{equation}
\zeta_{\mC}(s) = \prod_{\fp}\frac{1}{1 - N(\fp)^{-s}},
\end{equation}
where $\fp$ runs through the closed points of $\mC$ and $N(\cdot)$ is the norm map. Fix an algebraic closure $\overline{\F_q}$ of $\F_q$ and let $m \ne 0$ be a positive integer; we have the following Lefschetz formula for the number $\vert \mC(\F_{q^m}) \vert$ of rational points over $\F_{q^m}$:

\begin{equation}
\vert \mC(\F_{q^m}) \vert = \sum_{\omega = 0}^2(-1)^{\omega}\mathrm{Tr}(\mathrm{Fr}^m \Big| H^\omega(\mC)) = 1 - \sum_{j = 0}^{2g}\lambda_j^m + q^f,
\end{equation}
where $\mathrm{Fr}$ is the Frobenius endomorphism acting on the \'{e}tale $\ell$-adic cohomology of $\mC$, the $\lambda_j$s are the eigenvalues of this action, and $g$ is the genus of the curve. It is not hard to show that we then have a ``weight decomposition''

\begin{equation}
\zeta_{\mC}(s) = \prod_{\omega = 0}^2\zeta_{h^{\omega}(\mC)}(s)^{(-1)^{\omega - 1}} 
		= \frac{\prod_{j = 1}^{2g}(1 - \lambda_jq^{-s})}{(1 - q^{-s})(1 - q^{1 - s})} \nonumber \\
\end{equation}

\begin{eqnarray} 	
	 = \frac{\mbox{\textsc{Det}}\Bigl((s\cdot\id - q^{-s}\cdot\mathrm{Fr})\Bigl| H^1(\mC)\Bigr)}{\mbox{\textsc{Det}}\Bigl((s\cdot\id - q^{-1}\cdot\mathrm{Fr})\Bigl| H^0(\mC)\Bigr.\Bigr)\mbox{\textsc{Det}}\Bigl((s\cdot\id - q^{-s}\cdot\mathrm{Fr})\Bigl| H^2(\mC)\Bigr.\Bigr)}.
	\end{eqnarray}
Here the $\omega$-weight component is the zeta function of the pure weight $\omega$ motive $h^{\omega}(\mC)$ of $\mC$.

Recalling the analogy between integers and polynomials in one variable over finite fields, 
Deninger gave a description of conditions on a conjectural category of motives that would admit a translation of Weil's proof of the Riemann Hypothesis for function fields of projective curves over finite fields $\F_q$ to the hypothetical curve $\overline{\Spec(\Z)}$. In particular, he showed that the following formula would hold:

\begin{equation}
\zeta_{\overline{\Spec(\Z)}}(s) = 2^{-1/2}\pi^{-s/2}\Gamma(\frac s2)\zeta(s)  
		=  \frac{\Rprod_\rho\frac{s - \rho}{2\pi}}{\frac{s}{2\pi}\frac{s - 1}{2\pi}} \overset{?}{=} \nonumber \\
\end{equation}
	\begin{eqnarray} 	
	 \frac{\mbox{\textsc{Det}}\Bigl(\frac 1{2\pi}(s\cdot\id - \Theta)\Bigl| H^1(\overline{\Spec(\Z)},*_{\mathrm{abs}})\Bigr.\Bigr)}{\mbox{\textsc{Det}}\Bigl(\frac 1{2\pi}(s\cdot\id -\Theta)\Bigl| H^0(\overline{\Spec(\Z)},*_{\mathrm{abs}})\Bigr.\Bigr)\mbox{\textsc{Det}}\Bigl(\frac 1{2\pi}(s\cdot\id - \Theta)\Bigl| H^2(\overline{\Spec(\Z)},*_{\mathrm{abs}})\Bigr.\Bigr)}, 
	\end{eqnarray}
where $\Rprod$ is the infinite {\em regularized product}, similarly
$\mbox{\textsc{Det}}$ denotes the {\em regularized determinant} (a determinant-like function of operators on infinite dimensional vector spaces), $\Theta$ is an ``absolute'' Frobenius endomorphism, and the $H^i(\overline{\Spec(\Z)},*_{\mathrm{abs}})$ are certain proposed cohomology groups. The $\rho$s run through the set of critical zeroes of the classical Riemann zeta. (In the formula displayed above, $\Spec(\Z)$ is compactified to $\overline{\Spec(\Z)}$ in order to see it as a projective curve.)
This description combines with Kurokawa's work on multiple zeta functions (\cite{Kurokawa1992}) from 1992 to  the hope that there are motives $h^0$ (``the absolute point''), $h^1$ and $h^2$ (``the absolute Lefschetz motive'') with zeta functions
	\begin{equation}
	\label{eqzeta}
		\zeta_{h^w}(s) \ = \ \mbox{\textsc{Det}}\Bigl(\frac 1{2\pi}(s\cdot\id-\Theta)\Bigl| H^w(\overline{\Spec(\Z)},*_{\mathrm{abs}})\Bigr.\Bigr) 
	\end{equation}
for $w=0,1,2$. Deninger computed that 
\begin{equation}
\zeta_{h^0}(s)=s/2\pi\ \ \mbox{and}\ \  \zeta_{h^2}(s)=(s-1)/2\pi. 
\end{equation}

Manin proposed in \cite{Manin} to interpret  $h^0$ as $\Spec(\Fun)$ and  $h^2$ as the affine line over $\Fun$. The search for a proof of the Riemann Hypothesis became a main motivation to look for a geometric theory over $\Fun$. 

And on the other hand, in this larger Grothendieck site of Deninger, the Cartesian product
\begin{equation}
\Spec(\Z) \times_{\Upsilon} \Spec(\Z) \times_{\Upsilon} \cdots \times_{\Upsilon} \Spec(\Z)
\end{equation}
now should make sense.\\

\medskip
\subsection{In search of a symbol}

So we are in search of the symbol $\Upsilon$, which we imagine to be, as Manin suggests in \cite{Manin}, the field with one element. In fact, we are in search of {\em Algebraic Geometry over $\Fun$} (``absolute'' Algebraic Geometry), having already developed several interesting theories over $\Fun$, as well as an initial realization of $\Fun$ itself as the multiplicative monoid $(\{ 0,1 \},\cdot)$ (cf. the first chapter of this monograph). 

In this chapter we will meet Deitmar's fundamental theory of monoidal schemes \cite{Deitmarschemes2,Deitmarschemes1,Deitmartoric}, inspired by work of Kato \cite{Kato}, in which the main algebraic objects | ``$\Fun$-rings'' so to speak | are unital monoids (usually foreseen with an absorbing element $0$). This theory works remarkably well, and several central objects such as projective spaces confirm Tits's earlier predictions of how such structures should look like when interpreted over $\Fun$. Also, zeta functions of Deitmar schemes of finite type appear to be very promising in the context of Deninger's proposed formula.

Still, it is clear that rather than being the definite scheme theory over $\Fun$, Deitmar schemes are a starting point. Several other absolute scheme theories have been defined, and in one way or the other, there always seems some functor present which descends the schemes to Deitmar schemes. One of these scheme theories is the one initiated by the author \cite{NotesI} (called {\em $\Upsilon$-schemes}), and which associates with any Deitmar scheme $S$ a category of $\Z$-schemes which descend to $S$. Some features about $\Upsilon$-schemes will be handled in the present chapter.

We will also indicate how one can naturally construct Deitmar schemes from a generalization of graphs called ``loose graphs'', and read several properties of 
Deitmar schemes from these loose graphs. For a number of important examples of Deitmar schemes, the associated loose graphs are exactly what Tits had in mind. (For instance, the loose graph of a Deitmar projective space scheme is the complete graph, and the loose graph of a Deitmar affine space scheme is a single point with a number of vertices through it which correspond to the directions of the space.)

\medskip
\subsection{Realization through zeta functions}

Although at present we do not have the right tools at hand to see $\Spec(\Z)$ as an $\Fun$-curve and $\Spec(\Z) \times_{\Fun} \Spec(\Z)$ as a surface (let alone having an intersection theory at hand on the latter object), it is at least possible to define zeta functions of 
such objects. In this chapter we will consider in some detail results of Kurokawa and Deitmar on zeta funtions in the $\Fun$-context. It is interesting to see that 
the examples of the absolute point and flag meet the Deninger-Manin predictions! 

Also, we will elaborate on Manin's vision on absolute zeta functions and absolute motives to make the story more complete. To make these considerations self-contained, we introduce the theory of motives in some detail.

\medskip
\subsection{Back to buildings}

In a final part of the chapter, we will describe observations of Connes and Consani which link $\Fun$-theory to certain action of groups on 
combinatorial geometries through the theory of hyperfield extensions (of the so-called ``Krasner hyperfield''). 
The idea is that a hyperfield extension $E$ of the Krasner hyperfield $\mathbf{K}$ yields a sharply transitive action of a certain group $G(E,\mathbf{K})$ on the 
point set of some projective space (which could be axiomatic). And the converse is also true: from such an action on a space one can construct a hyperfield
extension of $\mathbf{K}$. 

Many deep and intruiging questions arise,
some of which go back to as far as the 1920s.

\newpage
\section{Deitmar schemes}
\label{Dschem}

Several interesting attempts have been made to define schemes ``defined over $\mathbb{F}_1$'', and often the approaches only differ in
subtle variations. 
We will describe some of these viewpoints in this text. We start with the most basic one, which is the ``monoidal scheme theory'' of Anton Deitmar \cite{Deitmarschemes1}. Deitmar's theory is very important for global $\mathbb{F}_1$-theory and plays a crucial role in the present chapter, we will explain it in some detail (and add additional comments and observations that will serve us later on).

\medskip
\subsection{Rings over $\mathbb{F}_1$}

A {\em monoid}\index{monoid} is a set $A$ with a binary operation $\cdot: A \times A \longrightarrow A$ which is associative, and has a unit element ($\id$). 
Homomorphisms of monoids preserve units, and for a monoid $A$, $A^\times$\index{$A^\times$} will denote the group of invertible elements (so that if $A$ is a  group, $A^\times = A$).

\begin{theorem}[First Isomorphism Theorem for monoids]
\label{FITM}
Let $\Phi: M \longrightarrow N$ be a homomorphism of monoids. Then
\begin{equation}
M/\mathrm{ker}(\Phi) \cong \Phi(M).
\end{equation}
Here, by $M/\mathrm{ker}(\Phi)$ we mean the monoid  naturally induced on $M$ by the equivalence relation $\mathrm{ker}(\Phi) = \{ (m,m') \in M \times M \vert \Phi(m) = \Phi(m')  \}$.
\end{theorem}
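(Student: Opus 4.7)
The plan is to mimic the classical First Isomorphism Theorem from group or ring theory, adapted to the monoid setting where the ``kernel'' is naturally a congruence (an equivalence relation compatible with the operation) rather than a subobject. First I would verify that $\ker(\Phi)$ really is a congruence on $M$: reflexivity, symmetry and transitivity of the relation $\{(m,m') : \Phi(m)=\Phi(m')\}$ are immediate, and compatibility with multiplication, i.e. $(m_1,m_1'),(m_2,m_2')\in \ker(\Phi) \Rightarrow (m_1m_2,m_1'm_2')\in \ker(\Phi)$, follows at once from $\Phi(m_1m_2)=\Phi(m_1)\Phi(m_2)=\Phi(m_1')\Phi(m_2')=\Phi(m_1'm_2')$.

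Once we have a congruence, I would endow the quotient set $M/\ker(\Phi)$ (whose elements are the equivalence classes $[m]$) with the operation $[m]\cdot[m']:=[mm']$. The congruence property from the previous step makes this operation well-defined, associativity is inherited from $M$, and $[\id]$ is a two-sided unit, so $M/\ker(\Phi)$ is a monoid. This is what is meant in the statement by ``the monoid naturally induced on $M$''.

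Next, I would define the candidate isomorphism
\begin{equation}
\overline{\Phi}: M/\ker(\Phi) \longrightarrow \Phi(M), \qquad [m]\longmapsto \Phi(m).
\end{equation}
Well-definedness of $\overline{\Phi}$ is exactly the definition of $\ker(\Phi)$; that it is a homomorphism follows from $\overline{\Phi}([m][m'])=\Phi(mm')=\Phi(m)\Phi(m')=\overline{\Phi}([m])\overline{\Phi}([m'])$ together with $\overline{\Phi}([\id])=\Phi(\id)=\id$. Surjectivity onto $\Phi(M)$ is obvious, and injectivity is the converse implication of well-definedness: $\overline{\Phi}([m])=\overline{\Phi}([m'])$ means $\Phi(m)=\Phi(m')$, so $(m,m')\in\ker(\Phi)$ and $[m]=[m']$.

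There is no real obstacle here; the only thing worth flagging is conceptual rather than technical, namely that in the category of monoids one cannot reduce a congruence to a single distinguished ``kernel submonoid'' as one does with normal subgroups or ideals (since there are no additive inverses to exploit), so the statement has to be read in the sense above. Once that is understood, the proof is an entirely routine verification and will fit in a few lines.
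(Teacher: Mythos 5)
Your proof is correct and complete: you verify that $\ker(\Phi)$ is a congruence, that the quotient carries a well-defined monoid structure, and that the induced map $[m]\mapsto\Phi(m)$ is a well-defined bijective homomorphism (hence an isomorphism of monoids, since the inverse of a bijective monoid homomorphism is automatically a homomorphism). The paper states this theorem without proof as a standard fact, and your argument is exactly the standard one it implicitly relies on, including the correct reading of $\ker(\Phi)$ as a congruence rather than a submonoid.
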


In \cite{Deitmarschemes2}, Deitmar defines the category of rings over $\mathbb{F}_1$\index{$\Fun$-ring} to be the category of monoids (as thus ignoring additive structure). 

Given an $\mathbb{F}_1$-ring $A$, {\em Deitmar base extension to $\mathbb{Z}$}\index{Deitmar base extension} is defined by

\begin{equation}
A \otimes \mathbb{Z} = A \otimes_{\mathbb{F}_1} \mathbb{Z} = \mathbb{Z}[A].
\end{equation}

(Here, $\mathbb{Z}[A]$ is a ``monoidal ring'' | it is naturally defined similarly to a group ring.)
Denote the functor of base extension by $\mathscr{F}(\cdot,\otimes_{\mathbb{F}_1}\mathbb{Z})$\index{$\mathscr{F}(\cdot,\otimes_{\mathbb{F}_1}\mathbb{Z})$}.

Conversely, we have a forgetful functor $\mathscr{F}$ which maps any (commutative) ring (with unit) to its (commutative) multiplicative monoid.

\begin{theorem}[Deitmar \cite{Deitmarschemes2}]
The functor  $\mathscr{F}(\cdot,\otimes_{\mathbb{F}_1}\mathbb{Z})$ is left adjoint to $\mathscr{F}$, that is, for every ring 
$R$ and every $\mathbb{F}_1$-ring $A$ we have that
\begin{equation}
\mathrm{Hom}_{\mathrm{Rings}}(A \otimes_{\mathbb{F}_1}\mathbb{Z},R) \cong \mathrm{Hom}_{\mathbb{F}_1}(A,\mathscr{F}(R)).
\end{equation}
\end{theorem}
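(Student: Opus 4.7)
The plan is to exhibit the adjunction via an explicit natural bijection between the two Hom-sets, built from the universal property of the monoidal ring $\mathbb{Z}[A]$. First I would set up notation: an element of $\mathbb{Z}[A]$ is a finite formal sum $\sum_{a \in A} n_a \cdot a$ with $n_a \in \mathbb{Z}$, with addition componentwise and multiplication dictated by the monoid operation $(n \cdot a)(m \cdot b) = nm \cdot (a \cdot b)$, together with the unit $1_{\mathbb{Z}[A]} = 1_{\mathbb{Z}} \cdot \mathbf{1}_A$. The key fact is that $A$ sits inside $\mathscr{F}(\mathbb{Z}[A])$ as the monoid of basis elements $\{1 \cdot a : a \in A\}$, and this inclusion $\iota_A : A \hookrightarrow \mathscr{F}(\mathbb{Z}[A])$ will be the unit of the adjunction.

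Next I would define the two maps of Hom-sets. Given a monoid homomorphism $\varphi : A \to \mathscr{F}(R)$, define $\widetilde{\varphi} : \mathbb{Z}[A] \to R$ by $\widetilde{\varphi}\bigl(\sum n_a \cdot a\bigr) = \sum n_a \cdot \varphi(a)$, where the right-hand side uses the additive and multiplicative structure of $R$ and the canonical map $\mathbb{Z} \to R$. One checks that $\widetilde{\varphi}$ preserves addition (trivially, by linearity), preserves multiplication (using that $\varphi$ respects the monoid operation and the distributive law in $R$), and sends $1$ to $1$ (since $\varphi(\mathbf{1}_A) = 1_R$). Conversely, given a ring homomorphism $\psi : \mathbb{Z}[A] \to R$, define $\psi^{\flat} : A \to \mathscr{F}(R)$ by $\psi^{\flat}(a) = \psi(1 \cdot a)$; this is automatically a monoid homomorphism because $\psi$ is multiplicative and unital.

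Then I would verify that the two assignments $\varphi \mapsto \widetilde{\varphi}$ and $\psi \mapsto \psi^{\flat}$ are mutually inverse. One direction, $(\widetilde{\varphi})^{\flat} = \varphi$, is immediate from the definitions. The other, $\widetilde{\psi^{\flat}} = \psi$, follows because both sides agree on the generating set $\{1 \cdot a : a \in A\}$ of $\mathbb{Z}[A]$ as a $\mathbb{Z}$-algebra and both are ring homomorphisms. Finally, naturality in $R$ (post-composition with a ring map $R \to R'$) and in $A$ (pre-composition with a monoid map $A' \to A$) is a direct diagram chase using the definitions above, and this gives the adjunction in the sense of Hom-set isomorphisms natural in both variables.

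No step is really hard here; the only place to be slightly careful is the check that $\widetilde{\varphi}$ is multiplicative, where one must unwind the convolution product on $\mathbb{Z}[A]$ and use that $\varphi$ preserves the monoid product to equate $\widetilde{\varphi}(a)\widetilde{\varphi}(b) = \varphi(a)\varphi(b) = \varphi(ab) = \widetilde{\varphi}(a \cdot b)$, then extend bilinearly. This is the conceptual heart of the construction: the tensor product $A \otimes_{\mathbb{F}_1} \mathbb{Z}$ is defined precisely as the free object that turns monoid homomorphisms into ring homomorphisms, and the adjunction is the formal statement of that freeness.
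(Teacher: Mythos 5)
Your proof is correct and is exactly the standard argument one would give: the paper itself states this result without proof (citing Deitmar), and your construction of the unit $a \mapsto 1\cdot a$ together with the mutually inverse maps $\varphi \mapsto \widetilde{\varphi}$ and $\psi \mapsto \psi^{\flat}$ is the expected verification that $\mathbb{Z}[A]$ is the free ring on the monoid $A$. The only point worth flagging is that if one works in the variant where monoids carry an absorbing element $0$, the monoid ring must additionally be quotiented by the ideal generated by the basis element $1\cdot 0_A$ for the bijection to restrict to zero-preserving morphisms; for the category of plain monoids used in the theorem as stated, your argument is complete.
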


\medskip
\subsection{Algebraic extensions}

Let $A$ be a submonoid of the monoid $B$. An element $b \in B$ is {\em algebraic} over $A$ if there exists $n \in \mathbb{N}$ for which $b^n \in A$. The extension
$B/A$ is {\em algebraic} if every $b \in B$ is algebraic over $A$. If $B/A$ is algebraic, then $\mathbb{Z}[B]/\mathbb{Z}[A]$ is an algebraic ring extension, but 
the converse is not necessarily true. 
An algebraic extension $B/A$ is {\em strictly algebraic} if for every $a \in A$ the equation $x^n = a$ has at most $n$ solutions in $B$.

A monoid $A$ is {\em algebraically closed} if every equation of the form $x^n = a$ with $a \in A$ and $n \in \mathbb{N}$ has a solution in $A$. Every monoid can be 
embedded into an algebraically closed monoid, and if $A$ is a group, then there exists a ``smallest'' such embedding which is called the {\em algebraic closure} of $A$.\\

\medskip
\subsection{Important example}

The algebraic closure $\overline{\mathbb{F}_1}$\index{$\overline{\mathbb{F}_1}$} of $\mathbb{F}_1$ is the group $\mu_{\infty}$ of all complex roots of unity; it is the torsion group of the circle group $\mathbb{T} = \{e^{i\theta} \vert \theta \in [ 0,2\pi )\}, \times \cong \mathbb{R} \oplus \mathbb{Q}/\mathbb{Z}$, so  
it is isomorphic to $\mathbb{Q}/\mathbb{Z}$. Note that the multiplicative group $\overline{\mathbb{F}_p}^\times$ of the algebraic closure $\overline{\mathbb{F}_p}$ of 
the prime field $\mathbb{F}_p$ is isomorphic to the group of all complex roots of unity of order prime to $p$, so that the definition of $\overline{\mathbb{F}_1}$ is in accordance with the finite field case.

\medskip
\subsection{Localization}

Let $S$ be a submonoid of the monoid $A$. We define the monoid $S^{-1}A$\index{$S^{-1}A$}, the {\em localization}\index{localization} of $A$ {\em by} $S$, to be $A \times S/\sim$, where the equivalence relation ``$\sim$'' is given by

\begin{equation}
(a,s) \sim (a',s') \ \ \mathrm{if\ and\ only\ if}\ \ s''s'a = s''sa'\ \ \mathrm{for\ some}\ \ s'' \in S.
\end{equation}

Multiplication in $S^{-1}A$ is componentwise, and one suggestively writes $\frac{a}{s}$ for the element in $S^{-1}A$ corresponding to $(a,s)$ ($\frac{a}{s}\frac{a'}{s'} = \frac{aa'}{ss'}$).

\medskip
\subsection{Ideal and spectrum}

If $C$ and $D$ are subsets of the monoid $A$, $CD$\index{$CD$} denotes the set of products $cd$, with $c \in C$ and $d \in D$. 

In this paragraph, and in fact throughout, a ring is always commutative with unit and any monoid is also supposed to be abelian. 

An {\em ideal}\index{ideal} $\fa$ of a monoid $M$ is a
subset such that $M\fa \subseteq \fa$. For any ideal $\fa$ in $M$, $\mathbb{Z}[\fa]$ is an ideal in $\mathbb{Z}[M]$. Note that if $A$ and $B$
are monoids and $\alpha: A \longrightarrow B$ is a morphism, then $\alpha^{-1}(\fa)$ is an ideal in $A$ if $\fa$ is an ideal in $B$. 
If $S$ is a subset of the monoid $A$, $\langle S \rangle$\index{$\langle S\rangle$} denotes the ideal {\em generated by $S$} (i.e., it is the smallest ideal containing $S$).

An ideal $\fp$  is called a {\em prime ideal}\index{prime!ideal}  if $S_{\fp} := M \setminus \fp$\index{$S_{\fp}$} is a monoid (that is, if $uv \in \fp$, then $u \in \fp$ or $v \in \fp$).
For any prime ideal $\fp$ in $M$, denote by 
\begin{equation}
M_{\fp} = S_{\fp}^{-1}M 
\end{equation}
the {\em localization}\index{$M_{\fp}$}\index{localization} of $M$ {\em at} $\fp$. 

\begin{proposition}[\cite{Deitmarschemes1}]
The natural map 
\begin{equation}
M \longrightarrow M_{\fp},\ m \longrightarrow \frac{m}{1}
\end{equation} 
with $\fp = M \setminus M^{\times}$ is an isomorphism.
\end{proposition}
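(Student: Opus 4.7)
The plan is to first verify that $\fp = M \setminus M^{\times}$ is genuinely a prime ideal, so that the localization $M_{\fp}$ even makes sense. For the ideal condition, take any $m \in M$ and any non-unit $p \in \fp$; if $mp$ were invertible with inverse $q$, then $p(mq) = \id$, making $p$ a unit and contradicting $p \in \fp$. For primality, if $uv \in \fp$ but both $u$ and $v$ were units, then $uv$ would be a unit with inverse $v^{-1}u^{-1}$, contradicting $uv \in \fp$.

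Next, I would identify $S_{\fp} = M \setminus \fp$ with $M^{\times}$ and use the fact that every element of $S_{\fp}$ is invertible already in $M$. This crucially simplifies the defining equivalence on $M_{\fp} = S_{\fp}^{-1}M$: the condition $(a,s) \sim (a',s')$ iff $s''s'a = s''sa'$ for some $s'' \in S_{\fp}$ becomes, after multiplying by $(s'')^{-1}$, simply $s'a = sa'$, and then further becomes $a = s(s')^{-1}a'$ since $s' \in M^{\times}$ as well.

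With these observations in hand, I would analyze the natural map $\iota: M \longrightarrow M_{\fp}$, $m \mapsto m/\id$. It is a monoid homomorphism by the componentwise multiplication rule and the fact that $\iota(\id) = \id/\id$. For injectivity, the identity $m/\id = m'/\id$ forces $s'' m = s'' m'$ for some $s'' \in M^{\times}$, and multiplying by $(s'')^{-1}$ gives $m = m'$. For surjectivity, given any class $a/s \in M_{\fp}$, the element $as^{-1} \in M$ (which exists since $s \in M^{\times}$) satisfies $\iota(as^{-1}) = (as^{-1})/\id = a/s$, as witnessed by $s'' = \id$: indeed $\id \cdot s \cdot (as^{-1}) = a = \id \cdot \id \cdot a$.

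The only subtle point — and thus the ``hard part'' of an otherwise formal argument — is a careful handling of the auxiliary witness $s''$ in the defining equivalence of $S^{-1}M$. In general monoidal localization this witness is unavoidable, but here the fact that every $s'' \in S_{\fp} = M^{\times}$ is cancellable makes it harmless; once this is observed, injectivity, surjectivity, and compatibility with the monoid structure follow immediately.
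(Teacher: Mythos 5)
Your proof is correct. The paper states this proposition without proof (it is cited from Deitmar), and your argument is the standard one: since $S_{\fp}=M\setminus\fp=M^{\times}$ consists entirely of invertible elements, the witness $s''$ in the localization's equivalence relation can always be cancelled, making $m\mapsto m/\id$ a bijective homomorphism (and a bijective homomorphism of monoids is automatically an isomorphism, since the set-theoretic inverse is then itself multiplicative).
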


Let $M$ be a monoid. The {\em spectrum}\index{spectrum} $\Spec(M)$\index{$\Spec(M)$} of $M$ is the set of prime ideals endowed with the obvious Zariski topology. 
(Note that the spectrum cannot be empty since $M \setminus M^{\times}$ is a prime ideal.)
The closed subsets are the empty set and all sets of the form

\begin{equation}
V(\fa) := \{ \fp \in \Spec(M) \vert \fa \subseteq \fp\},
\end{equation}\index{$V(\fa)$}
where $\fa$ is any ideal.
The point $\eta = \emptyset$ is contained in every nonempty open set and the point $M \setminus M^{\times}$ is closed and contained in every nonempty closed set.
Note also that for every $m \in M$ the set $V(m) :=  \{ \fp \in \Spec(M) \vert m \in \fp\}$ is closed (as $V(m) = V(Mm)$).

\begin{proposition}
$M \setminus M^{\times}$ is the unique maximal  ideal for any monoid $M$, so any such $M$ is a local $\mathbb{F}_1$-ring.
\end{proposition}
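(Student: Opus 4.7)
The plan is to show directly that $\mathfrak{m} := M \setminus M^{\times}$ is an ideal, and that every proper ideal of $M$ must be contained in $\mathfrak{m}$; together these statements give that $\mathfrak{m}$ is the unique maximal ideal. The argument is essentially the monoidal analogue of the standard fact that a commutative ring whose non-units form an ideal is local, and no serious obstacle is expected.

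First I would verify that $\mathfrak{m}$ is an ideal. Given $x \in \mathfrak{m}$ and $m \in M$, I want $mx \in \mathfrak{m}$. If instead $mx \in M^{\times}$, there would exist $y \in M$ with $(mx)y = \id$, and then $x(my) = \id$ would exhibit $x$ as a unit, contradicting $x \notin M^{\times}$. (As a side remark, $\mathfrak{m}$ is even prime: if $uv \in \mathfrak{m}$ and both $u,v \in M^{\times}$, then their product would be a unit, so at least one factor lies in $\mathfrak{m}$. This matches the statement of the proposition preceding the current one, where $S_{\mathfrak{p}} = M \setminus \mathfrak{p}$ is required to be a submonoid.)

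Next I would show that any proper ideal $\mathfrak{a} \subsetneq M$ is contained in $\mathfrak{m}$. Suppose for contradiction that $\mathfrak{a}$ contains a unit $u$, with inverse $u^{-1} \in M$. Then by the defining property $M\mathfrak{a} \subseteq \mathfrak{a}$ of an ideal, we have $\id = u^{-1}u \in \mathfrak{a}$, and consequently $M = M \cdot \id \subseteq \mathfrak{a}$, contradicting properness. Hence $\mathfrak{a} \cap M^{\times} = \emptyset$, i.e., $\mathfrak{a} \subseteq \mathfrak{m}$.

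Combining these two observations finishes the proof: $\mathfrak{m}$ is a proper ideal (it does not contain $\id$, since $\id \in M^{\times}$), and every proper ideal is contained in $\mathfrak{m}$. Thus $\mathfrak{m}$ is the unique maximal ideal of $M$, so in the $\mathbb{F}_1$-ring sense (monoids as ``$\mathbb{F}_1$-rings''), $M$ is local. I anticipate the only subtle point being purely notational — making sure the conventions ``ideal'' $=$ subset $\mathfrak{a}$ with $M\mathfrak{a} \subseteq \mathfrak{a}$ and ``$M$ is abelian with unit'' are applied consistently — but no real difficulty arises.
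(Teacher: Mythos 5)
Your argument is correct and complete: non-units form an ideal because (using commutativity) a factor of a unit is a unit, and any ideal meeting $M^{\times}$ absorbs $\id$ and hence all of $M$. The paper states this proposition without proof, and your reasoning is exactly the standard argument it leaves implicit.
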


\medskip
\subsection{Structure sheaf}

Let $A$ be a ring over $\mathbb{F}_1$. For any open set $U \subseteq \Spec(A)$, one defines $\mO_{\Spec(A)}(U) = \mO(U)$ to be the set of functions
(called {\em sections}\index{section})

\begin{equation}
s: U \longrightarrow \coprod_{\fp \in U}A_{\fp}
\end{equation}
for which $s(\fp) \in A_{\fp}$ for each $\fp \in U$, and such that there exists a neighborhood $V$ of $\fp$ in $U$, and elements $a, b \in A$, for which
$b \not\in \mathfrak{q}$ for every $\mathfrak{q} \in V$, and $s(\mathfrak{q}) = \frac{a}{b}$ in $A_{\mathfrak{q}}$. The map 

\begin{equation}
\mO_{\Spec(A)}: \Spec(A) \longrightarrow \mbox{monoids}: U \longrightarrow \mO(U)
\end{equation}
is the {\em structure sheaf}\index{structure sheaf} of $\Spec(A)$.

\begin{proposition}[\cite{Deitmarschemes1}]
\label{globsec}
\begin{itemize}
\item[{\rm (i)}]
For each $\fp \in \Spec(A)$, the stalk $\mO_{\fp}$ of the structure sheaf is isomorphic to the localization of $A$ at $\fp$.
\item[{\rm (ii)}]
for global sections, we have $\Gamma(\Spec(A),\mO) := \mO(\Spec(A)) \cong A$.
\end{itemize}
\end{proposition}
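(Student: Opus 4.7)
The plan is to mimic the standard affine-scheme argument (as in Hartshorne II.2.2), adjusted for the fact that monoids carry no additive structure and that $\Spec(A)$ has a distinguished unique closed point.

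For part (i), I would construct the natural evaluation map $\phi_{\fp}: \mO_{\fp} \longrightarrow A_{\fp}$ sending the germ at $\fp$ of a section $s$ to $s(\fp) \in A_{\fp}$, and then verify bijectivity directly. For surjectivity, given $a/b \in A_{\fp}$ with $b \notin \fp$, I use the principal open set $D(b) := \Spec(A) \setminus V(b)$, which is a neighborhood of $\fp$ on which $b \notin \mathfrak{q}$ for every $\mathfrak{q} \in D(b)$; the assignment $\mathfrak{q} \mapsto a/b \in A_{\mathfrak{q}}$ is then a bona fide section on $D(b)$, and its germ at $\fp$ is sent to $a/b$. For injectivity, I take two germs at $\fp$ evaluating to the same element of $A_{\fp}$. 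After shrinking to a common neighborhood $U$ of $\fp$, I may assume they are represented there by $a/b$ and $a'/b'$ with $b,b' \notin \mathfrak{q}$ for all $\mathfrak{q} \in U$. The equality $a/b = a'/b'$ in $A_{\fp}$ means, via the monoidal localization relation, that there is $t \in A \setminus \fp$ with $tb'a = tba'$. Then on $U \cap D(tbb')$, again an open neighborhood of $\fp$, this same identity forces $a/b = a'/b'$ in every stalk $A_{\mathfrak{q}}$, so the two sections already coincide on this smaller neighborhood and hence define the same germ.

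For part (ii), I would lean on the preceding proposition, which tells us that $\mathfrak{m} := A \setminus A^{\times}$ is the unique maximal ideal and that the canonical map $A \longrightarrow A_{\mathfrak{m}}$ is an isomorphism. Since every proper ideal $\fa$ of $A$ is contained in $\mathfrak{m}$, every nonempty closed set $V(\fa) \subseteq \Spec(A)$ contains $\mathfrak{m}$, so the only open neighborhood of $\mathfrak{m}$ is $\Spec(A)$ itself. Given a global section $s \in \Gamma(\Spec(A),\mO)$, local representability near $\mathfrak{m}$ therefore forces the existence of $a,b \in A$ with $b \notin \mathfrak{q}$ for every $\mathfrak{q} \in \Spec(A)$ and $s(\mathfrak{q}) = a/b$ throughout; the condition on $b$ says $b$ lies in no prime ideal of $A$, hence $b \in A^{\times}$. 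Setting $c := ab^{-1}$, the section $s$ is then the image of $c$ under the canonical homomorphism $A \longrightarrow \Gamma(\Spec(A),\mO)$, $c \mapsto (\fp \mapsto c/1)$. Injectivity of this canonical homomorphism is immediate from the preceding proposition: composition with evaluation at $\mathfrak{m}$ is the isomorphism $A \cong A_{\mathfrak{m}}$.

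The main technical point to watch out for is careful bookkeeping of the auxiliary factor $s'' \in S$ in the monoidal equivalence relation $(a,s) \sim (a',s')$; this is the multiplicative shadow of ``differ by something killed by an element of $S$'' and is precisely what makes the localizer $tbb'$ in the injectivity step of (i) the right substitute for the classical annihilator argument. Conversely, the absence of additive structure actually simplifies matters in (ii): the fact that $\mathfrak{m}$ admits no proper open neighborhood collapses the usual patching argument to a single local calculation, so no analogue of the Cech-type ``partition of unity'' patching over a basic open cover is needed.
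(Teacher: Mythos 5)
The paper states this proposition without proof, citing it from Deitmar's article, so there is nothing internal to compare against; your argument is correct and is essentially the standard affine-scheme argument correctly transposed to the monoidal setting. In particular, your key observation in (ii) --- that the unique closed point $\mathfrak{m} = A \setminus A^{\times}$ lies in every nonempty closed set, hence has no proper open neighbourhood, so a global section is a single quotient $a/b$ with $b \in A^{\times}$ and everything reduces to the isomorphism $A \cong A_{\mathfrak{m}}$ recorded just before the statement --- is exactly the right simplification afforded by monoids, and the bookkeeping with the auxiliary factor $t \in A \setminus \fp$ in the injectivity step of (i) is handled correctly.
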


\medskip
\subsection{Monoidal spaces}

A {\em monoidal space}\index{monoidal space} is a topological space $X$ together with a sheaf of monoids $\mO_X$. 
Call a morphism of monoids $\beta: A \longrightarrow B$ {\em local}\index{local morphism} if $\beta^{-1}(B^{\times}) = A^{\times}$. In particular, monoidal epimorphisms and  isomorphisms and automorphisms are always local.
A {\em morphism}\index{morphism!of monoidal spaces} between monoidal spaces $(X,\mO_X)$ and $(Y,\mO_Y)$ is defined naturally: it is a pair $(f,f^\#)$ with $f: X \longrightarrow Y$ a 
continuous function, and 

\begin{equation}
f^\#: \mO_Y \longrightarrow f_*\mO_X
\end{equation}
a morphism between sheaves of monoids on $Y$. (Here, $f_*\mO_X$\index{$f_*\mO_X$}, the {\em direct image sheaf}\index{direct image sheaf} on $Y$ induced by $f$, is defined by $f_*\mO_X(U) := \mO_X(f^{-1}(U))$ for all open $U \subseteq Y$.)
The morphism is {\em local}\index{local morphism} if each of the induced morphisms $f^\#_x: \mO_{Y,f(x)} \longrightarrow \mO_{X,x}$ is local.

\begin{proposition}[\cite{Deitmarschemes1}]
\label{propmor}
\begin{itemize}
\item[{\rm (i)}]
For an $\mathbb{F}_1$-ring $A$, we have that $(\Spec(A),\mO_A)$ is a monoidal space.
\item[{\rm (ii)}]
If $\alpha: A \longrightarrow B$ is a morphism of monoids, then $\alpha$ induces a morphism of monoidal spaces
\begin{equation}
(f,f^{\#}): \Spec(B) \longrightarrow \Spec(A),
\end{equation}
yielding a functorial bijection
\begin{equation}
\mathrm{Hom}(A,B) \cong \mathrm{Hom}_{\mathrm{loc}}(\Spec(B),\Spec(A)),
\end{equation}
where on the right hand side we only consider local morphisms (hence the notation).
\end{itemize}
\end{proposition}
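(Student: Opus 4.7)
The plan is to follow the classical argument for affine Grothendieck schemes (Hartshorne II.2.3), adapted to the monoidal setting, using the fact that the preceding paragraphs already give us localizations, primes, and Proposition \ref{globsec} for free.

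For (i), I would first check that $\mO_A(U)$ is indeed a monoid under pointwise multiplication, with unit the constant section $\fp \mapsto 1/1 \in A_{\fp}$: if $s, t \in \mO_A(U)$ are locally represented as $a/b$ and $c/d$ near $\fp$, then on the intersection of the two neighborhoods the product $st$ is represented by $ac/bd$, which is again of the required form. The restriction maps are just restriction of functions, and the sheaf axioms (identity and gluing) are formal, since being locally of the form $a/b$ is a local condition stable under gluing of compatible sections. This gives $(\Spec(A), \mO_A)$ the structure of a monoidal space.

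For (ii), given a monoid morphism $\alpha: A \longrightarrow B$, I would set $f(\mathfrak{q}) := \alpha^{-1}(\mathfrak{q})$; this is a prime ideal because $\alpha$ preserves products, and $f$ is continuous because $f^{-1}(V(\fa)) = V(\langle \alpha(\fa)\rangle)$ for any ideal $\fa$ of $A$. The map $\alpha$ induces stalk maps $\alpha_{\mathfrak{q}}: A_{f(\mathfrak{q})} \longrightarrow B_{\mathfrak{q}}$ by $a/s \mapsto \alpha(a)/\alpha(s)$, which is well-defined since $s \notin f(\mathfrak{q})$ forces $\alpha(s) \notin \mathfrak{q}$. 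Gluing these together defines $f^{\#}: \mO_A \longrightarrow f_*\mO_B$ section-wise via $s \mapsto \bigl(\mathfrak{q} \mapsto \alpha_{\mathfrak{q}}(s(f(\mathfrak{q})))\bigr)$, and compatibility with the ``locally $a/b$'' form is immediate. Locality of each stalk map $\alpha_{\mathfrak{q}}$ follows because, by the proposition preceding this one, both $A_{f(\mathfrak{q})}$ and $B_{\mathfrak{q}}$ are local monoids, and $a/s$ is a unit in $A_{f(\mathfrak{q})}$ iff $a \notin f(\mathfrak{q})$ iff $\alpha(a) \notin \mathfrak{q}$ iff $\alpha_{\mathfrak{q}}(a/s)$ is a unit in $B_{\mathfrak{q}}$.

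Conversely, from a local morphism $(g, g^{\#}): \Spec(B) \longrightarrow \Spec(A)$ I would extract a monoid morphism by applying global sections and composing with the identifications of Proposition \ref{globsec}(ii): $\alpha := \Gamma(g^{\#}): A \cong \Gamma(\Spec(A), \mO_A) \longrightarrow \Gamma(\Spec(B), \mO_B) \cong B$. To verify that the two constructions are mutually inverse, it suffices to check that for a local $(g, g^{\#})$ the underlying map must satisfy $g(\mathfrak{q}) = \alpha^{-1}(\mathfrak{q})$: the stalk map $g^{\#}_{\mathfrak{q}}: \mO_{A, g(\mathfrak{q})} \longrightarrow \mO_{B, \mathfrak{q}}$ is local, so it pulls the unique maximal ideal of the codomain back to that of the domain; commuting $\alpha$ with the canonical maps $A \to A_{g(\mathfrak{q})}$ and $B \to B_{\mathfrak{q}}$ then forces $\alpha^{-1}(\mathfrak{q}) = g(\mathfrak{q})$. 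Functoriality of both constructions in $A$ and $B$ is immediate, yielding the asserted bijection. The main obstacle, exactly as in the ring case, is precisely this locality hypothesis: without it, there are morphisms of monoidal spaces $\Spec(B) \to \Spec(A)$ not induced by any monoid morphism (for instance, collapsing all of $\Spec(B)$ onto the generic point $\eta = \emptyset \in \Spec(A)$). Pleasantly, because every monoid is automatically local with canonical maximal ideal $M \setminus M^{\times}$, the locality condition here is cleaner than over $\Z$.
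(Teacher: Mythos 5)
Your argument is correct and matches the paper's treatment: part (i) and the forward direction of (ii) are quoted from Deitmar without proof, and the paper's proof of the converse (Proposition \ref{isomloc}) is exactly your global-sections argument — recover $\alpha = \Gamma(g^{\#})$ via Proposition \ref{globsec} and use locality of the stalk maps to force $g(\mathfrak{q}) = \alpha^{-1}(\mathfrak{q})$. Only your parenthetical non-example is shaky: a morphism collapsing $\Spec(B)$ onto the generic point requires every element of $A$ to become a unit in $B$ and, when it exists, is typically local and induced; the standard witness that locality is needed instead collapses onto a \emph{closed} point (e.g.\ $\mathrm{F}^{\#}$ induced by $\mathbb{F}_1[X] \to \mathbb{F}_1[Y^{\pm 1}]$, $X \mapsto Y$, with $f$ sending the unique point of the target's spectrum to $(X)$).
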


We will need the following converse of (ii):

\begin{proposition}
\label{isomloc}
Any local morphism of monoidal spaces $(f,f^\#)$: $\Spec(B) \longrightarrow \Spec(A)$ is induced by a monoidal
morphism $\alpha = \alpha_{(f,f^\#)}$ as in {\em Proposition \ref{propmor}(ii)}.
\end{proposition}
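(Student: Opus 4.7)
The plan is to mimic the classical argument from Grothendieck scheme theory that recovers a ring homomorphism from a morphism of locally ringed spaces, adapted to the monoidal setting of Deitmar.

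First I would define the candidate monoidal morphism $\alpha$ by taking global sections of $f^\#$. Proposition \ref{globsec}(ii) identifies $\Gamma(\Spec(A),\mO_A) \cong A$ and $\Gamma(\Spec(B),\mO_B) \cong B$, and by construction $\Gamma(\Spec(B),f_*\mO_B) = \mO_B(f^{-1}(\Spec(B))) = \Gamma(\Spec(B),\mO_B)$. Thus the component $f^\#_{\Spec(A)}:\mO_A(\Spec(A)) \to f_*\mO_B(\Spec(A))$ is a monoid morphism
\begin{equation}
\alpha \ := \ \Gamma(f^\#) : A \ \longrightarrow \ B.
\end{equation}
Let $(g,g^\#):\Spec(B) \longrightarrow \Spec(A)$ denote the morphism of monoidal spaces induced by $\alpha$ via Proposition \ref{propmor}(ii). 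My goal is to show $(f,f^\#) = (g,g^\#)$.

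Next I would check that the underlying continuous maps coincide, and this is exactly where the locality hypothesis is used. For each $\fq \in \Spec(B)$, the stalk map
\begin{equation}
f^\#_\fq \ : \ \mO_{A,f(\fq)} \ \longrightarrow \ \mO_{B,\fq}
\end{equation}
is, by Proposition \ref{globsec}(i), a local morphism $A_{f(\fq)} \to B_\fq$. Composing the natural map $A \to A_{f(\fq)}$ with $f^\#_\fq$ yields a monoid morphism $A \to B_\fq$ which, by naturality of $f^\#$ applied to the inclusion of the global sections into the stalk, agrees with the composite $A \xrightarrow{\alpha} B \to B_\fq$. Now the unique maximal ideal of $B_\fq$ is $\fq B_\fq$ (Proposition before it in the text), and its preimage in $A$ equals $\alpha^{-1}(\fq)$ on the one hand, and, by locality of $f^\#_\fq$, equals the preimage of the maximal ideal of $A_{f(\fq)}$, namely $f(\fq)$, on the other hand. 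Hence $f(\fq) = \alpha^{-1}(\fq) = g(\fq)$, so $f = g$.

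Finally I would verify equality of the sheaf morphisms $f^\# = g^\#$. Since both $f^\#$ and $g^\#$ are morphisms of sheaves of monoids on $\Spec(A)$ with identical target $f_*\mO_B = g_*\mO_B$, it suffices to check agreement on a basis of principal open sets $D(a) = \Spec(A)\setminus V(a)$, and there both maps are forced by $\alpha$: the localization $A[a^{-1}] = \mO_A(D(a))$ must map to $\mO_B(f^{-1}D(a)) = B[\alpha(a)^{-1}]$ by $\frac{x}{a^n}\mapsto \frac{\alpha(x)}{\alpha(a)^n}$, because this prescription is determined on global sections by $\alpha$ and is then extended uniquely through compatibility with localization (which is precisely how $g^\#$ is constructed in Proposition \ref{propmor}(ii)). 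Thus $f^\# = g^\#$ and the proposition follows.

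The main obstacle I anticipate is the middle step, namely forcing $f(\fq) = \alpha^{-1}(\fq)$ from locality; everything else is bookkeeping around Proposition \ref{globsec} and the universal property of localization. Without the locality assumption one could only say $\alpha^{-1}(\fq) \subseteq f(\fq)$, and equality truly requires that $f^\#_\fq$ sends nonunits to nonunits.
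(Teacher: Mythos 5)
Your proposal is correct and follows essentially the same route as the paper's proof: take global sections of $f^\#$ to obtain $\alpha$ via Proposition \ref{globsec}(ii), use the locality of the stalk maps $f^\#_{\fq}: A_{f(\fq)} \to B_{\fq}$ together with the commuting square with $\alpha$ to force $f(\fq) = \alpha^{-1}(\fq)$, and then conclude that the sheaf morphism is determined by $\alpha$ through compatibility with localization. The only difference is one of detail: where the paper dispatches the final identification $f^\# = g^\#$ with ``it follows readily,'' you spell out the verification on the basis of principal opens $D(a)$, which is a welcome elaboration rather than a new idea.
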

\begin{proof}
Let $(f,f^\#)$ be as in the statement of the theorem; then taking global sections, $f^\#$ induces a morphism $\phi: 
\Gamma(\Spec(A),\mO) \longrightarrow \Gamma(\Spec(B),\mO)$, which by Proposition \ref{globsec} is a morphism $\phi: A \longrightarrow B$. 
For any $\fp \in \Spec(B)$, we have a local morphism $f^\#_{\fp}: A_{f(\fp)} \longrightarrow B_{\fp}$ such that the following diagram commutes:

\begin{equation}
\begin{array}{ccc}
A &\xrightarrow{\phi} &B\\
&&\\
\downarrow & &\downarrow \\
&&\\
A_{f(\fp)} & \xrightarrow{f^{\#}_{\fp}} &B_{\fp}
\end{array}
\end{equation}

As $f^{\#}$ is a local homomorphism, we have that $\phi^{-1}(\fp) = f(\fp)$, so that $f$ coincides with the map $\Spec(B) \longrightarrow \Spec(A)$ induced by $\phi$. It follows readily that the monoid homomorphism  $\phi = \alpha_{(f,f^\#)}$ induces $(f,f^\#)$.
\end{proof}

Since any automorphism is local, we have the following implication.

\begin{corollary}
\label{corisomloc}
If $(f,f^\#) \in \Aut(\Spec(A))$ is such that $f = \id$ implies that $\alpha_{(f,f^\#)} = \id$, then $f^\#$ also is trivial.
\end{corollary}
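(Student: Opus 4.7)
The plan is to deduce this corollary directly from Proposition \ref{isomloc} combined with the functorial bijection in Proposition \ref{propmor}(ii), using the fact (noted just before Proposition \ref{propmor}) that every monoidal automorphism is local.

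Since $(f, f^\#) \in \Aut(\Spec(A))$, the pair is in particular a local morphism of monoidal spaces, so by Proposition \ref{isomloc} it is induced by the monoid homomorphism $\alpha = \alpha_{(f, f^\#)} : A \longrightarrow A$ constructed there (namely the one obtained from $f^\#$ on global sections via Proposition \ref{globsec}(ii)). Under the hypothesis of the corollary, imposing $f = \id_{\Spec(A)}$ forces $\alpha_{(f, f^\#)} = \id_A$.

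Next I would invoke the bijection $\Hom(A, A) \cong \Hom_{\mathrm{loc}}(\Spec(A), \Spec(A))$ of Proposition \ref{propmor}(ii). By functoriality, this bijection sends the identity monoid homomorphism $\id_A$ to the identity local morphism $(\id_{\Spec(A)}, \id_{\mO_A})$. Since it also sends $\alpha_{(f, f^\#)} = \id_A$ to $(f, f^\#)$, injectivity of the correspondence forces $(f, f^\#) = (\id_{\Spec(A)}, \id_{\mO_A})$, and hence $f^\#$ is trivial as well.

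I do not anticipate any substantial obstacle: the argument really reduces to bookkeeping about the bijection of Proposition \ref{propmor}(ii) and its inverse from Proposition \ref{isomloc}. The one point to guard against is inadvertently weakening the hypothesis that $(f, f^\#)$ is an \emph{automorphism} rather than an arbitrary morphism, since it is precisely this that guarantees locality and hence the applicability of Proposition \ref{isomloc}.
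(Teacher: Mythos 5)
Your proposal is correct and follows essentially the same route as the paper, which justifies the corollary simply by remarking that automorphisms are local so that Proposition \ref{isomloc} applies; you merely make explicit the final step that the bijection of Proposition \ref{propmor}(ii) sends $\id_A$ to the identity morphism of monoidal spaces, forcing $f^\#$ to be trivial. No gaps.
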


So if the topology of $\Spec(A)$ is sufficiently fine, the only element in its automorphism group $\Aut(\Spec(A))$\index{$\Aut(\Spec(A))$} with trivial component $f$, is the trivial one.

\medskip
\subsection{Deitmar's $\mathbb{F}_1$-schemes}

As in the theory of rings, we have defined a structure sheaf $\mO_X$ on the topological space $X = \Spec(M)$. One then defines a {\em scheme}
over $\mathbb{F}_1$ to be a topological space together with a sheaf of monoids, locally isomorphic to spectra of monoids in the above sense. The details are below.

An {\em affine scheme}\index{affine!$\Fun$-scheme} over $\mathbb{F}_1$ is a monoidal space which is isomorphic to $\Spec(A)$ for some monoid $A$. For the rest of the chapter, we will call such schemes {\em affine Deitmar schemes}\index{affine!Deitmar scheme} or also {\em $\mD$-schemes}\index{affine!$\mD$-scheme}
or {\em $\mD_0$-schemes}\index{affine!$\mD_{0}$-scheme}. (The ``$\mD$'' obviously stands for ``Deitmar''; sometimes we add the sub-index $0$ to stress that monoids have a zero in this context.)
A monoidal space $X$ is a {\em scheme}\index{$\Fun$-scheme} over $\mathbb{F}_1$ if for every point $x \in X$ there is an open neighborhood $U \subseteq X$ such that 
$(U,\mO_{X \vert U})$ is an affine scheme over $\mathbb{F}_1$. As in the affine case, we also speak of {\em $\mD$-schemes}\index{$\mD$-scheme} and {\em $\mD_0$-schemes}\index{$\mD_0$-scheme}.
A {\em morphism}\index{morphism!of $\mD_{(0)}$-schemes} of $\mD_{(0)}$-schemes is a local morphism of monoidal spaces. 

Recall that a point $\eta$ of a topological space is a {\em generic point}\index{generic point} if it is contained in every nonempty open set.

\begin{proposition}[\cite{Deitmarschemes1}]
Any connected $\mD_0$-scheme has a unique generic point $\emptyset$, and morphisms between connected schemes map generic points to generic points.  
\end{proposition}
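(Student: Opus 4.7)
The plan is to handle the affine case first, then use the fact that $\mD_0$-schemes admit an affine basis to glue into a global generic point, and finally reduce the morphism claim back to the affine case.

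In the affine case $X = \Spec(A)$, the empty subset is trivially an ideal of $A$, and its complement $A \setminus \emptyset = A$ is a monoid, so $\emptyset$ is prime. Since $V(\emptyset) = \Spec(A)$, the singleton $\{\emptyset\}$ is dense, equivalently $\emptyset$ lies in every nonempty open. Any other prime $\fp$ contains some $m \in A$, so $\fp \in V(m)$, whereas $\emptyset \notin V(m)$, so $\Spec(A) \setminus V(m)$ is a nonempty open missing $\fp$. Hence $\emptyset$ is the unique generic point of $\Spec(A)$. For a monoid homomorphism $\alpha : A \longrightarrow B$, the induced map on spectra is $\fq \longmapsto \alpha^{-1}(\fq)$, and $\alpha^{-1}(\emptyset) = \emptyset$, so in the affine setting generic points are automatically preserved.

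Next I pass to the global setting. Cover $X$ by affine opens $\{U_i\}$ and let $\eta_i \in U_i$ correspond under $U_i \cong \Spec(A_i)$ to the empty ideal. The key step is the following gluing claim: whenever $U_i \cap U_j \neq \emptyset$, one has $\eta_i = \eta_j$. To see this, choose any affine open $V \subseteq U_i \cap U_j$, which exists because affines form a basis of the topology of $X$. Every nonempty open $W \subseteq V$ is also a nonempty open of $U_i$, hence contains $\eta_i$; thus $\eta_i$ is a generic point of $V$, and by the affine uniqueness $\eta_i = \eta_V$. Symmetrically $\eta_j = \eta_V$, so $\eta_i = \eta_j$. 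Declaring $\eta(x) := \eta_U$ for any affine open $U \ni x$ now gives a well-defined, locally constant map $\eta : X \longrightarrow X$. If $X$ is connected, $\eta$ is constant with value some $\eta_X$, and for any nonempty open $W \subseteq X$, picking $x \in W$ and an affine open $U \ni x$ gives $\eta_X = \eta_U \in W \cap U \subseteq W$, so $\eta_X$ is a generic point. Uniqueness: any generic point $\eta'$ of $X$ lies in every affine open $U$, and is still contained in every nonempty open of $U$, so $\eta' = \eta_U = \eta_X$.

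For the morphism part, let $f : X \longrightarrow Y$ be a morphism of connected $\mD_0$-schemes with generic points $\eta_X$ and $\eta_Y$. Pick any affine open $V \subseteq Y$ containing $f(\eta_X)$, then any affine open $U \subseteq f^{-1}(V)$ containing $\eta_X$; this is possible because $f^{-1}(V)$ is open and nonempty, and affines form a basis. The restriction $f|_U : U \longrightarrow V$ is a local morphism of affine $\mD_0$-schemes, hence by Proposition \ref{isomloc} comes from a monoid morphism, and the affine case gives $f|_U(\eta_U) = \eta_V$. But by the gluing claim $\eta_U = \eta_X$ and $\eta_V = \eta_Y$, so $f(\eta_X) = \eta_Y$.

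The main obstacle is really the gluing claim in the second paragraph: one has to see that candidate generic points coming from two overlapping affine charts actually coincide. What makes the argument go through is the combination of two features peculiar to the monoidal setting, namely that the Zariski topology on $\Spec(A)$ is coarse enough for the empty ideal to lie in every nonempty open, and that $\mD_0$-schemes admit a basis of affine opens, so one can always shrink down to a shared affine patch where both candidate points are visible.
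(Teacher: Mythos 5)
Your proof is correct. The paper states this proposition only as a citation to Deitmar's work and gives no argument of its own, so there is nothing internal to compare against; what you give is the standard proof, and all the steps check out against the definitions in Section \ref{Dschem}. Two minor remarks. First, you twice invoke the fact that affine opens form a basis of the topology of a $\mD_0$-scheme (to find $V\subseteq U_i\cap U_j$ and $U\subseteq f^{-1}(V)$). This is true but is used without justification: the point is that the basic opens $D(a)=\Spec(A)\setminus V(a)$ of an affine $\mD_0$-scheme are themselves affine, being isomorphic to $\Spec(S^{-1}A)$ with $S=\{1,a,a^2,\dots\}$, exactly as for rings. If you prefer to avoid this, your gluing claim also follows from irreducibility alone: since $\overline{\{\eta_j\}}=U_j$, any nonempty open $W\subseteq U_j$ meets the nonempty open $U_i\cap U_j$, so $W\cap U_i$ is a nonempty open of $U_i$ and hence contains $\eta_i$; thus $\eta_i$ is a generic point of $U_j$ and equals $\eta_j$ by affine uniqueness. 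Second, a purely notational caveat: in the $\mD_0$ convention monoids carry an absorbing element $0$, and the minimal prime is sometimes written $(0)$ rather than $\emptyset$ (as in the picture of the absolute flag); your use of $\emptyset$ agrees with the convention of the subsection on ideals and spectra, where $\eta=\emptyset$ is explicitly declared to lie in every nonempty open set, so this is only bookkeeping.
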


As a corollary we have:

\begin{proposition}[\cite{Deitmarschemes1}]
For an arbitrary $\mD_0$-scheme $X$, $\mathrm{Hom}(\Spec(\mathbb{F}_1),X)$ can be identified with the set of connected components of $X$.
\end{proposition}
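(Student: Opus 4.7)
The plan is to construct a bijection $\Psi: \mathrm{Hom}(\Spec(\mathbb{F}_1), X) \to \pi_0(X)$ by sending a morphism $f$ to the connected component of $X$ containing the image of $f$. Well-definedness follows because $\Spec(\mathbb{F}_1)$ is itself connected (its generic point $\emptyset$ lies in every nonempty open, so the space is irreducible), so by continuity the image of any morphism lies inside a single connected component.

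For surjectivity, given a connected component $X_0$ I would pick any affine open $U = \Spec(A) \subseteq X_0$ and apply Proposition \ref{propmor}(ii). The natural candidate is the augmentation $\varepsilon: A \to \mathbb{F}_1$ sending $A^\times$ to $1$ and $A \setminus A^\times$ to $0$; this is a well-defined monoid homomorphism because $A^\times$ is a submonoid of $A$, and it is local by construction since $\varepsilon^{-1}(\mathbb{F}_1^\times) = A^\times$. The induced local morphism $\Spec(\mathbb{F}_1) \to U \hookrightarrow X$ then has image in $X_0$, realizing $X_0$ as $\Psi(f)$ for some $f$.

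For injectivity, suppose $f_1, f_2: \Spec(\mathbb{F}_1) \to X$ both land in the same component $X_0$. By the preceding proposition, both send the generic point $\emptyset$ to the unique generic point $\eta_0$ of $X_0$. For each $f_i$, I choose an affine open $V_i = \Spec(A_i) \subseteq X_0$ through which $f_i$ factors; such opens exist because any affine open containing the remaining image points automatically contains $\eta_0$ (the generic point lies in every nonempty open). By Proposition \ref{propmor}(ii), the restricted morphism corresponds to a local monoid morphism $A_i \to \mathbb{F}_1$, and the locality condition forces it to coincide with the canonical augmentation described above; hence each $f_i|_{V_i}$ is uniquely determined. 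Comparing $f_1$ and $f_2$ on the affine intersection $V_1 \cap V_2$ (nonempty since it contains $\eta_0$) and invoking the sheaf property then yields $f_1 = f_2$ globally.

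The main technical obstacle is verifying that the uniquely determined augmentations on different affine charts of $X_0$ glue to a single global morphism, so that no additional freedom is introduced by the covering. This ultimately hinges on the special feature of $\mathbb{F}_1$ that $\mathrm{Hom}_{\mathrm{loc}}(A, \mathbb{F}_1)$ is a singleton for every monoid $A$, which forces all local comparisons to agree and propagates uniqueness through the gluing.
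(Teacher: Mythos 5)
There is a genuine inconsistency between your surjectivity and injectivity arguments, and it points to the real content of the proposition. The augmentation $\varepsilon: A \to \mathbb{F}_1$ you use for surjectivity sends the unique point $\{0\}$ of $\Spec(\mathbb{F}_1)$ to $\varepsilon^{-1}(\{0\}) = A\setminus A^\times$, i.e.\ to the \emph{closed} point of $\Spec(A)$. Your injectivity argument, on the other hand, invokes the preceding proposition to force every morphism $\Spec(\mathbb{F}_1)\to X$ to hit the \emph{generic} point of its component. These are different points unless $\Spec(A)$ is a singleton, so the morphism you exhibit for surjectivity is not the one the proposition counts (and, granting the generic-point proposition, cannot be a morphism of connected $\mD_0$-schemes at all). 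The morphism you actually need factors through the stalk at the generic point, $A \to A_{\eta} = \mathrm{Quot}(A) \to \mathbb{F}_1$; on elements it sends $0$ to $0$ and every nonzero element to $1$, which is exactly the unique \emph{local} morphism out of the group-with-zero $\mathrm{Quot}(A)$.

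The second gap is your claim that Proposition \ref{propmor}(ii) matches the restricted morphism with a local monoid morphism $A_i\to\mathbb{F}_1$ and that this set being a singleton drives the whole argument. The bijection there is $\mathrm{Hom}(A,B)\cong\mathrm{Hom}_{\mathrm{loc}}(\Spec(B),\Spec(A))$: locality is imposed on the geometric side only, and the algebraic side consists of \emph{all} monoid morphisms. A morphism $\alpha:A\to\{0,1\}$ is the same datum as the prime ideal $\alpha^{-1}(0)$, so $\mathrm{Hom}(A,\mathbb{F}_1)$ is in bijection with the set of prime ideals of $A$ and is not a singleton; the ``special feature'' on which your injectivity and gluing steps ultimately hinge is therefore not available. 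What is true, and what you should use instead, is that a morphism out of the one-point space $\Spec(\mathbb{F}_1)$ is completely determined by its image point $x$ together with the induced local stalk morphism $\mO_{X,x}\to\mathbb{F}_1$, and for each fixed $x$ that local morphism is unique (units to $1$, the rest to $0$). Once the preceding proposition pins $x$ down to be the generic point of the component, both existence and uniqueness follow at once and the chart-by-chart gluing discussion becomes unnecessary; this is precisely why the paper records the statement as an immediate corollary of the generic-point proposition.
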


\medskip
\subsection{Schemes of finite type}

One obtains a functor

\begin{equation}
X \longrightarrow X_{\mathbb{Z}}
\end{equation}
from $\mD_0$-schemes to $\mathbb{Z}$-schemes, thus extending the base change functor $\mF(\cdot,\otimes_{\mathbb{F}_1}\mathbb{Z})$\index{$\mF(\cdot,\otimes_{\mathbb{F}_1}\mathbb{Z})$},
in the following way. 
One writes a scheme $X$ over $\mathbb{F}_1$ as a union of affine $\mD_0$-schemes, $X = \cup_i \Spec(A_i)$, and then map it to
$\cup_i\Spec(A_i \otimes_{\mathbb{F}_1} \mathbb{Z})$ (glued via the gluing maps from $X$).

We say that the $\mD_0$-scheme $X$ is {\em of finite type}\index{finite!type $\mD_0$-scheme} if it has a finite covering by affine schemes $U_i = \Spec(A_i)$ such that the
$A_i$ are finitely generated. 

\begin{proposition}[\cite{Deitmarschemes1}]
$X$ is of finite type over $\mathbb{F}_1$ if and only if $X_{\mathbb{Z}}$ is a $\mathbb{Z}$-scheme of finite type.
\end{proposition}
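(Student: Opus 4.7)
The plan is to prove the two implications separately, both hinging on the following key equivalence: a monoid $A$ is finitely generated as an $\Fun$-ring if and only if its monoid ring $\Z[A]$ is finitely generated as a $\Z$-algebra. The forward direction of the proposition is then essentially formal, and the reverse direction requires passing from a finite affine cover of $X_{\Z}$ back to a finite affine cover of $X$ with finitely generated coordinate monoids.

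For the forward direction, suppose $X = \bigcup_{i=1}^n \Spec(A_i)$ with each $A_i$ generated as a monoid by finitely many elements $a_1^{(i)}, \ldots, a_{r_i}^{(i)}$. By the way base change is defined, $X_{\Z} = \bigcup_{i=1}^n \Spec(\Z[A_i])$, and the same elements — now regarded as basis elements of $\Z[A_i]$ — manifestly generate $\Z[A_i]$ as a $\Z$-algebra, since every monoid element is a product of the generators and the whole ring is their $\Z$-span. So $X_{\Z}$ is a finite union of affines of finite type over $\Z$, hence itself of finite type.

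For the reverse direction, start with any affine cover $X = \bigcup_{i \in I} \Spec(A_i)$ of the $\mD_0$-scheme $X$, inducing an affine cover $X_{\Z} = \bigcup_{i \in I} \Spec(\Z[A_i])$. Since $X_{\Z}$ is of finite type it is quasi-compact, so one can extract a finite subcover indexed by $i_1, \ldots, i_n$. The next step is to show that $\{\Spec(A_{i_j})\}_{j=1}^n$ still covers $X$. For this I would construct a natural surjection $\pi \colon X_{\Z} \twoheadrightarrow X$ of underlying topological spaces by gluing the local maps $\Spec(\Z[A]) \to \Spec(A)$, $P \mapsto P \cap A$: continuity is immediate from $\pi^{-1}(V(\fa)) = V(\Z[\fa])$, and surjectivity follows by extending $\fp \cdot \Z[A]$ to a prime of $\Z[A]$ avoiding $A \setminus \fp$ via standard prime-avoidance. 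Lifting any $x \in X$ to some $y \in X_{\Z}$ with $y \in \Spec(\Z[A_{i_j}])$ then yields $x \in \Spec(A_{i_j})$. Since being of finite type is a local property of $\Z$-schemes, each affine open $\Spec(\Z[A_{i_j}])$ of $X_{\Z}$ is itself of finite type, so $\Z[A_{i_j}]$ is a finitely generated $\Z$-algebra.

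It remains to prove the monoidal lemma: if $\Z[A]$ is a finitely generated $\Z$-algebra, then $A$ is a finitely generated monoid. Pick generators $f_1, \ldots, f_k$ of $\Z[A]$, write each as a finite $\Z$-linear combination $f_i = \sum_j c_{ij} a_{ij}$ of elements of $A$, and let $A' = \langle \{a_{ij}\} \rangle \subseteq A$ be the finitely generated submonoid they generate. Then any $\Z$-polynomial in the $f_i$ lies in $\Z[A']$, whence $\Z[A] = \Z[A']$; comparing the two $\Z$-bases $A \setminus \{0\}$ and $A' \setminus \{0\}$ of this free $\Z$-module and invoking uniqueness of basis expansions forces $A = A'$. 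The main obstacle I anticipate is the bookkeeping around $\pi$ (well-definedness on overlaps, continuity, surjectivity), together with being careful about Deitmar's absorbing zero in the monoidal lemma; everything else is a direct transcription of familiar scheme-theoretic arguments into the $\Fun$-setting.
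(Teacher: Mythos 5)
The paper states this proposition without proof, citing Deitmar, and your argument is the standard one hinging on exactly the right lemma ($A$ is a finitely generated monoid if and only if $\Z[A]$ is a finitely generated $\Z$-algebra, proved via the free $\Z$-basis $A\setminus\{0\}$ of $\Z[A]$); it is correct. The only point worth naming precisely: what you call ``prime-avoidance'' is really the lemma that an ideal disjoint from a multiplicatively closed set is contained in a prime disjoint from it, applied to the ideal $\Z[\fp]$ and the multiplicative set $A\setminus\fp$ --- and the needed disjointness holds precisely because $\Z[\fp]$ is the span of the basis elements lying in $\fp$, so your basis argument is doing the real work there too.
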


Conversely, as mentioned in the previous section, one has a functor from monoids to rings, and it is left adjoint to the 
forgetful functor that sends a ring $R$ to the multiplicative monoid $(R, \times)$. 
A scheme $X$ over $\mathbb{Z}$ can be written as a union of affine schemes 

\begin{equation}
X = \cup_{i}\Spec(A_i)
\end{equation}
for some set of rings $\{A_i\}$. Then map $X$ to $\cup_i\Spec(A_i,\times)$ (using the gluing maps from $X$) to obtain a functor from schemes over $\mathbb{Z}$ to schemes over $\mathbb{F}_1$ which extends the aforementioned forgetful functor.

\medskip
\subsection{Structure of $\mD_0$-schemes (of finite type)}

Let $A$ be a monoid (abelian, with unit), and $\mathrm{Quot}(A)$\index{$\mathrm{Quot}(A)$} its {\em quotient group}\index{quotient group} or {\em Grothendieck group}\index{Grothendieck!group} | namely the localization of $A$ by $A$.
So $\mathrm{Quot}(A)$ coincides with the stalk of $A_\eta$ of $\Spec(A)$ at the generic point $\eta = \emptyset$.
Every morphism from $A$ to a group factorizes uniquely over the natural morphism

\begin{equation}
A \longrightarrow \mathrm{Quot}(A).
\end{equation}

If $A$ is a finitely generated monoid, the point set of its spectrum is a finite set. So the underlying $\mD_0$-scheme $X$ (which is of finite type)
is a finite set. 

A monoid $A$ is {\em integral}\index{integral!monoid} if it has the cancellation property, that is, if $ab = ac$ implies $b = c$. (This is equivalent to requiring that $A$ injects into
$\mathrm{Quot}(A)$.)

\medskip
\subsection{Modules}

A {\em module}\index{module} of a monoid $A$ is a set $S$ together with a map (action)

\begin{equation}
A \times S \longrightarrow S: (a,s) \longrightarrow as
\end{equation}
such that $\id s = s$ for all $s$ and $(ab)s = a(bs)$. A point of a module is {\em stationary}\index{stationary point} if it is invariant under the aforementioned map for any $a \in A$.
A {\em pointed module}\index{pointed module} is a pair $(S,s_0)$ consisting of an $A$-module $S$ and a stationary point $s_0 \in S$.

The {\em tensor product}\index{tensor product} $M \otimes N$\index{$M \otimes N$} of two $A$-modules $M, N$ is 
\begin{equation}
M \otimes N = M \otimes_A N = M \times N/\sim,
\end{equation}
where ``$\sim$'' is the equivalence relation generated by $(am,n) \sim (m,an)$, for every $a \in A$, $m \in M$, $n \in N$.
The class of $(m,n)$ is written as $m \otimes n$. It then is clear that $M \otimes N$ becomes an $A$-module via
\begin{equation}
a(m \otimes n) = (am)\otimes n = m \otimes (an).
\end{equation}

\subsection*{Example}

The $A$-module $A \otimes M$ is isomorphic to the $A$-module $M$ through the map $A \otimes M \longrightarrow M: a \otimes m \longrightarrow am$.\\

If $(M,m_0)$ and $(N,n_0)$ are pointed $A$-modules, the ``pointed tensor product'' is $(M \otimes N,m_0 \otimes n_0)$.\\

A pointed module is {\em flat}\index{flat!pointed module} if and only if for every injection $M \hookrightarrow N$ of pointed modules the induced map 
\begin{equation}
F \otimes M \longrightarrow F \otimes N
\end{equation}
is also an injection.
A morphism of monoids $\phi: A \longrightarrow B$ is {\em flat}\index{flat!morphism} if $B$ is flat as an $A$-module. A morphism of $\mD_0$-schemes $f: X \longrightarrow Y$ is {\em flat}\index{flat!morphism}
if for every $x \in X$ the morphism of monoids $f^{\#}: \mO_{Y,f(x)} \longrightarrow \mO_{X,x}$ is flat.

\begin{proposition}[\cite{Deitmartoric}]
\begin{itemize}
\item
A morphism of monoids $\phi: A \longrightarrow B$ is flat if and only if the induced morphism of $\mD_0$-schemes $\Spec(B) \longrightarrow \Spec(A)$ is flat.
\item
The composition of flat morphisms is flat.
\item
Base change of flat morphisms by an arbitrary morphism is flat.
\end{itemize}
\end{proposition}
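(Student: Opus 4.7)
The plan is to handle the three assertions in order, reducing each to a statement about monoids and their pointed modules, and using the characterization of flatness in terms of preservation of injections under tensor product.

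\medskip
\noindent\textbf{First bullet.} I would begin with the direction ``$\phi$ flat $\Rightarrow f = \Spec(\phi)$ flat''. This requires showing that if $B$ is flat over $A$, then for every $\fq\in\Spec(B)$, the localization $B_{\fq}$ is flat over $A_{\phi^{-1}(\fq)}$. The key fact is that localization of pointed modules is exact (i.e.\ preserves injections), which follows directly from the definition of $S^{-1}A$ as a quotient of $A\times S$. Then, given an injection $M\hookrightarrow N$ of pointed $A_{\phi^{-1}(\fq)}$-modules, one rewrites $B_{\fq}\otimes_{A_{\phi^{-1}(\fq)}}M$ as a localization of $B\otimes_A M$; flatness of $\phi$ gives the required injectivity.

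For the converse (the main obstacle), assume every stalk map is flat and take an injection $M\hookrightarrow N$ of pointed $A$-modules; suppose its image in $B\otimes_A N$ has a nontrivial element $x\otimes m$ in its kernel. The idea is to localize at a prime of $B$ where this element remains nontrivial. Concretely, one considers the support of $x\otimes m$ in $\Spec(B)$, picks a $\fq$ in that support, and passes to the local stalk; the stalk-level image of $x\otimes m$ lies in $B_{\fq}\otimes_{A_{\phi^{-1}(\fq)}} M_{\phi^{-1}(\fq)}$, and by hypothesis injects into the corresponding tensor product with $N_{\phi^{-1}(\fq)}$, a contradiction. This step is the delicate one because for monoids one must verify that the support of a nonzero element is nonempty and behaves as in the ring case; this uses the fact that $M\setminus M^{\times}$ is always a prime ideal (so $\Spec$ is nonempty) and that localization commutes with tensor products of pointed modules.

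\medskip
\noindent\textbf{Second bullet.} Given $f:X\to Y$ and $g:Y\to Z$ flat, for each $x\in X$ the stalk map $(g\circ f)^\#_x$ factors as $f^\#_x\circ g^\#_{f(x)}$. So it suffices to show that the composition of flat monoid morphisms $\phi:A\to B$ and $\psi:B\to C$ is flat as an $A$-morphism. Given an injection $M\hookrightarrow N$ of pointed $A$-modules, flatness of $\phi$ yields an injection $B\otimes_A M\hookrightarrow B\otimes_A N$ of pointed $B$-modules, and flatness of $\psi$ then yields an injection $C\otimes_B(B\otimes_A M)\hookrightarrow C\otimes_B(B\otimes_A N)$. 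Using the associativity isomorphism $C\otimes_B(B\otimes_A -)\cong C\otimes_A-$ for pointed tensor products (which is immediate from the definition via generators and relations), one concludes.

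\medskip
\noindent\textbf{Third bullet.} For base change, in view of the first bullet it suffices to prove the monoidal version: if $\phi:A\to B$ is flat and $\psi:A\to A'$ is arbitrary, then the induced $A'\to B\otimes_A A'$ is flat. For a pointed $A'$-module injection $M\hookrightarrow N$, the projection formula $(B\otimes_A A')\otimes_{A'} M\cong B\otimes_A M$ (again, routine from the definition of pointed tensor product) reduces the claim to flatness of $\phi$, which gives the desired injection $B\otimes_A M\hookrightarrow B\otimes_A N$.

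\medskip
The hardest point is the ``flat stalks $\Rightarrow$ flat monoid morphism'' direction in the first bullet, which requires the support argument and verifying that localization of pointed modules commutes with tensor products; the other implications are formal consequences of the definition of flatness together with standard associativity and base-change isomorphisms for pointed tensor products, which transfer from the ring setting with only notational changes.
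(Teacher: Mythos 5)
A preliminary remark: the paper itself gives no proof of this proposition; it is imported verbatim from \cite{Deitmartoric}, so there is no in-text argument to compare yours against. Judged on its own terms, your sketch of the second and third bullets is correct and is the expected argument: the associativity isomorphism $C\otimes_B(B\otimes_A M)\cong C\otimes_A M$ and the base-change isomorphism $(B\otimes_A A')\otimes_{A'}M\cong B\otimes_A M$ do hold for pointed modules over monoids (they are checked on generators exactly as for rings), and the stalk map of a composite of scheme morphisms factors as you say. The forward direction of the first bullet (localization preserves injections, and $B_{\mathfrak{q}}\otimes M$ is a localization of $B\otimes_A M$) is also fine.

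The one step that would not go through as written is the converse in the first bullet. A pointed $A$-module has no additive structure, so failure of injectivity of $B\otimes_A M\to B\otimes_A N$ is \emph{not} witnessed by ``a nontrivial element $x\otimes m$ in the kernel'': it means either that a non-basepoint element is sent to the basepoint, or, more typically, that two distinct elements $b\otimes m$ and $b'\otimes m'$ become identified. Your support-and-localize argument must therefore be recast for a \emph{pair} of elements, and ``support'' itself needs a definition in this setting. In fact the monoid world lets you bypass all of this: every monoid $B$ has a unique maximal ideal $\mathfrak{m}_B=B\setminus B^{\times}$, this is a closed point of $\Spec(B)$, and the localization $B\to B_{\mathfrak{m}_B}$ is an isomorphism (this is the proposition from \cite{Deitmarschemes1} recalled in the section on spectra). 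Hence flatness of the scheme morphism at this single point already says that $B$ is flat over $A_{\fp}$ with $\fp=\phi^{-1}(\mathfrak{m}_B)$; since $M\mapsto M_{\fp}$ preserves injections and $B\otimes_{A_{\fp}}M_{\fp}\cong B\otimes_A M$, flatness of $B$ over $A$ follows immediately, with no support argument and no choice of prime required. I would replace your delicate step by this observation.
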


\medskip
\subsection{Cohomology}

In \cite{Deitmartoric}, the author states that  (sheaf) cohomology over $\mathbb{F}_1$ is not defined. He provides an example of a topological space consisting of three points, endowed with sheaves of abelian groups $\mF$ and $\mG$, and a ``flip'' map, such that if the sheaves were defined over $\mathbb{F}_1$ (which they are), the flip also should be as such, which is not the case.
In that same example, there even are different injective resolutions which produce different cohomology groups.

This indicates that at least  on the level of cohomology, one needs a different approach.
We refer to Anton Deitmar's chapter in the present volume for (much) more on this matter.

\medskip
\subsection{Fibre products}

Let $S$ be a scheme over $\mathbb{F}_1$. Then the $\mD_0$-scheme $X$ is a {\em scheme over $S$}\index{$\mD_0$-scheme!over $S$} if there exists a morphism $X \longrightarrow S$. 

\begin{proposition}
Let $X$ and $Y$ be $\mD_0$-schemes over the $\mD_0$-scheme $S$. Then there exists a scheme $X \times_S Y$ over $S$, the {\em fibre product}\index{fibre product} of $X \longrightarrow S$ and $Y \longrightarrow S$,
unique up to $S$-isomorphism, and 
morphisms from $X \times_S Y$ to $X$ and $Y$ such that the diagram below is commutative: 

\begin{equation}
\begin{array}{ccc}
X \times_S Y &\longrightarrow &X\\
&&\\
\downarrow &\searrow &\downarrow \\
&&\\
Y & \longrightarrow &S
\end{array}
\end{equation}
and such that these morphisms induce a bijection 
\begin{equation}
\mathrm{Hom}_S(Z,X) \times \mathrm{Hom}_S(Z,Y) \longrightarrow \mathrm{Hom}_S(Z,X \times_S Y)
\end{equation}
for every scheme $Z$ over $S$. Moreover, the fibre product is compatible with extension to $\mathbb{Z}$ and the usual fibre product for schemes:

\begin{equation}
(X \times_S Y) \otimes_{\mathbb{F}_1} \mathbb{Z} \cong (X \otimes_{\mathbb{F}_1} \mathbb{Z}) \times_{S \otimes_{\mathbb{F}_1} \mathbb{Z}} (Y \otimes_{\mathbb{F}_1} \mathbb{Z}).
\end{equation}
\end{proposition}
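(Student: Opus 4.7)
The plan is to construct $X \times_S Y$ first in the affine setting as the spectrum of a suitable pushout of monoids, then to globalize by standard gluing, and finally to deduce compatibility with base change to $\Z$ from Deitmar's adjunction theorem. In the affine case, write $S = \Spec(C)$, $X = \Spec(A)$, $Y = \Spec(B)$, and let $\alpha : C \longrightarrow A$, $\beta : C \longrightarrow B$ be the monoid morphisms underlying the given $S$-structure (Propositions~\ref{propmor} and~\ref{isomloc}). I would define the relative tensor product $A \otimes_C B$ as the quotient of the monoid $A \times B$ by the smallest monoidal equivalence relation identifying $(\alpha(c)a,\, b)$ with $(a,\, \beta(c)b)$ for all $c \in C$, $a \in A$, $b \in B$. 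A direct verification shows that $A \otimes_C B$, endowed with the componentwise multiplication descended to equivalence classes, is the pushout of $A \xleftarrow{\alpha} C \xrightarrow{\beta} B$ in the category of abelian pointed unital monoids. Setting $X \times_S Y := \Spec(A \otimes_C B)$, the adjunction of Proposition~\ref{propmor} then translates the pushout property of $A \otimes_C B$ into the required universal property for local morphisms of affine $\mD_0$-schemes over $S$.

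For the general case I would proceed as in Grothendieck's construction for ordinary schemes. Cover $S$ by affine opens $\{S_k\}$, cover the preimages in $X$ and $Y$ by affine opens $\{U_{ki}\}$ and $\{V_{kj}\}$ mapping into $S_k$, and form the local fibre products $U_{ki} \times_{S_k} V_{kj}$ by the affine construction just described. The fact that localization of monoids commutes with the relative tensor product, which follows at once from the pushout property, furnishes transition isomorphisms on pairwise intersections; after checking the cocycle condition on triple overlaps, these affine pieces glue to a monoidal space $X \times_S Y$ equipped with canonical projections to $X$ and $Y$ over $S$. Uniqueness up to $S$-isomorphism is formal from the universal property. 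The base change compatibility
\begin{equation}
(X \times_S Y) \otimes_{\Fun} \Z \;\cong\; (X \otimes_{\Fun} \Z) \times_{S \otimes_{\Fun} \Z} (Y \otimes_{\Fun} \Z)
\end{equation}
reduces on affine charts to the identity $\Z[A \otimes_C B] \cong \Z[A] \otimes_{\Z[C]} \Z[B]$, which holds because $\mF(\cdot,\otimes_{\Fun}\Z)$, being a left adjoint by the theorem of Deitmar recalled above, preserves pushouts; the global statement then follows since $X \mapsto X_{\Z}$ is itself built by gluing the affine pieces.

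The step I expect to be the main obstacle is the gluing. Concretely, one has to verify that the localization of $A \otimes_C B$ at a prime ideal coincides, as a monoid, with the relative tensor product of the corresponding localizations of $A$ and $B$ over the appropriate localization of $C$, and only then can one assemble the cocycle condition on triple intersections of the chosen affine cover. Once this bookkeeping is dispatched, the remaining assertions of the proposition follow formally from the affine universal property and from the functoriality of the base change $\otimes_{\Fun}\Z$.
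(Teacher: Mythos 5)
Your construction is correct and is essentially the intended one: the paper states this proposition without proof (it is part of Deitmar's theory), and Deitmar's own argument proceeds exactly as you do, by realizing the affine fibre product as $\Spec(A\otimes_C B)$ for the pushout of pointed monoids, gluing via localization, and deducing the base-change compatibility from the fact that the left adjoint $\mF(\cdot,\otimes_{\Fun}\Z)$ carries monoid pushouts to tensor products of rings. The only point worth making explicit is that in the pointed (with-zero) setting your congruence automatically collapses all pairs with a zero coordinate to a single zero element, so the quotient of $A\times B$ really is the pushout in the category the paper works in.
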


\medskip
\subsection{\'{E}tale morphisms}

Let $A$ be a monoid, and $m_A = A \setminus A^{\times}$ its maximal ideal. Then a homomorphism $\phi: A \longrightarrow B$ is {\em local}\index{local!morphism} if and only if $m_A^{\phi} \subseteq m_B$. Such a local homomorphism is {\em unramified}\index{unramified} if both of the following conditions are satisfied
\begin{itemize}
\item[(i)]
$m_A^{\phi}B = m_B$, and
\item[(ii)]
$\phi$ injects $A^\times$ into $B^\times$, and $B/A^{\phi}$ is a finite strictly algebraic extension. 
\end{itemize}

If $\phi$ is unramified, then so are all localizations $\phi_{\fp}: A_{\fp^{\phi^{-1}}} \longrightarrow B_{\fp}$ for $\fp \in \Spec(B)$.

A morphism $f: X \longrightarrow Y$ of $\mD_0$-schemes is called {\em unramified}\index{unramified} if for every $x \in X$ the local morphism 
\begin{equation}
f^{\#}: \mO_{Y,f(x)} \longrightarrow \mO_{X,x} 
\end{equation}
is unramified. 

A morphism $f: X \longrightarrow Y$ of $\mD_0$-schemes is {\em locally} of {\em finite type}\index{morphism!locally of finite type} if every point $y \in Y$ has an open affine neighborhood $V =  \Spec(A)$ such that $f^{-1}(V)$ is a union of open affine $\Spec(B_i)$s with $B_i$ finitely generated as a monoid over $A$.
The morphism is then of {\em finite type}\index{morphism!of finite type} if for every point $y \in Y$ the number of $B_i$s can be chosen to be finite.
A morphism $f: X \longrightarrow Y$ of finite type is {\em \'{e}tale}\index{etale@\'{e}tale!morphism} if $f$ is flat and unramified. It is an {\em \'{e}tale covering}\index{etale@\'{e}tale!covering} if it is also finite.

\begin{theorem}[\cite{Deitmartoric}]
The \'{e}tale coverings of $\Spec(\mathbb{F}_1)$ are the morphisms of the form $\Spec(A) \longrightarrow \Spec(\mathbb{F}_1)$, where $A$ is a  finite cyclic group. 
\end{theorem}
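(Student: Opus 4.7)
The plan is to translate the \'{e}tale covering conditions directly on the monoid side and then to extract a cyclicity constraint. First I would observe that $\Spec(\F_1)$ consists of a single point, with maximal ideal $m_{\F_1} = \F_1 \setminus \F_1^\times$ which is trivial (either empty, or reduced to $\{0\}$ in the pointed setting). Hence any finite morphism $f\colon X \longrightarrow \Spec(\F_1)$ must have affine source, say $X = \Spec(B)$, and the problem reduces to characterising those finitely generated monoids $B$ for which the structural map $\phi\colon \F_1 \longrightarrow B$ is \'{e}tale.

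Next I would dispense with flatness: an $\F_1$-module is just a (pointed) set, and the functor $B \otimes_{\F_1}(\cdot)$ merely replaces the underlying set by $B$-indexed copies, so it is visibly exact on injections. Every monoid is therefore flat over $\F_1$, and the entire content of \'{e}taleness lies in the unramifiedness conditions. Condition (i), $m_{\F_1}^\phi B = m_B$, forces $m_B$ to be trivial, so that $B$ coincides with $B^\times$ (up to the absorbing element), i.e.\ $B$ is a group. The injectivity clause of condition (ii) is automatic since $\F_1^\times = \{1\}$, and its remaining content translates to the statement that $B$ is a finite abelian group in which, for every integer $n \ge 1$, the equation $x^n = 1$ admits at most $n$ solutions.

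The heart of the argument is then the classical fact that a finite abelian group $G$ satisfying this last counting property must be cyclic. I would argue via the primary decomposition: writing $G \cong \bigoplus_p G_p$ and each $p$-primary component as $G_p \cong \Z/p^{a_1}\Z \oplus \cdots \oplus \Z/p^{a_k}\Z$ with $a_1 \le \cdots \le a_k$, the $p^{a_1}$-torsion subgroup of $G_p$ has cardinality $p^{k a_1}$, which under the hypothesis forces $k = 1$; hence each $G_p$, and therefore $G$ itself, is cyclic. I expect this to be the only step with any real content. Once it is in place, the converse direction is essentially routine: for any finite cyclic group $A$ the morphism $\Spec(A) \longrightarrow \Spec(\F_1)$ is finite, flat by the preceding observation, and unramified since cyclicity delivers exactly the strict algebraicity clause.
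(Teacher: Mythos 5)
Your argument is correct, and it is in substance the expected proof; note that the chapter itself only states this theorem with a citation to \cite{Deitmartoric} and gives no proof, so there is nothing in the text to compare against line by line, but your unwinding matches the definitions the chapter does supply. Flatness over $\mathbb{F}_1$ is indeed vacuous, condition (i) of unramifiedness forces $m_B=\{0\}$, so $B=B^\times\cup\{0\}$ is a group with zero, finiteness makes $B^\times$ a finite abelian group, and strict algebraicity applied to $a=1$ yields exactly the counting condition ``$x^n=1$ has at most $n$ solutions''; your primary-decomposition argument that this forces cyclicity is fine, as is the routine verification of the converse. The one place you are slightly too quick is the opening reduction: a morphism of finite type onto the one-point scheme $\Spec(\mathbb{F}_1)$ has source a \emph{finite union} of open affines $\Spec(B_i)$, not an affine scheme outright. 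The clean way to close this is to run the unramifiedness condition at the closed point of each $\Spec(B_i)$, where by the localization proposition the local monoid is $B_i$ itself; this shows each $B_i$ is a group with zero, hence each $\Spec(B_i)$ is a single point, so $X$ is a finite discrete set and the connected coverings are precisely the affine ones $\Spec(\{0\}\cup C_n)$, which is the form in which the theorem is meant (consistent with the subsequent definition of simple connectedness via connected schemes).
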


\begin{theorem}[\cite{Deitmartoric}]
The scheme $\Spec(\overline{\mathbb{F}_1})$ has no nontrivial \'{e}tale coverings.
\end{theorem}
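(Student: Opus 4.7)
The plan is to reduce the statement to its affine content and then exploit the algebraic closedness of $\overline{\mathbb{F}_1} = \mu_{\infty}$ directly against the strict-algebraicity clause in the definition of \'{e}tale. Since $\Spec(\overline{\mathbb{F}_1})$ consists of a single point (its unique prime ideal being $\{0\}$, as $\overline{\mathbb{F}_1}$ is a group with absorbing zero), any morphism of finite type $f : X \longrightarrow \Spec(\overline{\mathbb{F}_1})$ is covered by finitely many affine opens $\Spec(B_i) \longrightarrow \Spec(\overline{\mathbb{F}_1})$, each inheriting the \'{e}tale property. So it suffices to show that every \'{e}tale monoidal morphism $\phi : \overline{\mathbb{F}_1} \longrightarrow B$ is an isomorphism; then each $B_i \cong \overline{\mathbb{F}_1}$, and the covering is trivial.

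So I would concentrate on an \'{e}tale homomorphism $\phi : A \longrightarrow B$ with $A := \overline{\mathbb{F}_1}$. Since $A^{\times} = A \setminus \{0\} = \mu_{\infty}$, we have $m_A = \{0\}$; the unramified condition $m_A^{\phi} B = m_B$ forces $m_B = \{0\}$, so $B \setminus \{0\} = B^{\times}$ as well. The remaining unramified conditions supply an inclusion $\mu_{\infty} = A^{\times} \hookrightarrow B^{\times}$ and the information that $B/A^{\phi}$ is a finite strictly algebraic extension.

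Next I would show $B^{\times} = \mu_{\infty}$. Pick $b \in B^{\times}$. Algebraicity provides $n \in \mathbb{N}$ with $a := b^n \in A$. Since $A$ is algebraically closed, there exists $c \in A$ with $c^n = a$. As $\mu_n \subseteq \mu_{\infty} = A$, the $n$ elements $\{c\zeta : \zeta \in \mu_n\}$ are pairwise distinct solutions of $x^n = a$ already sitting inside $A \subseteq B$. By strict algebraicity, the equation $x^n = a$ has at most $n$ solutions in $B$, so $b$ must coincide with some $c\zeta \in A$. Hence $B^{\times} \subseteq A^{\times}$, so $B = A$ and $\phi$ is an isomorphism. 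Feeding this back into the reduction step finishes the theorem.

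The main (small) obstacle is a bookkeeping one: one must match up the number of $n$-th roots that algebraic closedness of $\mu_{\infty}$ manufactures inside $A$ with the upper bound $n$ furnished by strict algebraicity. What makes this go through cleanly is the specific feature that $\mu_{\infty}$ contains $\mu_n$ for every $n$, so the \emph{full} bound in the strict-algebraicity axiom is already saturated by elements of $A$ itself; this is precisely the reason the \'{e}tale fundamental group collapses over the absolute algebraic closure, in perfect analogy with the classical situation over an algebraically closed field. A secondary care is needed to keep the zero element and the $(\cdot)^{\times}$-convention consistent throughout the unramifiedness argument, but once that is in place the reduction to the affine case is immediate because the base is a single point.
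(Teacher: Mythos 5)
Your argument is correct. The paper itself states this theorem without proof (it is quoted from Deitmar's paper on $\mathbb{F}_1$-schemes and toric varieties), and your reasoning is essentially the argument given there: reduce to the unramified condition at the closed point of an affine piece, note that $m_{\overline{\mathbb{F}_1}}=\{0\}$ forces $B$ to be a group with zero, and then play the algebraic closedness of $\mu_\infty$ (which already supplies $n$ distinct $n$-th roots of any $a\in\mu_\infty$ inside the base) against the ``at most $n$ solutions'' clause of strict algebraicity to conclude $B^\times\subseteq\mu_\infty$ and hence $B\cong\overline{\mathbb{F}_1}$. The one point worth making explicit is that the unramifiedness hypothesis is imposed on the stalk maps $\mO_{Y,f(x)}\to\mO_{X,x}$; applying it at the closed point $m_B=B\setminus B^\times$, where $B_{m_B}\cong B$, recovers exactly the morphism $\overline{\mathbb{F}_1}\to B$ to which you apply the definition, so your reduction is legitimate.
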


A connected scheme over $\mathbb{F}_1$ which has only the trivial \'{e}tale covering is said to be  {\em simply connected}\index{simply connected}.

\medskip
\subsection{Toric varieties}
\label{Dtoric}

A {\em toric variety}\index{toric variety} is an irreducible variety $V$ over the field of complex numbers $\mathbb{C}$ together with an algebraic action of the $r$-dimensional torus
$\GL_1^r$, such that $V$ contains an open orbit. Every toric variety is the lift $X_{\mathbb{C}}$ of a $\mD_0$-scheme $X$. 

The next theorem, which is due to Deitmar, obtains the converse; essentially it shows that integral $\mD_0$-schemes of finite type are the same as toric varieties.

\begin{theorem}[\cite{Deitmartoric}]
Let $X$ be a connected integral $\mD_0$-scheme of finite type. Then every irreducible component of $X_\mathbb{C}$ is a toric variety. The components of 
$X_{\mathbb{C}}$ are mutually isomorphic as toric varieties.
\end{theorem}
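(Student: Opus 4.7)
The plan is to reduce the assertion to an affine local computation where the structure theorem for finitely generated abelian groups turns everything into classical toric geometry. First I would cover $X$ by affine opens $U_i = \Spec(A_i)$ with each $A_i$ finitely generated and integral, and set $A_i^{\ast} := A_i \setminus \{0\}$; integrality ensures $A_i^{\ast}$ is a cancellative abelian monoid that embeds into its Grothendieck group $M_i := \mathrm{Quot}(A_i^{\ast})$, a finitely generated abelian group. Connectedness of $X$ and the existence of a unique generic point $\eta$ force the localizations $(A_i)_{\eta}$ to be canonically identified with the common stalk $\mO_{X,\eta}$, so the groups $M_i$ coincide with one and the same group $M = \mathrm{Quot}(\mO_{X,\eta})$ that is a global invariant of $X$.

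Next I would analyse the affine case. By the structure theorem, write $M \cong \Z^r \oplus T$ with $T$ a finite abelian group. Because $\mathbb{C}$ contains all roots of unity, one has
\begin{equation}
\mathbb{C}[M] \;\cong\; \mathbb{C}[\Z^r]\otimes_{\mathbb{C}}\mathbb{C}[T] \;\cong\; \prod_{\chi \in \widehat{T}} \mathbb{C}[x_1^{\pm 1},\ldots,x_r^{\pm 1}],
\end{equation}
sending a basis element $e_{(v,t)}$ to $(\chi(t)\,x^v)_{\chi}$. Letting $\pi : M \twoheadrightarrow \Z^r$ denote the projection, the image of $\mathbb{C}[A_i^{\ast}]$ in each factor is the monoid algebra $\mathbb{C}[\pi(A_i^{\ast})]$ of a finitely generated, cancellative, torsion-free submonoid of $\Z^r$; this is the coordinate ring of an affine toric variety containing the torus $\GL_1^r = \Spec(\mathbb{C}[\Z^r])$ as a dense open orbit. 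Hence $(U_i)_{\mathbb{C}}$ has exactly $|T|$ irreducible components, each isomorphic to $\Spec(\mathbb{C}[\pi(A_i^{\ast})])$ as a toric variety.

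For the global conclusion I would introduce the character-group action of $\widehat{T}$ on $\mathbb{C}[M]$ given by $\chi_0 \cdot e_{(v,t)} = \chi_0(t)\,e_{(v,t)}$; multiplicativity of characters makes this a ring automorphism, and since it merely rescales basis elements it preserves each subalgebra $\mathbb{C}[A_i^{\ast}]$. Under the product decomposition above this action corresponds to translation by $\chi_0$ on the indexing set $\widehat{T}$, so it permutes the $|T|$ components of every $(U_i)_{\mathbb{C}}$ freely and transitively, and does so through toric isomorphisms because it commutes with the torus $\GL_1^r$ sitting in each component. Because $M$ is a global invariant, this action extends consistently to all of $X_{\mathbb{C}}$, whence all irreducible components of $X_{\mathbb{C}}$ are pairwise isomorphic as toric varieties.

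The main obstacle, I expect, will be verifying that the labelling of the $|T|$ components by characters is coherent across overlapping affine patches, so that one obtains a single global $\widehat{T}$-action on $X_{\mathbb{C}}$ rather than independent actions on the individual pieces. This is exactly where the connectedness hypothesis enters: it guarantees that $M$, and hence $\widehat{T}$, is a well-defined global invariant of $X$, and that the restriction maps induced by the inclusions $U_i \cap U_j \hookrightarrow U_i, U_j$ are $\widehat{T}$-equivariant, so the local irreducible decompositions glue into a coherent global one.
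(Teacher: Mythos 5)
The paper itself states this theorem without proof, quoting it from Deitmar's \cite{Deitmartoric}, and your argument is correct and reproduces essentially the proof given there: pass to the common quotient group $M=\mathrm{Quot}(\mO_{X,\eta})$ at the unique generic point, split $M\cong\Z^r\oplus T$, use $\mathbb{C}[T]\cong\mathbb{C}^{\vert T\vert}$ to identify the irreducible components of each affine patch $\Spec(\mathbb{C}[A_i^\ast])$ with the toric varieties $\Spec(\mathbb{C}[\pi(A_i^\ast)])$, and let $\widehat{T}$ permute these components simply transitively by torus-equivariant automorphisms, the connectedness hypothesis making the labelling by characters coherent across patches. I see no gaps.
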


This means that Deitmar's base extension to $\mathbb{Z}$ is basically too tight: the addition which appears after applying the functor $\mF(\cdot,\otimes_{\mathbb{F}_1}\mathbb{Z})$ leads us to toric varieties, and we want to have a much bigger set of schemes at hand after base extension. 
Later in this chapter, we will define the category of {\em $\Upsilon$-schemes}\index{$\Upsilon$-scheme} to fulfil this desire; we also refer 
to Lorscheid's chapter for his theory of ``blueprints''\index{blueprint}.



\newpage
\section{Fundamental examples | affine and projective spaces}

 \subsection{$\Spec(\mathbb{F}_1)$ | The absolute point}
 
  The spectrum of $\Spec(\mathbb{F}_1)$ consists of precisely one point, namely, the 
unique prime ideal $\{0\}$, which coincides with the unique closed point. The stalk at $\{0\}$ is equal to $\mathbb{F}_1$. 
The base 
$\mD_0$-extension to $\mathbb{Z}$ is $\Spec(\mathbb{Z})$. The $\mD_0$-scheme $\Spec(\mathbb{F}_1)$ is a terminal object in the 
category of $\mD_0$-schemes.

\medskip
\subsection{Polynomial rings}

Define
\begin{equation}
\mathbb{F}_1[X_1,\ldots,X_n] = \{0\} \cup \{ X_1^{u_1}\ldots X_n^{u_n} \vert u_j \in \mathbb{N}\}, 
\end{equation}\index{$\mathbb{F}_1[X_1,\ldots,X_n]$}
that is, the union of $\{0\}$ and the (abelian) monoid generated by the $X_j$. In other words, 
\begin{equation}
\mathbb{F}_1[X_1,\ldots,X_n]  = \mathbb{F}_1[\langle X_1,\ldots,X_n\rangle_{\mathrm{ab}}],
\end{equation}
where $\langle X_1,\ldots,X_n\rangle_{\mathrm{ab}}$ is the free abelian group generated by the letters $X_1,\ldots,X_n$.

\medskip
\subsection{Groups}

Let $G$ be a group and put
$A = \mathbb{F}_1[G] := \{0\} \cup G$.
Then $\Spec(A)$ 
consists of the unique prime ideal $\{0\}$ of $A$ and the stalk at $\{0\}$ is $A$.  
Base $\mD_0$-extension to $\mathbb{Z}$ is 
\begin{equation}
\Spec(A)_\mathbb{Z} = \Spec(\mathbb{Z}[G]). 
\end{equation}

In particular, if $G$ is a free abelian group on $n$ generators $X_1,\ldots,X_n$, then 
\begin{equation}
A = \mathbb{F}_1[X_1^{\pm 1},\ldots,X_n^{\pm 1}],\ \ A_\mathbb{Z} = \mathbb{Z}[X_1^{\pm 1},\ldots,X_n^{\pm 1}],  
\end{equation}
and thus $\Spec(A)_\mathbb{Z} \cong \mathbb{G}^n_m$. 
So in accordance, in this case we will denote $\Spec(A)$ by $\mathbb{G}^n_{m\vert\mathbb{F}_1}$\index{$\mathbb{G}^n_{m\vert\mathbb{F}_1}$}. (If $n = 1$, we omit the supscript.) \\

\medskip
\subsection{Affine space}

Let $A=\Fun[X_1,\dotsc,X_n]$; then $A_\Z = \Z[X_1,\dotsc,X_n]$ and thus $\Spec(A)_\Z \equiv \A^n$. Denote $\Spec(\mathrm{\Fun[X_1,\dotsc,X_n]})$ by $\A_{\Fun}^n$ and call it the \emph{$n$-dimensional affine space over $\Fun$}\index{affine!space over $\Fun$}. The $\ne (0)$ prime ideals of $A$ are of the form 

\begin{equation}
\fp_I=\bigcup_{i\in I}X_iA, 
\end{equation}
where $I$ is a subset of $\{1,\dotsc,n\}$ and $X_iA=\{X_ia\mid a\in A\}$. The stalk of the structure sheaf at $\fp_I$ is the localization of $A$ at the multiplicative set $S$ that contains all products of elements $X_j$ where $j\notin I$.\\

\medskip
\subsection{The absolute flag}

An example which deserves separate mention (certainly in a context of combinatorial geometry) is the {\em absolute flag}\index{absolute!flag} | the $\mD_0$-scheme $\Spec(\mathbb{F}_1[X])$; it consists of one closed point and one (different) generic point.

\bigskip
\begin{center}
\item
\begin{tikzpicture}[style=thick, scale=2]
\foreach \x in {1}{
\fill (\x,0) circle (2pt);}

\draw (1,0) -- (2,0);
\draw (1.5,.25) node {$(X)$};
\draw (1,.25) node {$(0)$};

\end{tikzpicture}
\item
The absolute flag
\end{center}
\hspace{0.5cm} 

At the $\mD_0$-level, all affine varieties will consist of a number of absolute flags (while the much more complex projective varieties will be built up out of 
projective lines over $\mathbb{F}_1$ | cf. the next paragraph, and the next section).\\

\medskip
\subsection{Proj-schemes}

In \cite{NotesI} we introduced the Proj-scheme construction for Deitmar schemes. We quickly repeat this procedure.

 \medskip
 \subsubsection{Monoid quotients}
 
 Let $M$ be a commutative unital monoid (with $0$), and $I$ an ideal of $M$. We define the monoidal quotient $M/I$ to be the set $\{ [m] \in M \vert m \in M \}/([m] = [0] \mbox{if}\ m \in I)$.
 (When $R$ is a commutative ring and $J$ an ideal, then the ring quotient $R/J$ induces the monoidal quotient on $R, \times$.)
 
 \subsubsection{The $\Proj$-construction}
 
 Consider the $\mathbb{F}_1$-ring $\mathbb{F}_1[X_0,X_1,\ldots,X_m]$, where $m \in \mathbb{N}$. Since any polynomial is 
 homogeneous in this ring, we have a natural grading
 
 \begin{equation}
 \mathbb{F}_1[X_0,\ldots,X_m] = \bigoplus_{i \geq 0}R_i,
 \end{equation}
 where $R_i$ consists of the elements of $\mathbb{F}_1[X_0,X_1,\ldots,X_m]$ of total degree $i$, for $i \in \mathbb{N}$. 
 The {\em irrelevant ideal}\index{irrelevant ideal}\index{$\mathrm{Irr}$} is 
 
 \begin{equation}
\mathrm{Irr} = \{0\} \cup \bigoplus_{i \geq 1}R_i.
 \end{equation}
 
 Now $\Proj(\mathbb{F}_1[X_0,\ldots,X_m]) =: \Proj(\mathbb{F}_1[\mathbf{X}])$\index{$\Proj(\mathbb{F}_1[\mathbf{X}])$} consists, as a set, of the prime ideals of  $\mathbb{F}_1[X_0,X_1,\ldots,X_m]$
 which do not contain $\mathrm{Irr}$ (so only $\mathrm{Irr}$ is left out of the complete set of prime ideals). 
 The closed sets of the (Zariski) topology on this set are defined as usual: for any ideal $I$ of  $\mathbb{F}_1[X_0,X_1,\ldots,X_m]$, we define
 
 \begin{equation}
 V(I) := \{ \fp \vert \fp\in \Proj(\mathbb{F}_1[\mathbf{X})],\ \ I \subseteq \fp \},
 \end{equation}
 where $V(I) = \emptyset$ if $I = \mathrm{Irr}$ and $V(\{0\}) = \Proj(\mathbb{F}_1(\mathbf{X}))$,
 the open sets then being of the form
 
  \begin{equation}
 D(I) := \{ \fp \vert \fp\in \Proj(\mathbb{F}_1[\mathbf{X})],\ \ I \not\subseteq \fp \}.
 \end{equation}
 
 It is obvious that $\Proj(\mathbb{F}_1[\mathbf{X}])$ is a $\mD_0$-scheme.
 (The structure sheaf is described below in a more general setting.)
  Each ideal $(X_i)$ defines an open set $D((X_i))$ such that the restriction of the scheme to this set is isomorphic to $\Spec(\mathbb{F}_1[\mathbf{X}_{(i)}])$, where $\mathbf{X}_{(i)}$ is $X_0,X_1,\ldots,X_m$ with $X_i$ left out.\\

 Suppose $M$ is a commutative unital monoid (with $0$) with a  grading
 
 \begin{equation}
M = \bigoplus_{i \geq 0}M_i,
 \end{equation}
 where the $M_i$ are the sets with elements of total degree $i$, for $i \in \mathbb{N}$, and let, as above, the
{\em irrelevant ideal}\index{irrelevant ideal} be $\mathrm{Irr} = \{0\} \cup \bigoplus_{i \geq 1}M_i$\index{$\mathrm{Irr}$}. Define the topology $\Proj(M)$\index{$\Proj(M)$} as before (noting that homogeneous (prime) ideals are the same as ordinary monoidal (prime) ideals here). For an open 
$U$, define $\mO_M(U)$ as consisting of all functions
 \begin{equation}
f: U \longrightarrow \coprod_{\fp \in U}M_{(\fp)},
\end{equation}
where $M_{(\fp)}$ is the subset of $M_{\fp}$ of fractions of elements with the same degree, 
for which $f(\fp) \in M_{(\fp)}$ for each $\fp \in U$, and such that there exists a neighborhood $V$ of $\fp$ in $U$, and  elements $u, v \in M$, for which
$v \not\in \mathfrak{q}$ for every $\mathfrak{q} \in V$, and $f(\mathfrak{q}) = \frac{u}{v}$ in $M_{(\mathfrak{q})}$.

In this way we obtain a sheaf of $\mathbb{F}_1$-rings on $\Proj(M)$ making it a $\mD_0$-scheme.

\medskip
\subsection{Notes on dimension}

 We will need the definition of inifinite dimensional projective spaces (over $\mathbb{F}_1$) for later purposes. From the incidence geometrical point of view, these can be seen as complete graphs on a set $\Omega$, with $\vert \Omega\vert$ the required dimension, endowed with the natural  induced subspace structure. Here $\Omega$ need not be countable. We want to formally see this in schematic language, in the spirit of the previous paragraphs. The definition boils down to an Ind-scheme construction.\\
 
So let $\Omega$ be any infinite set of cardinality $\omega$; we define {\em projective space of dimension $\omega$}\index{projective space} to be 

\begin{equation}
\Proj(\mathbb{F}_1[X_i]_{i \in \Omega}).
\end{equation} 

Again,  we have a natural grading
 
 \begin{equation}
 \mathbb{F}_1[X_i]_{i \in \Omega} = \bigoplus_{i \geq 0}R_i,
 \end{equation}
 where $R_i$ consists of the elements of $\mathbb{F}_1[X_i]_{i \in \Omega}$ of total degree $i$, for $i \in \mathbb{N}$,
 and the irrelevant ideal is by definition
 
 \begin{equation}
\mathrm{Irr} = \bigoplus_{i \geq 1}R_i.
 \end{equation}
 
As a set, $\Proj(\mathbb{F}_1[X_i]_{i \in \Omega})$ consists of the prime ideals of  $\mathbb{F}_1[X_i]_{i \in \Omega}$
 which do not contain $\mathrm{Irr}$, and the closed/open sets are analogous to those described in the finite dimensional case.
 In $\mathbb{F}_1[X_i]_{i \in \Omega} = R$, prime ideals are ideals $\fp$ for which $R \setminus \fp$ are multiplicative, so they are of the form
 
 \begin{equation}
 \fp = \cup_{j \in J}RX_j
\end{equation}
with $J \subseteq \Omega$. In $\Proj(\mathbb{F}_1[X_i]_{i \in \Omega})$ we do not allow the prime ideal $R \setminus R^{\times} = \mathrm{Irr}$.

 The closed points of $\Proj(\mathbb{F}_1[\mathbf{X}])$ correspond to the next-to-maximal ideals which are of the form 
 \begin{equation}
 \cup_{i \in J}RX_i =: R(j),
 \end{equation}
 where $\vert I \setminus J \vert = 1$.
 More generally, linear subspaces can again be seen as follows. Let $S$ be any set of closed points, corresponding to the elements of the set $K \subseteq \Omega$. The ideal 
 
 \begin{equation}
 \cup_{i \not\in K}MX_i = D
 \end{equation}
 corresponds to a closed set $V(D)$ of the topology on which the coordinate ring is 
 
 \begin{equation}
 \mathbb{F}_1[X_i]_{i \in \Omega}/D,
 \end{equation}
 which can be identified with $\mathbb{F}_1[X_i]_{i \in I \setminus K}$. Now the induced scheme  
is the projective space over $\mathbb{F}_1$ of dimension $\vert K\vert$. 

\begin{remark}[On dimension]{\rm
Essentially, there is no problem with having $\vert I \setminus K\vert$ infinite: in infinite dimensional combinatorial projective space (over any field), the linear spaces of infinite dimension and infinite co-dimension can only be defined by an infinite number of linear equations over that field. From the point of Algebraic Geometry however, one only considers 
a finite number of linear equations in this context (due to the fact many basic properties are lost if one allows the more general approach). Still, in the $\F_1$ context,  one wants that the dimension of the space coincides with the number of closed points, so in the Algebraic Geometry way of defining closed linear subspaces, one could have an obstruction in the usual Krull dimension definition (as the supremum over finite chains of ideals) if the number of closed points  of the space is ``too large''. (No such problems arise when the number of closed points is countable.)
We propose to use the more general definition, and live with the exotic phenomena which will occur by the 
existence of linear subspaces of infinite dimension and infinite co-dimension.  }\end{remark}

  Note that we have assumed in the above approach of Proj-schemes, that polynomials have finite degree. On the other hand, it also makes sense to consider the notion of
 $\Proj(\mathbb{F}_1[X_i]_{i \in \P})$ where polynomials of infinite degree in $\mathbb{F}_1[X_i]_{i \in \P}$ are allowed; the situation corresponds to 
 infinite dimensional vector spaces, say of the form $\fK^{\Omega}$ with $\fK$ a field and $\Omega$ some infinite set, where vectors not necessarily have 
 a finite number of nonzero entries. One could define elements of $\mathbb{F}_1[X_i]_{i \in \P}$ as elements of the infinite cartesian product
 
 \begin{equation}
 \prod_{\nu \in \P}\chi^{\nu},
 \end{equation}
 where each $\chi^{\nu}$ is a copy of $\cup_{\epsilon \in \P}X_{\epsilon}$, and where we agree that two elements $(Y_{\ell})_{\P}$ and $(Y'_{\ell})_{\P}$ are the same if there exists some permutation $\sigma$ of $\P$ such that
 
 \begin{equation}
 (Y_{\sigma(\ell)})_{\ell \in \P} =  (Y'_{\ell})_{\ell \in \P}.
 \end{equation}
 
 (Formally, one denotes such an element by $\prod_{\mu \in \P}X^{e(\mu)}_{\mu}$, where $e(\mu)$ is the cardinality of the number of times $X_{\mu}$ occurs.)
 By this definition, the degree of a polynomial is then at most the cardinality of $\P$. 
 
 Of course the form of the prime ideals changes in this approach. For let $\fp$ be a prime ideal, and 
 take $\mu \in \P$. Then either $X_{\mu} \in \fp$ and so $RX_{\mu} \subseteq \fp$, or the countable cyclic  monoid $\langle X_{\mu}\rangle$ (which only contains elements with finite exponents) is in $R \setminus \fp$.
 Whence a set of the form
 
 \begin{equation}
 R \setminus \langle X_{\mu}\rangle
 \end{equation}
 is a prime ideal, while it contains polynomials $X_\mu^\omega$,  where $\omega \not\in \mathbb{N}$. 
 We have an injection
 
 \begin{equation}
 \psi: 2^{\P} \longrightarrow \Spec(R): J \longrightarrow R \setminus \langle X_j \vert j \in J\rangle,
 \end{equation}
 where for once $\Spec(R)$ denotes the prime spectrum of $R$. It easily follows that we have chains of prime ideals
 
 \begin{equation}
 (\fp_{\ell})_{\ell \in \Omega}\ \ \mathrm{such\ that}\ \ \mu < \mu' \Longrightarrow \fp_{\mu} \subset \fp_{\mu'}\ \ \mathrm{for}\ \ \mu, \mu' \in \Omega, 
 \end{equation}
 where $\mathrm{card}(\Omega) = \mathrm{card}(2^{\P})$.\\

 \begin{remark}[Krull dimension]{\rm
 We do not work with this definition of Proj-schemes since the Krull dimension is not equal to the cardinality of $\P$ anymore, and this is a feature we really want to enjoy in $\mathbb{F}_1$-theory (having Tits's definition of spherical $\mathbb{F}_1$-buildings in mind, cf. the first chapter).  In some sense, this notion of projective space (vector space) does not reflect the motivic nature of its $\mathbb{F}_1$/Weyl-theory.}
 \end{remark}

\newpage
\section{Loose graphs and $\mD_0$-schemes}

 Define a {\em loose graph}\index{loose!graph} (``L-graph'') to be a rank $2$ incidence geometry $(V,E,\I)$ with the additional property that each line is incident with at most two distinct points. 
 (In other words, it relaxes the definition of graphs, in that an edge can now also have one, or even no, point(s). Also, since we introduce loose graphs as incidence geometries, we do not allow loops, and the geometry is undirected.) In this section, we usually suppose that loose graphs are always connected unless otherwise mentioned | so isolated points/vertices or lines/edges do not exist.\\

 \subsection{Embedding theorem}
 
 Let $\Gamma = (V,E,\I)$ be a loose graph. We define a projective space $\P(\Gamma)$ over $\mathbb{F}_1$ as follows. 
 Let $E' \subseteq E$ be the set of ``loose edges''\index{loose!edge} | edges with only a single point. On each of these edges, we add a new point, as such creating a point set $V'$
 which is in bijective correspondence with $E'$.
Now  $\P(\Gamma)$ is the complete graph
 on the vertex set $V \cup V'$. 
 As such, we have an embedding of geometries
 
 \begin{equation}
  \psi: \Gamma \hookrightarrow \P(\Gamma) = \P,
 \end{equation}
 where we see $\P$ as the combinatorial projective space  over $\mathbb{F}_1$ of dimension $\vert V \vert + \vert V'\vert - 1$.
If $\Gamma$ is a graph, then $E' = 0$ and the dimension of $\P$ is $\vert V\vert - 1$.
 
 \begin{theorem}
  The following properties clearly hold.
 \begin{itemize}
 \item[DIM]
 $\P$ has minimal dimenson $\vert V \vert + \vert V' \vert - 1$ with respect to the embedding property (that is, there is no projective space over $\mathbb{F}_1$ of smaller dimension in which $\Gamma$ embeds).
 \item[AUT]
 Each automorphism of $\Gamma$ is faithfully induced by an automorphism of $\P$. (So that in particular, only the identity automorphism of $\P$ can fix any element of $E \cup V$.)
 \end{itemize}
 \end{theorem}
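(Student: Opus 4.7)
The statement asserts two largely combinatorial facts, and the slogan ``clearly hold'' suggests that the right plan is to unwind the definitions carefully rather than develop new machinery. The key observation to keep in mind is that a projective space over $\mathbb{F}_1$ of dimension $d$ is, combinatorially, the complete graph on $d+1$ vertices, its closed points being precisely those vertices.

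For \textbf{DIM}, my plan is to argue by counting closed points. Suppose $\Gamma$ embeds into some projective space $\Pi$ over $\mathbb{F}_1$. By the embedding axiom, the vertex set $V$ of $\Gamma$ injects into the closed points of $\Pi$. Every edge $e \in E'$ has only one endpoint in $V$, but in $\Pi$ (a complete graph on its closed points) every line has two distinct closed points; so the image of $e$ must pick up a second closed point of $\Pi$, which is not in the image of $V$ precisely because $e$ has no second endpoint in $\Gamma$. Distinct loose edges force distinct such new points, otherwise two loose edges would be incident to the same pair of endpoints in $\Pi$ and would have to coincide as lines. This yields an injection $V \sqcup V' \hookrightarrow$ (closed points of $\Pi$), so $\dim(\Pi) \geq |V| + |V'| - 1$. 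The construction of $\P(\Gamma)$ shows this bound is attained, giving minimality.

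For \textbf{AUT}, the plan is to define the induced map on automorphism groups explicitly. Given $\sigma \in \Aut(\Gamma)$, note that $\sigma$ maps $E'$ to $E'$, because the number of points incident with an edge is incidence-invariant. The bijection $E' \leftrightarrow V'$ transports $\sigma|_{E'}$ to a bijection $\sigma': V' \to V'$. Then $\sigma \sqcup \sigma'$ is a bijection of $V \sqcup V'$, and since $\P(\Gamma)$ is the complete graph on $V \sqcup V'$, any bijection of its vertex set is automatically an automorphism. Call this extension $\widehat{\sigma}$. A routine check shows $\sigma \mapsto \widehat{\sigma}$ is a group homomorphism $\Aut(\Gamma) \to \Aut(\P(\Gamma))$, and the restriction of $\widehat{\sigma}$ to $\Gamma$ is $\sigma$ by construction, so the map is injective (faithful).

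For the parenthetical remark, I would observe that if an automorphism $\tau$ of $\P(\Gamma)$ fixes every element of $V \cup E$ pointwise, then in particular each loose edge $e \in E'$ is fixed; since the unique ``new'' point $v_e \in V'$ lying on $e$ is determined as the unique vertex of $e$ not in $V$, it too is fixed by $\tau$. Thus $\tau$ fixes $V \sqcup V'$ pointwise, and since $\P(\Gamma)$ is a complete graph on that set, $\tau$ is the identity. The only subtle point worth double-checking is that loose edges in $\Gamma$ are not collapsed or identified in $\P(\Gamma)$ — i.e.\ that distinct loose edges acquire distinct new endpoints — which is immediate from the definition of $V'$ as being in bijection with $E'$, so no real obstacle arises.
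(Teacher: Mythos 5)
First, a remark on the comparison itself: the paper offers no argument for this theorem (it is asserted with ``the following properties clearly hold''), so your proposal is supplying a proof where the source gives none. Your treatment of AUT is correct and complete: the extension $\sigma \mapsto \widehat{\sigma}$ via the bijection $E' \leftrightarrow V'$, the observation that every permutation of the vertex set of a complete graph is an automorphism of the corresponding $\Fun$-projective space, and the derivation of the parenthetical claim from the fact that fixing $V$ and $E$ elementwise forces fixing $V'$ elementwise, are all sound.

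The DIM argument, however, has a genuine gap at the sentence ``distinct loose edges force distinct such new points, otherwise two loose edges would be incident to the same pair of endpoints in $\Pi$ and would have to coincide as lines.'' This justification only covers two loose edges hanging from the \emph{same} vertex. If $e_1, e_2 \in E'$ have distinct endpoints $v_1 \neq v_2$, their images in $\Pi$ could be $\{\psi(v_1),p\}$ and $\{\psi(v_2),p\}$ for one and the same new closed point $p$: these are distinct lines of $\Pi$, so injectivity on lines is not violated and incidence is still preserved and reflected, yet your injection of $V \sqcup V'$ into the closed points of $\Pi$ breaks down. This is not a hypothetical worry. Take $\Gamma$ with $V=\{a,b\}$, an edge $ab$, and one loose edge at each of $a$ and $b$, so that $|V|+|V'|-1=3$; sending $a,b$ to two vertices of the complete graph $K_3$ and the two loose edges to the two remaining edges of $K_3$ is an injective, incidence-preserving and incidence-reflecting map into the $\Fun$-projective plane, which has dimension $2<3$. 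Hence under the naive notion of embedding the minimality claim itself fails, and your counting argument cannot be completed as written. What is needed is to make the notion of ``embedding of geometries'' precise and strong enough: one must require that images of distinct lines of $\Gamma$ meet only in images of points of $\Gamma$ (equivalently, that no two image edges of $\Pi$ share a vertex outside $\psi(V)$). With that clause each loose edge contributes its own new closed point and your injection, and hence the bound $\dim(\Pi)\geq |V|+|V'|-1$, goes through; you should state this hypothesis explicitly, since it is exactly where the content of DIM lies.
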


 \subsection{Example: Projective completion}
 Note that if one starts with a combinatorial affine space $\A$ over $\mathbb{F}_1$, considered as a loose graph,
 $\P(\A)$ is precisely the projective completion of $\A$.\\

\subsection{Patching and the functor $\Theta$}
\label{inf}

Now let $\Gamma = (V,E,\I)$ be  a not necessarily finite graph. We will give a "dual patching" argument as follows.

Consider $\P = \P(\Gamma)$, and note that since $\Gamma$ is a graph, $\P \setminus \Gamma$ | when $\P$ is considered as a graph | is just a 
set $S$ of edges. Let $\mu$ be arbitrary in $S$, and let $z$ be one of the two (closed) points on $\mu$.
Suppose that in the projective space $\P = \Proj(\mathbb{F}_1[X_i]_{i \in V})$, $z$ is 
defined by the ideal generated by the polynomials

\begin{equation}
X_i,\ \ i \in V, i \ne j = j(z).
\end{equation}

Let $\P(z)$ be the complement in $\P$ of $z$; it is a hyperplane defined by $X_j = 0$ (and it forms a complete graph on all the points but $z$).
Denote the corresponding closed subset of $\Proj(\mathbb{F}_1[X_i]_{i \in V})$ by $C(z)$.
Let $z' \ne z$ be the other point of the edge $\mu$ corresponding to the index $j' = j(z') \in V$. Define the subset $\P(z') = \P \setminus \{z'\}$ of $V$, and denote the corresponding closed subset by $C(z')$. 
Finally, define

\begin{equation}
C({\mu}) = C(z) \cup C(z').
\end{equation}

It is also closed in $\Proj(\mathbb{F}_1[X_i]_{i \in V})$, and the corresponding closed subscheme is the projective space $\P$ ``without the edge $\mu$''; the coordinate ring is $\mathbb{F}_1{[X_i]}_{i \in V}/I_{\mu}$ (where $(X_jX_l) =: I_{\mu}$) and its scheme is the Proj-scheme defined by this ring.
Now introduce the closed subset

\begin{equation}
C(\Gamma) =  \cap_{\mu \in S}C(\mu).
\end{equation}

Then $C(\Gamma)$ defines a closed subscheme $S(\Gamma)$ which corresponds to the graph $\Gamma$.  We have 

\begin{equation}
S(\Gamma) = \Proj(\mathbb{F}_1{[X_i]}_{i \in V}/\cup_{\mu \in S}I_{\mu}).
\end{equation}

In this presentation, an edge corresponds to a relation, and we construct a coordinate ring for $\Theta(\Gamma) = S(\Gamma)$ by deleting all relations of the ambient space $\P(\Gamma)$ which  are defined by edges in the complement of $\Gamma$.
We call a $\mD_0$-scheme $S(\Gamma)$ constructed from a graph $\Gamma$ a {\em G-scheme}\index{G-scheme}. 

A similar construction can be done for loose graphs, cf. \S\S \ref{logr}.\\

\subsection{Automorphism groups}

The following theorem, using the notation of the introductory paragraph of this section, is easy to obtain.

\begin{theorem}
\label{WLGaut}
For any element $\Gamma \in \mathbb{G}$, we have that
 \begin{equation}
 \Aut(\Gamma) \cong \Aut(S(\Gamma)).
 \end{equation}
 \end{theorem}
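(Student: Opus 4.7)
The plan is to build mutually inverse group homomorphisms
\begin{equation*}
\Phi : \Aut(\Gamma) \longrightarrow \Aut(S(\Gamma)) \quad \text{and} \quad \Psi : \Aut(S(\Gamma)) \longrightarrow \Aut(\Gamma),
\end{equation*}
working directly with the concrete presentation $S(\Gamma) = \Proj(R/J)$, where $R := \Fun[X_i]_{i \in V}$ and $J := \bigcup_{\mu \in S} I_\mu$ encodes the non-edges of $\Gamma$ inside $\P(\Gamma)$ (so $I_\mu = (X_j X_l)$ whenever $\mu$ is the non-edge $\{j,l\}$ of $\Gamma$).

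For $\Phi$: given $\sigma \in \Aut(\Gamma)$, the vertex permutation extends to a graded monoid automorphism $\widetilde{\sigma}$ of $R$ by $X_i \mapsto X_{\sigma(i)}$. Since $\sigma$ permutes edges and non-edges of $\Gamma$ alike, one gets $\widetilde{\sigma}(J) = J$, so $\widetilde{\sigma}$ descends to a graded automorphism of $M := R/J$, and thus induces $\Phi(\sigma) \in \Aut(S(\Gamma))$ via the Proj-analogue of Proposition~\ref{propmor}(ii).

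For $\Psi$: I would first invoke the Proj-version of Proposition~\ref{isomloc}, so that any $\beta \in \Aut(S(\Gamma))$ comes from a grading-preserving monoid automorphism $\alpha$ of $M$. The key observation is that the irreducible non-unit elements of $M$ are precisely the classes $[X_i]$, $i \in V$: indeed $R$ has trivial unit group with exactly the $X_i$ as irreducibles, and since $J$ is generated in degree $2$ and the nonzero monomials in $M$ are exactly those supported on cliques of $\Gamma$, no new irreducibles appear and the old ones survive. Hence $\alpha$ restricts to a permutation $\sigma$ of $V$. The condition $\alpha(J) = J$ then reads $[X_i][X_j] = 0$ in $M$ iff $[X_{\sigma(i)}][X_{\sigma(j)}] = 0$, i.e. $\{i,j\}$ is a non-edge of $\Gamma$ iff $\{\sigma(i),\sigma(j)\}$ is, so $\sigma \in \Aut(\Gamma)$; set $\Psi(\beta) := \sigma$.

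That $\Psi \circ \Phi = \id$ is immediate from tracing vertex images, and $\Phi \circ \Psi = \id$ follows because both automorphisms act identically on the irreducibles $[X_i]$, which generate $M$; here Corollary~\ref{corisomloc} guarantees that the induced point-level permutation determines the sheaf-level automorphism uniquely. The homomorphism property of $\Phi$ and $\Psi$ is automatic from composition on generators. The main obstacle will be the upgrade of Proposition~\ref{isomloc} from the affine to the Proj setting: one must check that $\beta$ compatibly permutes the canonical affine charts $D((X_i))$ of $S(\Gamma)$, so that the resulting local monoid isomorphisms glue to a bona fide graded automorphism of the global coordinate monoid $M$. The canonical identification of the $[X_i]$ as the intrinsic irreducibles of $M$ is precisely what makes this gluing unambiguous.
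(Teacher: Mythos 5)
The paper does not actually prove Theorem \ref{WLGaut}: it is asserted as "easy to obtain," and the proofs are deferred to \cite{NotesI}. So your argument cannot be compared against an in-text proof, only judged on its own merits — and on those merits it is sound and, in outline, complete. Your two maps are the right ones, and you have correctly isolated the only genuinely nontrivial step, namely the Proj-analogue of Proposition \ref{isomloc}. Your handling of that step is also correct in substance: each $D((X_i))$ is exactly the set of generizations of the closed point $\fp_{V\setminus\{i\}}$, hence is intrinsically determined by the topology, so any automorphism of the monoidal space permutes these charts; on each chart Proposition \ref{isomloc} applies, and the resulting local data reassemble into a permutation of $V$ because, as you observe, the classes $[X_i]$ are precisely the irreducible non-units of $M=R/J$ (here the triviality of $M^\times$ is doing all the work — this is the rigidity that has no analogue over a field, where $\Aut(\Proj)$ is much larger than the permutation group of the variables). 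The translation "$[X_i][X_j]=0$ in $M$ iff $\{i,j\}$ is a non-edge" then forces $\sigma\in\Aut(\Gamma)$, and Corollary \ref{corisomloc} disposes of the sheaf component once the point-level map is the identity. Two minor points worth tightening in a full write-up: (i) once you have an automorphism $\alpha$ of the quotient monoid $M$, the condition "$\alpha(J)=J$" is not a separate hypothesis but is subsumed in $\alpha(0)=0$; and (ii) in passing from the chart-level isomorphisms of the degree-zero localizations $M_{([X_i])}$ back to a single graded automorphism of $M$, you should say explicitly that the permutation of closed points read off from the topology is what pins down the images of the generators $[X_i]$ — your remark about intrinsic irreducibles is exactly the right tool, but it is the global monoid $M$, not the localizations, in which that identification takes place.
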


\subsection{Extension to loose graphs}
\label{logr}

Let $\Gamma = (V,E)$ be a connected loose graph. We distinguish three types:

\begin{itemize}
\item[type I]
graphs;
\item[type II]
complements of graphs $\Delta \subseteq C$ (where $C$ is some complete graph in which $\Delta$ is embedded); 
\item[type III]
loose graphs not of type I nor II.
\end{itemize}

If $\Gamma$ is of type I, we have seen how to associate a closed $\mD_0$-subscheme $S(\Gamma)$ of $\P(\Gamma)$ to $\Gamma$. If $\Gamma$ is of type II, then we define 
the $\mD_0$-scheme $S(\Gamma)$ naturally on the open set of $\P(\Gamma)$ which is the complement of the (closed) point set of the graph $\Gamma^c$ (the complement of $\Gamma$ in $\P(\Gamma)$). If $\Gamma$ is of type III, $S(\Gamma)$ is the $\mD_0$-scheme defined by the intersection of the closed subscheme defined on its graph theoretical completion $\overline{\Gamma} \ne \Gamma$, and the open set which is the complement of the complete graph defined on the vertices of $\overline{\Gamma} \setminus \Gamma$. As such we have:

\begin{proposition}
Each loose graph $\Gamma$ defines a $\mD_0$-scheme $S(\Gamma)$.
\end{proposition}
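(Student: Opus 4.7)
The proposition splits into three disjoint cases according to the tripartite classification of loose graphs, and my plan is to verify each case in turn that the recipe described just before the statement really produces a $\mD_0$-scheme.

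For type I there is nothing new to prove: the preceding paragraph already exhibits $S(\Gamma)$ as $\Proj\bigl(\mathbb{F}_1[X_i]_{i\in V}/\cup_{\mu\in S}I_{\mu}\bigr)$, i.e.\ as a Proj-scheme of a quotient of a graded monoid, which is a $\mD_0$-scheme by the general Proj-construction reviewed in the preceding section. For type II, $\Gamma$ arises as the complement in some complete graph $C$ of a subgraph $\Delta$, so $\P(\Gamma)$ is identified with $\P(C)$. The vertex set of $\Gamma^c$ is a finite collection of closed points of the $\mD_0$-scheme $\P(\Gamma)$, and removing it yields an open subscheme, which inherits a natural $\mD_0$-scheme structure from the restriction of the structure sheaf.

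For type III, I would first form the canonical graph-theoretical completion $\overline{\Gamma}$ by adjoining the missing endpoints on each loose edge; this completion is uniquely determined by $\Gamma$, since loose edges carry a fixed set of unattached half-edges. Now $\overline{\Gamma}$ is of type I, so $S(\overline{\Gamma})$ is already a closed $\mD_0$-subscheme of $\P(\overline{\Gamma})=\P(\Gamma)$ by the type I case. The added vertices $W=\overline{\Gamma}\setminus\Gamma$ span a complete subgraph of $\P(\overline{\Gamma})$, corresponding to a closed subset of the ambient scheme; intersecting $S(\overline{\Gamma})$ with the open complement of this closed subset produces a locally closed subscheme, which by two applications of the open/closed subscheme principle is again a $\mD_0$-scheme.

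The main obstacle, such as it is, is the implicit use of the fact that open and closed subschemes of $\mD_0$-schemes are themselves $\mD_0$-schemes. This reduces to a local verification on affine charts: closed subschemes correspond to quotients of monoids by ideals, while open subschemes are covered by basic opens corresponding to localizations at single elements, and both operations stay inside the category of monoids. Hence the resulting monoidal space is locally a spectrum of a monoid, which is the definition of a $\mD_0$-scheme. Once this principle is available (or simply checked directly for the explicit quotients and localizations appearing here), the three cases conclude by inspection, and the well-definedness of $S(\Gamma)$ follows because each loose graph falls into exactly one type and in each type the construction depends only on $\Gamma$ itself.
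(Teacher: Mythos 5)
Your case-by-case argument is exactly the construction the paper itself gives (type I as a closed $\Proj$-subscheme of $\P(\Gamma)$, type II as the open complement of the point set of $\Gamma^c$, type III as the locally closed intersection coming from the completion $\overline{\Gamma}$); the paper states the proposition as an immediate consequence of that description and defers the details to \cite{NotesI}. Your supplementary verification that closed and open pieces of a $\mD_0$-scheme are again locally spectra of monoids (via monoid quotients and localizations on affine charts) is precisely the supporting lemma the sketch implicitly relies on, so this is the same proof, just made explicit.
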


Denote the category of loose (undirected, loopless)
graphs and natural morphisms by $\mathbb{LG}$\index{$\mathbb{LG}$}.
The following theorem is obtained in a similar way as Theorem \ref{WLGaut}.

\begin{theorem}
\label{WLGaut2}
For any element $\Gamma \in \mathbb{LG}$, we have that
 \begin{equation}
 \Aut(\Gamma) \cong \Aut(S(\Gamma)).
 \end{equation}
 \end{theorem}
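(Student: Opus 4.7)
The plan is to mirror the proof of Theorem \ref{WLGaut}, handling each of the three types of loose graph by constructing mutually inverse homomorphisms between $\Aut(\Gamma)$ and $\Aut(S(\Gamma))$.

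First I would construct the forward map $\Aut(\Gamma) \longrightarrow \Aut(S(\Gamma))$: any $\phi \in \Aut(\Gamma)$ extends by the \textbf{AUT} property of the embedding theorem to an automorphism $\widetilde\phi$ of the ambient projective space $\P(\Gamma)$, acting as a permutation of the coordinate variables $X_i$, $i \in V(\P(\Gamma))$. For type I graphs, this permutation stabilises the family of ideals $I_\mu$ indexed by the edges of $\Gamma^c$, and so $\widetilde\phi$ descends to the closed subscheme $S(\Gamma)$ as in \S\S\ref{inf}. For type II, $\widetilde\phi$ stabilises the closed point set associated with $\Gamma^c$, hence restricts to an automorphism of its open complement $S(\Gamma)$. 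For type III, $\widetilde\phi$ stabilises both the closed subscheme attached to the completion $\overline{\Gamma}$ and the closed set attached to the vertices of $\overline{\Gamma}\setminus\Gamma$, and therefore induces an automorphism of their scheme-theoretic intersection.

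Next I would build the reverse map $\Aut(S(\Gamma)) \longrightarrow \Aut(\Gamma)$. Given $\Psi = (f,f^\#) \in \Aut(S(\Gamma))$, Propositions \ref{propmor} and \ref{isomloc} identify, on each affine open piece, local scheme automorphisms with monoid automorphisms of the corresponding coordinate monoid; gluing, $f$ permutes the closed points of $S(\Gamma)$, which correspond bijectively to the vertices of $\Gamma$ (respectively, to the vertices of $\overline{\Gamma}$ that survive in the type III construction). The edge structure is then recovered intrinsically: an edge $\{v,w\}$ of $\Gamma$ corresponds to the non-vanishing of $X_vX_w$ in the appropriate coordinate ring, equivalent to the presence of an irreducible one-dimensional closed subscheme through the two closed points. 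Since $f$ must preserve such subschemes and their incidence pattern, the induced permutation of vertices is an element of $\Aut(\Gamma)$.

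The hard part will be the type III case, where $S(\Gamma)$ is neither purely closed nor purely open in $\P(\Gamma)$ but arises from intersecting a closed subscheme with an open set, so one must rule out exotic symmetries created by this mixed construction. Here I would invoke Corollary \ref{corisomloc}: once the action of $f$ on closed points is matched with an element of $\Aut(\Gamma)$ via the argument above, the sheaf component $f^\#$ is forced to be the induced one, so no spurious automorphism can arise from the sheaf side. Verifying that the two constructed maps are mutually inverse then reduces to tracking their effect on vertices and edges of $\Gamma$, which is a direct combinatorial check extending the one performed in Theorem \ref{WLGaut}.
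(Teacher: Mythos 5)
Your plan is sound and follows exactly the route the paper itself indicates: the text gives no proof of Theorem \ref{WLGaut2} beyond asserting that it "is obtained in a similar way as Theorem \ref{WLGaut}" and deferring details to \cite{NotesI}, and your use of the embedding theorem's AUT property for the forward map, the closed-point/vertex and edge/one-dimensional-subscheme correspondences for the reverse map, and Corollary \ref{corisomloc} to exclude nontrivial sheaf components is precisely that argument carried out for the three types of loose graph. I see no gap beyond routine verification (e.g., recovering loose edges, which meet only one closed point, from the generators of the local monoid at that point).
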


\medskip
\subsection{Connectedness}

Elements of the category of loose schemes have many important properties which can easily be read off from the corresponding  loose graph | recall for instance 
Theorem \ref{WLGaut}. Another one is:

\begin{theorem}
A  loose scheme $S(\Gamma)$ is connected if and only if the  loose graph $\Gamma$ is connected.
\end{theorem}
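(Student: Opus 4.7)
The plan is to describe $S(\Gamma)$ explicitly in terms of prime ideals indexed by cliques of $\Gamma$, and to use the dictionary between edges of $\Gamma$ and specialization relations between closed points of $S(\Gamma)$. I will treat the three types of loose graphs in turn, with type~I being the crux.

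First I would set up the type I case. For a graph $\Gamma=(V,E)$, with $R=\mathbb{F}_1[X_i]_{i\in V}$ and $I$ the ideal generated by $\{X_jX_l:\{j,l\}\notin E\}$, we have $S(\Gamma)=\Proj(R/I)$. The prime ideals of $R$ are precisely $\fp_J=\bigcup_{j\in J}RX_j$ for $J\subseteq V$, and $\fp_J$ contains $I$ if and only if $V\setminus J$ is a clique of $\Gamma$. Hence points of $S(\Gamma)$ correspond bijectively to nonempty cliques of $\Gamma$; closed points correspond to vertices of $\Gamma$ (singleton cliques); and for each edge $\{u,v\}\in E$, the prime $\fp_{V\setminus\{u,v\}}$ specializes to the two closed points associated with $u$ and $v$.

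For the implication ``$\Gamma$ connected $\Rightarrow S(\Gamma)$ connected'', suppose $S(\Gamma)=U\sqcup W$ is a decomposition into disjoint closed subsets. For each edge $\{u,v\}$ of $\Gamma$ the associated prime $\fp_{V\setminus\{u,v\}}$ lies in one of the two pieces, and since each piece is closed it contains its closure, forcing both closed points corresponding to $u$ and $v$ into the same piece. Iterating along a path in $\Gamma$, connectedness of $\Gamma$ implies all vertices (and thus all closed points of $S(\Gamma)$) lie in a common piece, say $U$. Finally any point $\fp_J\in S(\Gamma)$ specializes to the closed point for any vertex of the clique $V\setminus J$, so $\fp_J\in U$ as well, and $W=\emptyset$. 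For the converse, suppose $\Gamma=\Gamma_1\sqcup\Gamma_2$ with vertex sets $V_1,V_2$. Since $\Gamma$ has no edge joining $V_1$ to $V_2$, every clique of $\Gamma$ lies entirely inside some $V_i$; setting
\begin{equation}
A_i \ = \ \bigcap_{j\in V_{3-i}}V(X_j)\cap S(\Gamma)\qquad (i=1,2),
\end{equation}
we obtain two nonempty closed subsets of $S(\Gamma)$ (each contains the closed point of any vertex in the corresponding side) whose union is $S(\Gamma)$ and whose intersection would require some $\fp_J$ with $V\setminus J\subseteq V_1\cap V_2=\emptyset$, impossible. Hence $S(\Gamma)$ is disconnected.

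To finish, I would transport the argument through the remaining types. For type II, $S(\Gamma)$ is an open subscheme of a type~I scheme, and both the clique-point dictionary and the edge specializations restrict cleanly, so the argument above goes through without change. For type III, one first passes to the graph-theoretical completion $\overline{\Gamma}$ and forms $S(\overline{\Gamma})$ by the type~I recipe, then restricts to the open complement of the virtual vertex locus; since every vertex and every edge of $\Gamma$ already appears in $\overline{\Gamma}$ and has $V\setminus J$ contained in the vertex set of $\Gamma$, the specializations used above survive the restriction, and the sets $A_1,A_2$ remain nonempty after intersecting with the open subscheme. The main obstacle I anticipate is precisely this bookkeeping in the type~III case: one must verify that the open restriction neither kills the closed points in each $A_i$ nor removes the intermediate primes $\fp_{V\setminus\{u,v\}}$ needed to connect closed points along edges of $\Gamma$. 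Both verifications reduce to the observation that cliques and edges of $\Gamma$ involve only genuine vertices and therefore lie safely inside the restricted open subscheme.
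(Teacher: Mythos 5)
The paper does not actually prove this statement in the text you were given: it states the theorem and defers to the preprint \cite{NotesI} (``Proofs and more details can be found in \cite{NotesI}''), so there is no in-paper argument to compare yours against line by line. Judged on its own terms, your argument is correct and is the natural one. The dictionary you set up for type I is right: primes of $\mathbb{F}_1[X_i]_{i\in V}$ are exactly the $\fp_J=\bigcup_{j\in J}RX_j$, and $\fp_J\supseteq I$ precisely when $V\setminus J$ is a clique, so points of $S(\Gamma)$ are the nonempty cliques, closed points are the vertices, and $\fp_J\subseteq\fp_{J'}$ iff $J\subseteq J'$ gives the specialization relations you use. Both directions then go through: closedness of the pieces of a putative disconnection forces the two endpoints of each edge into the same piece, and every point specializes to some closed point; conversely your sets $A_1,A_2$ are closed, nonempty, disjoint (no point of $\Proj$ has empty associated clique) and cover $S(\Gamma)$ because a clique cannot straddle two components.

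Two small caveats. First, in the converse you should note explicitly that both $V_1$ and $V_2$ are nonempty and contain a vertex (the paper's standing convention that components are not isolated edges matters here: an edge with \emph{no} endpoints would, under the type III recipe, contribute nothing to $S(\Gamma)$ after the open restriction, and the equivalence would degenerate). Second, your treatment of types II and III is necessarily a little informal because the paper's own definitions of $S(\Gamma)$ in those cases are loosely specified; but your key observation --- that every clique surviving the open restriction contains a genuine vertex of $\Gamma$, so the closed points and the intermediate primes $\fp_{V\setminus\{u,v\}}$ needed for the specialization chains all persist --- is exactly the right bookkeeping, and the loose edges (having at most one endpoint) are irrelevant to connectivity on both the graph side and the scheme side. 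I would accept this as a complete proof modulo those definitional ambiguities, which are the paper's rather than yours.
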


Proofs and more details can be found in \cite{NotesI}.

\bigskip
\begin{remark}[Weighted incidence geometries]
{\rm
One could go a step further and associate a $\mD_0$-scheme to a {\em weighted incidence geometry}\index{weighted incidence geometry} (that is, an incidence geometry coming with a weight function on the point set)  in a similar way as one does for  loose graphs (of course, one should do this by by-passing the embedding theorem). As such, all $\mD_0$-schemes could be constructed
from a combinatorial geometry, and they could be studied through these geometries. 
}
\end{remark}

\newpage
\section{Another approach | $\Upsilon$-schemes}

In this section we present yet another approach to $\Fun$-schemes, taken from the sketch in \cite{NotesI}.

\subsection{$\F_1$-Descent}

Let $R$ be any commutative ring with unit, and let $G = \{g_i \vert i \in I\}$ be a minimal generating set. Define a surjective 
homomorphism
\begin{equation}
\Phi: \mathbb{Z}{[X_i]}_I \longrightarrow R: X_j \longrightarrow g_j\ \ \forall j \in I, 
\end{equation}
so that $R \cong \mathbb{Z}{[X_i]}_I/J$ with $J$ the kernel of $\Phi$. 
For an element $P$ of $J$, write $P(1)$ for the set of ``$\mathbb{F}_1$-polynomials'' defined by $P$; if 
\begin{equation}
P = \sum_{i = 0}^kk_iX_0^{n_{i0}}\ldots X_m^{n_{im}}, 
\end{equation}
then 
\begin{equation}
P(1) := \{ X_0^{n_{i0}}\ldots X_m^{n_{im}}  \vert i = 0,\ldots,k\}.
\end{equation}

Then $\Spec(\mathbb{F}_1[X_0,\ldots,X_m]/\langle P(1) \vert P \in J \rangle) =: \underline{Y}$ is an {\em $\mathbb{F}_1$-descent}\index{$\Fun$-descent} of the affine scheme $\Spec(R) =: Y$, and we write

\begin{equation}
Y \leadsto \underline{Y} \ \ \mbox{or}\ \ Y  \overset{\delta}{\leadsto} \underline{Y}.
\end{equation}

We also use the same notation for the opposite operation between rings
(by mapping $\mathbb{Z}{[X_i]}_I$ to 
$\mathbb{F}_1{[X_i]}_I$ and $J$ to $\{ P(1) \vert P \in J\}$. For the sake of convenience, define $J(1) := \{P(1) \vert P \in J\}$\index{$J(1)$}.
Associate to a commutative unital ring $R$ the set $\Omega(R) := \{ (G,J) \vert \langle G \rangle \overset{\mbox{min}}{=} R, R \cong \Z[X_i]_{i \in G}/J\}$ (= 
the category of minimal generating sets of $R$, together with explicit kernels of the natural morphism $\phi: \Z[X_i]_{i \in G} \mapsto R: X_g \mapsto g$).

\begin{remark}
{\rm Note that the operation ``$\leadsto$'' is very different than the forgetful functor which Deitmar applies to descend from $\mathbf{Gro}$ to $\mD_0$.}
\end{remark}

The category $\underline{\mathbb{Y}}$ is defined by having as objects all $\mathbb{F}_1$-descents, and the morphisms consist of $\mD_0$-scheme morphisms $f: A \longrightarrow B$ making the diagram below commute

\begin{equation}
\begin{array}{ccc}
Y  &\rightarrow  &A \\
&&\\
 &\rosetlt &\downarrow{f} \\
&&\\
& &B
\end{array}
\end{equation}

Note that the diagrams

\begin{equation}
\Spec(k[X_0,\ldots,X_n]) \rightarrow \Spec(\mathbb{F}_1[X_0,\ldots,X_n]), \ \ \mbox{and}
\end{equation}

\begin{equation}
\Proj(k[X_0,\ldots,X_n]) \rightarrow \Proj(\mathbb{F}_1[X_0,\ldots,X_n])
\end{equation}
are unique and in accordance with the theory seen so far. In fact, using the analogy between the integers and polynomial rings, we als could write

\begin{equation}
\mathbb{Z} \rightarrow \mathbb{F}_1.
\end{equation}

It is clear why these diagrams are unique | the more relations are needed to describe a commutative ring inside the polynomial ring corresponding to a given generating set (that is, the more ``complex'' the kernel ideal), the more $\mathbb{Z}$-extensions exist and the ``less natural'' the presentation.



\medskip
\subsection{Geometric interpretation}

Note that the choice of representing a commutative ring by a generating set is not canonical (with respect to $\mathbb{F}_1$-descent) {\em whatsoever}:
consider for instance the two rings 
\begin{equation}
\mathbb{Z}[X_1,\ldots,X_n]/(X_1)\ \ \mbox{and}\ \ \mathbb{Z}[X_1,\ldots,X_n]/(X_1 - X_2)
\end{equation}
with $n \in \mathbb{N} \setminus \{0,1\}$;
they are isomorphic (by $X_1 \longrightarrow X_1 - X_2$ and $X_i \longrightarrow X_i$ for $i \ne 1$) and give the same $\mathbb{Z}$-schemes, 
but have nonisomorphic $\mathbb{F}_1$-descents (affine spaces $\Spec(\mathbb{F}_1[X_3,\ldots,X_n])$ and $\Spec(\mathbb{F}_1[X_2,\ldots,X_n])$) in the theory sketched above. This  is simply because from below one cannot see addition. Although Krull dimension is not preserved in this viewpoint, this 
is not so important since we are looking at a category which represents all $\mathbb{F}_1$-pieces of a variety relative to its generating sets.
In fact, what we do when descending to $\mathbb{F}_1$, for instance for a projective $\mathbb{Z}$-variety, is intersecting it with the canonical base, as points 
over $\mathbb{F}_1$ can have at most one nonzero coordinate. So isomorphic varieties over $\mathbb{Z}$ can be nonisomorphic over $\mathbb{F}_1$ (which
is also true when considering varieties over fields in comparison to $\mathbb{Z}$). (The intersection of $\mathbb{Z}$-varieties with the canonical base
could be compared with considering the fixed points of an ``absolute Frobenius map''.)

\medskip
\subsection{The category $\overline{\mathbb{X}}$}
\label{UX}

For each $X \in \mD_0$ we also consider the category $\overline{\mathbb{X}} = \mathbf{C}_X$ with objects

\begin{equation}
\overline{X} \in \mathbf{Gro}\ \ \mbox{for}\ \mbox{which}\ \ \overline{X} \leadsto X,
\end{equation}
and where the morphisms are those scheme theoretical morphisms commuting with the $\delta$-map | in other words, 
for $Y, Z \in \overline{\mathbb{X}}$, we have that
$\mathrm{Hom}_{\overline{\mathbb{X}}}(Y,Z)$ consists of $\mathbb{Z}$-scheme morphisms $\alpha: Y \longrightarrow Z$ for which the 
following diagram commutes:

\begin{equation}
\begin{array}{ccc}
Y  &\rightarrow  &Z \\
&&\\
\rosotlt &\roswtlt & \\
&&\\
X& &
\end{array}
\end{equation}


\medskip
\subsection{Ideals and prime ideals}
 
Let $R \cong \mathbb{Z}{[X_i]}_I/U$ be a commutative ring, and let $A = \delta(\mathbb{Z}{[X_i]}_I/U)$, $A = \mathbb{F}_1{[X_i]}_I/V$. 
Then if $e \cup V$ is an element of $\mathbb{F}_1{[X_i]}_I/V$, we have that the inverse image through $\delta$ is the element
\begin{equation}
\mathbb{Z}[e \cup V] + U \in \mathbb{Z}{[X_i]}_I/U.
\end{equation}

Note that ideals in $R$ are mapped by $\delta$ to ideals of $A$, and that vice versa, inverse images of ideals in $A$ are again ideals in $R$.

Let $P$ be a prime ideal in $A$; then 
\begin{equation}
\delta^{-1}(P) = \mathbb{Z}[P \cup V]/U. 
\end{equation}

Let $f + U, g + U \not\in \delta^{-1}(P)$; then some terms of $f$ and $g$, say for example
$f_* (+ U)$ and $g_* (+ U)$ are not in   $\mathbb{Z}[P \cup V]/U$. So $\delta(f_* + U)$ and $\delta(g_* + U)$ are not in $P$, implying that 
$\mathbb{Z}f_*g_* \not\subseteq \delta^{-1}(P)$. Whence $fg + U$ is not in $\delta^{-1}(P)$, and so
the latter is a prime ideal in $R$.

The converse is not true; consider for instance $R \cong \mathbb{Z}[X,Y]$ and the prime ideal $(X^2 + Y)$. After applying $\delta$, we get 
$\mathbb{F}_1[X,Y]$ and the ideal $(X^2,Y)$ which is not prime anymore. (Another interesting example is $\mathbb{Z}[X]$ with $(X^2 + 1)$.)
 
In the particular case of $P$ being the maximal ideal $(\mathbb{F}_1{[X_i]}_I/V) \setminus \{1 \cup V\}$, we have that $\delta^{-1}(P) = 
 (\mathbb{Z}{[X_i]}_I \setminus \mathbb{Z}^\times)/U$. Over a field $k$, this becomes $(k{[X_i]}_I \setminus k^\times)/U$. 
 Summarizing, we obtain the following proposition.
 
 \begin{proposition}
 \begin{itemize}
 \item[{\rm (i)}]
 We have that $\delta$ and $\delta^{-1}$ preserve ideals, $\delta^{-1}$ sends prime ideals of $\mathbb{F}_1$-rings to prime ideals of 
 commutative rings, but the converse is not true. 
 \item[{\rm (ii)}]
 As a corollary, we have that $\delta$ induces a continuous function between the Zariski 
 topologies of commutative rings and their $\mathbb{F}_1$-descents.
 \item[{\rm (iii)}]
 For all fields, the unique closed point of an affine $\mD_0$-scheme $S$ corresponds to a closed point of any element of $\delta^{-1}(S)$.
 \end{itemize}
 \end{proposition}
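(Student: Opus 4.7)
The plan is to handle the three items in sequence, leveraging the formula $\delta^{-1}(P) = \Z[P\cup V]/U$ recorded just above the statement.

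For (i), ideal preservation in both directions is essentially formal: if $\fa \subseteq R = \Z[X_i]_{i\in I}/U$ is an ideal, then $\delta(\fa)$ consists (modulo $V$) of the monomials appearing in elements of $\fa$ and is stable under monomial multiplication because $\fa$ is stable under polynomial multiplication; conversely, $\delta^{-1}(P) = \Z[P\cup V]/U$ is manifestly closed under $\Z$-linear combinations and under multiplication by any monomial, hence is an ideal. For primality of $\delta^{-1}(P)$ I would follow the contrapositive already sketched: if $f+U, g+U \not\in \delta^{-1}(P)$, then since $\delta^{-1}(P)$ is closed under addition, some monomial terms $f_*, g_*$ of $f, g$ have cosets outside $\delta^{-1}(P)$, so their $\delta$-images lie outside $P$, and primality of $P$ in the monoidal sense forces $f_* g_* + V \not\in P$; thus a monomial of $fg$ lies outside $\delta^{-1}(P)$ and $fg + U \not\in \delta^{-1}(P)$. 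The failure of the converse is witnessed by $(X^2+Y) \subseteq \Z[X,Y]$, whose image $(X^2, Y)$ is not prime in $\Fun[X,Y]$.

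For (ii), by (i) the assignment $P \mapsto \delta^{-1}(P)$ yields a well-defined map $\Spec(A) \to \Spec(R)$ on prime spectra. Continuity reduces to verifying that the preimage of a basic closed set $V(\fa) = \{\fp \in \Spec(R) : \fa \subseteq \fp\}$ is closed; by the adjunction-style equivalence $\fa \subseteq \delta^{-1}(P) \iff \delta(\fa) \subseteq P$ (which holds because every monomial appearing in an element of $\Z[P\cup V]/U$ lies in $P \cup V$), this preimage equals $V(\delta(\fa))$, and closedness follows. For (iii), the unique maximal ideal of $A$ is $P_0 = A \setminus A^{\times}$ by Deitmar's local-monoid proposition recalled earlier; applying the formula for $\delta^{-1}$, the calculation in the paragraph preceding the statement shows that over a field $k$ this pulls back to the ideal $(k[X_i]_{i\in I} \setminus k^{\times})/U$, i.e.\ the ideal generated by the $X_i$ in $k[X_i]_{i\in I}/U$. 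Its residue ring is $k$, so it is maximal and therefore corresponds to a closed point.

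The main obstacle I anticipate lies in (i): multiplying polynomials over $\Z$ can cause monomial cancellation, so one must explain carefully why the particular product $f_* g_*$ actually survives as a nonzero term of $fg$ modulo $U$. A rigorous implementation would either choose $f_*, g_*$ to have extremal multi-degree (for instance, lexicographically maximal among monomials whose $\delta$-images avoid $P$) so that no other cross-product contributes the same multi-degree, or reformulate the entire argument in terms of the underlying set of monomials passing through $\delta$ and bypass tracking integer coefficients altogether.
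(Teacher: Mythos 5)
Your argument coincides with the paper's: the same formula $\delta^{-1}(P)=\mathbb{Z}[P\cup V]/U$, the same contrapositive via terms $f_*,g_*$ for primality of $\delta^{-1}(P)$, the same counterexample $(X^2+Y)$, and the same computation of $\delta^{-1}$ of the maximal ideal over a field for (iii); parts (ii) and (iii) are only asserted in the paper, and your short justifications (preimage of $V(\fa)$ equals $V(\delta(\fa))$; residue ring $k$) are exactly what is needed. The cancellation issue you flag is real and is in fact glossed over in the paper's own text; your proposed repair works: writing $f=f_0+f_1$ and $g=g_0+g_1$ with $f_0,g_0$ the parts supported on monomials outside $P$, the part of $fg$ supported outside $P$ is exactly $f_0g_0\neq 0$ (or, equivalently, taking $f_*,g_*$ lex-maximal among such terms, no other cross-product can contribute to the monomial $\delta(f_*)\delta(g_*)$, which lies outside $P$ by primality of $P$). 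The only remaining looseness, shared with the paper, is the identification of cosets modulo $U$ with particular polynomial representatives.
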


\medskip
\begin{remark}[Comparison with Deitmar]
{\rm
Let $A$ and $R$ be represented as earlier. Then $\delta^{-1}(V) = \mathbb{Z}[V] + U$, so that $U \subseteq \delta^{-1}(V)$. In Deitmar's first version 
of scalar extension, $U := \mathbb{Z}[V]$ by definition, so is 
\begin{equation}
\Spec(R) = \Spec(A)_{\mathbb{Z}} 
\end{equation}
($\Spec(A)_{\mathbb{Z}} = \Spec(\mathbb{Z}[A]/\mathbb{Z}[V])$).
This endowes $\mathbb{Z}$-lifts of 
$\mD_0$-schemes with a toric structure (all elements of $V$ are lifted to the same monics, so no extra relations are introduced while lifting $V$ to an ideal in $\mathbb{Z}{[X_i]}_I$), while in the version described above, the structure of $U$ endowes the scheme with a nontrivial gluing, governed
by the relations generated by $U$.
}
\end{remark}

\medskip
\subsection{Base extension to $\mathbb{Z}$, $\mathbb{F}_1$-schemes revisited}

For an affine $\mD_0$-scheme $S$, $X = \mathbf{C}_S$ can be seen as the category of $\mathbb{Z}$-schemes which arise by imposing all possible
additions on the $\mathbb{F}_1$-ring of $S$. 

An element $(\overline{X},X)$ in $\mathbf{Gro} \times \mD_0$ for which $\overline{X} \in \overline{\mathbb{X}}$ could be called an {\em $\mathbb{F}_1$-scheme}\index{$\Fun$-scheme};
\begin{equation}
\overline{X} = (\overline{X},X) \otimes_{\mathbb{F}_1} \mathbb{Z}
\end{equation} 
is its base change to $\mathbb{Z}$.

\begin{remark}{\rm
Note that Deitmar's $X_\mathbb{Z}$ is in $\overline{\mathbb{X}}$.}
\end{remark}

\medskip
Observe the following.

\begin{proposition}
\label{mingenmon}
Let $M = \mathbb{F}_1{[X_i]}_I$, and let $J$ be an ideal in $M$. Then there is a unique set $S \subseteq J$ such that $J = \cup_{s \in S}Ms$, and such that
any other set $S'$ with this property contains $S$ (in other words, $S = \cap_{\langle S' \rangle = J}S')$. We write $S = S(J)$ to denote this set.
\end{proposition}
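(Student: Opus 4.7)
The plan is to exhibit $S$ explicitly as the set of divisibility-minimal nonzero elements of $J$, and then verify the two required properties (generating $J$, and being contained in every other such generating set) directly from the monomial structure of $M$. The key combinatorial input is that $M = \mathbb{F}_1[X_i]_I$ consists of $0$ together with monomials $X_{i_1}^{a_1} \cdots X_{i_k}^{a_k}$ of finite support; each nonzero $m \in M$ therefore has only finitely many divisors in $M$ (at most $\prod_j(a_j+1)$), so the divisibility preorder on $M \setminus \{0\}$ is well-founded and every nonempty subset admits minimal elements.

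I would set
\[
S \;:=\; \{\, s \in J \setminus \{0\} : \text{if } t \in J \text{ and } t \mid s \text{ in } M,\ \text{then } t = s\,\},
\]
with the degenerate case $J = \{0\}$ handled by $S = \emptyset$ (noting $0 \in Ms$ for any $s$). To see $J = \bigcup_{s \in S} Ms$, the inclusion $\supseteq$ is automatic since $S \subseteq J$ and $J$ is an ideal; for $\subseteq$, given a nonzero $x \in J$, the nonempty finite set of divisors of $x$ lying in $J$ has a minimal element $s$, which by construction lies in $S$ and satisfies $x \in Ms$.

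For the minimality property, suppose $S' \subseteq J$ also satisfies $J = \bigcup_{s' \in S'} Ms'$. Each $s \in S$ can then be written $s = ms'$ for some $s' \in S'$; since $s' \in J$ divides $s$, the defining property of $S$ forces $s' = s$, so $s \in S'$ and hence $S \subseteq S'$. Uniqueness of $S$ follows formally: any two sets with the stated minimality property must contain each other. The identification $S = \bigcap_{\langle S' \rangle = J} S'$ stated in the proposition is then immediate, since $S$ itself is one of the $S'$ appearing in the intersection on the right.

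The main obstacle is only bookkeeping: because $I$ may be infinite, one might worry that the set of divisors of an element is infinite, but each element of $M$ involves only finitely many of the $X_i$, reducing divisibility of monomials to componentwise inequality of finite exponent tuples and restoring well-foundedness. I do not expect any conceptual difficulty beyond this.
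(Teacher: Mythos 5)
Your proof is correct, and it takes a slightly different (and in one respect more complete) route than the paper's. The paper's proof simply begins ``let $S$ be a minimal generating set of $J$'' and then runs the same divisibility argument you use for the containment $S \subseteq S'$: writing $s = ms'$ and $s' = m's''$ with $s, s'' \in S$ gives $s = mm's''$, which minimality forces to collapse to $m = m' = 1$. What the paper does \emph{not} do is justify that a minimal generating set exists in the first place --- this is not automatic for an arbitrary monoid, and it is exactly the point your argument supplies: you define $S$ outright as the set of divisibility-minimal nonzero elements of $J$ and use the fact that each monomial has only finitely many divisors (well-foundedness of the divisibility order on $M \setminus \{0\}$) to show this $S$ really generates $J$. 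So your write-up fills a genuine gap in the published proof, at the cost of a little more bookkeeping. The only blemish is the degenerate case $J = \{0\}$: with $S = \emptyset$ the union $\bigcup_{s \in S} Ms$ is empty rather than $\{0\}$, so there you should take $S = \{0\}$ instead; this does not affect anything else.
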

 
\begin{proof}
Let $S$ be a minimal generating set of $J$ (that is, no subset of $S$ generates $J$), and let $S'$ be another generating set. Let $s \in S$; then there is an $m \in M$
and $s' \in S'$ such that $s = ms'$. On the other hand, there is an $s'' \in S$ and $m' \in M$ such that $s' = m's''$, or $s = mm's''$. By minimality this is only possible
when $m' = m'' = 1$.
\end{proof}

 Let $A = \Fun[X_i]_{I}/V$ and $R = \Z[X_i]_I/U$ be as before. 
Let  $S = S(V)$. Then $U$ is generated by polynomials $\{ f_j \}_J$ such that $\cup_Jf_j(1) = S$.  So $\mathbf{C}_V$ can be canonically described
{\em relative to $S$}. So given an affine $\mD_0$-scheme $A = \Spec(A)$, the objects in $\mathbf{C}_S$ are defined as the $\Spec(R)$ for which there is a 
representation $R \cong  \Z[X_i]_I/U$ such that $A \cong \Fun[X_i]_{I}/V$ with $U(1) = V$.\\

 \medskip
\subsection{General $\mathbb{F}_1$-descent, etc}

Let $X$ be a $\mathbb{Z}$-scheme, and let $X = \cup_{i \in I}X_i$ be an open cover of $X$, with $X_i = \Spec(A_i)$ and $A_i$ a commutative ring.
Then we say that $\underline{X}$ is a {\em base descent}\index{base descent} to $\mathbb{F}_1$ of $X$, and write 
\begin{equation}
X  \leadsto \underline{X}
\end{equation}
as before, if $\underline{X} \cong \cup_{i \in I}\underline{\Spec(A_i)}$, where 
\begin{equation}
\Spec(A_j)  \rightarrow \underline{\Spec(A_j)}
\end{equation}
for all $j \in J$, and where the $\delta$  (that is, the representations of the rings $A_i$) are chosen such that the 
gluing is well defined.\\

Base ascent, the categories $\overline{\mathbb{X}}$  and $\underline{\mathbb{X}}$, etc. are defined similarly.\\

\medskip
\subsection{$\Upsilon$-Schemes}

It might be handy to consider a class of objects which has a richer structure than the ones we encountered till now in the present
section. Instead of merely working with objects $(\overline{Y},Y,\leadsto)$, we also could consider triples 
\begin{equation}
(\mathbf{C}_Y,Y,\leadsto)
\end{equation}
in which the first argument consists of {\em all} $\mathbb{Z}$-schemes which descend to $Y$, and study all these objects at once. 
We call all these instances of $\Fun$-schemes ``$\Upsilon$-schemes''\index{$\Upsilon$-scheme}.

Two natural questions arise:
\begin{itemize}
\item[(a)]
What is the zeta function of an object $(\mathbf{C}_X,X,\leadsto)$?
\item[(b)]
Is there a zeta function associated to $\Upsilon$-schemes $(\mathbf{C}_X,X,\leadsto)$?
\end{itemize}

In the next section, we will make 
a conjecture on the zeta function of these $\Upsilon$-schemes which gives crucial information about a possible answer to (a).\\

\newpage
\section{Zeta functions and absolute zeta functions}

Various $\Fun$-objects are defined through their ($\Fun$-) zeta functions. We review some parts of this theory in
the present section.\\

\subsection{Arithmetic zeta functions}

Let $X$ be a scheme of finite type over $\mathbb{Z}$ | a {\em $\mathbb{Z}$-variety}\index{$\Z$-variety}. (So $X$ has a finite covering of affine $\mathbb{Z}$-schemes $\Spec(A_i)$ with the $A_i$
finitely generated over $\mathbb{Z}$.) 
Recall that if $\widetilde{X}$ is an $\F$-scheme, $\F$ a field, a point $x \in \widetilde{X}$ is {\em $\F$-rational}\index{$\F$-rational} if 
the natural morphism 
\begin{equation}
\F \hookrightarrow k(x)
\end{equation}
is an isomorphism.
A morphism
\begin{equation}
\Spec(\F) \longrightarrow \widetilde{X}
\end{equation}
is completely determined by the choice of a point $x \in \widetilde{X}$ and a field extension $\F/k(x)$ (so once one has such a field extension, the morphism
is constructed by sending the unique point of $\Spec(\F)$ to $x$). Whence the set of $\F$-rational points of $\widetilde{X}$ can be identified with 
\begin{equation}
\mathrm{Hom}(\Spec(\F),\widetilde{X}).
\end{equation}
(If $\widetilde{X} \cong \Spec(A)$ is affine, $A$ being a commutative ring, one also has the identification with $\mathrm{Hom}(A,\F)$.)

\begin{proposition}[Closed and rational points]
\label{proparze}
\begin{itemize}
\item[{\rm (1)}]
A point $x$ of $X$ is closed if and only if its residue field $k(x)$ is finite. (Note that $\vert k(x) \vert = \mathrm{dim}(\overline{\{x\}})$ as a closed subscheme.)
\item[{\rm (2)}]
Let $k = \overline{k}$ be algebraically closed, and let $\widetilde{X} \longrightarrow \Spec(k)$ be a $k$-scheme which is locally of finite type. Then a point $x$ is closed if and only if  it is $k$-rational.
\item[{\rm (3)}]
More generally, let $\mathbb{F}$ be any field. Then a point $x$ of the $\F$-scheme $\widetilde{X} \longrightarrow \Spec(\F)$, which is again assumed to be locally of finite type, is closed if and only if the field extension $k(x)/\F$ is finite. A closed point is $\F$-rational if and only if $k(x) = \F$.
\end{itemize}
\end{proposition}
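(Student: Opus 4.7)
The plan is to reduce each of the three statements to the affine case and then apply the appropriate form of the Nullstellensatz (Zariski's lemma). In every case, closedness of a point is a local property, so I would first choose an affine open $U = \Spec(A) \ni x$ with $A$ finitely generated as an algebra over the relevant base ($\mathbb{Z}$ for (1), $\mathbb{F}$ for (2)--(3)). Then $x$ is closed in the ambient scheme iff the corresponding prime $\fp \subset A$ is maximal, while $k(x) = \mathrm{Frac}(A/\fp)$, which equals $A/\fp$ as soon as $\fp$ is maximal. This reduction makes all three claims purely algebraic.

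For (3), if $\fp$ is maximal then $k(x) = A/\fp$ is a field that is simultaneously a finitely generated $\mathbb{F}$-algebra, so Zariski's lemma forces $k(x)/\mathbb{F}$ to be finite; conversely, if $k(x)/\mathbb{F}$ is finite, then $A/\fp$ is a finite-dimensional integral domain over $\mathbb{F}$, hence a field, so $\fp$ is maximal. The rationality statement is tautological, since ``$\mathbb{F}$-rational'' just means $\mathbb{F} \hookrightarrow k(x)$ is an isomorphism, i.e.\ $k(x) = \mathbb{F}$. Part (2) then falls out of (3): over $k = \overline{k}$, any finite extension of $k$ coincides with $k$, so ``closed'' and ``$k$-rational'' are equivalent.

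Part (1) is handled by the same reduction, but now with $A$ a finitely generated $\mathbb{Z}$-algebra. The easy direction is again immediate: a finite integral domain $A/\fp$ is a field, so $\fp$ is maximal. The hard part, and the main obstacle, is the converse, which amounts to the arithmetic Nullstellensatz: a field $L$ that is finitely generated as a $\mathbb{Z}$-algebra must be finite. If $\mathrm{char}(L) = p > 0$, then $\mathbb{F}_p \subseteq L$ and Zariski's lemma applied over $\mathbb{F}_p$ finishes the argument. The delicate point is ruling out $\mathrm{char}(L) = 0$: in that case $\mathbb{Q} \subset L$, and Zariski's lemma over $\mathbb{Q}$ (using the same finite generating set) forces $L$ to be a number field $K$; but any finitely generated $\mathbb{Z}$-subalgebra of $K$ lies inside some localization $\mathcal{O}_K[1/N]$ and therefore still has infinitely many non-invertible prime ideals, contradicting the assumption that $L = \mathbb{Z}[x_1, \ldots, x_n]$ is a field. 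This characteristic-zero elimination is the only ingredient beyond the classical Nullstellensatz needed in the whole argument.
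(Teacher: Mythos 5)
The paper itself gives no proof of this proposition: it is quoted as standard background from scheme theory, so there is nothing of the author's to compare your argument against. Your proof is the standard one and is correct in all essentials: reduce to an affine chart $\Spec(A)$ with $A$ of finite type over the base, apply Zariski's lemma for (2) and (3), deduce (2) from (3) since a finite extension of an algebraically closed field is trivial, note that the rationality clause is true by the paper's own definition of $\F$-rational, and for (1) supply the arithmetic Nullstellensatz, ruling out characteristic zero exactly as you do: a field finitely generated as a $\Z$-algebra would be a number field $K$ sitting inside some $\mathcal{O}_K[1/N]$, which is absurd because $\mathcal{O}_K[1/N]$ still has nonzero maximal ideals (above the infinitely many primes not dividing $N$).

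One sentence deserves tightening: closedness of a point is \emph{not} a local property in an arbitrary scheme (the generic point of $\Spec(\Z_{(p)})$ is closed in the affine open $\Spec(\Q)$ but not in the whole space), so "this reduction makes all three claims purely algebraic" is too quick as stated. What rescues the reduction is that the condition you actually verify --- finiteness of $k(x)$, resp.\ finiteness of $k(x)/\F$ --- is intrinsic to $x$ and therefore forces the corresponding prime to be maximal in \emph{every} affine open of a cover containing $x$; intersecting $\overline{\{x\}}$ with each chart then gives $\overline{\{x\}}=\{x\}$ globally. The reverse implication (closed in $X$ implies closed in any affine open containing $x$) is the trivial one. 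With that one observation made explicit your argument is complete. (You are also right to pass over the parenthetical "$\vert k(x)\vert = \mathrm{dim}(\overline{\{x\}})$" in the statement, which as printed cannot be what is meant, since the closed subscheme $\overline{\{x\}}$ has dimension $0$.)
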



Assume again that $X$ is an arithmetic scheme. 
Let $\overline{X}$ be the ``atomization''\index{atomization} of $X$; it is the set of closed points, equipped  with the discrete topology and the sheaf of fields $\{k(x) \vert x\}$. For $x \in \overline{X}$, let $N(x)$ be the cardinality of the finite field $k(x)$, that is, the {\em norm}\index{norm} of $x$.
Define the {\em arithmetic zeta function}\index{arithmetic zeta function} $\zeta_X(s)$ as 
\begin{equation}
\zeta_X(s) := \prod_{x \in \overline{X}}\frac{1}{1 - N(x)^{-s}}. 
\end{equation}

\begin{lemma}[Reduction Lemma]
\label{redlem}
\begin{itemize}
\item[{\rm (1)}]
If $X$ is a (possibly infinite) disjoint union of subschemes $X_i$, we have
\begin{equation}
\zeta_X(s) = \prod_i\zeta_{X_i}(s).
\end{equation}
(It is enough that the atomization of $X$ is the disjoint union of the atomizations of the $X_i$, since $\zeta_X(\cdot)$ only depends on $\overline{X}$.)
\item[{\rm (2)}]
{\em Application of (1)}: If $f: X \longrightarrow Y$ is a scheme morphism, and if $X_y := f^{-1}(y)$ for $y \in \overline{Y}$, one has 
\begin{equation}
\zeta_X(s) = \prod_{y \in \overline{Y}}\zeta_{X_y}(s).
\end{equation}
(The $X_y$ are schemes over the finite fields $k(y)$.)
\end{itemize}
\end{lemma}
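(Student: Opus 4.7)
The plan is to prove (1) essentially by unwinding the definition of the arithmetic zeta function, and to derive (2) by applying (1) to a canonical partition of the closed points of $X$ indexed by $\overline{Y}$.

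For (1), I would observe that $\zeta_X(\cdot)$ depends on $X$ only through its atomization (as the parenthetical remark in the statement explicitly permits). When $X = \bigsqcup_i X_i$, each closed point $x$ lies in a unique component $X_i$, is closed in that component, and has the same residue field $k(x)$ whether considered in $X$ or in $X_i$. Hence $\overline{X} = \bigsqcup_i \overline{X_i}$ with matching norms, and reindexing the (possibly infinite) product defining $\zeta_X(s)$ along this partition yields $\prod_i \zeta_{X_i}(s)$.

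For (2), the first step is to check that $f$ sends closed points to closed points. Since $X$ is of finite type over $\Z$, any closed $x \in X$ has finite residue field by Proposition~\ref{proparze}(1); the induced inclusion $k(f(x)) \hookrightarrow k(x)$ then forces $k(f(x))$ to be finite, so $f(x)$ is closed in $Y$. This yields a partition
\[
\overline{X} \;=\; \bigsqcup_{y \in \overline{Y}} \{\,x \in \overline{X} : f(x) = y\,\},
\]
and it remains to identify each class with $\overline{X_y}$. The fiber $X_y = X \times_Y \Spec(k(y))$ is locally of finite type over the finite field $k(y)$, and for each $x \in X$ with $f(x) = y$ its preimage in $X_y$ is $\Spec(k(x) \otimes_{k(y)} k(y)) \cong \Spec(k(x))$, a single point with residue field $k(x)$. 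By Proposition~\ref{proparze}(3) this point is closed in $X_y$ iff $k(x)/k(y)$ is finite, which, since $k(y)$ is already finite, is equivalent to $k(x)$ being finite, i.e.\ by Proposition~\ref{proparze}(1) to $x$ being closed in $X$. Hence $\overline{X_y}$ coincides with the set of closed points of $X$ mapping to $y$, with identical norms; applying part~(1) to this partition produces $\zeta_X(s) = \prod_{y \in \overline{Y}} \zeta_{X_y}(s)$.

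The only delicate point is the residue-field book-keeping in (2): one must verify that the residue field of a point is unambiguous whether computed in $X$, in the fiber $X_y$, or via the base-change description $\Spec(k(x) \otimes_{k(y)} k(y))$. Once that identification is in hand, both assertions reduce to a harmless reshuffling of the product over $\overline{X}$.
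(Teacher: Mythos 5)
Your proposal is correct and follows exactly the route the paper indicates: the paper offers no formal proof, only the parenthetical remarks that $\zeta_X$ depends solely on the atomization (giving (1)) and that (2) is an application of (1) to the fibration of $\overline{X}$ over $\overline{Y}$. Your residue-field book-keeping via Proposition \ref{proparze} correctly fills in the details left implicit there.
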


\medskip
\subsection{Four standard examples | affine and projective space, Dedekind and classical Riemann}

\# Let $X = \Spec(A)$, where $A$ is the ring of integers of an number field $\K$; then $\zeta_X(s)$ is the {\em Dedekind zeta function}\index{Dedekind zeta function} of $\K$.\\

\# Put $X = \Spec(\mathbb{Z})$; then $\zeta_X(s)$ becomes the classical {\em Riemann zeta function}\index{Riemann zeta function}.\\

\# With $\mathbb{A}^n(X)$ being the affine $n$-space over a scheme $X$, $n \in \mathbb{N}$, one has 
\begin{equation}
\zeta_{\mathbb{A}^n(X)} = \zeta_X(s - n).
\end{equation}

\# And with $\mathbb{P}^n(X)$ being the projective $n$-space over a scheme $X$, $n \in \mathbb{N}$, one has 
\begin{equation}
\zeta_{\mathbb{P}^n(X)} = \prod_{j = 0}^n\zeta_X(s - j).
\end{equation}

The latter can be obtained inductively (applying Lemma \ref{redlem}) by using the expression for the zeta function of affine spaces.

\medskip
\subsection{Finite fields}

Let $X$ be a scheme of finite type over $\F = \F_q$; if $x \in \overline{X}$, the residue field $k(x)$ is a finite extension of $\F_q$. With $\mathrm{deg}(x)$ the degree of 
the extension, we have 
\begin{equation}
N(x) = q^{\mathrm{deg}(x)}.
\end{equation}

Then $\zeta_X(s) = Z(X,q^{-s})$, where $Z(X,T)$ is the power series defined by the product
\begin{equation}
Z(X,T) = \prod_{x \in \overline{X}}\frac{1}{1 - T^{\mathrm{deg}(x)}}.
\end{equation}

Denote by $\F_d$ the extension $\F_{q^d}/\F_q$,  and let $X_d = X(\F_d)$ be the $\F_d$-rational points of $X$. We have 
\begin{equation}
X(\overline{\F}) = \bigcup_{d \in \mathbb{N}^\times}X_d,
\end{equation}
$\overline{\F}$ denoting the algebraic closure of $\F$. \\

Now considere the Frobenius map
\begin{equation}
\mathrm{Fr}: X(\overline{\F}) \longrightarrow X(\overline{\F}): x \longrightarrow \mathrm{Fr}(x) = x^q, 
\end{equation}
where $x = (x_1,\ldots,x_n)$ and $x^q = (x_1^q,\ldots,x_n^q)$. Then we can alternatively describe $X_d$ by
\begin{equation}
X_d = \{ x \in X(\overline{\F}) \vert \mathrm{Fr}^d(x) = x\},
\end{equation}
as the elements of $\overline{\F} = \bigcup_{i \in \mathbb{N}^\times}\F_i$ in $\F_d$ are characterized by the fact that they are fixed by $\mathrm{Fr}^d$.

\begin{remark}
{\rm
Note that for an arithmetic scheme $X$, we have that 
\begin{equation}
\zeta_X(s) = \prod_{p\ \ \mbox{prime}}\zeta_{X\vert \F_p}(s),
\end{equation}
where $\zeta_{X\vert \F_p}(s) = Z(X,p^{-s})$.
}
\end{remark}

\medskip
\subsection{Lefschetz fixed points formula and $\ell$-adic cohomology}

Let $\F_q$ be a finite field and $\overline{\F_q}$ an algebraic closure. Let $X$ be an $\F_q$-variety (a scheme of finite type over $\F_q$), and let $\overline{X}$ be the scheme 
obtained by base extension $\F_q \longrightarrow \overline{\F_q}$.  Finally, let $\ell$ be a prime number different from the characteristic of $\F_q$.
For every $i \geq 0$ there is an \'{e}tale {\em $\ell$-adic cohomology group}\index{$\ell$-adic cohomology} $H^i_{\mathrm{et}}(\overline{X},\mathbb{Q}_{\ell})$\index{$H^i_{\mathrm{et}}(\overline{X},\mathbb{Q}_{\ell})$}; it is a finite dimensional $\mathbb{Q}_\ell$-vector space, 
and it vanishes if $i > 2\mathrm{dim}(X)$. Note that 
\begin{equation}
H^i(\overline{X},\mathbb{Z}_{\ell}) := \lim_{\leftarrow}H^i(\overline{X},\mathbb{Z}/{\ell^n\mathbb{Z}})
\end{equation}
and
\begin{equation}
H^i_{\mathrm{et}}(\overline{X},\mathbb{Q}_{\ell}) := H^i(\overline{X},\mathbb{Z}_{\ell}) \otimes_{\mathbb{Z}_\ell}\mathbb{Q}_\ell = H^i(\overline{X},\mathbb{Z}_{\ell})[1/\ell].\\
\end{equation}

Let $\mathrm{Fr}: X \longrightarrow X$ be as above (it is the identity on the underlying topological space, and acts on the structure sheaf $\mO_X$ by $f \longrightarrow f^q$).
It can be shown that the morphism $\mathrm{Fr}: X \longrightarrow X$ acts by functoriality on the spaces $H^i_{\mathrm{et}}(\overline{X},\mathbb{Q}_{\ell})$; denote the trace of this 
endomorphism by $\mathrm{Tr}_i(\mathrm{Fr})$, and put
\begin{equation}
\mathrm{Tr}(\mathrm{Fr}) := \sum_i(-1)^i\mathrm{Tr}_i(\mathrm{Fr}).
\end{equation}
This quantity is the {\em Lefschetz number}\index{Lefschetz!number} of $\mathrm{Fr}$ and is independent of the choice of $\ell$, as the following result by Grothendieck shows:

\begin{theorem}[Lefschetz Formula\index{Lefschetz!formula} \cite{SGA4/5,SGA5}]
$\mathrm{Tr}(\mathrm{Fr}) = \vert X(\F_q)\vert$.\\
\end{theorem}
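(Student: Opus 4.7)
The first step is to identify the right-hand side with the set of fixed points of $\mathrm{Fr}$ acting on $\overline{X}(\overline{\F_q})$. By definition, $x \in X(\overline{\F_q})$ lies in $X(\F_q)$ precisely when each coordinate of $x$ satisfies $x_i^q = x_i$, i.e.\ when $\mathrm{Fr}(x) = x$. Thus $|X(\F_q)| = |\mathrm{Fix}(\mathrm{Fr})|$, and the claim becomes a Lefschetz-type trace formula for the geometric Frobenius on $\overline{X}$.

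The plan is to invoke the general Lefschetz--Verdier trace formula (SGA 5) for a proper endomorphism $f: Y \longrightarrow Y$ of an $\overline{\F_q}$-variety, which expresses $\sum_i (-1)^i \mathrm{Tr}_i(f)$ as a sum of local terms $\mathrm{loc}_x(f)$ indexed by the fixed points of $f$. I would first reduce to the case where $X$ is proper and smooth: by Nagata's compactification together with the excision / long exact sequence for compactly supported $\ell$-adic cohomology (equivalently, additivity of $\mathrm{Tr}$ along closed--open decompositions), one sees that both sides of the desired identity are additive for locally closed stratifications, so one may reduce to the projective smooth case by de Jong alterations or by dévissage; the Frobenius lifts compatibly under all these reductions because it is a natural transformation of the identity functor on $\F_q$-schemes.

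The heart of the argument is then the local computation $\mathrm{loc}_x(\mathrm{Fr}) = 1$ at every fixed point. Here one exploits the crucial characteristic-$p$ feature that the differential $d\mathrm{Fr}_x: T_{\overline{X},x} \longrightarrow T_{\overline{X},x}$ is identically zero, so that $\mathrm{id} - d\mathrm{Fr}_x$ is the identity on the tangent space. Consequently the graph $\Gamma_{\mathrm{Fr}} \subset \overline{X} \times \overline{X}$ meets the diagonal $\Delta$ transversally at $x$, with intersection multiplicity $\det(\mathrm{id} - d\mathrm{Fr}_x) = 1$. In the Lefschetz--Verdier formalism this intersection multiplicity is precisely the local term, so each fixed point contributes exactly $1$.

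Summing over fixed points gives $\sum_i (-1)^i \mathrm{Tr}_i(\mathrm{Fr}) = |\mathrm{Fix}(\mathrm{Fr})| = |X(\F_q)|$. The independence of $\ell$ follows a posteriori since the left-hand side equals an integer independent of $\ell$. The main obstacle in carrying this out rigorously is the construction and justification of the Lefschetz--Verdier formula itself, which requires the six-functor formalism on $\ell$-adic sheaves (smooth and proper base change, biduality, and the trace map), and the clean identification of the local term with the naive intersection multiplicity; both of these are the technical core of SGA 4$\tfrac{1}{2}$ and SGA 5 and are what one would have to cite rather than reprove.
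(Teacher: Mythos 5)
The paper does not prove this statement; it is quoted as Grothendieck's theorem with a pointer to SGA~4$\tfrac{1}{2}$ and SGA~5, so there is no in-text argument to compare yours against. Your outline is the standard modern route and is essentially sound: the identification $\vert X(\F_q)\vert = \vert \mathrm{Fix}(\mathrm{Fr})\vert$ is correct (a $\overline{\F_q}$-point is $\F_q$-rational iff it is Frobenius-fixed), and the observation that $d\mathrm{Fr}_x = 0$ forces transversality of the graph with the diagonal, hence local terms equal to $1$, is exactly the right mechanism in the smooth proper case.

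Two caveats are worth making explicit. First, your dévissage via closed--open decompositions only works for \emph{compactly supported} cohomology: ordinary $H^\bullet_{\mathrm{et}}$ is not additive along stratifications, so the formula you obtain for general finite-type $X$ is $\vert X(\F_q)\vert = \sum_i(-1)^i\mathrm{Tr}(\mathrm{Fr}\,\vert\, H^i_c)$; this agrees with the statement as displayed in the paper only when $X$ is proper (the paper is implicitly silent on this). Also, de Jong's alterations are generically finite covers, not stratifications, so they do not slot directly into an additivity argument; the classical dévissage (as in Deligne's \emph{Rapport}) instead fibers by curves and verifies the formula on curves directly. Second, the identification of the Lefschetz--Verdier local terms with naive intersection multiplicities is genuinely delicate for general correspondences (this is Deligne's conjecture on local terms, settled only later by Pink, Fujiwara and Varshavsky); for the plain Frobenius on a smooth proper variety your transversality argument does suffice, but you should be aware that this is the one step where ``cite rather than reprove'' hides a real theorem rather than mere bookkeeping.
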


So for any field extension $\F_{q^m} \vert \F_q$ ($m \in \mathbb{N}^\times$)  the Lefschetz formula reads
\begin{equation}
\vert X(\F_{q^m})\vert = \mathrm{Tr}(\mathrm{Fr}^m) = \sum_i(-1)^i\mathrm{Tr}(\mathrm{Fr}^m \vert H^i_{\mathrm{et}}(\overline{X},\mathbb{Q}_{\ell})).
\end{equation}

For the case that $X$ is a curve of genus $g$ (which will be of special interest later on), one derives the following formula for the zeta function in terms of the action of the Frobenius operator:

$$ \zeta_{X}(s) 
		= \frac{\prod_{j = 1}^{2g}(1 - \lambda_jq^{-s})}{(1 - q^{-s})(1 - q^{1 - s})} \nonumber \\
$$

\begin{eqnarray} 
\label{Lefschet}
			 = \frac{\mbox{\textsc{Det}}\Bigl((s\cdot\id - q^{-s}\cdot\mathrm{Fr})\Bigl| H^1_{\mathrm{et}}(\overline{X},\mathbb{Q}_{\ell})\Bigr)}{\mbox{\textsc{Det}}\Bigl((s\cdot\id - q^{-1}\cdot\mathrm{Fr})\Bigl| H^0_{\mathrm{et}}(\overline{X},\mathbb{Q}_{\ell})\Bigr.\Bigr)\mbox{\textsc{Det}}\Bigl((s\cdot\id - q^{-s}\cdot\mathrm{Fr})\Bigl| H^2_{\mathrm{et}}(\overline{X},\mathbb{Q}_{\ell})\Bigr.\Bigr)},
	\end{eqnarray}
where the $\lambda_j$s are the eigenvalues of the Frobenius acting on \'{e}tale cohomology. (``$\textsc{Det}(\cdot)$'' is the usual determinant.)\\

\medskip
\subsection{Absolute Frobenius endomorphisms}

Consider the algebraic closure $\overline{\mathbb{F}_1}$ of $\F_1$. We define the {\em absolute Frobenius endomorphism}\index{absolute!Frobenius automorphism} of degree $n \in \mathbb{N}$, denoted $\mathrm{Fr}^n_1$\index{ $\mathrm{Fr}^n_1$} to be the map
\begin{equation}
\mathrm{F}_1^n: \overline{\F_1} \longrightarrow \overline{\F_1}: x \longrightarrow x^n.
\end{equation}

Elements of $\F_1^{d} \cong \mu_d \cup \{0\} \leq \overline{\F_1}$ are characterized by the fact that they are the solutions of 
\begin{equation}
x^{\mathrm{Fr}_1^{d + 1}} = x,
\end{equation}
which is analogous to the fact that elements of finite fields $\F_{q^d} \leq \overline{\F_q}$ are singled out as fixed points of $\mathrm{Fr}^d$.

\medskip
\subsection{Projective spaces over extensions}

Let $m \in \mathbb{N}^\times \cup \{ \infty\}$, $n \in \mathbb{N}$, and
let $A$ be the $\mathbb{F}_1$-ring $\mathbb{F}_1^m[X_0,X_1,\ldots,X_n]$. Here, we put $\mathbb{F}_1^{\infty} = \overline{\F_1}$.
Rather than looking at $\Spec(A)$, we want to consider 
the $\Proj$-scheme $\P(n,m) := \Proj(\Spec(A))$\index{$\P(n,m)$}. The following is simple, yet it illustrates a different behavior of $\mathbb{F}_1$-schemes
than schemes over ``real fields''.

\begin{proposition}
Topologically, the structure of $\P(n,m)$ is independent of the choice of $m$. 
\end{proposition}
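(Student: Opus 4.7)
The plan is to show that the underlying set and the closed subsets of $\P(n,m)$ depend only on the combinatorics of the variables $X_0,\ldots,X_n$, not on the choice of the unit group $\mu_m$ (where $\mu_\infty$ denotes all complex roots of unity).

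First I would analyze the prime spectrum of $A_m := \mathbb{F}_1^m[X_0,\ldots,X_n]$. Every nonzero element has a unique expression $\zeta X_0^{a_0}\cdots X_n^{a_n}$ with $\zeta \in \mu_m$ and $a_i \in \mathbb{N}$, and the unit group is exactly $A_m^\times = \mu_m$. If $\fp$ is a prime ideal and $\zeta X^{\alpha} \in \fp$, then since $\zeta \in A_m^\times$ one has $X^{\alpha} \in \fp$; and primeness forces $X_i \in \fp$ for some $i$ with $a_i \geq 1$. Iterating, $\fp$ is generated (as an ideal) by a subset $\{X_i : i \in I\}$ for some $I \subseteq \{0,1,\ldots,n\}$, and conversely every such subset produces a prime $\fp_I$. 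Hence the assignment $I \mapsto \fp_I$ gives a bijection
\begin{equation}
2^{\{0,\ldots,n\}} \;\longleftrightarrow\; \Spec(A_m)
\end{equation}
which, crucially, does not involve $m$. The irrelevant ideal corresponds to $I = \{0,\ldots,n\}$, so $\Proj(A_m)$ is in natural bijection with the proper subsets of $\{0,\ldots,n\}$, independently of $m$.

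Next I would check that the Zariski topology is also combinatorial. Any (homogeneous) ideal $\mathfrak{a}$ of $A_m$ is invariant under multiplication by $\mu_m \subseteq A_m^\times$, so $\mathfrak{a}$ is generated by monomials $X^{\alpha}$; in fact $\mathfrak{a}$ is determined by the set $E(\mathfrak{a}) := \{\alpha \in \mathbb{N}^{n+1} \setminus\{0\} : X^{\alpha} \in \mathfrak{a}\}$, a subset of $\mathbb{N}^{n+1}$ with no reference to $m$. The containment $\mathfrak{a} \subseteq \fp_I$ translates into the purely combinatorial condition $\mathrm{supp}(\alpha) \cap I \neq \emptyset$ for every $\alpha \in E(\mathfrak{a})$. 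Therefore under the bijection above,
\begin{equation}
V(\mathfrak{a}) \;=\; \{\,I \subsetneq \{0,\ldots,n\} : \mathrm{supp}(\alpha) \cap I \neq \emptyset \text{ for all } \alpha \in E(\mathfrak{a})\,\},
\end{equation}
which again does not depend on $m$. Consequently, for any two values $m,m'$, the identification $\fp_I \leftrightarrow \fp_I'$ is a homeomorphism $\P(n,m) \cong \P(n,m')$.

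The only delicate point is the first step, where one must verify that every prime ideal is monomial and classified by a subset $I$; the key observation is simply that $\mu_m$ consists of units, so it absorbs into the ambient ideal structure and plays no role in either the set of primes or the lattice of ideals. Everything else is a formal translation of the Zariski closed sets into combinatorics of subsets of $\{0,\ldots,n\}$.
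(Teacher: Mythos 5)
Your proof is correct and rests on exactly the same observation as the paper's: the roots of unity are units, so $(X_i)=(\nu X_i)$ for $\nu\in\mu_m$, and hence both the prime ideals and the closed sets are determined purely by the combinatorics of the variables. The paper states only this one-line observation; your write-up is simply the fully detailed version of that same argument.
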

\begin{proof}
It suffices to observe that for any $\nu \in \mu_m$ and any $i \in \{0,1,\ldots,n\}$, we have
\begin{equation}
(X_i) = (\nu X_i).
\end{equation}
\end{proof}

If $X$ is a $\K$-scheme, where $\K$ is a field, then the closed points of $X$ represent orbits of the Galois group $\mathrm{Gal}(\overline{\K}/\K)$, and all $\overline{\K}$-rational points of $X \times_{\K}\overline{\K}$ are contained in the union of these orbits. So there is a natural map
\begin{equation}
\alpha: X(\overline{\K}) \longrightarrow X,
\end{equation}
sending closed points of $X(\overline{\K})$ to closed points of $X$, which is neither injective nor surjective.

Going back to the spaces $\P(n,m)$ of above, we see the closed points of $\P(n,m)$ (for any $m$) as orbits of $\mathrm{Gal}(\overline{\F_1}/\F_1^m)$.
A stalk at an arbitrary closed point of $\P(n,m)$ is isomorphic to
\begin{equation}
\{ 0\} \cup (\mu_m \times \mathrm{F}^{\mathrm{ab}}(X_0,X_1,\ldots,X_n)),
\end{equation}
where $\mathrm{F}^{\mathrm{ab}}(X_0,X_1,\ldots,X_n)$\index{ $\mathrm{F}^{\mathrm{ab}}(X_0,X_1,\ldots,X_n)$} is the free abelian group generated by $X_0,X_1,\ldots,X_n$. So on the algebraic level we can see
the extension of the ground field | we consider the stalk as consisting, besides $0$, of $m$ distinct copies of $\mathrm{F}^{\mathrm{ab}}(X_0,X_1,\ldots,X_n)$ equipped with 
a sharply transitive $\mu_m$-action, which is in accordance with the classical picture.\footnote{The $m$ ``points'' are invisible points which do not occur in $\mathbb{Z}$-schemes.}

The stalks at any given closed point of $\P(n,\infty)$ are given by 
\begin{equation}
\{ 0\} \cup (\overline{\mathbb{F}_1}^\times \times \mathrm{F}^{\mathrm{ab}}(X_0,X_1,\ldots,X_n)),
\end{equation}
and the absolute Frobenius map $\mathrm{Fr}_1^{d + 1}$ with $d \in \mathbb{N}^\times$ (which acts on the scheme by acting trivially on the topology and as $\mathrm{Fr}_1^{d + 1}$ on the  structure scheaf) singles out the (``ordinary'' and invisible) $\mathbb{F}_1^d$-points.

\medskip
\subsection{Deitmar zeta functions}

In \cite{Soule}, C. Soul\'{e}, inspired by Manin's paper \cite{Manin}, associated a zeta function to any sufficiently regular counting-type function $N(q)$ by considering the limit

\begin{equation}
\zeta_N(s) := \lim_{q \to 1}Z(q,q^{-s})(q - 1)^{N(1)}, \ \ \ \ s \in \mathbb{R}.
\end{equation}

\noindent
(See also the next paragraph for more on this definition.)
Here $Z(q,q^{-s})$ is the evaluation at $T = q^{-s}$ of the Hasse-Weil zeta function

\begin{equation}
Z(q,T) = \mathrm{exp}(\sum_{r \geq 1}N(q^r)\frac{T^r}{r}).
\end{equation}

\noindent
One computes that if $N(X) = a_0 + a_1X + \cdots  + a_nX^n$, then

\begin{equation}
\zeta_{X\vert \mathbb{F}_1}(s) = \prod_{i = 0}^n\frac{1}{(s - i)^{a_i}},
\end{equation}
which is in accordance with the aforementioned example for projective $\mathbb{F}_1$-spaces.\\

 In \cite{Deitmarschemes1}, the following theorem is obtained.

\begin{proposition}
Let $X$ be a $\mD_0$-scheme and $X_{\mathbb{Z}} = X \otimes_{\mathbb{F}_1}\mathbb{Z}$ be the Deitmar base extension to the integers. Then there exists a natural number $e$ and a polynomial $N(T)$ with integer coefficients such that for
every prime power $q$ one has 
\begin{equation}
(q - 1,e) = 1\ \  \Longrightarrow\ \  \# X_{\mathbb{Z}}(\mathbb{F}_q) = N(q).
\end{equation}
The polynomial $N$ is uniquely determined, and independent of the choice of $e$.
\end{proposition}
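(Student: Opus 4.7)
The plan is to reduce to the affine case, then to compute $\F_q$-rational points by analysing monoid homomorphisms, and finally to extract the polynomial and verify uniqueness.

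First I would reduce to the affine case. Since $X$ is of finite type, it admits a finite open affine cover $X = \bigcup_{i=1}^{k} U_i$ with $U_i = \Spec(A_i)$ and $A_i$ finitely generated as a monoid. All finite intersections $U_I := \bigcap_{i \in I} U_i$ are again open and can be covered by basic opens of the form $\Spec(A_i[f^{-1}])$, which correspond to localizations at a single element of a finitely generated monoid and are therefore again affine of the same type. Then inclusion–exclusion gives $\# X_{\Z}(\F_q) = \sum_{\emptyset \ne I \subseteq \{1,\dots,k\}} (-1)^{|I|+1} \# (U_I)_{\Z}(\F_q)$, so once the statement is established for each finitely generated affine piece with some integer $e_I$ and polynomial $N_I$, the global $e$ is the $\mathrm{lcm}$ of the $e_I$ and $N$ is the corresponding alternating sum.

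For the affine case $\Spec(A)$ with $A$ a finitely generated pointed monoid, by the adjunction (Deitmar's theorem on base extension) and the identification of $\F_q$-points with ring maps to $\F_q$, we have $\# \Spec(\Z[A])(\F_q) = \# \Hom_{\mathrm{monoid}}(A,(\F_q,\cdot))$. Each such homomorphism $\phi$ sends $0$ to $0$, and the preimage $\fp := \phi^{-1}(0)$ is a prime ideal of $A$. Conditioning on $\fp$, the restriction of $\phi$ to $A \setminus \fp$ lands in $\F_q^{\times}$ and factors uniquely through the group completion, which in this case is naturally the group $G_{\fp} := A_{\fp}^{\times}$ of units of the localization at $\fp$; conversely every such group homomorphism extends uniquely by zero on $\fp$. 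Since $\Spec(A)$ is finite and each $G_{\fp}$ is a finitely generated abelian group, the structure theorem yields $G_{\fp} \cong \Z^{r_{\fp}} \oplus T_{\fp}$ with $T_{\fp}$ finite, so
\begin{equation}
\# \Hom(G_{\fp},\F_q^{\times}) = (q-1)^{r_{\fp}} \cdot \# \Hom(T_{\fp},\F_q^{\times}).
\end{equation}

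Now set $e := \prod_{\fp \in \Spec(A)} |T_{\fp}|$ (or its $\mathrm{lcm}$, which then aggregates across the finitely many affine pieces of $X$). When $(q-1,e)=1$, the cyclic group $\F_q^{\times}$ of order $q-1$ has no nontrivial homomorphism from any $T_{\fp}$, so $\# \Hom(T_{\fp},\F_q^{\times}) = 1$ and the total count becomes
\begin{equation}
\# X_{\Z}(\F_q) \;=\; \sum_{\fp} (q-1)^{r_{\fp}} \;=: N(q),
\end{equation}
a polynomial in $q$ with integer coefficients (for the global case, apply inclusion–exclusion to the pointwise formulas). I expect the main obstacle to lie precisely in pinning down the group $G_{\fp}$ and verifying that monoid homomorphisms split cleanly according to $\fp$; once this bookkeeping is correct the rest is a routine count. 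Uniqueness is immediate: any two polynomials agreeing on the infinite set of prime powers $q$ with $(q-1,e)=1$ must coincide, since such $q$ form an infinite set (e.g.\ all primes $\equiv 1 \pmod{e}$ are excluded, but for any $e$ infinitely many prime powers remain coprime to $e$ in the sense $(q-1,e)=1$). Independence of the choice of $e$ then follows formally: if $e'$ is any other admissible integer, both $(q-1,e)=1$ and $(q-1,ee')=1$ produce infinite sets of test values, forcing the same $N$.
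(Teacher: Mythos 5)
Your argument is correct and is essentially the original one of Deitmar (the paper only quotes this proposition from \cite{Deitmarschemes1} without reproducing its proof): reduce to $\Spec(A)$ with $A$ finitely generated, identify $X_{\Z}(\F_q)$ with monoid homomorphisms $A\to(\F_q,\cdot)$, stratify by $\fp=\phi^{-1}(0)$, and count group homomorphisms from the finitely generated abelian group $\mathrm{Quot}(A\setminus\fp)=A_\fp^{\times}$ into $\F_q^{\times}$, choosing $e$ so that the torsion contributes trivially. The only points you leave implicit --- that the unit group of a finitely generated commutative monoid is indeed a finitely generated abelian group, that the set of prime powers $q$ with $(q-1,e)=1$ is infinite (needed for uniqueness), and that the possibly non-affine intersections $U_I$ are disposed of by a further finite inclusion--exclusion over basic opens $D(f)=\Spec(A_i[f^{-1}])$ --- are routine and do not affect the argument.
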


Deitmar calls this the ``zeta polynomial''\index{zeta!polynomial} of $X$. 

If $N_X(T) = a_0 + a_1T + \ldots + a_nT^n$ is the zeta polynomial of an arbitrary $\mD_0$-scheme $X$, we can thus define the zeta function of $X$ as

\begin{equation}
\zeta_{X \vert \mathbb{F}_1}(s) = \frac{1}{s^{a_0}(s - 1)^{a_1} \cdots (s - n)^{a_n}}. 
\end{equation}

The {\em Euler characteristic}\index{Euler characteristic} of $X$ is then defined as 

\begin{equation}
\chi(X) := N_X(1) = a_0 + \cdots + a_n.
\end{equation}

\medskip
\subsection{Kurokawa zeta functions}

In \cite{Kurozeta}, Kurokawa says a scheme $X$ is of {\em $\Fun$-type}\index{scheme of $\Fun$-type} if its arithmetic zeta function $\zeta_X(s)$ can be expressed in the form
\begin{equation}
\zeta_X(s) = \prod_{k = 0}^n\zeta(s - k)^{a_k}
\end{equation}
with the $a_k$s in $\Z$. A very interesting result in \cite{Kurozeta} reads as follows:

\begin{theorem}
Let $X$ be a $\Z$-scheme. The following are equivalent.
\begin{itemize}
\item[{\rm (i)}]
\begin{equation}
\zeta_X(s) = \prod_{k = 0}^n\zeta(s - k)^{a_k}
\end{equation}
with the $a_k$s in $\Z$.
\item[{\rm (ii)}]
For all primes $p$ we have
\begin{equation}
\zeta_{X\vert \F_p}(s) = \prod_{k = 0}^n(1 - p^{k - s})^{-a_k}
\end{equation}
with the $a_k$s in $\Z$.
\item[{\rm (iii)}]
There exists a polynomial $P_X(Y) = \sum_{i = 0}^na_kY^k$ such that
\begin{equation}
\#X(\F_{p^m}) = N_X(p^m) 
\end{equation}
for all finite fields $\F_{p^m}$.
\end{itemize}
\end{theorem}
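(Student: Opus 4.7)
The proof plan is to cycle through (i) $\Rightarrow$ (ii) $\Rightarrow$ (iii) $\Rightarrow$ (i), exploiting two pieces of structure that are already recorded in the excerpt: the Euler-type factorization $\zeta_X(s) = \prod_p \zeta_{X\mid \F_p}(s)$ (the displayed remark before the paragraph on $\ell$-adic cohomology) and the identity $\zeta_{X\mid \F_p}(s) = Z(X\mid \F_p, p^{-s})$, where $Z$ is the Hasse–Weil series $\exp\bigl(\sum_{m\ge 1}\#X(\F_{p^m})T^m/m\bigr)$. I also use the classical Euler product $\zeta(s) = \prod_p (1-p^{-s})^{-1}$, so that $\zeta(s-k) = \prod_p(1-p^{k-s})^{-1}$ for every $k\in \Z_{\ge 0}$.

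For (i) $\Leftrightarrow$ (ii), substitute the Euler product for each $\zeta(s-k)$ into the hypothesis (i) to obtain
\begin{equation}
\prod_{k=0}^n \zeta(s-k)^{a_k} \ = \ \prod_p \prod_{k=0}^n (1-p^{k-s})^{-a_k}.
\end{equation}
Comparison with the factorization $\zeta_X(s) = \prod_p \zeta_{X\mid \F_p}(s)$ then forces the per-prime identity (ii). The converse direction is immediate by multiplying (ii) over all primes. The only subtle point here is uniqueness of the per-prime factor, which follows because, for fixed $p$, the right-hand side of (ii) is a rational function in $p^{-s}$ whose zeros and poles determine it, while the factors contributed by other primes $q\ne p$ are analytic and nonvanishing in a neighborhood of the relevant values of $s$.

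For (ii) $\Leftrightarrow$ (iii), the bridge is the logarithmic expansion
\begin{equation}
\log Z(X\mid \F_p, T) \ = \ \sum_{m\ge 1} \#X(\F_{p^m})\frac{T^m}{m}, \qquad -\log(1-p^k T) \ = \ \sum_{m\ge 1}\frac{(p^k T)^m}{m}.
\end{equation}
Assuming (iii), set $T = p^{-s}$ and compute
\begin{equation}
\log Z(X\mid \F_p, T) = \sum_{m\ge 1} \frac{T^m}{m}\sum_{k=0}^n a_k p^{mk} = -\sum_{k=0}^n a_k \log(1-p^k T),
\end{equation}
which exponentiates to (ii). Conversely, given (ii), take logarithms and expand in powers of $T=p^{-s}$; matching coefficients of $T^m/m$ shows $\#X(\F_{p^m}) = \sum_{k=0}^n a_k p^{mk}$. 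Since this holds for every prime $p$ and every $m\ge 1$, the polynomial $P_X(Y) := \sum_{k=0}^n a_k Y^k$ has the required property, proving (iii).

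The argument is essentially a transparent bookkeeping exercise, so the only place where I expect to have to be careful is the equivalence (i) $\Leftrightarrow$ (ii), namely verifying that the same integer tuple $(a_0,\dots,a_n)$ works globally and prime-by-prime. The identification is clean precisely because the $a_k$ are the exponents in a rational expression of $\zeta_{X\mid \F_p}(s)$ as a product of factors $(1-p^{k-s})$ for distinct integers $k$, and these factors are linearly independent in the multiplicative group of rational functions in $p^{-s}$; hence the exponents $a_k$ are uniquely determined per prime, and (i) constrains them to be the \emph{same} integers for every $p$ (and hence works in (iii)). Assembling the three implications completes the proof.
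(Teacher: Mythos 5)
The paper does not actually prove this theorem: it is quoted from Kurokawa's \emph{Zeta functions over $\mathbf{F}_1$} without argument, so there is no in-text proof to compare against. Your proof is correct and is the standard argument one would expect: the equivalence (ii) $\Leftrightarrow$ (iii) by taking logarithms of the Hasse--Weil series and matching coefficients of $T^m/m$ is exactly right, and (i) $\Leftrightarrow$ (ii) follows from the factorization $\zeta_X(s)=\prod_p\zeta_{X\vert\F_p}(s)$ together with uniqueness of Euler factors. The only place where your justification is looser than it needs to be is that uniqueness step: rather than appealing to zeros, poles and nonvanishing of the other local factors, it is cleaner to note that for formal Dirichlet series an Euler product $\prod_p f_p(p^{-s})$ with $f_p(0)=1$ determines each $f_p$, since the coefficient of $(p^m)^{-s}$ in the expanded series is precisely the coefficient of $T^m$ in $f_p(T)$; this is standard and does not constitute a gap. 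You should also record the standing hypothesis that $X$ is of finite type over $\Z$, so that the point counts $\#X(\F_{p^m})$ are finite and the Euler products converge in a half-plane.
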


Kurokawa defines the {\em $\Fun$-zeta function}\index{$\Fun$-zeta function} of a $\Z$-scheme $X$ which is defined over $\Fun$ as 
\begin{equation}
\zeta_{X\vert \Fun}(s) :=  \prod_{k = 0}^n(s - k)^{-a_k}
\end{equation}
with the $a_k$s as above. Define, again as above, the {\em Euler characteristic}\index{Euler characteristic}
\begin{equation}
\#X(\Fun) := \sum_{k = 0}^na_k.
\end{equation}

The connection between $\Fun$-zeta functions and arithmetic zeta functions is explained in the following theorem, taken from \cite{Kurozeta}.

\begin{theorem}
Let $X$ be a $\Z$-scheme which is defined over $\Fun$. Then
\begin{equation}
\zeta_{X\vert \Fun}(s) =  \lim_{p \longrightarrow 1}\zeta_{X\vert \F_p}(s)(p - 1)^{\# X(\Fun)}.
\end{equation}
Here, $p$ is seen as a complex variable (so that the left hand term is the leading coefficient of the Laurent expansion of $\zeta_{X \vert \Fun}(s)$ around $p = 1$).
\end{theorem}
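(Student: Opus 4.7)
The plan is to reduce the identity to a one-variable limit computation by unpacking the characterization of $\Fun$-type schemes already proved in the previous theorem. Since $X$ is defined over $\Fun$, we may write, for every prime $p$,
\begin{equation}
\zeta_{X\vert \F_p}(s) \ = \ \prod_{k=0}^{n}(1 - p^{k-s})^{-a_k}
\end{equation}
with integers $a_k$, and $\# X(\Fun) = \sum_{k=0}^{n}a_k$. Multiplying through, the right hand side of the target identity becomes
\begin{equation}
\lim_{p \to 1}\prod_{k=0}^{n}\left(\frac{p-1}{1 - p^{k-s}}\right)^{a_k},
\end{equation}
so the claim will follow as soon as one checks that each factor tends to $1/(s-k)$ in the limit $p \to 1$ (with $s$ held fixed, away from the poles $s \in \{0,1,\ldots,n\}$).

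To do this, first I would treat $p$ as a complex variable approaching $1$ and set $p = 1 + \varepsilon$. A first-order expansion gives $p^{k-s} = 1 + (k-s)\varepsilon + O(\varepsilon^{2})$, hence $1 - p^{k-s} = (s-k)\varepsilon + O(\varepsilon^{2})$, and therefore
\begin{equation}
\frac{p - 1}{1 - p^{k-s}} \ = \ \frac{\varepsilon}{(s-k)\varepsilon + O(\varepsilon^{2})} \ \longrightarrow \ \frac{1}{s-k} \qquad (\varepsilon \to 0).
\end{equation}
Equivalently one can use L'Hôpital's rule applied to the quotient $(p-1)/(1-p^{k-s})$, which differentiates to $1/(-(k-s)p^{k-s-1})$ and evaluates at $p = 1$ to $1/(s-k)$. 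Because only finitely many factors are involved and each has a finite nonzero limit for $s \notin \{0,\ldots,n\}$, the limit of the product equals the product of the limits.

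Assembling the pieces yields
\begin{equation}
\lim_{p \to 1}\zeta_{X \vert \F_p}(s)(p-1)^{\# X(\Fun)} \ = \ \prod_{k=0}^{n}(s - k)^{-a_k} \ = \ \zeta_{X\vert \Fun}(s),
\end{equation}
which is the required equality. Since every factor $(s-k)^{-a_k}$ is the corresponding factor in the Laurent expansion in the variable $p$ around $p = 1$, the identification of the left hand side with the leading Laurent coefficient follows automatically from the definition of the limit. The only delicate point is the permissibility of the term-by-term limit in the finite product, which is immediate once one verifies analyticity of each factor in a punctured neighborhood of $p = 1$; the degenerate cases $s \in \{0,1,\ldots,n\}$ are handled by matching orders of the pole/zero on both sides, again using the Taylor expansion above. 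I do not foresee a serious obstacle: the entire argument is a careful bookkeeping of the factorization provided by the preceding theorem together with a first-order expansion.
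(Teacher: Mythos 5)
Your proof is correct and follows essentially the same route as the paper: both factor $\zeta_{X\vert\F_p}(s)$ as $\prod_{k}(1-p^{k-s})^{-a_k}$ using the preceding characterization, distribute $(p-1)^{\#X(\Fun)}$ over the finitely many factors, and evaluate each limit $\lim_{p\to 1}(1-p^{k-s})/(p-1)=s-k$ by a first-order expansion (equivalently L'H\^opital). Your additional remarks on the degenerate values $s\in\{0,\dots,n\}$ and on analyticity near $p=1$ only make explicit what the paper leaves implicit.
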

We will give a brief sketch of the proof here.

\begin{proof}
We have 
\begin{equation}
\zeta_{X\vert \Fun}(s)(p - 1)^{\# X(\Fun)} = \prod_{k = 0}^n\big(\frac{1 - p^{k - s}}{p - 1}\big)^{-a_k}
\end{equation}
so that for the limit we get
\begin{eqnarray}
\lim_{p \longrightarrow 1}\zeta_{X\vert \Fun}(s)(p - 1)^{\# X(\Fun)} &= &\prod_{k = 0}^n\big(\lim_{p \longrightarrow 1}\frac{1 - p^{k - s}}{p - 1}\big)^{-a_k}\nonumber \\
&= &\prod_{k = 0}^n(s - k)^{-a_k}\nonumber \\ 
&= &\zeta_{X\vert \Fun}(s).
\end{eqnarray}
\end{proof}

For affine and projective spaces, we obtain the following zeta functions (over $\Z$, $\F_p$ and $\Fun$, with $n \in \mathbb{N}^\times$):
\begin{eqnarray}
\zeta_{\A^n\vert \Z}(s) &= &\zeta(s - n);\nonumber \\
\zeta_{\A^n\vert \F_p}(s) &= &\frac{1}{1 - p^{n - s}};\nonumber \\
\zeta_{\A^n\vert \Fun}(s) &= &\frac{1}{s - n},
\end{eqnarray}
and
\begin{eqnarray}
\zeta_{\P^n\vert \Z}(s) &= &\zeta(s)\zeta(s - 1)\cdots\zeta(s - n);\nonumber \\
\zeta_{\P^n\vert \F_p}(s) &= &\frac{1}{(1 - p^{-s})(1 - p^{1 - s})\cdots(1 - p^{n - s})};\nonumber \\
\zeta_{\P^n\vert \Fun}(s) &= &\frac{1}{s(s - 1)\cdots(s - n)}.
\end{eqnarray}

\medskip
\subsection{Appendix: Toric varieties and zeta functions}

Suppose $N$ is a {\em lattice}\index{lattice} | a group isomorphic to $\mathbb{Z}^n$ for some natural number $n$. 
A {\em fan}\index{fan} $\Delta$ in $N$ is a finite collection of proper convex rational polyhedral cones in the real vector space $N_{\mathbb{R}} = N \otimes \mathbb{R}$
such that every face of a cone in $\Delta$ is in $\Delta$, and the intersection of two cones in $\Delta$ is a face of each of these cones. Recall that a 
{\em convex cone}\index{convex cone} is a convex subset $\sigma$ of $N_{\mathbb{R}}$ with $\mathbb{R}^+\sigma = \sigma$. Such a cone is {\em polyhedral}\index{polyhedral} if it is finitely generated, 
and {\em proper}\index{proper cone} if it does not contain a nonzero subvector space of $N_{\mathbb{R}}$.

Suppose $\Delta$ is a fan in the lattice $N$, and let $N^D := \mathrm{Hom}(N,\mathbb{Z})$ be the dual lattice of $N$. For a cone $c \in \Delta$, the {\em dual cone}\index{dual cone}
$c^D$ is the cone in the dual space $N^D_{\mathbb{R}}$ consisting of all $\alpha \in N^D_{\mathbb{R}}$ for which $\alpha(c) \geq 0$.
As such one has defined a monoid $A_c = c^D \cap N^D$. Now put $U_c = \Spec(\mathbb{C}[A_c])$; if $\tau$ is a face of $c$, then $A_{\tau} \supseteq A_c$, and 
the latter inclusion gives rise to an open embedding 

\begin{equation}
U_{\tau} \hookrightarrow U_c.
 \end{equation}

Along these embeddings the affine varieties $A_c$ can be glued to obtain a variety $X_{\Delta}$ over $\mathbb{C}$, which has been given an $\mathbb{F}_1$-structure
\cite{Deitmartoric}. Then $X_{\Delta}$ is a toric variety, and the torus is $U_0 \cong \GL_1^n$. Every toric variety can be obtained in this way.

The next proposition, which is used in the proof of the theorem following it, is of independent interest.

\begin{proposition}[\cite{Deitmartoric}]
Let $B$ be a submonoid of the monoid $A$ of finite index. Then the map
\begin{equation}
\varphi: \Spec(A) \longrightarrow \Spec(B)
\end{equation}
defined by $\varphi(\fp) = \fp \cap B$ is a bijection.\\
\end{proposition}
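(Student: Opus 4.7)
The plan is to first extract the operational content of the hypothesis that $B$ has finite index in $A$: for every $a \in A$ there must exist a positive integer $n$ with $a^n \in B$, since among the cosets $aB, a^2B, \ldots$ only finitely many can be distinct, forcing $a^i \equiv a^j$ modulo $B$ for some $i<j$ and hence $a^{j-i} \in B$. Once this reduction is in hand the rest becomes formal manipulation with primality.

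For injectivity I would argue directly: if $\fp_1\cap B = \fp_2 \cap B$ and $a \in \fp_1$, pick $n$ with $a^n \in B$, so that $a^n \in \fp_1 \cap B = \fp_2 \cap B \subseteq \fp_2$, and primality of $\fp_2$ yields $a \in \fp_2$; the reverse inclusion is symmetric. For surjectivity, given $\mathfrak{q} \in \Spec(B)$ the natural candidate is
\begin{equation}
\fp := \{a \in A \mid a^n \in \mathfrak{q}\ \text{for some}\ n \geq 1\},
\end{equation}
and the task splits into verifying that (a) $\fp$ is an ideal of $A$, (b) $\fp$ is prime, and (c) $\fp\cap B = \mathfrak{q}$.

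For (a) I would combine $a^n \in \mathfrak{q}$ with some $m$ such that $c^m \in B$, observing that $(ca)^{mn} = (c^m)^n(a^n)^m$ lies in $B\cdot \mathfrak{q} \subseteq \mathfrak{q}$. For (b), if $(ab)^k \in \mathfrak{q}$ I would choose $N$ divisible by suitable exponents so that both $a^{kN}$ and $b^{kN}$ lie in $B$; then $(ab)^{kN} = a^{kN}b^{kN}$ is a product of two elements of $B$ that lies in $\mathfrak{q}$, and primality of $\mathfrak{q}$ in $B$ places one of them in $\mathfrak{q}$, so that $a \in \fp$ or $b \in \fp$. Part (c) is then immediate, the non-trivial direction using primality of $\mathfrak{q}$ once more. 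The chief obstacle throughout is the exponent bookkeeping in (a) and (b): one must take common multiples carefully so that each factor arriving at $\mathfrak{q}$ actually lies in $B$ before the ideal or prime property of $\mathfrak{q}$ can be invoked. That $\varphi$ is well-defined requires no separate comment, since $B \hookrightarrow A$ is a monoid homomorphism and preimages of prime ideals under such maps are automatically prime.
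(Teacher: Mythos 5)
Your core argument is sound and is, in substance, the standard proof of this fact (the paper itself only cites \cite{Deitmartoric} and reproves nothing): injectivity follows from ``every element of $A$ has a power in $B$'' together with primality of the $\fp_i$, and surjectivity from sending $\mathfrak{q}$ to its radical $\fp=\{a\in A\mid a^n\in\mathfrak{q}\ \text{for some}\ n\geq 1\}$; your exponent bookkeeping in (a) and (b) goes through because all monoids in this setting are commutative, and (c) and well-definedness are as routine as you say.

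The genuine problem is the opening reduction. For monoids the coset calculus you invoke is simply not available: $a^iB=a^jB$ only gives $a^i=a^jb$ for some $b\in B$, and without cancellation and inverses this does not yield $a^{j-i}\in B$. Worse, ``finitely many cosets'' is not a sufficient hypothesis for the proposition itself: take $A=\langle a\mid a^3=a^2\rangle=\{1,a,a^2\}$ and $B=\{1\}$. There are only three sets $xB$, yet no power of $a$ lies in $B$; here $\Spec(B)=\{\emptyset\}$ while $\Spec(A)=\{\emptyset,\{a,a^2\}\}$, so $\varphi$ is not injective. The hypothesis ``of finite index'' in the source means precisely the power condition you want (there is an $n$, even a uniform one, with $a^n\in B$ for all $a\in A$ --- i.e.\ the extension is algebraic in the sense of Section 2; this is what holds for a finite-index sublattice in the toric application). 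So you should take the power condition as the content of the hypothesis rather than derive it from a coset count; once that is done, the remainder of your proof is complete and correct.
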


In \cite{Deitmartoric} Deitmar then obtains the next theorem, which supports Manin's predictions (in Deitmar's theory).

\begin{theorem}[\cite{Deitmartoric}]
Let $\Delta$ be a fan in a lattice of dimension $n$. For $j \in \{0,1,\ldots,n\}$, let $f_j$ be the number of cones in $\Delta$ of dimension $j$. Set
\begin{equation}
c_j = \sum_{k = j}^nf_{n - k}(-1)^{k + j}\left(\begin{array}{c}k\\j\end{array}\right).
\end{equation}
Let $X$ be the corresponding toric variety. Then the $\mathbb{F}_1$-zeta function of $X$ equals
\begin{equation}
\zeta_X(s) = s^{c_0}(s - 1)^{c_1}\cdots(s - n)^{c_n}.
\end{equation}
\end{theorem}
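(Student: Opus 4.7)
The plan is to reduce the theorem to a direct combinatorial point-count on a cell decomposition and then quote the Kurokawa machinery developed a few subsections earlier. Everything boils down to producing a polynomial $N_X(T)$ with $\#X(\mathbb{F}_{p^m}) = N_X(p^m)$ whose coefficients match the $c_j$.

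First I would invoke the orbit-cone correspondence for toric varieties: $X_{\Delta}$ admits a canonical stratification
\begin{equation}
X_{\Delta} \ = \ \bigsqcup_{c \in \Delta} O_c, \qquad O_c \cong \mathbb{G}_m^{\,n - \dim c},
\end{equation}
where $O_c$ is the torus orbit through the distinguished point of $U_c = \Spec(\mathbb{C}[c^D \cap N^D])$ associated to the face $c$. This decomposition is visible at the monoidal level (the distinguished points of $U_c$ are indexed by the faces of $c$, and each orbit is the spectrum of a localisation whose invertible elements form a free abelian group of rank $n - \dim c$), and is therefore compatible with the $\mathbb{F}_1$-structure described in \S\ref{Dtoric}. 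In particular it descends to every finite field.

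Second, applying the Reduction Lemma (Lemma \ref{redlem}) to this stratification together with the elementary count $|\mathbb{G}_m^d(\mathbb{F}_q)| = (q-1)^d$ yields
\begin{equation}
N_X(q) \ = \ |X_{\Delta}(\mathbb{F}_q)| \ = \ \sum_{j=0}^n f_j (q-1)^{n-j}.
\end{equation}
Expanding $(q-1)^{n-j}$ by the binomial theorem, reindexing with $k = n-j$, and collecting coefficients of $q^\ell$ gives
\begin{equation}
N_X(q) \ = \ \sum_{\ell = 0}^n \Bigl(\sum_{k=\ell}^n f_{n-k}(-1)^{k+\ell}\binom{k}{\ell}\Bigr) q^\ell \ = \ \sum_{\ell=0}^n c_\ell \, q^\ell,
\end{equation}
so the zeta-polynomial of $X_{\Delta}$ has precisely the coefficients $c_\ell$ of the statement.

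Finally, since $N_X$ is a genuine polynomial in $q$ matching $|X(\mathbb{F}_{p^m})|$ for every prime power, Kurokawa's criterion applies (the equivalence (iii) $\Leftrightarrow$ (i) of the theorem attributed to Kurokawa above), and the $\mathbb{F}_1$-zeta function of $X$ reads off as $\prod_{k=0}^n (s-k)^{c_k}$ in the convention of the statement. The only genuine content in this plan is the first step: the orbit-cone correspondence and the identification $O_c \cong \mathbb{G}_m^{n - \dim c}$. Once that structural input is in place, the remainder is routine binomial manipulation followed by an appeal to the zeta-function machinery already developed in the preceding subsections.
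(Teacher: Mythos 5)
Your argument is correct, and it is in substance the proof that Deitmar gives in \cite{Deitmartoric}; the chapter itself only states the theorem and cites that paper, so there is no in-text proof to diverge from. The point count $N_X(q)=\sum_{j=0}^n f_j(q-1)^{n-j}$ via the orbit--cone correspondence, the reindexing $k=n-j$, and the binomial expansion producing $c_\ell=\sum_{k=\ell}^n f_{n-k}(-1)^{k+\ell}\binom{k}{\ell}$ all check out, and the appeal to the Kurokawa/Deitmar equivalence between polynomial countability and the shape of the $\Fun$-zeta function is the right way to finish.

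Two remarks. First, the one place where your sketch hides real content is exactly where you say the stratification ``descends to every finite field.'' In Deitmar's setting one counts $\#X_{\Z}(\F_q)$ stratum by stratum as $\#\mathrm{Hom}(A_c,(\F_q,\times))$; such a homomorphism is determined by its support (a prime ideal of $A_c$, i.e.\ a face of $c$) together with a character of the quotient group of the complementary submonoid, and the count is $(q-1)^{n-\dim c}$ for \emph{all} $q$ only because $A_c=c^D\cap N^D$ is saturated, so those quotient groups are free abelian. For general integral monoids torsion can appear in these groups, which is why Deitmar's general counting proposition carries the condition $(q-1,e)=1$ and why he establishes the finite-index bijection $\Spec(A)\to\Spec(B)$ recorded just before the theorem in \S\ref{Dtoric}'s appendix. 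You should at least note that saturation is what makes your count unconditional. Second, the formula in the statement, $\prod_{k}(s-k)^{c_k}$, is the reciprocal of the convention $\zeta_{X\vert\Fun}(s)=\prod_k(s-k)^{-a_k}$ used earlier in the chapter; your final step correctly matches the statement as written, but be aware that the two conventions coexist in the text.
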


\medskip
\subsection{Zeta functions of categories}
\label{Kurocat}

In several versions of $\mathbb{F}_1$-geometry (such as in $\Upsilon$-schemes), it appears that many $\Z$-schemes can descend to one and the same 
$\mD_0$-scheme. As we want to see these data as one object, it is desirable that one can attach a zeta function to such an object. In \cite{Kurokawacat}, Kurokawa introduces such an approach.\\

Let $\mathbf{C}$ be a category with a zero object (that is, an object which is both initial and terminal). An object $X$ of $\mathbf{C}$ is {\em simple}\index{simple object} if for every object $Y$, $\mathrm{Hom}(X,Y)$ only consists of monomorphisms and zero-morphisms. The {\em norm}\index{norm}
of an object $Z$ is defined as 
\begin{equation}
N(Z) = \vert \mathrm{End}(Z,Z) \vert = \vert \mathrm{Hom}(Z,Z)\vert.
\end{equation}

An object is {\em finite}\index{finite!object} if its norm is. We denote the category of isomorphism classes of finite simple objects of $\mathbf{C}$ by
$\mP(\mathbf{C})$\index{$\mP(\mathbf{C})$}. The {\em zeta function}\index{zeta!function!of a category} of $\mathbf{C}$ is
\begin{equation}
\zeta(\mathbf{C},s) = \prod_{P \in \mP(\mathbf{C})}\frac{1}{(1 - N(P)^{-s})}.
\end{equation}
Note that if two categories are equivalent, then their zeta functions are the same. 

Following Kurokawa \cite{Kurokawacat}, we indicate some important examples.\\

\subsubsection{Abelian groups}

Let $\mathbf{C} = \mathbf{Ab}$\index{$\mathbf{A}$}, the category of abelian groups (or $\Z$-modules). Then obviously $\mP(\mathbf{C})$ coincides with the set of 
cyclic groups of prime order, so that $\zeta(\mathbf{Ab},s)$ is nothing else than the classical Riemann zeta $\zeta(s)$:
\begin{equation}
\zeta(\mathbf{Ab},s) = \zeta(s).
\end{equation}

\subsubsection{Groups}

If $\mathbf{C}$ is the category of groups, then $\mP(\mathbf{C})$ runs through the finite simple groups. Kurokawa shows that the associated 
zeta function is meromorphic when $\mathrm{Re}(s) > 3$.\\

\subsubsection{Commutative rings}

Let $R$ be a finitely generated commutative ring, and let $\mathbf{C}$ be the category $\mathbf{Mod}(R)$\index{$\mathbf{Mod}(R)$} of $R$-modules. Then
\begin{equation}
\zeta(\mathbf{Mod}(R),s) = \prod_{\frak{m}}\frac{1}{1 - N(\frak{m})^{-s}},
\end{equation}
where $\frak{m}$ runs over all maximal ideals of $R$ and where $N(\frak{m}) = \vert  (R/\frak{m}) \vert$. (A Kurokawa-simple $R$-module is also
simple in the usual sense | there are no proper nonzero $R$-submodules |  and so such a module is isomorphic to $R/\frak{m}$ where $\frak{m}$ is maximal. And one can identify $\mathrm{Hom}(R/\frak{m},R/\frak{m})$ with $R/\frak{m}$.)\\

\medskip
\subsection{$\Upsilon$-Schemes and a conjecture on zeta functions}

 In the incidence geometry over $\mathbb{F}_1$, we have seen (cf. the first chapter of this volume) that if $W \in \mA$, any element of $\underline{\mA}^{-1}(W)$ has many isomorphic copies of $W$, and $W$ is the most general incidence geometry which satisfies the axioms of the class $\underline{\mA}^{-1}(W)$, and which is contained in any element of the latter class.
 In fact, our proposal of base extension (for $\Upsilon$-schemes) is quite in agreement with this idea.
 Perhaps this could also be an approach to define zeta functions of $\mathbb{F}_1$-schemes. Although we state the precise formulation as conjectures, the reader may regard them as ideas, rather then precise predictions. \\
 
\begin{conjecture}[Zeta 1]
The (inverse) zeta function of 
 a $\mD_0$-scheme $S$ should be a divisor of the inverse of any Soul\'{e} zeta function of a $\mathbb{Z}$-scheme $\overline{S}$ which descends to $S$,
  if the latter is a scheme which has a counting polynomial $X$ such that 
 \begin{equation}
 \#(S \otimes_{\mathbb{Z}}\mathbb{F}_q) = X(q)
 \end{equation}
  for any prime power $q$. 
  \end{conjecture}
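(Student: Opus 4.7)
The plan is to use the $\Upsilon$-descent construction to exhibit $S_{\Z} = S \otimes_{\Fun} \Z$ as a closed subscheme of $\overline{S}$, and then to translate point-counts into a coefficient-wise comparison of the associated Deitmar and Soul\'{e} zeta polynomials.

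First I would work locally. Fix an affine model $\overline{S} = \Spec(R)$ with $R \cong \Z[X_i]_{i\in I}/U$, and let $A \cong \Fun[X_i]_{i\in I}/V$ with $V = U(1) = \{P(1) : P \in U\}$ be the corresponding local model for $S$. For any $P = \sum_\alpha c_\alpha X^\alpha \in U$ one has $P \in \langle P(1)\rangle$ viewed as an ideal in $\Z[X_i]_{i\in I}$, so $U \subseteq \langle V \rangle_{\Z}$, giving a surjection $R \twoheadrightarrow \Z[X_i]_{i\in I}/\langle V\rangle_{\Z}$, i.e.\ a closed embedding $S_{\Z} \hookrightarrow \overline{S}$. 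I would globalize by covering $\overline{S}$ by affines recorded by $\leadsto$ and checking that the gluing on both sides is compatible, using that $\Upsilon$-morphisms are defined precisely so as to intertwine the two $\delta$-maps.

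Next I extract counting data. By hypothesis $\#\overline{S}(\F_q) = N_{\overline{S}}(q)$ is a polynomial in $q$, and by Deitmar's proposition on zeta polynomials $\#S_{\Z}(\F_q) = N_S(q)$ is a polynomial for $q$ coprime to some fixed integer $e$. The closed embedding above gives the pointwise inequality $N_S(q) \leq N_{\overline{S}}(q)$ for every such prime power $q$. Writing $N_{\overline{S}}(T) = \sum_{i=0}^n b_i T^i$ and $N_S(T) = \sum_{i=0}^n a_i T^i$, the Kurokawa--Soul\'{e} recipe yields
\begin{equation}
\zeta_{\overline{S}\vert \Fun}(s)^{-1} = \prod_{i=0}^n (s-i)^{b_i}, \qquad \zeta_{S\vert \Fun}(s)^{-1} = \prod_{i=0}^n (s-i)^{a_i},
\end{equation}
so, since the factors $s-i$ are pairwise coprime in $\C[s]$, the desired divisibility is equivalent to the coefficient inequalities $a_i \leq b_i$ for every $i$.

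The main obstacle, and in fact the real content of the conjecture, is the passage from the pointwise inequality $N_S(q) \leq N_{\overline{S}}(q)$ to coefficient-wise dominance: a polynomial that is nonnegative at infinitely many integer arguments need not have nonnegative coefficients. The strategy I would pursue is to show that the open complement $\overline{S}\setminus S_{\Z}$ admits a stratification whose class in the Grothendieck ring $K_0(\mathbf{Var}_{\Z})$ is a nonnegative $\Z$-linear combination of powers of the Lefschetz class $\mathbb{L} = [\A^1_{\Z}]$. For the $\Upsilon$-schemes arising from loose graphs this is plausible via Theorem \ref{WLGaut2} and \S\S\ref{logr}: every relation in $U$ that is lost upon passing to $\langle V \rangle_{\Z}$ should contribute a torus or affine-cell stratum, and by Deitmar's structure theorem of \S\ref{Dtoric} the scheme $S_{\Z}$ is itself cellularly decomposable into orbits of $\GL_1^r$. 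Once such a motivic decomposition is in hand, $N_{\overline{S}}(T) - N_S(T)$ becomes a sum of nonnegative monomials in $T$, whence $a_i \leq b_i$ for each $i$, and the divisibility of inverse zeta functions follows.
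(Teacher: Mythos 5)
First, be aware that the paper itself offers no proof of this statement: it is explicitly a conjecture, and the author even cautions that the ``Zeta'' conjectures are to be read as ideas rather than precise predictions, the only supporting evidence being the quadrangle example that follows. So the question is whether your argument closes the conjecture, and it does not. The reduction you perform is sound as far as it goes: $U \subseteq \Z[V]$ does give a closed embedding $S_{\Z} \hookrightarrow \overline{S}$, hence $N_S(q) \le N_{\overline{S}}(q)$ at prime powers, and divisibility of $\prod_i (s-i)^{b_i}$ by $\prod_i (s-i)^{a_i}$ is indeed equivalent to $a_i \le b_i$ for all $i$. But the bridge you propose from the pointwise inequality to the coefficientwise one --- that $[\overline{S}] - [S_{\Z}]$ is an $\mathbb{N}$-linear combination of powers of $\mathbb{L}$ in the Grothendieck ring --- is precisely the content of the conjecture and is nowhere established. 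The appeal to the toric structure theorem of \S\ref{Dtoric} cannot supply it: that theorem requires $S$ to be connected and \emph{integral}, whereas the $\mD_0$-schemes coming from loose graphs with relations (e.g.\ the quadrangle, with $X_0X_1 = X_2X_3 = 0$) are not integral; and in any case a cell decomposition of $S_{\Z}$ says nothing about the complement $\overline{S}\setminus S_{\Z}$, which is where the negative coefficients would have to be excluded.

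Two further obstacles deserve mention. The coefficients $a_i$ of Deitmar's zeta polynomial can be negative (already $\mathbb{G}_{m\vert\Fun}$ has $N(T) = T - 1$), so ``the inverse of the zeta function'' need not be a polynomial and ``divisor'' must be reinterpreted, which changes what is to be proved. More seriously, the paper's own test case resists the reading your proof relies on: for the quadrangle, $S_{\Z} = \Proj(\Z[X_0,\ldots,X_3]/(X_0X_1,X_2X_3))$ is a cycle of four lines in $\P^3$, so $N_{S_{\Z}}(T) = 4(T+1) - 4 = 4T$ and the inverse Deitmar zeta function is $(s-1)^4$, which does \emph{not} divide the inverse Soul\'{e} zeta function $s(s-1)^2(s-2)$ of the smooth quadric $\overline{S}$; the common divisor the paper exhibits is $s(s-1)(s-2)$. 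Hence either the conjecture is false with the Deitmar zeta function you compute, or the ``zeta function of $S$'' intended in the statement is a different, yet-to-be-defined invariant. Until that notion is pinned down, the statement is a constraint on a definition rather than a theorem admitting the proof you sketch.
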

  
  \begin{conjecture}[Zeta 2]
  \label{Zeta2}
  The zeta function of an $\Upsilon$-scheme is the greatest common divisor of these polynomials.
  \end{conjecture}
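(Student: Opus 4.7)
The plan is to treat Conjecture \ref{Zeta2} as an attempt to identify, inside the polynomial ring $\mathbb{Z}[T]$, the ``largest'' object that simultaneously governs all the lifts in $\mathbf{C}_X$. First I would bring in Conjecture~(Zeta~1) as the base inequality: for every $\overline{X}\in\mathbf{C}_X$ whose Soul\'e-type counting polynomial $N_{\overline{X}}(T)$ exists (so that $\#(\overline{X}\otimes_{\Z}\mathbb{F}_q)=N_{\overline{X}}(q)$ for a.e.\ prime power $q$), the inverse $\zeta$-function of $X$ divides the inverse Soul\'e zeta of $\overline{X}$. Translating via the product formulas on affine and projective pieces displayed for Kurokawa zeta functions, this divisibility is equivalent to polynomial divisibility at the level of counting polynomials, giving a common multiple statement: each $N_{\overline{X}}(T)$ is a multiple (in $\Z[T]$) of a fixed polynomial $N_X(T)$ attached to $X$.

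Next I would work on the gcd direction. Let $G(T):=\gcd_{\overline{X}\in\mathbf{C}_X}N_{\overline{X}}(T)$ in $\Z[T]$. By the preceding paragraph, $N_X(T)\mid G(T)$. The nontrivial step is the reverse divisibility $G(T)\mid N_X(T)$, and my proposal is to obtain it by exhibiting, for each irreducible factor $(T-k)^{a_k}$ appearing in $N_X$ but not in $G$, a specific lift $\overline{X}\in\mathbf{C}_X$ realizing the minimal exponent. Concretely, I would use the $\mathbb{F}_1$-descent picture of \S\ref{UX}: varying the representation $R\cong\Z[X_i]_I/U$ with fixed $A\cong\Fun[X_i]_I/V$, $U(1)=V$, one can impose or relax linear relations without changing the descent (as illustrated by the example $\Z[X_1,\ldots,X_n]/(X_1)$ versus $\Z[X_1,\ldots,X_n]/(X_1-X_2)$). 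This freedom should let one construct, for each $k$, a lift whose $\mathbb{F}_{p^m}$-point count differs from $N_X(p^m)$ precisely in the $(p^m)^k$-coefficient, hence whose counting polynomial contributes the smallest possible multiplicity of $(T-k)$ to the gcd.

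To make the zeta function of an $\Upsilon$-scheme itself into a well-defined object I would route the argument through Kurokawa's categorical zeta function of \S\ref{Kurocat}: apply it to the comma-type category $\mathbf{C}_X$ (after restricting to finite type lifts so that endomorphism sets and simple objects can be controlled), and identify its Euler product with $\prod_p\prod_k(1-p^{k-s})^{-a_k(G)}$, where $a_k(G)$ are the exponents of the factors of $G(T)$. Combined with the gcd identification above and with the $p\to 1$ limit formula of Kurokawa–Deitmar, this would yield $\zeta_{(\mathbf{C}_X,X,\leadsto)|\Fun}(s)=\prod_k(s-k)^{-a_k(G)}$, as claimed.

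The hard part, as I see it, is twofold. The conceptual obstacle is that the $\Upsilon$-scheme zeta function is not yet rigorously defined, so a ``proof'' must simultaneously propose a definition (most plausibly the categorical one above) and verify the gcd description; these two are not a priori compatible, since $\mathbf{C}_X$ need not be essentially small and the Kurokawa finite-simple-object count could diverge. The technical obstacle is the realization step in the second paragraph: constructing, for arbitrary $X$, enough pairwise inequivalent lifts whose counting polynomials jointly have gcd equal to $N_X(T)$ and not a proper multiple. I expect this to be feasible when $X$ is a $\mD_0$-scheme coming from a loose graph (via \S\ref{logr}), since there one can deform the defining ideal $\cup_{\mu\in S}I_\mu$ by replacing pure monomial relations with generic linear ones, but the general case will require a systematic understanding of how addition, invisible from the $\Fun$-side, perturbs point counts modulo each factor $(T-k)$.
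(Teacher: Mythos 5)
The statement you are addressing is labelled \emph{Conjecture} (Zeta 2) in the paper, and the author explicitly offers no proof --- indeed he writes that the reader ``may regard them as ideas, rather than precise predictions,'' and the open question immediately preceding it asks whether a zeta function of an $\Upsilon$-scheme even exists. So there is no proof in the paper against which to compare yours; what you have written is a strategy sketch for an open problem, and as such it contains genuine gaps that you partly acknowledge but do not close. First, your ``base inequality'' is Conjecture (Zeta 1), which is itself unproven, so even the easy divisibility $N_X(T)\mid G(T)$ is conditional. Second, the reverse divisibility rests on a realization step --- producing, for each factor $(s-k)$, a lift in $\mathbf{C}_X$ whose counting polynomial attains the minimal multiplicity --- which you describe only for loose-graph examples and do not carry out; the quadrangle example in the paper (where $\chi(s)=s(s-1)(s-2)$ divides all the inverse Soul\'e zetas of the lifts $\Proj(\Z[X_0,\ldots,X_3]/(\kappa X_0X_1+\kappa'X_2X_3))$) is evidence for the conjecture, not a mechanism for the general case, and the paper's own remark that addition is invisible from below is precisely the obstruction you would need to control.

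Third, and most seriously, your proposed route through Kurokawa's categorical zeta function is not compatible with the target formula as stated: Kurokawa's $\zeta(\mathbf{C},s)$ is an Euler product over finite simple objects weighted by $\lvert\mathrm{End}(P)\rvert^{-s}$, and there is no argument --- nor any plausible one without substantial new input --- identifying the simple objects of the comma category $\mathbf{C}_X$ and their endomorphism monoids with the data $\prod_k(s-k)^{-a_k(G)}$ extracted from a gcd of counting polynomials. You would also need $\mathbf{C}_X$ to have a zero object and an essentially small class of finite simple objects, neither of which is established. The honest status of your text is a research programme: it correctly isolates where the difficulty lies (the definition of the $\Upsilon$-zeta function, and the realization of minimal exponents), but it does not prove the conjecture, and the paper's only rigorous statement in this direction is the conditional one that, under the Tate conjecture, the relevant lifts in $\mathbf{C}_S$ are exactly those with mixed Tate motives.
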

  
  In any case, we predict that there is an ``absolute zeta polynomial'' which is independent of the choice of $\overline{S}$ in $\mathbf{C}_S$, which only depends on $S$, and which has similar properties. A Kurokawa-type approach such as in \S \ref{Kurocat} seems promising to attack these conjectures.\\
  
  Consider  the loose graph $\Gamma$ determined by an ordinary quadrangle; then $S(\Gamma)$ is isomorphic to the scheme 
 \begin{equation}
 \Proj(\mathbb{F}_1[X_0,X_1,X_2,X_3]/(X_0X_1,X_2X_3)), 
 \end{equation}
 and a general element of $\overline{\mathbb{S}(\Gamma)}$ has the form 
 \begin{equation}
 \Proj(\mathbb{Z}[X_0,X_1,X_2,X_3]/(\kappa X_0X_1 + \kappa' X_2X_3)), 
 \end{equation}
 for $\kappa,\kappa' \in \mathbb{Z}^\times$, so any such scheme has a counting polynomial, and obviously the function $\chi(s) = s(s - 1)(s - 2)$ is 
 a divisor of all associated inversed Soul\'{e}-zeta functions. The ``degenerate cases'' are schemes of type 
 
  \begin{equation}
 \Proj(\mathbb{Z}[X_0,X_1,X_2,X_3]/(\kappa X_0X_1,\kappa' X_2X_3), 
 \end{equation}
 which are isomorphic to $S(\Gamma)_{\mathbb{Z}}$ in Deitmar's language.

 \medskip
 \subsubsection{Special case: L-schemes}
 
 Let $\Gamma$ be a loose graph, and $S(\Gamma)$ be the associated $\mD_0$-scheme. Can the zeta function of $S(\Gamma)$, in the vein of the previous paragraph, be read from
 $\Gamma$?

\newpage
\section{Motives, absolute motives and regularized determinants}

{\footnotesize
\hspace*{\fill}Parmi toutes les chose math\'{e}matiques que j'avais eu\\
\hspace*{\fill}le privil\`{e}ge de d\'{e}couvrir et d'amener au jour,\\
\hspace*{\fill}cette r\'{e}alit\'{e} des motifs m'appara\^{\i}t encore comme la plus fascinante,\\
\hspace*{\fill}la plus charg\'{e}e de myst\`{e}re | au coeur m\^{e}me \\
\hspace*{\fill}de l'identit\'{e} profonde entre la ``g\'{e}om\'{e}trie'' et l' ``arithm\'{e}tique''.\\
\hspace*{\fill}Et le ``yoga des motifs'' auquel m'a conduit cette r\'{e}alit\'{e}\\ 
\hspace*{\fill}longtemps ignor\'{e}e est peut-\^{e}tre \\
\hspace*{\fill}le plus puissant instrument de d\'{e}courverte que j'aie d\'{e}gag\'{e}\\
\hspace*{\fill}dans cette premi\`{e}re p\'{e}riode de ma vie de math\'{e}maticien\\}
{\footnotesize \hspace*{\fill}A. Grothendieck, {\em R\'{e}coltes et Semailles}}\\

\bigskip
It is hard to see Absolute Arithmetic not deeply connected to Grothendieck's theory of {\em motives}\index{motive}, which is a universal cohomology 
theory $h$ for ``good'' cohomology theories of (say) varieties over fields.\\

The functor $h$ must satisfy:
\begin{itemize}
\item[(M$_1$)]
the K\"{u}nneth formula
\begin{equation}
h(V \times W) = h(V) \otimes h(W);
\end{equation}
\item[(M$_2$)]
translate disjoint unions into direct sums;
\item[(M$_3$)]
certain additional axioms to obtain the Lefschetz formula.
\end{itemize}

Let $k$ be a field.
The universal cohomology theory $h$ should take values in a category of motives $\mathbf{Mot}(k)$\index{$\mathbf{Mot}(k)$} which should look like the category of finite-dimensional $\mathbb{Q}$-vector spaces | more precisely:
\begin{itemize}
\item[(M$_1'$)]
homomorphism groups should be $\mathbb{Q}$-vector spaces;
\item[(M$_2'$)]
$\mathbf{Mot}(k)$ should be an abelian category (or even better a ``tannakian" category, cf. \cite{DelMil}).
\end{itemize}

Every Weil (= ``good'') cohomology theory $\mathbf{H}$ with coefficients in some field $\F$ (such as the \'{e}tale $\ell$-adic one of before) should fit (uniquely) into the following diagram
\begin{equation}
\mathbf{H}: \mathbf{Var}(\F_q) \overset{h}{\longrightarrow} \mathbf{Mot}(\F_q) \overset{\omega_{\mathbf{H}}}{\longrightarrow} \{ \mathrm{graded}\ \F-\mathrm{vector}\ \mathrm{spaces} \}.
\end{equation}
(We denote the category of nonsingular projective varieties over $\F_q$ by $\mathbf{Var}(\F_q)$\index{$\mathbf{Var}(\F_q)$}.)
Here, $\omega_{\mathbf{H}}$ is a functor which comes with the cohomology theory $\mathbf{H}$, such that
\begin{equation}
\omega_{\mathbf{H}}(h(X)) = \mathbf{H}^*(X) = \bigoplus_{i = 0}^{2\mathrm{dim}(X)}H^i(X).
\end{equation}

\medskip
\subsection{Algebraic cycles and the category $\mathbf{Mot}_{\mathrm{num}}$}

An {\em algebraic cycle}\index{algebraic cycle} of a scheme $S$ is an element in the free abelian group $C(S)$ generated by the closed irreducible reduced subschemes of $S$.
The generators, so the closed irreducible reduced subschemes, are called the {\em prime cycles}\index{prime!cycle} of $S$. We assume in this section that the scheme has finite dimension $n$. We then have a natural grading of $C(S)$ by the codimension, by putting $C^d(S)$\index{$C^d(S)$} equal to the free abelian group generated by the closed irreducible reduced subschemes $Z$ of codimension $d = n - \mathrm{dim}(Z)$.  By mapping a closed irreducible reduced subscheme to its generic point, and vice versa, by mapping a point of $S$ to its closure, we see that prime cycles can be identified with points of $S$, so that $C(S)$ is the free abelian group generated by the points of $S$.

Two cycles $\gamma$ and $\gamma'$ (on $X$) are {\em rationally equivalent}\index{rationally equivalent} if there is an algebraic cycle on $X \times \P^1$ having $\gamma - \gamma'$ as its fibre over one point of $\P^1$ and $0$ as its fibre over a second point; any two algebraic cycles are rationally equivalent to algebraic cycles that intersect properly, and so the ``intersection product'' is well defined.
The latter (on passing to quotients by rational equivalence $\sim$) yields a bi-additive map
\begin{equation}
({\mathrm{C}^n(X,Y)}/\sim) \times ({\mathrm{C}^m(X,Y)}/\sim) \longrightarrow {\mathrm{C}^{n + m}(X,Y)}/\sim.
\end{equation}

The group of {\em correspondences}\index{correspondence} of degree $n$ from a $k$-variety $X$ to a $k$-variety $Y$ is now defined as 
\begin{equation}
{\mathrm{Corr}^n(X,Y)} := \mathrm{C}^{n + \mathrm{dim}(X)}(X \times Y).
\end{equation}
\index{${\mathrm{Corr}^n(X,Y)}$}

We further define
\begin{equation}
\widetilde{\mathrm{Corr}^n(X,Y)} := \mathrm{Corr}^n(X,Y)/\sim\ \ \mbox{and}\ \ 
\widetilde{\mathrm{Corr}^n(X,Y)_{\mathbb{Q}}} := \widetilde{\mathrm{Corr}^nX,Y)} \otimes_{\mathbb{Z}} \mathbb{Q}.
\end{equation}
\index{$\widetilde{\mathrm{Corr}^n(X,Y)}$}\index{$\widetilde{\mathrm{Corr}^n(X,Y)_{\mathbb{Q}}}$}

 Another type of equivalence relation we want to consider is {\em numerical equivalence}\index{numerically equivalent}: two cycles $\gamma$ and $\gamma'$ are said to be {\em numerically equivalent} if for any other algebraic cycle $\gamma''$,
\begin{equation}
\mathrm{deg}(\gamma\cdot\gamma'') = \mathrm{deg}(\gamma'\cdot\gamma''),
\end{equation}
where ``$\mathrm{deg}$'' denotes the degree map.

\medskip
\subsection{Two standard conjectures}

\begin{conjecture}[(D)]
\label{stanD}
If an algebraic cycle is numerically equivalent to zero, then its cohomology class is zero.
\end{conjecture}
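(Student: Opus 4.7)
The statement in question is Grothendieck's standard conjecture $D$ (numerical equivalence coincides with homological equivalence, for a fixed Weil cohomology), which remains one of the major open problems in the theory of motives. I cannot propose a genuine proof; what I can offer is an outline of the strategies that a serious attempt would follow, and identify where the obstacle sits.

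The plan would be to work Weil-cohomology-by-Weil-cohomology and to try to reduce $D$ to the other standard conjectures. Concretely, first I would fix a smooth projective variety $X$ of dimension $n$ over $k$ and a cycle class $\gamma \in C^i(X)$ with $\mathrm{cl}_{\mathbf{H}}(\gamma) \ne 0$; the goal is to produce an algebraic cycle $\gamma'' \in C^{n-i}(X)$ such that $\deg(\gamma\cdot\gamma'') \ne 0$, which would contradict numerical triviality. Poincar\'{e} duality in any Weil cohomology $\mathbf{H}$ guarantees a cohomology class $\eta \in H^{2(n-i)}(X)$ with $\langle \mathrm{cl}(\gamma),\eta\rangle \ne 0$; the whole game is to show that $\eta$ can be chosen algebraic. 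This is precisely what the Lefschetz-type standard conjecture $B$ delivers, since inverting the Lefschetz operator $L$ on algebraic classes requires that $\Lambda$ is itself induced by an algebraic correspondence.

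With that reduction in hand, the next step would be to exploit Jannsen's theorem that the category of pure motives modulo numerical equivalence is semisimple abelian, and try to promote this semisimplicity to the category modulo homological equivalence, which is exactly the content of $D$. One would proceed by constructing an auxiliary polarization on $\mathbf{Mot}_{\mathrm{num}}(k)_{\mathbb{Q}}$ and pulling it back to the homological category, or alternatively by reducing to characteristic zero via specialization and invoking the Hodge conjecture. In characteristic zero, $D$ would follow from the Hodge conjecture combined with conjecture $C$ (algebraicity of K\"{u}nneth projectors); in positive characteristic one is forced through $B$.

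The principal obstacle is that this reduction simply moves the difficulty from one open conjecture to another: in all known cases where $D$ has been established (surfaces, abelian varieties after Lieberman, products of curves, certain hypersurfaces) one first establishes conjecture $B$ by geometric means specific to the situation. In the general setting of the present chapter, where varieties descend from $\Upsilon$-schemes and one contemplates an ``absolute'' motivic framework, the honest hope would be that the extra combinatorial rigidity coming from the $\Fun$-descent produces algebraic representatives for $\Lambda$ that are unavailable over $\Z$, or that an absolute cohomology theory $H^\bullet(\overline{\Spec(\Z)},\ast_{\mathrm{abs}})$ makes the pairing between homological and numerical equivalence visibly non-degenerate by construction. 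Identifying such a framework, rather than executing the reduction itself, is where I expect the real work to lie.
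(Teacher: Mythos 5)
The statement you were asked about is labelled a \emph{conjecture} in the paper — it is Grothendieck's standard conjecture $D$ — and the paper offers no proof of it, only uses it as a hypothesis in the subsequent discussion of motivic decompositions. Your response is therefore the correct one: you rightly decline to prove an open problem, and your survey of the known reductions (to the Lefschetz-type conjecture $B$, to the Hodge conjecture in characteristic zero, and the role of Jannsen's semisimplicity theorem) is accurate and consistent with how the paper treats $D$ as an assumption rather than a result.
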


Assuming Conjecture \ref{stanD}, every Weil cohomology theory $\mathbf{H}$ {\em does} factor uniquely through $X \longrightarrow h(X)$. The next step is wanting more: one desires a decomposition of $h(X)$ that underlies the decomposition of $\mathbf{H}^*(X) = \bigoplus_{i = 0}^{2\mathrm{dim}(X)}H^i(X)$.

\begin{conjecture}[(C)]
\label{stanC}
In $\mathrm{End}(h(X)) = C^{\mathrm{dim}(X)}_{\mathrm{num}}(X\times X)$, the diagonal $\Delta_X$ has a canonical decomposition into a  sum 
of mutually orthogonal idempotents:
\begin{equation}
\Delta_X = \pi_0 + \cdots + \pi_{2\mathrm{dim}(X)}.
\end{equation}
\end{conjecture}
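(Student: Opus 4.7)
The plan is first to establish the decomposition at the level of a chosen Weil cohomology theory, and then to attempt to lift it to cycles modulo numerical equivalence by invoking Conjecture \ref{stanD}. I fix a Weil cohomology $\mathbf{H}$ (for instance $\ell$-adic \'{e}tale cohomology with $\ell \ne \mathrm{char}(k)$) and apply the K\"{u}nneth isomorphism
\begin{equation}
\mathbf{H}^{2n}(X \times X) \cong \bigoplus_{i + j = 2n} H^i(X) \otimes H^j(X),
\end{equation}
with $n = \mathrm{dim}(X)$. Via Poincar\'{e} duality, the right-hand side is canonically the space of graded endomorphisms of $\mathbf{H}^*(X)$, and the class of the diagonal $[\Delta_X]$ corresponds to the identity endomorphism. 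Picking out the projector onto $H^i(X)$ in each summand yields the cohomological decomposition
\begin{equation}
[\Delta_X] = \widetilde{\pi}_0 + \widetilde{\pi}_1 + \cdots + \widetilde{\pi}_{2n},
\end{equation}
with the $\widetilde{\pi}_i \in \mathbf{H}^{2n}(X \times X)$ manifestly mutually orthogonal idempotents summing to $[\Delta_X]$.

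Second, I would leverage Conjecture \ref{stanD}: if one can produce cycles $\pi_i \in C^n(X \times X)/\sim_{\mathrm{num}}$ whose cycle classes realize the $\widetilde{\pi}_i$, then the identities $\pi_i \circ \pi_j = \delta_{ij}\pi_i$ and $\sum_i \pi_i = \Delta_X$ hold cohomologically and hence, by (D), as relations in the group of cycles modulo numerical equivalence. In other words, the entire content of the conjecture collapses onto a single question: the algebraicity of each K\"{u}nneth projector $\widetilde{\pi}_i$.

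Third, this algebraicity is precisely where the real work and the main obstacle lie. Over a finite field $\F_q$ the step can be completed via the Katz--Messing argument: Deligne's Riemann Hypothesis for varieties over finite fields shows that the eigenvalues of the Frobenius correspondence on $H^i_{\mathrm{et}}(\overline{X},\mathbb{Q}_\ell)$ are algebraic numbers all of whose conjugates have absolute value $q^{i/2}$, so the minimal polynomials of Frobenius on distinct $H^i$ are pairwise coprime. A B\'{e}zout-type argument then exhibits each $\widetilde{\pi}_i$ as a polynomial in the Frobenius correspondence (which is algebraic as a cycle on $X \times X$), producing the required lifts. Over a general base field no such absolute Frobenius is available, and the conjecture is known only in isolated geometric situations (curves trivially, surfaces by Murre, abelian varieties by Lieberman--Kleiman). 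I would therefore not expect a complete proof of Conjecture \ref{stanC} without a genuinely new ingredient --- arguably precisely the sort of ``absolute Frobenius'' $\Theta$ that the $\Fun$-theoretic perspective of this chapter is striving to produce, and which would, in the best possible world, play over $\Fun$ the role that Frobenius plays over $\F_q$.
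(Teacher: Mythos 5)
This statement is one of Grothendieck's standard conjectures and the paper does not prove it --- it is stated as a conjecture, with only a remark afterwards listing the cases where it is known. Your proposal correctly recognizes this and is, as an assessment, accurate and consistent with the paper: the Künneth--Poincaré duality construction of the cohomological projectors $\widetilde{\pi}_i$, the reduction (granting Conjecture \ref{stanD}) to the algebraicity of these projectors, and the Katz--Messing/Deligne argument over finite fields are exactly the mechanism the paper alludes to when it says the conjecture holds over finite fields ``where certain polynomials related to the Frobenius map can be used to decompose the motive.'' One small point of comparison: for abelian varieties the paper later works not with the Lieberman--Kleiman cohomological argument you cite but with the Shermenev and Deninger--Murre decomposition, obtained at the level of Chow groups from the eigenspace decomposition of $(\id_{\mA}\times n)^*$ on $\mathrm{CH}^g(\mA\times_k\mA,\mathbb{Q})$; this gives a \emph{canonical} system of mutually orthogonal idempotents $\pi_i$ with $(\id\times n)^*\pi_i = n^i\pi_i$, which is stronger than mere algebraicity of the Künneth components. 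Your conclusion that no complete proof is available in general, and that the missing ingredient is precisely an ``absolute Frobenius'' of the kind the $\Fun$-programme hopes to supply, is the correct one and is the stance the paper itself takes.
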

There would follow an expression
\begin{equation}
h(X) = h^0(X) + \cdots + h^{2\mathrm{dim}(X)}(X),
\end{equation}
where $h^j(X) = h(X,\pi_j,0)$. (The latter is to be defined in the next paragraph.)

\begin{remark}
Conjecture \ref{stanC} is known to be true for nonsingular projective varieties over finite fields (where certain polynomials related to the Frobenius map can be used to decompose the motive), and to abelian varieties in characteristic $0$ (where the power maps $m: X \longrightarrow X$, $m \in \Z$ are used to decompose). We will come back to this in more detail later in this section.
\end{remark}

Assume Conjecture \ref{stanC}. We say that the motive $h^j(X)$ has {\em weight}\index{weight} $j$, and that $h(X,\pi_j,m)$ has {\em weight}\index{weight} $i - 2m$.

\medskip
\subsection{Defining motives}

Usually one defines motives in three simple steps (the fact that they {\em do} define motives is less simple). In this section, let $k$ be a fixed field.\\

\medskip
\# \textsc{Step 1 - The category and morphisms}\quad
Define $\underline{\mathbf{Mot}(k)}$\index{$\underline{\mathbf{Mot}(k)}$} to be a category whose objects are one object $h(X)$ per (nonsingular, projective) $k$-variety $X$, and where, for 
$k$-varieties $X$ and $Y$, we define
\begin{equation}
\mathrm{Hom}(h(X),h(Y)) := \widetilde{\mathrm{Corr}^0(X,Y)_{\mathbb{Q}}}.
\end{equation}
Due to the existence of idempotents, this definition is not sufficient for our goals (the category contains morphisms that do not correspond to the direct 
decomposition of an object, so it is far from abelian). One way to fix this is the following.
An additive category is {\em pseudo-abelian}\index{pseudo-abelian!category} if we have the next property:
\begin{itemize}
\item[PA]
For any idempotent (=projector) $p \in \mathrm{Hom}(X,X)$, where $X$ is any object, there exists a kernel $\mathrm{ker}(p)$, and the canonical homomorphism
\begin{equation}
\mathrm{ker}(p) \oplus \mathrm{ker}(\id_X - p) \longrightarrow X 
\end{equation}
is an isomorphism.
\end{itemize}
Given an additive category $\mathbf{C}$, its {\em pseudo-abelian completion}\index{pseudo-abelian!completion} is the category $\widetilde{\mathbf{C}}$
where objects are pairs $(X,p)$ (in the obvious notation); homomorphisms are defined by 
\begin{equation}
\mathrm{Hom}((X,p),(Y,r)) := \{ f \in \mathrm{Hom}_{\mathbf{C}}(X,Y) \vert f \circ p = r \circ f \}/Z,
\end{equation}
where $Z$ is the subgroup consisting of elements $\mu$ for which $\mu \circ p = r \circ \mu = 0$. So
\begin{equation}
\mathrm{Hom}((X,p),(Y,r)) = \{ r \circ f \circ p \vert f \in \mathrm{Hom}_{\mathbf{C}}(X,Y) \}.
\end{equation}

\medskip
\# \textsc{Step 2 - Adding idempotents}\quad
Define $\overline{\mathbf{Mot}(k)}$\index{$\overline{\mathbf{Mot}(k)}$} to be the pseudo-abelian completion of $\underline{\mathbf{Mot}(k)}$.

As such we have introduced the category of {\em effective motives}\index{effective motive} (where we sometimes specify the sub-index ``rat'' or ``num'' depending on whether the equivalence relation ``$\sim$'' is rational equivalence, or numerical equivalence). 

The contravariant functor $h$ associates with $X$ the element $(X,\id)$; a general element $(X,e)$ then corresponds to the motive $e(h(X))$ that is chopped off from $h(X)$ by the projector $e$. For any Weil cohomology theory $\mathbf{H}$, $e$ then singles out a sub vector space $e(\mathbf{H}(X))$ of $\mathbf{H}^*(X)$.

Define $\mathbb{L} := h^2(\P^1)$\index{$\mathbb{L}$}; it is the so-called {\em Lefschetz motive}\index{Lefschetz!motive}.

\medskip
\# \textsc{Step 3 - Adding duals}\quad
We define $\mathbf{Mot}(k)$\index{$\mathbf{Mot}(k)$} by inverting $\mathbb{L}$; the objects are $h(X,e,m)$ with $X$ a $k$-variety, $e$ an idempotent in the ring $\widetilde{\mathrm{Corr}^0(X,X)_{\mathbb{Q}}}$ and $m \in \mathbb{Z}$, and morphisms are defined by 
\begin{equation}
\mathrm{Hom}(h(X,e,m),h(Y,f,n)) = f \circ \widetilde{\mathrm{Corr}^{n - m}(X,Y)_{\mathbb{Q}}} \circ e.
\end{equation}

This is what one calls the {\em category of motives};
if $\sim$ is rational equivalence, one also speaks of {\em Chow motives}\index{Chow motive}, while if $\sim$ is numerical equivalence, we call them
{\em Grothendieck motives}\index{Grothendieck!motive}. Other equivalences are also in use, and we keep applying the term ``motive'' to each of these theories.

\medskip
\subsection{Grothendieck ring, the Lefschetz motive and virtual Tate motives}

Let $k$ be a field.
Consider the category $\widehat{\mathbf{Var}(k)}$\index{$\widehat{\mathbf{Var}(k)}$} of algebraic $k$-varieties, and let $K_0(\widehat{\mathbf{Var}(k)})$\index{$K_0(\widehat{\mathbf{Var}(k)})$} be its {\em Grothendieck ring}.\index{Grothendieck!ring} This is the free abelian group with generators $[X]$, $[X]$ being the isomorphism class of the object $X$ of $\widehat{\mathbf{Var}(k)}$, moding out the subgroup generated by the relations of the form 
\begin{equation}
[X] =  [Y] + [X \setminus Y], 
\end{equation}
where $Y$ is a closed subvariety of $X$. The group is made into a ring  by defining the product of classes $[X]$ and $[Y]$ as 
\begin{equation}
[X]\cdot [Y] := [X \times_{\Spec(k)} Y],
\end{equation}
extended by linearity. One often calles the generators in $K_0(\widehat{\mathbf{Var}(k)})$ ``virtual motives''\index{virtual!motive}. We define in a similar manner the Grothendieck ring $K_0({\mathbf{Mot}(k)})$\index{$K_0({\mathbf{Mot}(k)})$} of the numerical pure $k$-motives.\\

Denoting the class $[\A^0(k)]$ by $\mathbf{1}$, as an example we have 
\begin{equation}
[\P^n(k)] = \mathbf{1} + [\A^1(k)] + [\A^2(k)] + \cdots + [\A^n(k)],
\end{equation}
for $n \in \mathbb{N}$, since $[\P^n(k)] =  [\P^{n - 1}(k)] + [\A^n(k)]$.\\

Let $R$ be a commutative ring.
An {\em additive invariant}\index{additive invariant} $\chi: \widehat{\mathbf{Var}(k)} \longrightarrow R$ is a map with the following properties (where $X, Y$ are objects in $\widehat{\mathbf{Var}(k)}$):
\begin{itemize}
\item[\#]\textsc{Isomorphism Invariance}
$\chi(X) = \chi(Y)$ if $X \cong Y$
\item[\#]\textsc{Multiplicativity}\quad
$\chi(X \times Y) = \chi(X)\chi(Y)$;
\item[\#]\textsc{Inclusion-Exclusion}
$\chi(X) = \chi(Y) + \chi(X \setminus Y)$ for $Y$ closed in $X$.
\end{itemize}
A standard example is the {\em topological Euler characteristic}. It is clear that giving an additive invariant $\chi$ is the same as giving a ring morphism
\begin{equation}
\chi: K_0(\widehat{\mathbf{Var}(k)}) \longrightarrow R.
\end{equation}

\begin{remark}{\rm
Note that there is an additive invariant 
\begin{equation}
\chi_{\mathrm{mot}}: \widehat{\mathbf{Var}(k)} \longrightarrow K_0({\mathbf{Mot}(k)})
\end{equation}
which assigns to a $k$-variety $X$ the object $[(h(X),\mathrm{id},0)]$ (and is generated by these assignments).}
\end{remark}

Let $\mathbb{L}$ be the class $[\A^1(k)]$ in $K_0(\widehat{\mathbf{Var}(k)})$;\footnote{It is suggestively denoted in the same way as the Lefschetz motive, since in $K_0(\widehat{\mathbf{Var}(k)})$ we have the identity $[\P^1(k)] = \mathbf{1} + [\A^1(k)]$.}  
it is called the virtual {\em Lefschetz motive}\index{virtual!Lefschetz motive}.
The subring $\mathbb{Z}[\mathbb{L}] \subset K_0(\widehat{\mathbf{Var}(k)})$ is the subring of virtual {\em mixed Tate motives}\index{virtual!mixed Tate motive}. So a $k$-variety $X$ has a mixed Tate motive if $[X] \in \mathbb{Z}[\mathbb{L}]$. 

\medskip
\subsection{Mixed Tate motives and counting polynomials}

If $X$ is a $\Z$-variety, and $X$ corresponds to a mixed Tate motive (over $k$), it has class $[X \times_{\Spec(\Z)} \Spec(k)] \in \Z[\mathbb{L}]$ in $K_0(\widehat{\mathbf{Var}(k)})$.
Up to a finite number of primes (where bad reduction phenomena could occur), it follows that 
\begin{equation}
N_{p^m}(X) := \#(X \times_{\Spec(\Z)}\Spec(\F_{p^m}))
\end{equation}
(with $p$ a prime and $m$ a nonzero integer) is a polynomial in $p^m$ since the counting function $N_{p^m}(\cdot)$ is an additive
invariant and since $N_{p^m}(\A_1(\F_{p^m})) = p^m$. 

There is more: {\em polynomial countability at all but finitely many primes is conjecturally
equivalent to the motive of the variety being mixed Tate} | the Tate conjecture
predicts that determining $N_p(X)$ for almost all primes $p$ would determine the
motive of $X  \times_{\Z}\F_{p^m}$ \cite{Andre}! So: 

\begin{theorem}
Under the assumption of the Tate conjecture, the $\mathbb{Z}$-schemes
in $\mathbf{C}_S$ which were addressed in Conjecture \ref{Zeta2} are those which have mixed Tate motives.
\end{theorem}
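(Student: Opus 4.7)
The plan is to prove the theorem as a two-direction equivalence, treating the easy direction directly and invoking the Tate conjecture only for the converse. Throughout, fix a $\mD_0$-scheme $S$, and consider any $\mathbb{Z}$-scheme $\overline{S} \in \mathbf{C}_S$ that descends to $S$. Conjecture \ref{Zeta2} concerns precisely those $\overline{S}$ which admit a counting polynomial $X(q) \in \mathbb{Z}[q]$ such that $\#(\overline{S} \otimes_{\mathbb{Z}} \mathbb{F}_q) = X(q)$ for every prime power $q$ (at least away from finitely many primes of bad reduction).

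First I would handle the direction ``mixed Tate $\Rightarrow$ polynomial countable''. Suppose $[\overline{S} \times_{\Spec(\mathbb{Z})} \Spec(\mathbb{F}_{p^m})] \in \mathbb{Z}[\mathbb{L}]$ in $K_0(\widehat{\mathbf{Var}(\mathbb{F}_{p^m})})$; write this class as $\sum_{k} a_k \mathbb{L}^k$ with $a_k \in \mathbb{Z}$. Since the point-counting map $N_{p^m}\colon \widehat{\mathbf{Var}(\mathbb{F}_{p^m})} \to \mathbb{Z}$ is an additive invariant (isomorphism invariant, multiplicative, and satisfying inclusion-exclusion) and sends $\mathbb{L} = [\mathbb{A}^1(\mathbb{F}_{p^m})]$ to $p^m$, it factors through $K_0(\widehat{\mathbf{Var}(\mathbb{F}_{p^m})})$, giving $N_{p^m}(\overline{S}) = \sum_k a_k (p^m)^k$. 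Thus $X(q) := \sum_k a_k q^k$ is a universal counting polynomial for $\overline{S}$.

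For the converse, I would invoke the Tate conjecture as stated in the paragraph preceding the theorem: polynomial countability at all but finitely many primes is conjecturally equivalent to the motive of the variety being mixed Tate. More precisely, if $N_p(\overline{S})$ is polynomial in $p$ for almost all primes, then the Tate conjecture predicts that these point counts determine (up to isogeny/equivalence in the relevant category) the motive of $\overline{S} \times_{\mathbb{Z}} \mathbb{F}_p$, and the class in the Grothendieck ring $K_0(\mathbf{Mot}(\mathbb{F}_p))$ must lie in the subring $\mathbb{Z}[\mathbb{L}]$ of virtual mixed Tate motives. Combining with the forward implication, this yields the equivalence; the $\mathbb{Z}$-schemes in $\mathbf{C}_S$ figuring in Conjecture \ref{Zeta2} are exactly the mixed Tate ones.

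The main obstacle, of course, lies in the converse: the implication ``polynomial countable $\Rightarrow$ mixed Tate'' is not unconditional, which is why the theorem is stated under the Tate conjecture. In a rigorous write-up one would have to specify which version of the Tate conjecture is being used (for instance the version from \cite{Andre}), and to be careful about the finite set of primes of bad reduction (where the counting polynomial may fail) and about passing between classes in $K_0(\widehat{\mathbf{Var}})$ and $K_0(\mathbf{Mot})$ via the additive invariant $\chi_{\mathrm{mot}}$. Once the statement of Tate is fixed, however, the theorem follows as an immediate reformulation, combining it with the elementary forward direction above.
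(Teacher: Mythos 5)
Your proposal is correct and follows essentially the same route as the paper: the forward implication is obtained from the fact that the point-counting function $N_{p^m}$ is an additive invariant sending $\mathbb{L}$ to $p^m$, hence takes polynomial values on classes in $\mathbb{Z}[\mathbb{L}]$, while the converse is exactly the content of the Tate conjecture as recalled (via \cite{Andre}) in the paragraph preceding the theorem. Your closing remarks about pinning down the precise form of Tate, the finitely many primes of bad reduction, and the passage between $K_0(\widehat{\mathbf{Var}})$ and $K_0(\mathbf{Mot})$ are exactly the caveats implicit in the paper's own (informal) derivation.
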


Recalling Kurokawa's $\Fun$-zeta functions, and assuming the Tate conjecture, it follows that schemes of $\Fun$-type
(slightly modifying Kurokawa's definition by allowing bad reduction up to a finite number of primes) are {\em precisely} those which have a 
mixed Tate motive.

\medskip
\subsection{Artin-Tate motives}

A motive $M$ is called an {\em Artin-Tate motive}\index{Artin-Tate motive} if there exists a finite collection of motives $\{ M_i \vert i \in I\}$ such that each tensor power of 
$M$ is a direct sum of Tate twists of the $M_i$s. The category of Artin-Tate motives contains the Tate motives.

\medskip
\subsection{Zeta functions of motives}

In any tannakian category, endomorphisms of objects have characteristic polynomials. We define the {\em zeta function}\index{zeta!function!of motive} $Z(M,T)$ of a pure motive $M$ over the finite field $\F_p$ ($p$ a prime)  of weight $i$ as the characteristic polynomial of the Frobenius map of $M$ if $i$ is odd, or its reciprocal if $i$ is even. (This polynomial has rational coefficients.) For pure motives $M$ and $M'$ of the same weight we have
\begin{equation}
Z(M \oplus M',T) = Z(M,T)Z(M',T),
\end{equation}
which one extends naturally to all motives.

Let $X$ be an $\F_p$-variety, and $\mathbf{H}$ \'{e}tale $\ell$-adic cohomology  ($\ell \ne p$).
We know by the positive solution of the first Weil conjecture (see \cite{GroBour})   that 
\begin{equation}
Z(X,T) = \frac{P_1(T)\cdots P_{2n - 1}(T)}{P_0(T)\cdots P_{2n}(T)}
\end{equation}
with the $P_i(T)$ being the characteristic polynomials of the Frobenius map of $X$ acting on \'{e}tale $\ell$-adic cohomology $H_{\mathrm{et}}^i(X,\mathbb{Q}_{\ell})$.

Assume Conjecture \ref{stanD}.
As the functor $\omega_{\mathbf{H}}$ preserves characteristic polynomials, we have that 
\begin{equation}
Z(h^i(X),T) = P_i(X,T)^{(-1)^{i + 1}}
\end{equation}
so that

\begin{equation}
Z(X,T) = Z(h^0(X),T)\cdots Z(h^{2n}(X),T).
\end{equation}

\medskip
\subsection{Regularized determinants}

Let $c \in \mathbb{C}^\times$, and suppose $c = \vert c\vert e^{i\gamma}$. Put, for $z$ a complex number,
\begin{equation}
c^{-z} := e^{-(\log{\vert c\vert} + i\gamma)z}.
\end{equation}

Then
\begin{equation}
\left.\mathrm{exp}\left(-\frac{d}{dz}c^{-z}\right|_{z = 0}\right) = \mathrm{exp}(\log{\vert c\vert} + i\gamma) = c.
\end{equation}
(Note that the right hand side is independent of the choice of the argument up to an integer multiple of $2\pi i$.) 
For a finite set of nonzero complex numbers $c_\nu$ (and an arbitrary choice of their arguments), we then get 
\begin{equation}
\left.\mathrm{exp}\left(-\frac{d}{dz}\sum_{\nu}c_{\nu}^{-z}\right|_{z = 0}\right) = \prod_{\nu}c_{\nu}.
\end{equation}

Based on this observation, we want to introduce an {\em infinite product}\index{infinite product}, as follows.
Let $(c_\nu)$ be an infinite family of nonzero complex numbers, let for each $\nu$, $\gamma_\nu$ be a corresponding argument, and assume that 
\begin{itemize}
\item
$\sum_{\nu}c_{\nu}^{-z}$ converges for sufficiently large $\mathrm{Re}(z)$;
\item
the function $\sum_{\nu}c_{\nu}^{-z}$ admits a meromorphic continuation and $z$ is regular in a neighborhoud of $z = 0$. 
\end{itemize}

Then, by definition, 
\begin{equation}
\Rprod_{\nu}c_{\nu} := \Rprod_{\nu}(c_\nu,\gamma_\nu) = \left.\mathrm{exp}\left(-\frac{d}{dz}\sum_{\nu}c_{\nu}^{-z}\right|_{z = 0}\right).
\end{equation}
This product does not change if a finite number of arguments are chozen differently, but it may change otherwise.

Let $X$ be a vector space, and $\Phi$ an operator of $X$ with spectrum $\{ c_{\nu}\}$.
We define the {\em regularized determinant}\index{regularized determinant} of $\Phi \vert X$ as
\begin{equation}
\mbox{\textsc{Det}}(s\cdot\mathbf{1} - \Phi \vert X)\index{\textsc{Det}} = \Rprod_{\nu}(s - c_{\nu}).
\end{equation}

Note that for finite products, $\Rprod$ and $\prod$ coincide.

\medskip
\subsection{Deninger's formula}

Let $\mC$ be a nonsingular absolutely irreducible algebraic curve over the finite field $\F_q$. Fix an algebraic closure $\overline{\F_q}$ of $\F_q$ and let $m \ne 0$ be a positive integer; as we noted in the introduction of this chapter, we have the following Lefschetz formula for the number $\vert \mC(\F_{q^m}) \vert$ of rational points over $\F_{q^m}$:

\begin{equation}
\vert \mC(\F_{q^m}) \vert = \sum_{\omega = 0}^2(-1)^{\omega}\mathrm{Tr}(\mathrm{Fr}^m \Big| H^\omega(\mC)) = 1 - \sum_{j = 0}^{2g}\lambda_j^m + q^f,
\end{equation}
where $\mathrm{Fr}$ is the Frobenius endomorphism acting on the \'{e}tale $\ell$-adic cohomology of $\mC$, the $\lambda_j$s are the eigenvalues of this action, and $g$ is the genus of the curve. We then have a motivic {\em weight decomposition}\index{weight!decomposition}
\begin{equation}
\zeta_{\mC}(s) = \prod_{\omega = 0}^2\zeta_{h^{\omega}(\mC)}(s)^{(-1)^{\omega - 1}} 
		= \frac{\prod_{j = 1}^{2g}(1 - \lambda_jq^{-s})}{(1 - q^{-s})(1 - q^{1 - s})} \nonumber \\
\end{equation}

\begin{eqnarray} 
\label{compLefsch}
	 = \frac{\mbox{\textsc{Det}}\Bigl((s\cdot\id - q^{-s}\cdot\mathrm{Fr})\Bigl| H^1(\mC)\Bigr)}{\mbox{\textsc{Det}}\Bigl((s\cdot\id - q^{-1}\cdot\mathrm{Fr})\Bigl| H^0(\mC)\Bigr.\Bigr)\mbox{\textsc{Det}}\Bigl((s\cdot\id - q^{-s}\cdot\mathrm{Fr})\Bigl| H^2(\mC)\Bigr.\Bigr)}.
	\end{eqnarray}
Here the $\omega$-weight component is the zeta function of the pure weight $\omega$ motive $h^{\omega}(\mC)$ of $\mC$.\\

In the early nineties, Christopher Deninger published his studies (\cite{Deninger1991}, \cite{Deninger1992}, \cite{Deninger1994}) on motives and regularized determinants. In \cite{Deninger1992}, Deninger gave a description of conditions on a category of motives that would admit a translation of Weil's proof of the Riemann Hypothesis for function fields of projective curves over finite fields $\F_q$ to the hypothetical curve $\overline{\Spec(\Z)}$. In particular, he showed that the following analogous formula would hold:

\begin{equation}
\label{169}
\zeta_{\overline{\Spec(\Z)}}(s) = 2^{-1/2}\pi^{-s/2}\Gamma(\frac s2)\zeta(s)  
		= \Rprod_\rho\displaystyle\frac{\frac{s - \rho}{2\pi}}{\frac{s}{2\pi}\frac{s - 1}{2\pi}} \overset{?}{=} \nonumber \\
		\end{equation}
		
	\begin{eqnarray} 
	\label{eq2}
	 \frac{\mbox{\textsc{Det}}\Bigl(\frac 1{2\pi}(s\cdot\id - \Theta)\Bigl| H^1(\overline{\Spec(\Z)},*_{\mathrm{abs}})\Bigr.\Bigr)}{\mbox{\textsc{Det}}\Bigl(\frac 1{2\pi}(s\cdot\id - \Theta)\Bigl| H^0(\overline{\Spec(\Z)},*_{\mathrm{abs}})\Bigr.\Bigr)\mbox{\textsc{Det}}\Bigl(\frac 1{2\pi}(s\cdot\id - \Theta)\Bigl| H^2(\overline{\Spec(\Z)},*_{\mathrm{abs}})\Bigr.\Bigr)}, 
	\end{eqnarray}
	where $\Rprod$ is the regularized product,
$\mbox{\textsc{Det}}$ denotes the regularized determinant, $\Theta$ is an absolute Frobenius endomorphism that comes with the category of motives, and the $H^i(\overline{\Spec(\Z)},*_{\mathrm{abs}})$ are certain proposed cohomology groups. The $\rho$s run through the set of critical zeroes of the classical Riemann zeta.

Note that in the first equation of ($\ref{compLefsch}$) we can re-write the factors of type $(1 - q^{-s}\lambda)$ as 
\begin{equation}
1 - q^{-s}\lambda = \Rprod_{\alpha \vert q^{\alpha} = \lambda}\frac{\log{q}}{2\pi i}(s - \alpha).
\end{equation}

The expression (\ref{169}) combined with Kurokawa's work on multiple zeta functions (\cite{Kurokawa1992}) from 1992 subliminated to  the hope that there are motives $h^0$ (``the absolute point''), $h^1$ and $h^2$ (``the absolute Tate motive'') with zeta functions
	\begin{equation}
	\label{eqzeta}
		\zeta_{h^w}(s) \ = \ \mbox{\textsc{Det}}\Bigl(\frac 1{2\pi}(s\cdot\mathbf{1}-\Theta)\Bigl| H^w(\overline{\Spec(\Z)},*_{\mathrm{abs}})\Bigr.\Bigr) 
	\end{equation}
for $w=0,1,2$. Deninger computed that 
\begin{equation}
\zeta_{h^0}(s)=s/2\pi\ \ \mbox{and}\ \  \zeta_{h^2}(s)=(s-1)/2\pi. 
\end{equation}

Manin proposed in \cite{Manin} the interpretation of $h^0$ as $\Spec(\Fun)$ and the interpretation of $h^2$ as the affine line over $\Fun$. Since these observations were made, the search for a proof of the Riemann Hypothesis became a main motivation to look for a geometric theory over $\Fun$. \\

\medskip
\subsection{Absolute motives}

At present, we do not have a construction available of the object 
\begin{equation}
\Spec(\Z) \times_{\Fun} \Spec(\Z)
\end{equation}
which suits our needs, but what we {\em can} do at least is to image how its zeta function looks like. Here, following Manin \cite{Manin}, we 
will consider zeta functions of {\em absolute motives}\index{absolute!motive}.\\

\subsubsection{Structure}

Imagine that ``natural factors'' of zeta functions of $\Z$-schemes of finite type (think of the decompostions in the previous paragraphs for the intuition behind natural factors), correspond to {\em absolute motives}\index{absolute!motive} $\hM$ that can be reconstructed from the zeroes of $\zeta_{\hM}(s)$ in some way. 
We want to have addition and (sometimes) multiplication at hand, which satisfies the natural rules below. We denote the composition of absolute motives by 
``$\underset{\circ}{\times}$'', and the dual of an absolute motive $\hM$ by $\hM^T$. The rules are:

\begin{equation}
\zeta_{\hM + \hN}(s) = \zeta_{\hM}(s)\zeta_{\hN}(s);
\end{equation}

\begin{equation}
\zeta_{\hM \underset{\circ}{\times} \hN}(s) = \zeta_{\hM}(s)\otimes\zeta_{\hN}(s);
\end{equation}

\begin{equation}
\zeta_{\hM^T}(s) = \zeta_{\hM}(-s).
\end{equation}

\medskip
\begin{proposition}
Every motive must define an absolute motive with the same zeta function.
\end{proposition}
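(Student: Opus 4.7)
The plan is to build the absolute motive attached to a classical motive $M$ by matching its weight decomposition piece by piece against the regularized determinant formalism, and then reading off the rules that addition, composition and duality must satisfy.

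First I would fix a classical motive $M$ over $\F_q$ (or, after a suitable base change, over $\F_p$ for each prime $p$). Assuming Conjecture \ref{stanC}, $M$ admits a decomposition $M = \bigoplus_{i} h^i(M)$ into pure weight components, so it suffices by the additivity rule $\zeta_{\hM+\hN}(s) = \zeta_{\hM}(s)\zeta_{\hN}(s)$ to construct an absolute motive attached to each pure piece $h^i(M)$. For a pure weight $i$ piece, the zeta function is, up to sign in the exponent, the characteristic polynomial of $\mathrm{Fr}$ on the corresponding étale $\ell$-adic cohomology group, i.e.\ a product of factors of the form $1 - q^{-s}\lambda_j$ with $\lambda_j$ the Frobenius eigenvalues.

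Next I would invoke the key identity already recorded in the excerpt,
\begin{equation}
1 - q^{-s}\lambda \ = \ \Rprod_{\alpha\,\vert\, q^{\alpha}=\lambda}\frac{\log q}{2\pi i}(s-\alpha),
\end{equation}
which rewrites each Euler-type factor as a regularized determinant of an operator $s\cdot\id - \Theta$ on an appropriate virtual space, with $\Theta$ playing the role of the absolute Frobenius. Packaging the resulting spaces together for all weights provides a candidate $\hM$ with
\begin{equation}
\zeta_{\hM}(s) \ = \ \prod_{i}\mbox{\textsc{Det}}\Bigl(\tfrac{1}{2\pi}(s\cdot\id-\Theta)\Bigl| H^i_{\mathrm{abs}}(M)\Bigr)^{(-1)^{i-1}},
\end{equation}
which reproduces $\zeta_M(s)$ by (\ref{compLefsch}). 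To make this assignment functorial with respect to the three stated rules, I would declare $\underset{\circ}{\times}$ on the absolute side to correspond to the tensor product of the underlying spaces (so that characteristic polynomials convolve as required), and define the dual $\hM^T$ via the involution $s \mapsto -s$ on the spectrum of $\Theta$; both are compatible with the definitions of $\Rprod$ and $\mbox{\textsc{Det}}$.

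The hard part will be the functoriality check across different characteristics: a motive over $\Z$ must give rise to a single absolute motive whose zeta function recovers the Euler product $\zeta_M(s) = \prod_p \zeta_{M|\F_p}(s)$. To handle this I would take a regularized product over all primes of the local absolute motives built above, using the fact that the local Frobenii $\Theta_p$ assemble (at least conjecturally, in Deninger's formalism) into a single absolute endomorphism $\Theta$ on a global cohomology $H^\bullet(\overline{\Spec(\Z)},*_{\mathrm{abs}})$. Modulo the existence of this global object — which is precisely the point where one has to appeal to Deninger's program rather than prove something unconditional — the verification of the three rules $\zeta_{\hM+\hN} = \zeta_{\hM}\zeta_{\hN}$, $\zeta_{\hM\underset{\circ}{\times}\hN} = \zeta_{\hM}\otimes\zeta_{\hN}$, $\zeta_{\hM^T}(s) = \zeta_{\hM}(-s)$ reduces to standard manipulations of regularized determinants, and the proposition follows.
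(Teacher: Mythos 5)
The paper offers no proof of this proposition at all: in Manin's framework, which this section is explicitly following, the statement is a normative requirement imposed on the still-hypothetical category of absolute motives (note the word ``must''), not a theorem derived from definitions, since neither the absolute cohomology $H^i(\overline{\Spec(\Z)},*_{\mathrm{abs}})$ nor the operator $\Theta$ has been constructed. Your sketch is therefore not in conflict with anything in the paper; it is the standard heuristic that makes the requirement plausible, and it rests on exactly the identity the paper records immediately after Deninger's formula, namely $1 - q^{-s}\lambda = \Rprod_{\alpha \vert q^{\alpha} = \lambda}\frac{\log q}{2\pi i}(s - \alpha)$, which recasts each Euler-type factor of a motivic zeta function as a regularized product and hence exhibits the whole zeta function in the shape $\prod_w \mbox{\textsc{Det}}\bigl(\tfrac{1}{2\pi}(s\cdot\id - \Theta)\bigl| H^w\bigr.\bigr)^{(-1)^{w-1}}$ demanded of absolute motives. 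What your write-up adds over the paper is an explicit (conditional) recipe together with the reduction to pure pieces via Conjecture \ref{stanC}; what it cannot do, as you rightly concede in your final paragraph, is constitute a proof in any currently existing category: the ``appropriate virtual space'' on which $s\cdot\id - \Theta$ acts, the operation realizing $\underset{\circ}{\times}$ as a convolution of spectra, and above all the global object assembling the local data over all primes are precisely the undefined ingredients of Deninger's program. The only adjustment I would suggest is to the framing: present the argument as a consistency check — every motivic zeta function already has the required regularized-determinant form, so postulating an absolute motive with the same zeta function introduces no contradiction and pins down what the functor $\underline{\mA}$ must do on zeta functions — rather than as a construction of the absolute motive itself.
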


Denote the functor which associated to a motive its absolute motive, again, by $\underline{\mA}$\index{$\underline{\mA}$} (and its ``$i$-th factor'' by $\underline{\mA}^i$\index{$\underline{\mA}^i$}).

\medskip
\subsubsection{The  absolute Lefschetz motive}

Manin defines the zeta function of the {\em absolute Lefschetz motive}\index{absolute!Lefschetz motive} $\hL$ as
\begin{equation}
\zeta_{\hL}(s) := \frac{s - 1}{2\pi}.
\end{equation}

We then want that 
\begin{equation}
\zeta_{\hL^{\underset{\circ}\times n}}(s) = \frac{s - n}{2\pi}.
\end{equation}

\begin{remark}{\rm
Sometimes the factor $(2\pi)^{-1}$ is ignored.} \\
\end{remark}

It is imagined that $\hL$ is the motive of the affine line $\Spec(\Fun[X])$ over $\Fun$, and $\hL^0$ corresponds to the absolute point
$\Spec(\Fun)$. So the decomposition of ($\ref{eq2}$) corresponds to the following expression:
\begin{equation}
\underline{\mA}(\overline{\Spec(\Z)}) = \hL^0 \oplus \underline{\mA}^1(\overline{\Spec(\Z)}) \oplus \hL.
\end{equation}

\begin{remark}
{\rm
In \cite{Manin}, Manin calls $\hL$ the {\em absolute Tate motive}\index{absolute!Tate motive} and denotes it by $\hT$. We prefer
to name it ``Lefschetz'' instead of ``Tate'', in accordance with the usual nomenclature for motives. (The inverse of $\hL$ then is the absolute Tate motive.)
}
\end{remark}

\medskip
\subsection{Conjecture C, motivic decomposition and motives for incidence geometries}

Let $X$ be scheme of finite type over $\Z$. As $X$ is Noetherian, there exists a decreasing sequence of closed subschemes
\begin{equation}
X = X_0 \supset X_1 \supset \cdots \supset X_n = \emptyset
\end{equation}
such that each $X_i \setminus X_{i  + 1}$ is an affine scheme of finite type over $\Z$. If we define $Y$ to be  the disjoint union of these affine schemes $X_i \setminus X_{i + 1}$, then $Y$ is an affine scheme for which 
\begin{equation}
N_X(p^m) = N_Y(p^m)
\end{equation}
for any prime $p$, and any positive nonzero integer $m$. (Note that it follows that there exists a family of polynomials $\mP = \{ P_\nu(X_1,\ldots,X_u) \vert \nu \}$ in $\Z[X_1,\ldots,X_u]$ for some positive integer $u$, such that the number $N_{\mP}(p^m)$ of solutions of this family
for any such $p^m$ coincides with $N_X(p^m)$.)\\

\subsubsection{Flag varieties and motivic decomposition}

Recall the notion of ``maximal flag'' from the first chapter of this monograph, and
let $F = (\P_0,\ldots,\P_n)$ be a maximal flag of the $n$-dimensional projective space (scheme) $\P = \P^n(k)$ (over the field $k$). So
\begin{equation}
\emptyset \subset \P_0 \subset \cdots \subset \P_n = \P,
\end{equation}
where $\P^j$ is $j$-dimensional. Note that as a building, $\P^n$ can be defined through its set of maximal flags.
Then each $\P_i \setminus \P_{i - 1}$ is an affine scheme, and the above applies. 
This decomposition corresponds in a precise way to the motivic decomposition 
\begin{equation}
[\P^n(k)] = \mathbf{1} + \mathbb{L} + \mathbb{L}^2 + \cdots + \mathbb{L}^n
\end{equation}
we met earlier. 

The way that a projective space (or vector space) can be represented by its flags, is, for example, through its {\em flag variety}\index{flag variety}.
Let $V$ be a vector space, and let $\mF(V)$ be its set of maximal flags (defined similarly as for projective spaces). Let $\chi = (\chi_0,\chi_1,\ldots,\chi_n)$ be in   $\mF(V)$, where $V$ is an $n$-dimensional $k$-space ($k$ a field, $n$ a positive integer), and where each $\chi_j$ has dimension $j$. (Note that the extremities of the flag are not important for this discussion.) For each integer $d$ with $1 \leq d \leq n - 1$, the Grasmannian\index{Grassmannian} $\wis{Grass}(d,V)$ admits a natural embedding
\begin{equation}
\wis{Grass}(d,V) \hookrightarrow \wedge^d(V)
\end{equation}
which sends an element $W$ to a line of $\wedge^d(V)$. So each $\wis{Grass}(d,V)$ carries the structure of a projective variety. It also follows that $\mF(V)$ is a projective variety through its embedding in $\P(\wedge^1(V) \times \cdots \times \wedge^n(V))$. The flag $\chi$ becomes a projective point in this variety.\\

One is tempted to consider more general varieties which, in some way, can be defined through their set of maximal flags, and try to obtain Conjecture C in the same way as for projective spaces. 
Put more generally: \\

\quad\textsc{Motivic Decomposition.}\quad {\em One wants to deduce similar decompositions of motives for any variety $X$ which admits a filtration
 
\begin{equation}
\emptyset \subset {X}_0 \subset \cdots \subset {X}_n = X,
\end{equation}
of closed subvarieties such that each difference $U_i = X_i \setminus X_{i - 1}$ behaves ``sufficiently well''.}\\

\subsubsection{Abelian varieties}

For abelian varieties, Conjecture C has a neat solution, which we tersely describe here.

In \cite{Sherm} Shermenev gave a decomposition of the Chow motive of an arbitrary abelian variety $\mA$ over an algebraically closed field
\begin{equation}
h(\mA) = \bigoplus_{i = 0}^{2\mathrm{dim}(\mA)}h^i(\mA)
\end{equation}
such that the $\ell$-adic realization of each $h^i(\mA)$ is $H_{\mathrm{et}}^i(\mA,\mathbb{Q}_{\ell})$. 

It should be remarked that such a decomposition is by no means unique (and in this particular case, Shermenev's decomposition is not {\em canonical}). 
In a paper by Deninger and Murre \cite{DenMur}, a {\em canonical functorial} decomposition is given for the more general category of abelian schemes over a smooth quasi-projective scheme defined over a field. In this paragraph we will restrict ourselve to the case of abelian varieties for the sake of convenience.
The crucial observation of \cite{DenMur} is that $\widetilde{\mathrm{Corr}(X,X)} \otimes \mathbb{Q}$ has a decomposition 
into the eigenspaces of the endomorphism $(\id_{\mA} \times n)^*$ where $n$ is an integer. And for $\vert n \vert > 1$
the components of the diagonal with respect to this decomposition are pairwise orthogonal 
idempotents in the ring of correspondences inducing a motivic decomposition.\\

Let $k$ be a field, and let $\mathbf{Var}(k)$ be the category of nonsingular projective varieties $X \longrightarrow \Spec(k)$ over $k$. Let $\widetilde{\mathrm{Corr}(X,X)} =: \mathrm{CH}(X)$\index{$\mathrm{CH}(X)$} be the Chow ring of algebraic cycles of $X$ modulo rational equivalence, and let 
\begin{equation}
\mathrm{CH}(X,\mathbb{Q}) := \mathrm{CH}(X) \otimes \mathbb{Q}.
\end{equation}
\index{$\mathrm{CH}(X,\mathbb{Q})$}
Let $\mA$ be an abelian variety as above (of fibre dimension $g$), and let $T \in \mathbf{Var}(k)$. For $n \in \Z$ define the map $n_T: T \times_k \mA \longrightarrow T \times_k \mA$ to be $\id_T \times n$; for all $p$ we put
\begin{equation}
\mathrm{CH}^p_s(T \times_k \mA,\mathbb{Q}) := \{ \gamma \in \mathrm{CH}^p(T \times_k \mA,\mathbb{Q}) \vert  n_T^*(\gamma) = n^{2p - s}\gamma\ \forall\ n \in \Z\}.
\end{equation}

Then Deninger and Murre show that there is a decomposition 
\begin{equation}
\mathrm{CH}^p(T \times_k \mA,\mathbb{Q}) = \bigoplus_{s = l}^{l'}\mathrm{CH}^p_s(T \times_k \mA,\mathbb{Q}),
\end{equation}
with $l = \mathrm{max}(p - g,2p - 2g)$ and $l' = \mathrm{min}(2p,p + \mathrm{dim}(T))$.

Letting $T = \mA$ and $p = g$ we get that 
\begin{equation}
\mathrm{CH}^g(\mA \times_k \mA,\mathbb{Q}) = \bigoplus_{s = 0}^{2g}\mathrm{CH}^g_s(\mA \times_k \mA,\mathbb{Q}),
\end{equation}
and in particular we have
\begin{equation}
\Delta_\mA = \Delta = \sum_{i = 0}^{2g}\rho_i
\end{equation}
for elements $\rho_i \in \mathrm{CH}^g_i(\mA \times_k \mA,\mathbb{Q})$. Putting $\pi_j := \rho_{2g - j}$, we obtain the next theorem. 

\begin{theorem}[\cite{DenMur}]
There is a unique decomposition 
\begin{equation}
\Delta = \bigoplus_{i = 0}^{2g}\pi_i
\end{equation}
in $\mathrm{CH}^g(\mA \times_k \mA,\mathbb{Q})$
such that $(\id \times n)^*\pi_i = n^i\pi_i$ and $\pi_i \circ \pi_i = \pi_i$ for $i = 0,1,\ldots,2g$ and $n \in \Z$.
\end{theorem}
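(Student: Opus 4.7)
The plan is to set $\pi_j := \rho_{2g-j}$ from the eigenspace decomposition stated just above, and then to exploit the fact that the diagonal $\Delta$ is the identity for composition of correspondences in order to force orthogonal idempotency. With this choice, both the decomposition $\Delta = \sum_{i=0}^{2g}\pi_i$ and the eigenvalue condition $(\id\times n)^*\pi_i = n^i\pi_i$ are immediate from the given decomposition of $\mathrm{CH}^g(\mA\times_k\mA,\mathbb{Q})$ together with the index shift: since $\rho_{2g-j} \in \mathrm{CH}^g_{2g-j}$, one has $(\id\times n)^*\rho_{2g-j} = n^{2g-(2g-j)}\rho_{2g-j} = n^j\pi_j$. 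Uniqueness of the decomposition subject to the eigenvalue condition then drops out of the uniqueness of the eigenspace decomposition itself.

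The substantive content lies in the orthogonal-idempotency relations $\pi_i\circ\pi_j = \delta_{ij}\pi_i$. For this I would first establish the following commutation: for any $\alpha,\beta \in \mathrm{CH}^g(\mA\times_k\mA,\mathbb{Q})$ with $(\id\times n)^*\beta = n^b\beta$, one has $(\id\times n)^*(\beta\circ\alpha) = n^b(\beta\circ\alpha)$. Writing $\beta\circ\alpha = (p_{13})_*(p_{12}^*\alpha\cdot p_{23}^*\beta)$ on $\mA\times_k\mA\times_k\mA$ and setting up the Cartesian square
\[
\begin{array}{ccc}
\mA^3 & \xrightarrow{\id\times\id\times n} & \mA^3 \\
p_{13}\downarrow & & \downarrow p_{13} \\
\mA^2 & \xrightarrow{\id\times n} & \mA^2
\end{array}
\]
(whose bottom map is flat as soon as $n\neq 0$, since $n_{\mA}$ is an isogeny), flat base change yields $(\id\times n)^*(p_{13})_* = (p_{13})_*(\id\times\id\times n)^*$. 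Since $(\id\times\id\times n)^*p_{12}^*\alpha = p_{12}^*\alpha$ while $(\id\times\id\times n)^*p_{23}^*\beta = p_{23}^*(\id\times n)^*\beta = n^b p_{23}^*\beta$, the scalar $n^b$ factors out of the pushforward, establishing the commutation.

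Applying this with $\alpha = \pi_j$ and $\beta = \pi_i$ shows that $\pi_i\circ\pi_j$ lies in the $n^i$-eigenspace of $(\id\times n)^*$, that is, in $\mathrm{CH}^g_{2g-i}(\mA\times_k\mA,\mathbb{Q})$. Since $\Delta$ is the identity correspondence, $\pi_j = \Delta\circ\pi_j = \sum_i \pi_i\circ\pi_j$; comparing this expression with the unique decomposition of $\pi_j$ into its eigenspace components forces $\pi_i\circ\pi_j = 0$ for $i\neq j$ and $\pi_j\circ\pi_j = \pi_j$. The hard part of the argument is precisely the verification of the flat base-change identity above, which depends crucially on $n_{\mA}$ being an isogeny for $n\neq 0$ and on the use of $\mathbb{Q}$-coefficients (so as to divide freely by integers and ignore torsion); once that is in place, the rest is formal bookkeeping with eigenspace decompositions.
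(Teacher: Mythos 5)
Your argument is correct and follows the same route as the paper: the paper likewise defines $\pi_j := \rho_{2g-j}$ from the Deninger--Murre eigenspace decomposition of $\mathrm{CH}^g(\mA\times_k\mA,\mathbb{Q})$, from which the decomposition of $\Delta$, the eigenvalue property and the uniqueness are immediate, and it simply cites \cite{DenMur} for the remaining idempotency. The lemma you supply for that step --- composition with a fixed left factor preserves the $(\id\times n)^*$-eigenvalue of that factor, proved by flat base change along the Cartesian square over $p_{13}$ (using that $n_{\mA}$ is an isogeny for $n\neq 0$), so that expanding $\pi_j = \Delta\circ\pi_j = \sum_i\pi_i\circ\pi_j$ into eigencomponents forces $\pi_i\circ\pi_j = \delta_{ij}\pi_j$ --- is exactly the mechanism behind the cited result, so there is nothing to correct.
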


\subsubsection{Reductive groups}

This is indeed possible for those varieties that come with a ``BN-pair structure'' (which we met in some detail in the first chapter). 

For the convenience of the reader, we quickly recall some basic notions.\\

A group $G$ is said to have
a {\em BN-pair}\index{BN-pair} $(B,N)$, where $B, N$ are subgroups of $G$, if:

\begin{itemize}
\item[(BN1)] 
$\langle B,N \rangle = G$;
\item[(BN2)] 
$H = B \cap N
\lhd N$ and $N/H = W$ is a Coxeter group with distinct generator set of involutions
$S = \{ s_j \vert j \in J \}$;
\item[(BN3)] 
$BsBwB \subseteq BwB \cup
BswB$ whenever $w \in W$ and $s\in S$;
\item[(BN4)] 
$sBs \ne B$ for all $s\in S$.
\end{itemize}

The group $B$, respectively $W$, is a {\em Borel subgroup}\index{Borel subgroup},
respectively the {\em Weyl group}\index{Weyl group}, of $G$. The  quantity $\vert S\vert$
is called the \emph{rank}\index{rank!of BN-pair} of the BN-pair. 
If $W$ is a finite group, the BN-pair is {\em spherical}\index{spherical!BN-pair}. It is {\em irreducible}\index{irreducible!BN-pair} if the corresponding Coxeter system is.
Sometimes we call $(G,B,N)$ also a {\em Tits system}\index{Tits system}.

Note  that $S$ is uniquely determined as the set of elements in $W^\times$ for which
\begin{equation}
B \cup BsB
\end{equation}
is a group.\\

For $w \in W$, let $\ell(w)$ be the {\em length}\index{length} (obviously defined) of $w$ in $W$ with respect to the generating set $S$.\\

To each Tits system $(G,B,N)$ one can associate a building $\mB_{(G,B,N)}$ in a natural way, through a group coset construction. We introduce the standard {\em parabolic subgroups}\index{parabolic subgroup} | these are just the proper subgroups of $G$ which properly contain $B$. Let $I \subset J$, and define 
\begin{equation}
W_I := \langle s_i \vert i \in I \rangle \leq W.
\end{equation}
Then 
\begin{equation}
P_I := BW_IB
\end{equation}
is a subgroup of $G$ which obviously contains $B$, and vice versa it can be shown that any standard parabolic subgroup has this form.

Now define $\mB_{(G,B,N)}$ as follows.

\begin{itemize}
\item[(B1)]
\textsc{Elements}: are elements of the left coset spaces $G/P_I$, $\emptyset \ne I \subset J \ne I$.
\item[(B2)]
\textsc{Incidence}: $gP_I$ is incident with $hP_L$, $I \ne L$, if these cosets intersect nontrivially.
\end{itemize}

The {\em rank}\index{rank!of a building} of $\mB_{(G,B,N)}$ is the rank of the BN-pair.
The building $\mB_{(G,B,N)}$ is {\em spherical}\index{spherical!building} when the BN-pair $(B,N)$ is. It is {\em irreducible}\index{irreducible!building} when $(B,N)$ is irreducible. More details can be found in the first chapter of the author.\\

By work of Karpenko \cite{Karp} (relying on results of Rost \cite{Rost}), and of Chernousov, Gille and Merkurjev \cite{CherGilMer}, we have the following general decomposition result.

\begin{theorem}[\cite{Karp,Rost,CherGilMer}]
Let $X$ be a nonsingular projective $k$-variety ($k$ a field), and suppose that $X$ admits a filtration
\begin{equation}
\emptyset = X_{-1} \subset {X}_0 \subset \cdots \subset {X}_n = X,
\end{equation}
of closed subvarieties, coming with flat morphisms
\begin{equation}
f_i: X_i \setminus X_{i - 1} \longrightarrow Y_i
\end{equation}
of constant relative dimension $n$ for $i = 0,1,\ldots,n$, where the $Y_i$ are smooth projective $k$-varieties. Suppose furthermore that the fiber of every $f_i$ over any point $y \in Y_i$ is isomorphic to $\mathbb{A}^{a_i}_{\kappa(y)}$. Then we have an isomorphism
\begin{equation}
h(X) \cong \bigoplus_{i = 1}^nh(Y_i)(a_i).
\end{equation}
\end{theorem}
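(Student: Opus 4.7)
The plan is to proceed by induction on the length $n$ of the filtration, establishing at each step a direct sum decomposition
\begin{equation}
h(X_i) \cong h(X_{i-1}) \oplus h(Y_i)(a_i)
\end{equation}
in the category of Chow motives, and then iterating to obtain $h(X) \cong \bigoplus_i h(Y_i)(a_i)$.

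The first key input is a relative homotopy invariance lemma: for a flat morphism $p\colon E \to B$ between smooth projective $k$-varieties all of whose fibers are affine spaces $\mathbb{A}^{a}_{\kappa(y)}$, one has $h(E) \cong h(B)(a)$. I would prove this by producing mutually inverse correspondences. The morphism $p$ itself, through the class of its graph with an appropriate Tate twist, yields a map $h(E) \to h(B)(a)$; manufacturing an inverse is the subtle point, because $p$ need not be a Zariski-locally trivial affine bundle and so admits no global section in general. Here I would invoke Rost's nilpotence principle to reduce, after a suitable base change, to the case of a trivial bundle over a splitting field, where a zero section is available, and then lift the resulting idempotent through the nilpotent ideal of geometrically trivial correspondences.

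For the inductive step, let $\iota \colon X_{i-1} \hookrightarrow X_i$ and $j \colon U_i \hookrightarrow X_i$ denote the closed and open complementary immersions, where $U_i = X_i \setminus X_{i-1}$. Since $X_i$ is smooth, one has a Gysin--localization distinguished triangle relating $h(X_i)$, $h(X_{i-1})$ and $h(U_i)$; applying the base-step lemma to $f_i\colon U_i \to Y_i$ identifies the open term with $h(Y_i)(a_i)$. The remaining task is to split this triangle. Using the projector produced in the base step together with the closure of the graph of $f_i$ inside $X_i \times Y_i$, one obtains a correspondence from $X_i$ to $Y_i$ whose restriction to $U_i$ recovers the identity on $h(Y_i)(a_i)$; the complementary projector then carves out $h(X_{i-1})$, and one concludes by the inductive hypothesis applied to the filtration of length $i-1$.

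The main obstacle is precisely this splitting of the localization triangle at each inductive step. In any triangulated category of motives a distinguished triangle does not automatically split, and upgrading the triangle to a direct sum requires a genuine geometric input that produces explicit idempotent correspondences. This is exactly the content of Rost's nilpotence theorem \cite{Rost}, Karpenko's construction of rational projectors for (relatively) cellular varieties \cite{Karp}, and its refinement by Chernousov, Gille and Merkurjev \cite{CherGilMer}. Rather than attempt a direct construction of such projectors, I would verify that the given filtration together with the flat morphisms $f_i$ having $\mathbb{A}^{a_i}$-fibers fits their notion of a relative cellular structure, and then invoke their splitting results to promote the Gysin triangle to the asserted decomposition.
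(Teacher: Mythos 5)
First, a point of comparison: the paper offers no proof of this theorem at all --- it is quoted from the literature, with the citations \cite{Karp,Rost,CherGilMer} standing in for the argument, and is then immediately applied to homogeneous spaces $G/P$. Your proposal ends by ``invoking their splitting results,'' so in the end you and the paper are doing the same thing: deferring to Karpenko's theorem on relative cellular spaces. The intermediate roadmap you sketch (induction on the filtration, localization, homotopy invariance for affine fibrations, explicit mutually orthogonal projectors) is indeed the correct skeleton of that proof.

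That said, two of the steps as you state them would not survive being written out. Your base-step lemma posits a flat morphism $p\colon E \to B$ \emph{between smooth projective varieties} all of whose fibers are $\mathbb{A}^{a}_{\kappa(y)}$; for $a>0$ no such $p$ exists, since the fibers are then closed in $E$, hence projective, while $\mathbb{A}^a$ is not. In the actual application the source $U_i = X_i\setminus X_{i-1}$ is only quasi-projective, so $h(U_i)$ is not an object of the category of pure Chow motives in which the theorem lives, and the ``Gysin--localization distinguished triangle'' you want to split is not available there either (it belongs to a larger triangulated category). This is why Karpenko does not argue your way: he works throughout at the level of Chow groups of products, using the localization exact sequence
\begin{equation}
\mathrm{CH}(X_{i-1}\times T)\longrightarrow \mathrm{CH}(X_i\times T)\longrightarrow \mathrm{CH}(U_i\times T)\longrightarrow 0
\end{equation}
together with the identification $\mathrm{CH}(U_i\times T)\cong \mathrm{CH}(Y_i\times T)$ coming from the affine-space fibration --- an elementary Chow-group computation --- and then builds the projectors directly from closures of graphs, verifying that the relevant pairings of correspondences are unimodular so that the filtration of $\mathrm{CH}$ splits canonically. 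Rost's nilpotence principle is not what splits anything here; it is the tool for the companion results (Krull--Schmidt, behaviour of decompositions under field extension for twisted forms), not for this decomposition of a variety that is already relatively cellular over $k$. So: right overall strategy, but the two places you identify as subtle are resolved by different, and in one case more elementary, means than the ones you propose, and as written your base-step lemma is vacuous.
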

In the latter theorem, the $Y_i$ are anistropic spaces over some finite separable field extension of $k$.

\medskip
Split $k$-reductive algebraic groups $G$
 have a  natural  BN-pair structure: let $T$ be any maximal torus; $N$ is the normalizer $N_G(T)$ of $T$ in $G$,
 $W = N_G(T)/C_G(T)$ where $C_G(T)$ is the centralizer, and $B$ is any miminal parabolic containing $T$.

 We have the following application.

\begin{corollary}
Let $G$ be a split $k$-reductive algebraic group ($k$ a field), let $(B,N)$ be a standard BN-pair as above, and put $W = N/(B \cap N)$.
Let $I \subseteq S$ and $P = P_I$ (where $S$ is the distinguished set of generating involutions of $W$). Then we have an isomorphism
\begin{equation}
h(G/P) \cong \bigoplus_{w' \in W/W_I}\mathbb{L}^{\ell(w')}.
\end{equation}
(Where each $\ell(w')$ is the length of some representative.)\\
\end{corollary}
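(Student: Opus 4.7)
The plan is to deduce the corollary from the theorem of Karpenko--Rost--Chernousov-Gille-Merkurjev stated just above by exhibiting the Bruhat--Schubert stratification of $G/P$ as a filtration of the required type with zero-dimensional base pieces.

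First I would recall the Bruhat decomposition: for $G$ a split $k$-reductive group with BN-pair $(B,N)$ and standard parabolic $P = P_I = BW_IB$, one has a disjoint union
\begin{equation}
G/P \;=\; \bigsqcup_{w' \in W/W_I} C_{w'}, \qquad C_{w'} := BwP/P,
\end{equation}
where $w$ denotes the unique minimal length representative of the coset $w' \in W/W_I$. In the split case, each Bruhat cell $C_{w'}$ is $k$-isomorphic to the affine space $\mathbb{A}^{\ell(w')}_k$, and the closure of $C_{w'}$ in $G/P$ is the Schubert variety $X_{w'} = \bigsqcup_{v' \leq w'} C_{v'}$, with respect to the Bruhat order on $W/W_I$.

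Next I would enumerate the cosets $w'_0, w'_1, \ldots, w'_N$ in some linear order refining the Bruhat order (so that $v' \leq w'_i$ implies $v' = w'_j$ for some $j \leq i$), and define the closed subvarieties
\begin{equation}
X_i \;:=\; \bigsqcup_{j \leq i} C_{w'_j} \;=\; \text{union of the Schubert varieties } X_{w'_j} \text{ for } j \leq i.
\end{equation}
This gives a filtration $\emptyset = X_{-1} \subset X_0 \subset \cdots \subset X_N = G/P$ by closed subvarieties. The difference $X_i \setminus X_{i-1} = C_{w'_i}$ is isomorphic to $\mathbb{A}^{\ell(w'_i)}_k$, so the structural morphism $f_i: C_{w'_i} \longrightarrow Y_i := \Spec(k)$ is flat, of constant relative dimension $a_i := \ell(w'_i)$, with fiber $\mathbb{A}^{a_i}_{\kappa(y)}$ over the unique point. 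The $Y_i$ are smooth projective $k$-varieties (points), so the hypotheses of the preceding theorem are fulfilled. Applying it yields
\begin{equation}
h(G/P) \;\cong\; \bigoplus_{i = 0}^{N} h(Y_i)(a_i) \;\cong\; \bigoplus_{i = 0}^{N} \mathbf{1}(\ell(w'_i)) \;\cong\; \bigoplus_{w' \in W/W_I} \mathbb{L}^{\ell(w')},
\end{equation}
since $h(\Spec(k)) = \mathbf{1}$ and $\mathbf{1}(n)$ is the $n$-th tensor power of the Lefschetz motive.

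The main obstacle is ensuring that the stratification we write down honestly satisfies the hypotheses of the cited theorem: the $X_i$ must be \emph{closed} in $G/P$, not merely locally closed, and the open complement $X_i \setminus X_{i-1}$ must admit a flat map to a smooth projective base with affine-space fibers of constant relative dimension. The first requirement is why one must refine the Bruhat partial order (so that each $X_i$ is a union of Schubert varieties and hence closed); the second is automatic here because we take each $Y_i$ to be a point, reducing the ``flat with $\mathbb{A}^{a_i}$-fibers'' condition to the classical fact that each Bruhat cell in the split case is $k$-isomorphic to an affine space. Once these verifications are in place, the corollary is immediate from the quoted theorem.
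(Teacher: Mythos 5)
Your proposal is correct and is exactly the argument the paper intends: the corollary is stated as an application of the Karpenko--Rost--Chernousov--Gille--Merkurjev theorem, and the implicit proof is precisely the Bruhat--Schubert stratification with minimal-length coset representatives and $Y_i = \Spec(k)$ that you spell out. Your care in refining the Bruhat order so that the partial unions of cells are genuinely closed, and in identifying $\ell(w')$ with the length of the minimal representative, fills in the details the paper leaves tacit.
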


\begin{remark}[Krull-Schmidt theorem]
Let $\mathbf{A}$ be an additive category. An object $A$ of $\mathbf{A}$ is {\em indecomposable}\index{indecomposable} if it is not isomorphic to the direct sum
of two nonzero objects. If $\mathbf{A}$ is pseudo-abelian, $A$ is indecomposable if and only if the endomorphism ring of $A$ in $\mathbf{A}$ has no
nontrivial idempotents. Let again $\mathbf{A}$ be an additive category. We say that $\mathbf{A}$ satisfies the {\em Krull-Schmidt Theorem}\index{Krull-Schmidt Theorem} if any two direct sum decompositions of an object into indecomposable objects are isomorphic. 
\end{remark}

\medskip
\subsection{More general frameworks | back to $\Fun$ and absolute motives}

As any reductive group has the structure of a BN-pair, it is natural to wonder whether any variety which comes from a group with a BN-pair has a canonical
motivic decomposition in much the same way as for reductive groups. On the other hand, since to each Tits system $(G,B,N)$ we can associate a building, maybe there is a synthetic way to derive a canonical motivic decompostion of such a variety. One suggestion would be to extract the decomposition directly from 
the associated diagram.
Recall that groups with a spherical BN-pair of rank at least $3$ were classified by Tits in \cite{Titslect}.
Also, irreducible, spherical Coxeter diagrams were classified by H. S. M. Coxeter \cite{HSMC}, and the complete list is the following.\\

\bigskip
$\mathbf{A}_n$: \begin{tikzpicture}[style=thick, scale=1.3]
\foreach \x in {1,2,3,5,6}{
\fill (\x,0) circle (2pt);}

\draw (1,0) -- (2,0);
\draw (2,0) -- (3,0);
\draw (3,0) -- (3.5,0);
\draw (4.5,0) -- (5,0);
\draw (5,0) -- (6,0);
\draw (4,0) node {$\dots$} ;

\end{tikzpicture} \hspace{0.5cm} ($n\geq 1$)\\

$\mathbf{C}_n$: \begin{tikzpicture}[style=thick, scale=1.3]
\foreach \x in {1,2,3,5,6}{
\fill (\x,0) circle (2pt);}

\draw (1,0) -- (2,0);
\draw (2,0) -- (3,0);
\draw (3,0) -- (3.5,0);
\draw (4.5,0) -- (5,0);
\draw (5,0.035) -- (6,0.035);
\draw (5,-0.035) -- (6,-0.035);
\draw (4,0) node {$\dots$} ;

\end{tikzpicture}\hspace{0.5cm} ($n\geq 2$)\\

$\mathbf{D}_n$: \begin{tikzpicture}[style=thick, scale=1.3]
\foreach \x in {1,2,3,5,6}{
\fill (\x,0) circle (2pt);}

\fill (5,1) circle (2pt);

\draw (1,0) -- (2,0);
\draw (2,0) -- (3,0);
\draw (3,0) -- (3.5,0);
\draw (4.5,0) -- (5,0);
\draw (5,0) -- (6,0);
\draw (5,0) -- (5,1);
\draw (4,0) node {$\dots$} ;

\end{tikzpicture}\hspace{0.5cm}  ($n\geq 4$)\\

$\mathbf{E}_n$: \begin{tikzpicture}[style=thick, scale=1.3]
\foreach \x in {0,2,3,4}{
\fill (\x,0) circle (2pt);}

\fill (2,1) circle (2pt);

\draw (0,0) -- (.5,0);
\draw (1.5,0) -- (2,0);
\draw (2,0) -- (3,0);
\draw (3,0) -- (4,0);
\draw (2,0) -- (2,1);
\draw (1,0) node {$\dots$} ;

\end{tikzpicture}
\hspace{0.5cm}  ($n=6,7,8$)\\

$\mathbf{F}_4$: \begin{tikzpicture}[style=thick, scale=1.3]
\foreach \x in {0,1,2,3}{
\fill (\x,0) circle (2pt);}

\draw (0,0) -- (1,0);
\draw (2,0) -- (3,0);
\draw (1,0.035) -- (2,0.035);
\draw (1,-0.035) -- (2,-0.035);

\end{tikzpicture}\\

$\mathbf{H}_3$: \begin{tikzpicture}[style=thick, scale=1.3]
\foreach \x in {0,1,2}{
\fill (\x,0) circle (2pt);}

\draw (0,0) -- (1,0);
\draw (1,0) -- (2,0);
\draw (1.5,.25) node {$5$} ;

\end{tikzpicture}\\

$\mathbf{H}_4$: \begin{tikzpicture}[style=thick, scale=1.3]
\foreach \x in {-1,0,1,2}{
\fill (\x,0) circle (2pt);}
\draw (-1,0) -- (0,0);
\draw (0,0) -- (1,0);
\draw (1,0) -- (2,0);
\draw (1.5,.25) node {$5$} ;

\end{tikzpicture}\\

$\mathbf{I}_2(m)$: \begin{tikzpicture}[style=thick, scale=1.3]
\foreach \x in {1,2}{
\fill (\x,0) circle (2pt);}

\draw (1,0) -- (2,0);
\draw (1.5,.25) node {$m$} ;

\end{tikzpicture}\hspace{0.5cm}  ($m\geq 5$)\\

\medskip
\begin{conjecture}
Any variety which comes as a homogeneous space from a Tits sytem $(G,B,N)$ as $V(G/P)$ with $P$ a parabolic, has 
a canonical decomposition which can be directly derived from its Coxeter diagram.\\
\end{conjecture}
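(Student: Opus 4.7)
The plan is to bootstrap on the Corollary just stated, which already gives $h(G/P) \cong \bigoplus_{w' \in W/W_I} \mathbb{L}^{\ell(w')}$ for split reductive $k$-groups with BN-pair $(B,N)$ and standard parabolic $P = P_I$. The key observation is that both the index set $W/W_I$ and the length function $\ell$ are combinatorial invariants extracted purely from the Coxeter system $(W,S)$, which in turn is read off from the Coxeter diagram of $(G,B,N)$. Hence, in the crystallographic case, the decomposition is visibly determined by the diagram: one Lefschetz summand $\mathbb{L}^j$ per coset in $W/W_I$ of minimal-representative length $j$, repackaged as a polynomial $\sum_j c_j \mathbb{L}^j$ whose coefficients are the coset-length distribution of $W/W_I$.

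First I would make the Bruhat-type cell decomposition fully explicit, writing $G/P = \bigsqcup_{w' \in W/W_I} C(w')$ with $C(w') = BwP/P \cong \mathbb{A}^{\ell(w')}_k$ for $w$ the minimal-length representative of $w'$. This is an affine stratification in the sense of the theorem of Karpenko, Rost and Chernousov-Gille-Merkurjev cited earlier, and directly produces the motivic decomposition with the desired summands. Because each Schubert cell is indexed by a coset and its dimension is the combinatorial length of its minimal representative, the motivic summands depend only on the abstract Coxeter data, i.e.\ on the diagram. Canonicity would then be argued by a Krull--Schmidt argument in the pseudo-abelian category $\overline{\mathbf{Mot}(k)}$: each $\mathbb{L}^j$ is indecomposable, so any two such Schubert-type decompositions into Lefschetz twists must agree up to reordering.

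The main obstacle will be the non-crystallographic types $\mathbf{H}_3$, $\mathbf{H}_4$, and $\mathbf{I}_2(m)$ (for $m=5$ or $m\geq 7$), where no split reductive algebraic group over a field realises the Tits system, so the naive variety $V(G/P)$ is not available. To handle these, one needs an interpretation of the ``homogeneous space'' purely in terms of the building $\mB_{(G,B,N)}$ or its associated $\Fun$-geometry, together with a motivic formalism on that side that respects the Bruhat stratification. Here the $\Upsilon$-scheme framework developed earlier, together with the loose-graph/$\Proj$-construction associating a $\mD_0$-scheme to a combinatorial geometry, is the natural candidate: one would attach a $\mD_0$-scheme $S(\mB_{(G,B,N)}/P)$ to the coset-building, observe that the Schubert-type closed subschemes $S(\Gamma_{w'})$ give a filtration mirroring the coset decomposition of $W/W_I$, and then produce absolute Lefschetz summands $\hL^{\ell(w')}$ in the sense of Manin's absolute motives.

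In this formulation the conjectural decomposition $\underline{\mA}(V(G/P)) = \bigoplus_{w' \in W/W_I} \hL^{\ell(w')}$ is manifestly diagrammatic, since $(W,S)$ and the subset $I\subseteq S$ are the only inputs. The genuinely new ingredient is therefore twofold: a motivic theory of $\Upsilon$-schemes (or absolute motives) fine enough to see such a decomposition, and a Krull--Schmidt-type uniqueness statement within that theory. The crystallographic case should then follow automatically by comparing with the classical motivic decomposition via the base-extension functor $\mathscr{F}(\cdot,\otimes_{\Fun}\Z)$, while the non-crystallographic cases become the primary test of the framework.
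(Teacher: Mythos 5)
The statement you are trying to prove is stated in the paper as a \emph{conjecture}, and the paper offers no proof of it; so there is no argument of the author's to compare yours against. The question is therefore whether your proposal actually closes the conjecture, and it does not. What you establish in your first two paragraphs is essentially the Corollary already recorded in the paper: for a \emph{split} $k$-reductive group with its standard BN-pair and a standard parabolic $P=P_I$, the Bruhat stratification into affine cells of dimensions $\ell(w')$, $w'\in W/W_I$, yields $h(G/P)\cong\bigoplus_{w'\in W/W_I}\mathbb{L}^{\ell(w')}$, and this decomposition is visibly read off from $(W,S)$ and $I$. That is a correct observation, but it is the known special case that \emph{motivates} the conjecture, not the conjecture itself.

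The genuine gaps are the ones you partially acknowledge but do not close, and one you do not mention. First, the conjecture quantifies over \emph{any} variety arising as $V(G/P)$ from a Tits system, and already for isotropic but non-split reductive groups the cited theorem of Karpenko, Rost and Chernousov--Gille--Merkurjev gives $h(X)\cong\bigoplus_i h(Y_i)(a_i)$ where the $Y_i$ are \emph{anisotropic} varieties over finite separable extensions of $k$; these summands depend on the anisotropic kernel of $G$, i.e.\ on arithmetic data invisible to the Coxeter diagram, so the claim that the decomposition is ``directly derived from the diagram'' is precisely what is unknown there. Second, your appeal to Krull--Schmidt to get canonicity is not automatic: the Krull--Schmidt property is only a definition/remark in the paper, and for Chow motives of projective homogeneous varieties it is a nontrivial theorem (Chernousov--Merkurjev) with known failures outside the settings where it has been proved. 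Third, for the non-crystallographic diagrams $\mathbf{H}_3$, $\mathbf{H}_4$, $\mathbf{I}_2(m)$ and for BN-pairs obtained by free constructions, no variety $V(G/P)$ exists at all, and the $\Upsilon$-scheme or absolute-motive formalism you invoke to replace it is, as you yourself say, ``the genuinely new ingredient'' --- that is, it is the content of the conjecture, not a tool available for proving it. As it stands, your proposal is a correct account of why the conjecture is plausible in the split case together with a list of the open problems; it is not a proof.
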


If this conjecture were true, there would be a short cut to derive such a decomposition from below, since $\underline{\mA}(\mB_{(G,B,N)})$
has the same diagram as $\mB_{(G,B,N)}$, cf. the first chapter of this monograph:\\

\begin{conjecture}
Any variety which comes as a homogeneous space from a Tits sytem $(G,B,N)$ as $V(G/P)$ with $P$ a parabolic, has 
a canonical decomposition which can be directly derived from its Weyl geometry.\\
\end{conjecture}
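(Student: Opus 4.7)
The plan is to reduce the conjecture to the Karpenko--Rost--Chernousov--Gille--Merkurjev decomposition theorem quoted earlier. Specifically, I would produce a filtration of $V(G/P)$ whose successive strata are the Schubert cells of the Bruhat decomposition, and then argue that both the index set of this filtration and its dimension function are purely combinatorial invariants of the Weyl geometry $\mB_{(G,B,N)}$.

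First I would use axioms (BN1)--(BN4) to establish the Bruhat decomposition $G = \bigsqcup_{w \in W} BwB$, together with its refinement
\begin{equation}
G/P_I = \bigsqcup_{w' \in W/W_I} Bw'P_I/P_I,
\end{equation}
where $w'$ is taken to be the unique minimal-length representative of its coset. Ordering the cells by length yields a chain of closed subvarieties $X_{-1} = \emptyset \subset X_0 \subset \cdots \subset X_n = V(G/P)$ such that each $X_i \setminus X_{i-1}$ is a disjoint union of Schubert cells of length $i$, which is precisely the shape of input required by the filtration hypothesis of the Karpenko--Rost--CGM theorem.

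Next I would verify that each cell $C(w') = Bw'P_I/P_I$ is isomorphic to an affine space of dimension $\ell(w')$. In the split reductive setting this is the classical consequence of the root-subgroup factorisation $U \cap w'U^{-}w'^{-1} \cong \mathbb{A}^{\ell(w')}$; feeding the trivial fibrations $C(w') \to \Spec(k)$ into the theorem of Karpenko et al.\ then yields
\begin{equation}
h(V(G/P)) \cong \bigoplus_{w' \in W/W_I} \mathbb{L}^{\ell(w')},
\end{equation}
recovering the corollary already displayed in the text. Since the indexing set $W/W_I$ and the lengths $\ell(w')$ depend only on the Coxeter matrix of $W$ and on the subset $I \subseteq S$, they are encoded faithfully by the labelled diagram of $\mB_{(G,B,N)}$, and this encoding is manifestly functorial with respect to diagram automorphisms. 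That is exactly the assertion that the motivic decomposition is directly readable from the Weyl geometry.

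The main obstacle is the affinity of the Schubert cells in the full generality claimed by the conjecture. For crystallographic diagrams ($\mathbf{A}_n,\mathbf{C}_n,\mathbf{D}_n,\mathbf{E}_n,\mathbf{F}_4$, and $\mathbf{I}_2(m)$ with $m \in \{3,4,6\}$) the argument above goes through verbatim, but for the non-crystallographic types $\mathbf{H}_3,\mathbf{H}_4$ and $\mathbf{I}_2(m)$ with $m \notin \{2,3,4,6\}$ there is no algebraic-group model of $G$, so $V(G/P)$ has no a priori schematic incarnation and the root-subgroup argument for cell affinity is unavailable. In this regime I would interpret the conjecture inside the absolute framework of the previous subsection: replace $h(V(G/P))$ by $\underline{\mA}(V(G/P))$, replace $\mathbb{L}$ by the absolute Lefschetz motive $\hL$, and try to define
\begin{equation}
\underline{\mA}(V(G/P)) \;=\; \bigoplus_{w' \in W/W_I} \hL^{\underset{\circ}{\times}\,\ell(w')}
\end{equation}
directly as a combinatorial object attached to the diagram. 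Concretely I would follow the $\Upsilon$-scheme and loose-graph philosophy of the earlier sections, attaching to each panel and each minimal gallery in $\mB_{(G,B,N)}$ an ``absolute cell'', and verifying by Matsumoto-style exchange arguments that the resulting formal sum is independent of the choice of minimal gallery representatives. The genuine difficulty is to make sense of the left-hand side in absence of a base change to $\Z$, so that the proposed decomposition becomes a definition rather than a theorem; the substantive content is then the compatibility of this definition with the reductive case when such a base change does exist, which should follow from the corollary above.
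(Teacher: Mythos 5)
This statement is labelled a conjecture in the paper and no proof of it is given there, so there is nothing of mine to compare your argument against; what you have written must be judged on its own as an attempted proof, and as such it does not close the conjecture. The part of your argument that does go through --- Bruhat decomposition, affinity of Schubert cells via the root-subgroup factorisation, and the application of the Karpenko--Rost--Chernousov--Gille--Merkurjev theorem --- reproduces exactly the corollary for split $k$-reductive groups that is already displayed in the text immediately before the conjecture. The conjecture is meant to go strictly beyond that corollary, and in the two directions where it does, your proposal either stops or substitutes a definition for a proof.

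Concretely, two gaps remain. First, for a reductive group that is isotropic but not split, the cited decomposition theorem does not yield $\bigoplus_{w'} \mathbb{L}^{\ell(w')}$ but rather $\bigoplus_i h(Y_i)(a_i)$ with the $Y_i$ anisotropic varieties over separable extensions of $k$; the group still has a BN-pair and hence a Weyl geometry, but the motives $h(Y_i)$ are arithmetic invariants (they depend on the Tits index and on the base field, not merely on the Coxeter diagram), so it is precisely \emph{not} clear that the decomposition ``can be directly derived from its Weyl geometry'' --- this is the substantive content of the conjecture and your argument does not touch it. Second, for BN-pairs with no algebraic incarnation (the non-crystallographic types, and the free constructions mentioned in the text), you candidly propose to \emph{define} $\underline{\mA}(V(G/P))$ as $\bigoplus_{w'} \hL^{\underset{\circ}{\times}\,\ell(w')}$; that is a reasonable way to set up the absolute theory, but it makes the claimed decomposition true by fiat rather than proving anything, and the compatibility check you defer to (agreement with the algebraic case under base change) is again only available in the split case. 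You should also note that ``canonical'' is doing real work in the statement: even in the split case one must argue that the idempotents produced by the cell filtration are independent of the choice of filtration (this is where Krull--Schmidt-type results such as those of Chernousov--Merkurjev enter), and your proposal does not address uniqueness at all.
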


Since groups with a BN-pair are not necessarily associated to varieties (there exist such groups which are not ``classical'' in any sense, by 
free constructions), it seems natural to ask for a theory of motives which are associated to (sufficiently structured) synthetic geometries, such as 
the $\Fun$-incidence geometries we considered in the first chapter (since all these objects come with a diagram), or the loose graphs of the present chapter which yield Deitmar schemes, and 
hence $\Upsilon$-schemes. This could lead to the theory of absolute motives alluded to several times before in the cases when varieties are associated to the incidence geometries in question.

More precisely, let $\Gamma$ be a loose graph, let $S_{\Gamma}$ be the corresponding Deitmar scheme, and let $\mathbf{C}_{S_{\Gamma}}$ be as before (so $(\mathbf{C}_{S_{\Gamma}},S_\Gamma,d)$ is an $\Upsilon$-scheme). \\

\quad$\natural$ \textsc{Question}.\quad 
{\em Can the motives of the objects in $\mathbf{C}_{S_{\Gamma}}$ be canonically decomposed just by the data of $S_{\Gamma}$, or $\Gamma$?
(As a motivating example, let $\Gamma$ be a complete graph, let $(\mathbf{C}_{S_{\Gamma}},S_\Gamma,d)$ be the corresponding projective space $\Upsilon$-scheme, etc.)}

\section{Back to buildings | The hyperring of ad\`{e}le classes, and Singer fields}

In \cite{Connes3}, Connes and Consani relate hyperstructures to the geometry of $\mathbb{F}_1$, giving rise to an interesting connection between hyperfield extensions of the so-called ``Krasner hyperfield'', and sharply transitive actions (on points) of automorphism groups of  combinatorial projective planes (see e.g. \cite{Hall,Pren,KTDZ} for strongly related discussions). (One is also referred to the paper \cite{Order} by the author for far more details on the Connes-Consani connection, sketched in the framework of  ``prime power conjectures''.) The {\em Krasner hyperfield}\index{Krasner hyperfield} $\mathbf{K}$ is the hyperfield $(\{0,1\},+,\cdot)$ with additive neutral element $0$, usual multiplication with identity $ \id$, and satisfying the ``hyperrule''

\begin{equation}
1 + 1 = \{0,1\}.
\end{equation}

In the category of hyperrings, $\mathbf{K}$ can be seen as the natural extension of the commutative pointed monoid $\mathbb{F}_1$, that is, $(\mathbf{K},\cdot) = \mathbb{F}_1$.

 \begin{equation}
  \hspace*{-1cm}
\begin{array}{ccc}
\Spec(\mathbf{K})&\leftarrow&\Spec(\mathbb{Z})\\
&&\\
\downarrow\scriptstyle{}&\swarrow&\\
&&\\
\Spec(\mathbb{F}_1)&&\\


\end{array} 
\end{equation}

It is precisely a number of foundational questions that arise in \cite{Connes3} in the context of classifying hyperfield extensions of $\mathbf{K}$, and that can be traced back to Hall \cite{Hall} in 1947, which we want to consider in this section. Formulated in a rather general form, one set of problems we want 
to study is:\\

\quad$\natural$ \textsc{Question(s)}.\quad
{\em For which fields $\K$  can classical planes $\mathbf{PG}(2,\mathbb{K})$ admit sharply transitive automorphism groups?}\\

The precise connection with the field of characteristic one is explained in the next section.
Very roughly, we will see that a group is in some sense ``defined over the field with one element'' if there exists a projective plane such that this group
acts sharply transitively, as an automorphism group, on its points.

And on the other hand, we also want to know\\

\quad$\natural$ \textsc{Question(s)}.\quad
{\em Which groups $G$ act as  sharply transitive automorphism groups of projective planes?}\\

Here, the sharply transitive action is taken on the points. Groups with this action are called {\em Singer groups}\index{Singer!plane} (of the planes) throughout.

The reader recalls a basic classical result for finite projective planes which states that if a finite projective plane $\Gamma$ of order $n$ admits the 
projective general linear group $\mathbf{PGL}_3(n)$ as an automorphism group (so $n$ is already assumed to be a prime power), then $\Gamma$
comes from a vector space over $\F_q$, i.e., is coordinatized over $\F_q$, and the group acts doubly transitively on both points and lines. In fact, similar results can be obtained when we ask that the group is abstract and acts doubly transitively on points or lines, or even by merely assuming it is big enough.
In other words, once an automorphism group is assumed to be big or classical enough or {\em acts} classically enough, this can only work for a classical plane, and the group contains  the information of only the classical plane (which can be reconstructed from the group) for this action. So we ask ourselves the 
following\\

\quad$\natural$ \textsc{Question}.\quad
{\em Can one group act as a Singer group on nonisomorphic projective planes?}\\

(Whereas one and the same classical plane could admit nonisomorphic Singer groups.)\footnote{It should be noted that for {\em affine} Singer actions, such as sharply transitive actions on the point set of an affine plane, many examples exist of nonisomorphic planes that admit the same Singer group. Even for buildings of higher rank such affine actions are known.} 

The sharply transitive action on points (or lines) is much more rigid than the ``linear actions'', but in the finite case no examples of nonclassical planes are known which admit sharply transitive automorphism groups. Even when the groups are assumed to be:
\begin{itemize}
\item
abelian, or even
\item
cyclic
\end{itemize}
we do not know that the planes are classical, or even have {\em a prime power order}! 
In fact, we will see that by a result of Karzel, ``abelian'' implies ``cyclic''. And as we will also see, this is not true at all in the infinite case: Karzel showed that infinite finitely generated abelian groups can {\em never} act sharply transitively on classical projective planes, and 
we shall prove that virtually {\em all} infinite abelian groups can act as Singer groups on {\em certain} projective planes, but those will never be classical.

Eventually, one of the things we want to understand is what the relation is between the structure of a Singer group of a classical plane, and the field over which the plane is  coordinatized. In particular, at the moment we do not know whether, if such a field is algebraically closed or real-closed, such groups exist (in all the other field cases, we will construct Singer groups explicitly). For the algebraic closures of finite fields, however, we will prove indeed that Singer groups cannot exist.

It seems that some general principle is present: the farther away a field $\F$ is from being algebraically closed, the easier it is to construct (isomorphism classes of) Singer groups for planes of type $\PG(2,\F)$. It would be extremely interesting to find a precise formulation for this principle.

Finally, we want to consider not just planes, but any (also infinite dimensional) projective space for this problem.

\bigskip

\subsection{The hyperring of ad\`{e}le classes}

In a recent paper \cite{Connes3}, the authors discovered unexpected connections between hyperrings and (axiomatic) projective geometry, foreseen with certain group actions.
Denoting the profinite completion of $\mathbb{Z}$ by $  \widehat{\mathbb{Z}}$ (and noting that it is isomorphic to the product $\prod_p\mathbb{Z}_p$ of all $p$-adic integer rings), the {\em integral ad\`{e}le ring}\index{integral!ad\`{e}le ring} is defined as 
\begin{equation}
\mathbb{A}_\mathbb{Z} = \mathbb{R} \times   \widehat{\mathbb{Z}},
 \end{equation}
 
 \noindent
  endowed with a suitable topology.
Let $\K$ be a global field (that is, a finite extension of $\mathbb{Q}$ or the function field of an algebraic curve over $\mathbb{F}_q$ | the latter is a finite field extension of the field of rational functions $\mathbb{F}_q(X)$). The {\em ad\`{e}le ring}\index{ad\`{e}le ring} of $\K$ is 
given by the expression
\begin{equation}
\mathbb{A}_{\K} = {\prod_{\nu}^{}}'\K_{\nu},
\end{equation}
which is the restricted product of local fields $\K_{\nu}$, labeled by the places of $\K$. 
If $\K$ is a number field, the ad\`{e}le ring of $\K$ can also be defined as the 
tensor product
\begin{equation}
\mathbb{A}_\K = \K \otimes_{\mathbb{Z}}\mathbb{A}_\mathbb{Z}.
\end{equation}

We need a few more definitions.

\subsection{Hyperrings and hyperfields}

Let $H$ be a set, and ``$+$'' be a ``hyperoperation'' on $H$, namely a map

\begin{equation}
+: H \times H \rightarrow (2^H)^\times,
\end{equation}

\noindent
where $(2^H)^\times = 2^H \setminus \{\emptyset\}$. For $U, V \subseteq H$, denote $\{ \cup(u + v)\vert u \in U, v \in V\}$ by $U + V$. (Here, we identify an element $h \in H$ with the singleton $\{h\} \subset H$.)
Then $(H,+)$ is an {\em abelian hypergroup}\index{abelian hypergroup} provided the following properties are satisfied:

\begin{itemize}
\item
$x + y = y + x$ for all $x, y \in H$;
\item
$(x + y) + z = x + (y + z)$ for all $x, y, z \in H$; 
\item
there is an element $0 \in H$ such that $x + 0 = 0 +  x$ for all $x \in H$;
\item
for all $x \in H$ there is a unique $y \in H$ ($=: -x$) such that $0 \in x + y$;
\item
$x \in y + z$ $\Longrightarrow$ $z \in x - y$.
\end{itemize}

\begin{proposition}[\cite{Connes3}]
Let $(G,\cdot)$ be an abelian group, and let $K \subseteq \Aut(G)$. Then the following operation defines a hypergroup structure on $H = \{g^K\vert g \in G\}$:

\begin{equation}
g_1^{K}\cdot g_2^K := (g_1^K\cdot g_2^K)/K.
\end{equation}
\end{proposition}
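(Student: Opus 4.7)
The plan is to verify the five axioms of an (abelian) hypergroup for the operation $\star$ on $H = G/K$ defined by $g_1^K \star g_2^K := (g_1^K g_2^K)/K$, where $g_1^K g_2^K \subseteq G$ is the pointwise product and we then partition it into $K$-orbits. The first step will be to note that $\star$ is well-defined: because every $\sigma \in K$ is an automorphism, $\sigma(g_1^K g_2^K) = \sigma(g_1^K) \sigma(g_2^K) = g_1^K g_2^K$, so the pointwise product set is $K$-invariant and decomposes uniquely as a disjoint union of $K$-orbits.

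Next I would handle commutativity and associativity simultaneously by reducing them to the corresponding identities in $G$. Commutativity is immediate from $g_1^K g_2^K = g_2^K g_1^K$ (as subsets of $G$) since $G$ is abelian. For associativity, I would observe that
\begin{equation}
\bigcup_{O \in g_1^K \star g_2^K} (O \cdot g_3^K) \;=\; g_1^K g_2^K g_3^K \;=\; \bigcup_{O' \in g_2^K \star g_3^K} (g_1^K \cdot O')
\end{equation}
as subsets of $G$, using associativity of multiplication in $G$; partitioning into $K$-orbits then yields associativity of $\star$.

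For the neutral element, I would take $0 := \{1\}$, which is a singleton $K$-orbit since every $\sigma \in K$ fixes the identity of $G$; then $g^K \star \{1\} = g^K$ directly. For inverses, I claim $-g^K = (g^{-1})^K$: on the one hand $1 = g \cdot g^{-1} \in g^K \cdot (g^{-1})^K$, so $\{1\} \in g^K \star (g^{-1})^K$; on the other hand, if $\{1\} \in g^K \star h^K$, then $1 = \sigma(g)\tau(h)$ for some $\sigma,\tau \in K$, whence $h = \tau^{-1}(\sigma(g^{-1})) \in (g^{-1})^K$, giving uniqueness.

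Finally, for the reversibility axiom, suppose $x^K \in y^K \star z^K$, i.e.\ some $x' \in x^K$ can be written $x' = \sigma(y)\tau(z)$ with $\sigma,\tau \in K$. Solving for $z$ gives $z = \tau^{-1}(\sigma(y)^{-1}) \cdot \tau^{-1}(x')$, which exhibits $z$ as a product of an element of $(y^{-1})^K = (-y)^K$ and an element of $x^K$; hence $z^K \in x^K \star (-y^K)$, as required. Throughout, the only real technical point is the well-definedness noted in the first step, which is exactly where the hypothesis $K \subseteq \Aut(G)$ (as opposed to an arbitrary subset of $G$) is used; the rest is a direct transfer of the abelian-group axioms of $G$ to the orbit space $H$, so no serious obstacle is expected.
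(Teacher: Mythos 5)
Your verification is correct, and it is essentially the standard direct check; the paper itself states this result without proof, simply citing Connes--Consani, and your argument is exactly the one intended there. The two points that actually require the hypotheses — that the product set $g_1^K g_2^K$ is $K$-stable because the elements of $K$ are \emph{automorphisms} (well-definedness), and that commutativity of $G$ is what lets you pass from $z \in (y^{-1})^K\, x^K$ to $z^K \in x^K \star (-y^K)$ in the reversibility axiom — are both correctly identified and handled. The only thing worth flagging is that, as written, the statement says $K \subseteq \Aut(G)$ is a subset; your proof (like the original) tacitly uses that $K$ is a sub\emph{group} (you invert $\sigma$ and $\tau$, and you need the $K$-orbits to partition $G$), which is the intended reading and should perhaps be said explicitly.
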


A {\em hyperring}\index{hyperring} $(\mathbf{R},+,\cdot)$ is a nonempty set $\mathbf{R}$ endowed with a hyperaddition $+$ and the usual multiplication $\cdot$ such that:

\begin{itemize}
\item
$(\mathbf{R},+)$ is an abelian hypergroup with neutral element $0$;
\item
$(\mathbf{R},\cdot)$ is a monoid with multiplicative identity $ \id$;
\item
for all $u, v, w \in \mathbf{R}$ we have that $u\cdot(v + w) = u\cdot v + u\cdot w$ and $(v + w)\cdot u = v\cdot u + w\cdot u$;
\item
for all $u \in \mathbf{R}$ we have that $u\cdot 0 = 0 = 0\cdot u$;
\item
$0 \ne  \id$.
\end{itemize}

A hyperring $(\mathbf{R},+,\cdot)$ is a {\em hyperfield}\index{hyperfield} if $(\mathbf{R} \setminus \{0\},\cdot)$ is a group.

\subsection{The Krasner hyperfield and its extensions}

The {\em Krasner hyperfield}\index{Krasner hyperfield} $\mathbf{K}$ is the hyperfield 
\begin{equation}
(\{0,1\},+,\cdot)
\end{equation}
with additive neutral element $0$, usual multiplication with identity $ \id$, and satisfying the hyperrule

\begin{equation}
1 + 1 = \{0,1\}.
\end{equation}

In the category of hyperrings, $\mathbf{K}$ can be seen as the natural extension of the commutative pointed monoid $\mathbb{F}_1$, that is, $(\mathbf{K},\cdot) = \mathbb{F}_1$. 
As remarked in \cite{Connes5}, the Krasner hyperfield {\em encodes the arithmetic of zero and nonzero numbers}, just as $\mathbb{F}_2$ does for even and odd numbers.
From this viewpoint, it is of no surprise that projective geometry will come into play.


Let $\mathbf{R}$ be a commutative ring, and let $G$ be a subgroup of its multiplicative group. The following operations define a hyperring on the set $\mathbf{R}/G$ of $G$-orbits
in $\mathbf{R}$ under multiplication.

\begin{itemize}
\item
\textsc{Hyperaddition}.\quad
$x + y := (xG + yG)/G = \{ xg + yh \vert g,h \in G \}/G$ for $x, y \in \mathbf{R}/G$.
\item
\textsc{Multiplication}.\quad
$xG\cdot yG := xyG$ for $x, y \in \mathbf{R}/G$.
\end{itemize}

\medskip
\quad\textsc{Important Example}.\quad
Let $\mathbf{R}$ be the finite field $\F_{q^m}$, where $q$ is a prime power and $m \in \mathbb{N}^\times$, and let $G$ be the multiplicative group
$\F_q^{\times} \leq \F_{q^m}^{\times}$. Then we can see $\mathbf{R}$ naturally as an $m$-dimensional $\F_q$-vector space, or better: as an $(m - 1)$-dimensional $\F_q$-projective space. In the latter case, projective points are the cosets $xG$ with $x \ne 0$. And lines, for instance, are of the form $(xG + yG)/G$.
Once one lets $m$ go to $1$, one naturally constructs the Krasner hyperfield $\mathbf{K}$.
These examples will be very important in what is to come. \\

\begin{proposition}[\cite{Connes3}]
Let $\mathbb{K}$ be a field with at least three elements. Then the hyperring $\mathbb{K}/\mathbb{K}^\times$ is isomorphic to the Krasner hyperfield.
If, in general, $\mathbf{R}$ is a commutative ring and $G \subset \mathbb{K}^\times$ is a proper subgroup of the group of units of $\mathbf{R}$, then the hyperring $\mathbf{R}/G$ defined as above contains $\mathbf{K}$ as a subhyperfield if and only if $\{0\} \cup G$ is a subfield of $\mathbf{R}$. 
\end{proposition}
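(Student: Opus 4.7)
The plan is to handle the two-element case first and then bootstrap to the general characterization. For the first assertion, the quotient $\mathbb{K}/\mathbb{K}^\times$ has exactly two classes, $[0]$ and $[1]:=\mathbb{K}^\times$, so the induced multiplicative structure literally is $\mathbb{F}_1$; it remains to check that the hyperaddition satisfies $[1]+[1]=\{[0],[1]\}$, equivalently that $\mathbb{K}^\times+\mathbb{K}^\times=\mathbb{K}$ as subsets of $\mathbb{K}$. Given $c\in\mathbb{K}$, I need $a\in\mathbb{K}^\times$ with $c-a\in\mathbb{K}^\times$; since $|\mathbb{K}^\times|\geq 2$ and at most one choice of $a$ is excluded (namely $a=c$), such an $a$ always exists.

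The reverse direction of the second assertion is essentially a reduction to the first: if $F:=\{0\}\cup G$ is a subfield of $\mathbf{R}$, then the subset $\{[0],[1]\}$ of $\mathbf{R}/G$ coincides with the natural hyperring quotient $F/F^\times=F/G$, which by the first part is $\cong\mathbf{K}$. (One needs $|F|\geq 3$ to invoke the first part; this either follows from $|\mathbf{R}^\times|\geq 3$ or must be taken as an implicit hypothesis, since otherwise $F/G$ is just $\mathbb{F}_2$ with honest single-valued addition.) For the forward direction, suppose $\mathbf{R}/G$ contains $\mathbf{K}$ as a subhyperfield. Its unique nonzero element must be the multiplicative identity of $\mathbf{R}/G$, namely $[1]=G$, so the Krasner rule translates to the set equality $G+G=\{0\}\cup G$ in $\mathbf{R}$. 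This gives closure of $F=\{0\}\cup G$ under addition at once.

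To obtain additive inverses I extract from $0\in G+G$ a relation $g_1+g_2=0$ with $g_i\in G$; dividing by $g_1$ yields $-1=g_2g_1^{-1}\in G$, so $-g=(-1)g\in G$ for every $g\in G$. Combined with multiplicative closure of $G$ and the presence of $0,1\in F$, this shows $F$ is a subring of $\mathbf{R}$, and since every nonzero element of $F$ lies in the group $G$, $F$ is a subfield. The argument is essentially formal; the only delicate step is the extraction of $-1\in G$, which crucially uses both inclusions in $G+G=\{0\}\cup G$ (not merely that sums of elements of $G$ stay inside $F$, but also that $0$ is actually attained as such a sum). This is the point at which I would take the most care.
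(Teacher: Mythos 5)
The paper states this proposition without proof, simply citing Connes--Consani, so there is no in-paper argument to compare against; your proof is correct and is in substance the standard one (reduce the quotient computation to the elementary fact that $\mathbb{K}^\times+\mathbb{K}^\times=\mathbb{K}$ when $|\mathbb{K}|\geq 3$, and in the forward direction extract $-1\in G$ from $0\in G+G$ to get a subfield). Two small points. First, your translation of the Krasner rule into the set identity $G+G=\{0\}\cup G$ tacitly uses that $G+G$ is a union of $G$-orbits (if $g+h=r$ then $kg+kh=kr$ for $k\in G$); that is what upgrades ``$[1]\in[1]+[1]$'' to ``$G\subseteq G+G$'', and it deserves a sentence. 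Second, the $\mathbb{F}_2$ caveat you flag is genuine, but the repair you float does not work: $|\mathbf{R}^\times|\geq 3$ does not force $|\{0\}\cup G|\geq 3$ (take $\mathbf{R}=\mathbb{F}_4$ and $G=\{1\}$, so that $\{0\}\cup G=\mathbb{F}_2$ is a subfield yet $[1]+[1]=\{[0]\}$ in $\mathbf{R}/G=\mathbf{R}$). The ``if'' direction really does require excluding the case where $\{0\}\cup G$ is the two-element field of characteristic $2$, i.e.\ the implicit hypothesis is $G\neq\{1\}$ (or $1+1\neq 0$), parallel to the hypothesis $|\mathbb{K}|\geq 3$ in the first assertion. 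With that understood, your argument is complete.
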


\begin{remark}
{\rm Consider a global field $\mathbb{K}$. Its ad\`{e}le class space $\mathbb{H}_\mathbb{K} = \mathbb{A}_{\mathbb{K}}/\mathbb{K}^\times$ is the quotient of a commutative ring $ \mathbb{A}_{\mathbb{K}}$ by $G = \mathbb{K}^\times$, and $\{0\} \cup G = \mathbb{K}$, so it is a hyperring extension of $\mathbf{K}$.}\\
\end{remark}

A {\em $\mathbf{K}$-vector space}\index{$\mathbf{K}$-vector space} is a hypergroup $E$ provided with a compatible action of $\mathbf{K}$. As $0 \in \mathbf{K}$ acts by retraction (to 
$\{0\} \subset E$) and $ \mathrm{id} \in \mathbf{K}$ acts as the identity on $E$, the $\mathbf{K}$-vector space structure is completely determined by the hypergroup structure.
It follows that a hypergroup $E$ is a $\mathbf{K}$-vector space if and only if 

\begin{equation}
x + x = \{0,x\}\ \ \mathrm{for}\ \ x \ne 0.
\end{equation}

\medskip
Let $E$ be a $\mathbf{K}$-vector space, and define $\mP := E \setminus \{0\}$. For $x, y \ne x \in \mP$, define the {\em line} $L(x,y)$ as 

\begin{equation}
x + y \cup \{x,y\}.  
\end{equation}

\noindent
It can be easily shown | see \cite{Pren} | that $(\mP,\{L(x,y) \vert x, y\ne x \in \mathbf{P}\})$ is a projective space. Conversely, if $(\mP,\mL)$ is the point-line geometry of a projective space with at least $4$ points per line, then a hyperaddition on $E := \mP \cup \{0\}$ can be defined as follows:

\begin{equation}
x + y = xy \setminus \{x,y\}\ \ \mathrm{for}\ \ x \ne y,\ \ \mathrm{and}\ \ x + x =\{0,x\}.
\end{equation}

\medskip
Now let $\mathbb{H}$ be a hyperfield extension of $\mathbf{K}$, and let $(\mP,\mL)$ be the point-line geometry of the associated projective space; then Connes and Consani \cite{Connes3} show that $\mathbb{H}^\times$ induces a so-called ``two-sided incidence group'' (and conversely, starting from such a group $G$, there is a unique hyperfield extension $\mathbb{H}$ of $\mathbf{K}$ such that $\mathbb{H} = G \cup \{0\}$). By the Veblen-Young result (cf. the introductory chapter of this book), this connection is reflected by the next theorem for the finite case.

\begin{proposition}[\cite{Connes3}]
\label{CCH}
Let $\mathbb{H} \supset \mathbf{K}$ be a finite commutative hyperfield extension of $\mathbf{K}$. Then one of the following cases occurs:
\begin{itemize}
\item[{\rm (i)}]
$\mathbb{H} = \mathbf{K}[G]$ for a finite abelian group $G$.
\item[{\rm (ii)}]
There exists a finite field extension $\mathbb{F}_q \subseteq \mathbb{F}_{q^m}$ such that $\mathbb{H} = \mathbb{F}_{q^m}/\mathbb{F}_q^\times$.
\item[{\rm (iii)}]
There exists a finite nonDesarguesian projective plane admitting a sharply point-transitive automorphism group $G$, and $G$ is the abelian incidence group associated to $\mathbb{H}$. 
\end{itemize}
\end{proposition}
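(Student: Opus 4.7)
The plan is to convert the hyperfield data into projective-geometric data: build a projective geometry $\Pi$ whose points are $\mathbb{H}^\times$, exhibit $\mathbb{H}^\times$ as an abelian sharply transitive automorphism group of $\Pi$, and then invoke known classification results to identify $\mathbb{H}$.

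First I would extract from the $\mathbf{K}$-vector space structure on $\mathbb{H}$ the point-line geometry $\Pi = (\mathcal{P}, \mathcal{L})$ with $\mathcal{P} = \mathbb{H}^\times$ and $L(x,y) := (x + y) \cup \{x, y\}$ for $x \ne y$, as recalled earlier in the excerpt. The hypergroup axioms for a $\mathbf{K}$-vector space, together with the absorption $x + x = \{0, x\}$, translate through a direct check of Veblen--Pasch into the statement that $\Pi$ is a (possibly degenerate) projective geometry. Next, distributivity of multiplication over hyperaddition makes $G := \mathbb{H}^\times$ act on $\Pi$ by left translation $g \cdot x = gx$; cancellation in $G$ renders this action sharply transitive on $\mathcal{P}$, and commutativity of $\mathbb{H}$ renders $G$ abelian. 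This sets up the trichotomy.

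Now I would split according to the size of lines in $\Pi$. If every line has only two points, the hyperaddition must be $x + y = \{x, y\}$ for $y \notin \{0, x\}$ together with $x + x = \{0, x\}$; this is precisely the group-algebra hyperfield $\mathbf{K}[G]$, giving case (i). Otherwise lines have at least three points and $\Pi$ is a proper finite projective geometry of some dimension $d \geq 2$. If $d \geq 3$, the Veblen--Young theorem yields $\Pi \cong \mathbf{PG}(n, \mathbb{F}_q)$ for some prime power $q$ and integer $n \geq 3$; the abelian sharply transitive action of $G$ is then, by Singer's classical theorem combined with Karzel's result forcing abelian Singer groups of Desarguesian projective spaces to be cyclic, conjugate to the standard Singer action of $\mathbb{F}_{q^{n+1}}^\times/\mathbb{F}_q^\times$. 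Transporting the hyperaddition of $\mathbb{H}$ along this identification matches it with that of the quotient hyperring $\mathbb{F}_{q^{n+1}}/\mathbb{F}_q^\times$, yielding case (ii). If $d = 2$, then $\Pi$ is a projective plane; if it is Desarguesian one argues as above (with $n = 2$) to land in case (ii), and if it is non-Desarguesian one is precisely in case (iii), the sharply transitive abelian action of $G$ being already exhibited.

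The main obstacle lies in the reconstruction of the hyperaddition in the Desarguesian cases: one must verify that knowing $G$ abstractly as the Singer group of $\mathbf{PG}(n, \mathbb{F}_q)$ is enough to recover, from the $G$-orbit of a single reference line, both the line-structure of $\Pi$ and the hyperaddition of $\mathbb{H}$ in a way that matches the quotient operation on $\mathbb{F}_{q^{n+1}}/\mathbb{F}_q^\times$. The rigidity needed here rests on the fact that in the Singer presentation the lines of $\mathbf{PG}(n, \mathbb{F}_q)$ are exactly the $G$-translates of one distinguished $\mathbb{F}_q$-subline, so the reconstructed hyperaddition is forced. Small-dimensional degeneracies (collapse to a projective line or a single point) are either absorbed into case (i) via the line-size two criterion, or handled as low-dimensional instances of case (ii) (for example $m = 2$ on the projective line).
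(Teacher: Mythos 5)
The paper does not actually prove this proposition; it is quoted from Connes--Consani \cite{Connes3}. Your overall architecture is nonetheless the right one and is essentially theirs: pass from the $\mathbf{K}$-vector space structure of $\mathbb{H}$ to the point-line geometry on $\mathbb{H}^\times$ via $L(x,y)=(x+y)\cup\{x,y\}$, observe that multiplication by $\mathbb{H}^\times$ gives a sharply point-transitive abelian collineation group, and then split into a degenerate case, a Desarguesian case of dimension $\ge 2$ (Veblen--Young plus the Karzel/Singer circle of results), and the non-Desarguesian planar case. The reconstruction issue you flag for case (ii) is indeed where the real work lies: one must know that the abelian (hence, by Karzel, cyclic) sharply transitive action on $\PG(n,q)$ is equivalent to the classical Singer action, so that the line through $1$ and $g$ is carried to an $\mathbb{F}_q$-subline of $\mathbb{F}_{q^{n+1}}$; you correctly identify the relevant rigidity.

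However, your treatment of case (i) is wrong. In the geometry attached to a $\mathbf{K}$-vector space a line can never have exactly two points: for distinct $x,y\in\mathbb{H}^\times$, reversibility ($a\in b+c\Rightarrow c\in a-b$) together with $-x=x$ forces $x\notin x+y$, $y\notin x+y$ and $0\notin x+y$, while $x+y\neq\emptyset$; hence every line has at least three points (in fact at least four, since three-point lines already destroy associativity of the hyperaddition, which is why $q\ge 3$ in case (ii)). In particular the rule $x+y=\{x,y\}$ you propose violates the hypergroup axioms and is not the hyperaddition of $\mathbf{K}[G]$: as the paper records immediately after the proposition, $\mathbf{K}[G]$ has $x+y=\mathbb{H}\setminus\{0,x,y\}$, i.e.\ it corresponds to the \emph{degenerate} geometry in which all of $\mathbb{H}^\times$ forms a single line. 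The correct trichotomy is therefore: all points collinear (case (i)); a projective space of dimension $\ge 2$ which is Desarguesian (case (ii)); a non-Desarguesian plane (case (iii)). With case (i) re-founded on that dichotomy, the remainder of your argument goes through.
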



In case (i), there is only one line (otherwise we have to be in the other cases), so for all $x, y, x',  y' \in \mathbb{H} \setminus \{0\}$ with $x \ne y$ and $x' \ne y'$, we must have

\begin{equation}
L(x,y) =  (x + y) \cup \{x,y\} = (x' + y') \cup \{x',y'\} = L(x' + y') = \mathbb{H} \setminus \{0\}. 
\end{equation}

\medskip
\noindent
In other words, hyperaddition is completely determined:

\begin{equation}
\left\{\begin{array}{cc}
x + 0 = x  &\mathrm{for}\ \ x \in \mathbb{H}\\
x + x = \{0,x\}    &\mathrm{for}\ \ x \in \mathbb{H}^\times\\
x + y = \mathbb{H}  \setminus \{0,x,y\}   &\mathrm{for}\ \ x \ne y \in \mathbb{H}^\times\\
\end{array}\right.
\end{equation}

\medskip
\subsection{Recent developments}

There exist infinite hyperfield extensions $\mathbb{H} \supset \mathbf{K}$ for which $\mathbb{H}^\times \cong \mathbb{Z}$ and not coming from Desarguesian projective spaces in the above sense, see M. Hall \cite{Hall}, and the next section. This remark, together with the following general version of 
Theorem \ref{CCH} (see the remark before that theorem), is the starting point of the paper \cite{HyperSinger}.

\begin{proposition}[\cite{Connes3}]
Let $\mathbb{H} \supset \mathbf{K}$ be a hyperfield extension of the Krasner hyperfield $\mathbf{K}$. Then there exists a  projective space admitting a sharply point-transitive automorphism group $A$, and $A$ is the incidence group associated to $\mathbb{H}$. 
\end{proposition}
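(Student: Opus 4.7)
The plan is to view $\mathbb{H}$ simultaneously as a $\mathbf{K}$-vector space (via restriction of scalars) and as a group (via multiplication), and then to extract a projective space from the first structure on which the second acts sharply transitively. In other words, one runs the Connes--Consani correspondence recalled in the excerpt in the general, possibly infinite, setting.

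First I would check that any hyperfield extension $\mathbb{H} \supset \mathbf{K}$ is automatically a $\mathbf{K}$-vector space in the sense defined above. Since $\mathbf{K} \subseteq \mathbb{H}$ as a subhyperfield, the hyperaddition $1 + 1 = \{0,1\}$ holds in $\mathbb{H}$, and by distributivity of multiplication over hyperaddition one gets, for any nonzero $x \in \mathbb{H}$,
\begin{equation}
x + x \;=\; (1 + 1)\cdot x \;=\; \{0,1\}\cdot x \;=\; \{0,x\},
\end{equation}
which is exactly the defining property. Hence the Prenowitz-type construction discussed in the excerpt applies: setting $\mathbf{P} := \mathbb{H}^{\times}$ and declaring $L(x,y) := (x+y) \cup \{x,y\}$ for distinct $x,y \in \mathbf{P}$, the pair $(\mathbf{P},\mathcal{L})$ is the point-line geometry of a projective space. (I would invoke the cited Prenowitz result \cite{Pren} directly here, treating the few low-cardinality degenerate cases by hand so that Veblen--Young does not need to be appealed to.)

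Second, let $A := \mathbb{H}^{\times}$, acting on $\mathbf{P}$ by left multiplication $a\cdot x = ax$. Because $(\mathbb{H}^{\times},\cdot)$ is a group and the action is the regular action of the group on itself, $A$ acts sharply transitively on $\mathbf{P}$. To see that $A$ is really a group of \emph{automorphisms} of the projective space, one uses distributivity of multiplication over hyperaddition in the hyperring $\mathbb{H}$: for $a\in A$ and distinct $x,y \in \mathbf{P}$,
\begin{equation}
a\cdot L(x,y) \;=\; a\cdot\bigl((x+y)\cup\{x,y\}\bigr) \;=\; (ax+ay)\cup\{ax,ay\} \;=\; L(ax,ay),
\end{equation}
so incidence is preserved; the same argument with right multiplication shows that $A$ yields a two-sided incidence group in the sense of \cite{Connes3}. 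Finally, the identification of $A$ with the incidence group canonically associated to $\mathbb{H}$ is then just the observation that both structures arise from the same underlying set $\mathbb{H}^{\times}$ with the same multiplication, and that the hyperaddition on $\mathbb{H}$ is recovered from the line geometry via $x+y = L(x,y)\setminus\{x,y\}$ (for $x\neq y$) together with $x+x = \{0,x\}$, as in the converse construction recalled in the excerpt.

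The main obstacle is the verification that the Prenowitz construction truly produces a projective space in full generality: for finite hyperfields, the Veblen--Young theorem (and the trichotomy of Proposition \ref{CCH}) does the heavy lifting, but in the infinite case one must verify the Pasch/Veblen axiom directly from the hyperring axioms (reversibility $x \in y+z \Rightarrow z \in x-y$ is the key input), and one has to be attentive to degenerate cases with very short lines, where the geometry is a single line and the claim becomes essentially trivial. Once these cases are dispatched, the compatibility of the $\mathbb{H}^{\times}$-action with incidence and its sharp transitivity are immediate from the hyperring axioms, so the substance of the argument is really in showing that $\mathbb{H}$, stripped of its multiplicative structure, is already a projective $\mathbf{K}$-space in Prenowitz's sense.
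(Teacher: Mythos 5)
Your proposal is correct and follows essentially the same route the paper sketches (and attributes to Connes--Consani): one observes that $\mathbb{H}$ is a $\mathbf{K}$-vector space because $x+x=x\cdot(1+1)=\{0,x\}$, invokes Prenowitz's theorem to turn $E=\mathbb{H}$ into a projective space on $\mP=\mathbb{H}^{\times}$ with lines $L(x,y)=(x+y)\cup\{x,y\}$, and then lets $\mathbb{H}^{\times}$ act regularly by multiplication, which preserves lines by distributivity and so realizes the (two-sided) incidence group as a sharply point-transitive automorphism group. Your attention to the degenerate short-line cases and to verifying the Veblen/Pasch axiom directly in the infinite setting is exactly where the substance lies, and is consistent with how the paper handles it.
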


The space could be nonDesarguesian if its dimension is two.
If the dimension of the space is at least three, we know the space {\em is} coordinatized over a skew field by the Veblen-Young result, but when one does not assume the group to be commutative for these spaces, not much seems to be known about such actions. And in the planar case, we can have axiomatic projective planes which are not associated to vector $3$-spaces (over some skew field), and by Hall's result, such planes could admit extremely strange sharply transitive automorphism groups, such as the infinite cyclic group $\mathbb{Z}, +$.

\bigskip
\subsection{What is known}

Karzel proves the following (answering a more general version of a question of Hall \cite{Hall}):

\begin{theorem}[H. Karzel \cite{Karzel}]
\label{Karz}
Let $S$ be a finitely generated commutative Singer group of $\PG(m,\F)$, with $m \in \mathbb{N}^\times$ and different from $1$,
and $\F$ a field. Then $\F$ is finite, and $S$ is cyclic.
\end{theorem}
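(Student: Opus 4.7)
The plan is to separate $S$ finite from $S$ infinite; the first case is classical and the second must be ruled out. Fix a base point $P_0 \in \PG(m,\F)$, so that sharp transitivity gives a bijection $S \to \PG(m,\F)$ via $s \mapsto s(P_0)$. Hence $|S| = |\PG(m,\F)| = \sum_{i=0}^{m}|\F|^i$, and $S$ finite forces $\F$ finite. If $\F = \F_q$, I would invoke the classical Singer theorem, which identifies every sharply point-transitive collineation subgroup of $\PG(m,\F_q)$ with the cyclic Singer cycle arising from multiplication by a primitive element of $\F_{q^{m+1}}$ on $\F_{q^{m+1}}^\times/\F_q^\times \cong \PG(m,\F_q)$; this yields cyclicity of $S$ in the finite case.

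To rule out $S$ infinite, I would first apply the Veblen--Young theorem: since $m \geq 2$, the space $\PG(m,\F)$ is Desarguesian and $\Aut(\PG(m,\F)) \cong P\Gamma L(m+1,\F)$, so $S$ is an abelian, sharply point-transitive subgroup of $P\Gamma L(m+1,\F)$. Set $S_0 := S \cap PGL(m+1,\F)$; then $S/S_0 \hookrightarrow \Aut(\F)$. A centralizer-lifting argument (using that $S$ is abelian and contains no nontrivial element with a fixed point on $\PG(m,\F)$) produces an abelian $\widetilde{S}_0 \leq GL(m+1,\F)$ projecting onto $S_0$ and containing the scalar subgroup $\F^\times\cdot I$. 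The key structural step is to show that the $\F$-linear span $\mathbb{E} := \F\langle \widetilde{S}_0\rangle \subseteq M_{m+1}(\F)$ is a commutative subalgebra, and that transitivity of $\widetilde{S}_0/\F^\times$ on $\F^\times$-orbits of nonzero vectors in $V = \F^{m+1}$ forces $\mathbb{E}$ to have no zero divisors and every nonzero element invertible, whence $\mathbb{E}$ is a field extension of $\F$ of degree $m+1$ and $S_0 \cong \mathbb{E}^\times/\F^\times$.

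With this identification in hand, it suffices to show that $\mathbb{E}^\times/\F^\times$ is never finitely generated whenever $\F$ is an infinite field and $[\mathbb{E}:\F] \geq 2$. I would split by the characteristic of $\F$: in characteristic zero, choosing $\alpha \in \mathbb{E}\setminus\F$ together with non-archimedean valuations on $\mathbb{E}$ extending the $p$-adic valuations on $\mathbb{Q}$ yields infinitely many multiplicatively independent cosets of the form $(\alpha - r)\F^\times$, $r \in \mathbb{Q}$; in positive characteristic one either finds unbounded torsion orders arising from finite subfields of $\F$, or infinite $\Z$-rank coming from a transcendence basis. Either subcase contradicts the structure $\Z^r\oplus T$ (with $T$ finite) of a finitely generated abelian group. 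The main obstacle I anticipate is the structural step identifying $S$ with a quotient of units of a field extension: when $S/S_0$ is nontrivial so that honest semilinear collineations appear, one must argue that the Galois twist can be absorbed by passing to an enlarged base field $\F' \supseteq \F$ in such a way that the full group $S$ still arises as a quotient $\mathbb{E}'^{\times}/(\F')^{\times}$; this delicate reduction, and not the final multiplicative-group estimate, is the technical heart of the argument.
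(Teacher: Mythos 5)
First, a point of reference: the paper does not prove this statement. It is quoted verbatim from Karzel's 1965 paper \cite{Karzel} and used as a black box, so there is no in-text argument to measure yours against. Judged on its own terms, your proposal has the right overall shape, and it is consistent with the mechanism the paper itself exploits in \S\ref{class}, where Singer groups of classical spaces are manufactured precisely as quotients ${\F'}^{\times}/\F^{\times}$ of a field extension of the appropriate degree; reducing an abelian Singer group to such a quotient and then showing the quotient cannot be finitely generated over an infinite field is indeed the natural route.

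The difficulty is that every step you label ``key'' or ``delicate'' is exactly where the content of Karzel's theorem lives, and none of them is actually carried out. (i) The finite case is not covered by ``the classical Singer theorem'': Singer's theorem is an existence statement (it exhibits one cyclic sharply transitive group), whereas the claim that \emph{every} abelian Singer group of $\PG(m,q)$ is cyclic is itself part of what Karzel proves --- the paper even advertises it as such (``by a result of Karzel, `abelian' implies `cyclic'\,''). As written, this part of your argument is circular unless you derive it from your own structural step, which would also have to dispose of possible semilinear elements over $\F_{p^e}$. (ii) The ``centralizer-lifting argument'' is asserted, not proved: the preimage in $\GL(m+1,\F)$ of an abelian subgroup of $\PGL(m+1,\F)$ is only a central extension by $\F^\times$ and need not be abelian; for lifts $\tilde a,\tilde b$ one only gets $\tilde a\tilde b\tilde a^{-1}=\lambda\tilde b$ with $\lambda^{m+1}=1$, and forcing $\lambda=1$ genuinely requires the fixed-point-freeness (no eigenvectors in $\F^{m+1}$) --- without it the claim is false, as abelian subgroups of $\PGL_2$ with non-abelian preimage show. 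Only after this can the linear span $\mathbb{E}$ be a commutative algebra, and one still must argue it is a field of degree exactly $m+1$. (iii) The final claim that $\mathbb{E}^\times/\F^\times$ is never finitely generated for $\F$ infinite and $[\mathbb{E}:\F]\geq 2$ is the arithmetic heart of the matter and is only gestured at; your characteristic-zero sketch (extending $p$-adic valuations from $\mathbb{Q}$) does not obviously produce classes that remain independent \emph{modulo} $\F^\times$ when $\F^\times$ itself has a large value group. (iv) The reduction of the semilinear case is explicitly deferred. So: plausible and correctly structured strategy, but with the essential steps left as acknowledged placeholders rather than proofs.
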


So Desarguesian projective spaces (different from projective lines) can only allow  commutative Singer groups which are 
infinitely generated. Later, we will construct commutative Singer groups with this property for spaces $\PG(m,\F)$ for many values of $(m,\F)$.

On the other hand, we will show in the next section that virtually any infinite  commutative group (those that do not have involutions) can 
act as a Singer group on an appropriate projective plane | so also the finitely generated examples | but the planes are not 
Desarguesian by Karzel's result. Our result is a corollary of a theorem which generalizes the next result of Hughes:

\begin{theorem}[D. R. Hughes \cite{Hughes}]
\label{Hugh}
Let $H$ be a countably infinite group, and
assume the following properties for $H$:
\begin{equation}
\left\{\begin{array}{ccccc}
(h_1) &h^2 &\ne & \id &\forall h \in H^\times \\
(h_2) &\vert h^H \vert &= &\infty &\forall h \in H \setminus Z(H) \\
(h_3) &\#\{ x \vert x^2 = h'\}&<&\infty &\forall h' \in H \\
\end{array}\right.
\end{equation}
Here, $Z(H)$ is the center of $H$. Then $H$ acts as a Singer group on some projective plane.
\end{theorem}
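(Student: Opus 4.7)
The plan is to build $\Pi$ by an $H$-equivariant free closure, in the style of M.\ Hall's construction of a free projective plane, but arranged so that the fixed countable group $H$ acts as a group of collineations on the resulting plane, sharply transitively on the points. The point set $P$ of $\Pi$ is declared to be $H$ itself, with $H$ acting by left multiplication; this action is automatically free and transitive. The plane then arises as the union of an increasing chain $\Pi_0 \subseteq \Pi_1 \subseteq \cdots$ of $H$-invariant \emph{partial planes} on $P$, each of which carries an $H$-invariant set of line symbols and an $H$-invariant incidence relation in which any two points lie on at most one line and any two lines meet in at most one point. A fixed quadrangle is put into $\Pi_0$ once and for all in order to witness axiom (P3).

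Since $H$ is countable, the list of ``unfulfilled axiom instances'' in any partial plane is countable; enumerate them (pairs of points with no common line, pairs of lines with no common point) as $\tau_1, \tau_2, \ldots$. At stage $n+1$ we discharge $\tau_n$ by introducing a new line (resp.\ new point) through the relevant pair, and we then freely add the entire $H$-orbit of this new object and of all the incidences it carries, so that $H$-invariance is preserved. Taking $\Pi = \bigcup_n \Pi_n$, axioms (P1) and (P2) hold because every obligation $\tau_n$ is eventually discharged, (P3) is witnessed by the initial quadrangle, and $H$ acts as a group of collineations sharply transitively on $P$ by construction.

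The substance of the proof lies in checking that each inductive extension does not collapse the already-built structure: no two distinct line symbols should become equal, no two points be joined by more than one line (or two lines meet in more than one point), and each new line must have trivial $H$-stabilizer so that its orbit genuinely consists of new lines. Each such potential collapse translates into a finite system of equations in $H$ that the newly chosen parameter must avoid. The three hypotheses are used precisely here: $(h_1)$ (no involutions) excludes those collapses that would arise if a line were setwise fixed by an order-$2$ element of $H$, which would identify a pair of its points; $(h_3)$ (finitely many square roots) bounds the number of solutions of the ``quadratic'' equations, such as $x d^{-1} x = d'$, that govern when a new line would meet one of its $H$-translates in a spurious second point; and $(h_2)$ (infinite non-central conjugacy classes) ensures that the stabilizer of a newly introduced line can be arranged to be trivial, since a nontrivial stabilizer would force a relation whose solution set lies in a finite union of conjugacy classes, contradicting $(h_2)$ unless the stabilizer is central --- and centrality can be escaped by an innocuous reparametrization.

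At each stage only finitely many equations are thereby imposed on the new parameter, leaving infinitely many admissible choices inside the countably infinite group $H$, so the construction never obstructs. The hard part will be the bookkeeping in this last step: one must track all the closure operations required by $H$-equivariance (adding orbits of lines, then orbits of their forced intersection points, and so on), verify that this closure process at each stage remains finite, and check that conditions $(h_1)$, $(h_2)$, $(h_3)$ simultaneously exclude every bad configuration. In spirit the argument is a diagonal / Baire-category closure for group-equivariant partial planes, and its novelty lies in how the three hypotheses interlock to keep the set of ``bad'' parameters finite at each stage.
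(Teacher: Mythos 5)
Your overall strategy --- build the plane as an increasing union of $H$-invariant partial structures on the point set $H$, and use $(h_1)$--$(h_3)$ to show that at each step only finitely many choices of the new parameter are forbidden --- is the right spirit, and it is indeed how Hughes's argument (which the paper only cites, and then generalizes transfinitely) works. But as written your construction contains a contradiction that is not just bookkeeping. You declare at the outset that the point set is $H$ with $H$ acting sharply transitively by translation, and you then propose to discharge the obligation ``two lines with no common point'' by \emph{introducing a new point} and adjoining its entire $H$-orbit. Once you add even one new point, its $H$-orbit is a second point orbit, so the action is no longer transitive on points; alternatively, if the point set is genuinely frozen as $H$, there are no new points to introduce, and the free-closure paradigm of Hall (to which you appeal) is unavailable. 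Two disjoint lines must instead be forced through an \emph{existing} element of $H$, and doing this $H$-equivariantly adds infinitely many new incidences in one step (all translates of the new flag), so your claim that ``each inductive extension imposes only a finite system of equations'' is exactly the point that needs proof and is not established by the sketch. The same issue infects ``adding orbits of lines, then orbits of their forced intersection points, and so on'': that closure process is not obviously finite per stage, and nothing in your write-up controls it.

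The repair is to abandon the free-closure framing and encode everything in a single subset $S\subseteq H$, i.e.\ a (planar) difference set: take points $=H$, lines $=H$, and declare $x$ incident with $y$ iff $xy^{-1}\in S$. Then $H$ acts sharply point-transitively by right translation, joining of points and meeting of lines both reduce to the bijectivity of $(a,b)\mapsto ab^{-1}$ on $S\times S$ minus the diagonal, and the construction becomes: enumerate $H=\{h_1,h_2,\dots\}$ and build a chain of \emph{partial} difference sets $S_0\subseteq S_1\subseteq\cdots$, at stage $n$ adjoining two elements $x$ and $d^{-1}x$ so that $d=h_n$ becomes a difference. The finitely many ``collapse equations'' are then explicit: $x$ must avoid the finitely many solutions of $d^x=s_j^{-1}s_i$ (here $(h_2)$ enters, since for non-central $d$ the conjugacy class is infinite, and for central $d$ the equation is vacuous because $d\notin D(S_{n})$), of the quadratic conditions of the form $s_j^{-1}ds_i=(s_j^{-1}x)^2$ (here $(h_3)$ bounds the solution set), and $(h_1)$ rules out the degenerate identifications coming from involutions (which can never lie in a Singer group anyway, since an involution $\sigma$ would fix a point of $L\cap L^{\sigma}$). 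Your instinct about which hypothesis blocks which pathology is essentially correct, but those checks only make sense once the construction is phrased as a partial-difference-set extension rather than an equivariant free completion.
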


Hughes applied Theorem \ref{Hugh} to show that free groups with a finite number $n$ of generators ($n \geq 2$) act as Singer groups of 
certain projective planes. Later, we will obtain this result for any free group.\\

For vector spaces, one could also consider the related problem of studying sharply transitive automorphism groups of the nonzero vectors. 
We recall the following nice result.

\begin{theorem}[\cite{Cheretal}]
Let $\mathbb{K}$ be an algebraically closed field and $G$ a subgroup of $\mathbf{GL}_n(\mathbb{K})$ which acts sharply transitively on the set of nonzero vectors in $\mathbb{K}^n$, where $n \in \mathbb{N}^\times$. Then either $n = 1$ and $G = \mathbb{K}^\times$, or $n = 2$, and $G$ can be precisely described.
\end{theorem}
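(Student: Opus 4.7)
The case $n=1$ is immediate, since $\mathbf{GL}_1(\mathbb{K}) = \mathbb{K}^\times$ already acts regularly on $\mathbb{K}^\times = \mathbb{K}^1 \setminus \{0\}$, forcing $G = \mathbb{K}^\times$. Assume now $n \geq 2$. A first elementary observation exploits sharp transitivity directly: for distinct $g, h \in G$ the matrix $g - h$ must be invertible, for otherwise some nonzero $v$ would satisfy $gv = hv$, violating sharp transitivity. In particular, no element of $G \setminus \{\mathrm{id}\}$ admits $1$ as an eigenvalue. Fix $v_0 \in V = \mathbb{K}^n$ with $v_0 \neq 0$; the orbit map $\phi : G \longrightarrow V \setminus \{0\}$, $g \longmapsto g v_0$, is a bijection. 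Moreover $\mathbb{K}$-linearity of the elements of $G$ forces $\lambda \longmapsto g_\lambda := \phi^{-1}(\lambda v_0)$ to satisfy $g_\lambda g_\mu = g_{\lambda \mu}$, so we obtain a canonical embedding $\mathbb{K}^\times \hookrightarrow G$ whose image is the full stabilizer of the line $\mathbb{K} v_0$.

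Next, transport the group structure of $G$ along $\phi$ to obtain a multiplication $\ast$ on $V \setminus \{0\}$, extended by $0 \ast v = v \ast 0 = 0$. Then $(V, +, \ast)$ is a \emph{left near-field} whose left distributivity is precisely $\mathbb{K}$-linearity of the elements of $G$. If right distributivity also held, $V$ would be a finite-dimensional associative division algebra over $\mathbb{K}$, and since $\mathbb{K}$ is algebraically closed the only such algebra is $\mathbb{K}$ itself, so one would get $n = 1$ at once. It is the failure of right distributivity that makes the classification for $n = 2$ possible, and that the proof must exploit or circumvent.

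The decisive step is then an algebraic-group argument. Let $\overline{G}$ be the Zariski closure of $G$ in $\mathbf{GL}_n(\mathbb{K})$. The orbit morphism $\overline{G} \longrightarrow V$, $g \longmapsto g v_0$, is dominant and has image containing $V \setminus \{0\}$, so Chevalley's constructibility theorem together with orbit-stabilizer (and the triviality of $G$-stabilizers, combined with Zariski-density of $G$ in $\overline{G}$) yields $\dim \overline{G} = n$. Thus $\overline{G}^\circ$ is a connected $n$-dimensional closed subgroup of $\mathbf{GL}_n(\mathbb{K})$ acting transitively on $V \setminus \{0\}$, and the theorem reduces to the classification of such algebraic subgroups.

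The classification itself constitutes the main obstacle. For $n = 2$ one combines the Lie--Kolchin theorem with the explicit list of $2$-dimensional connected linear algebraic groups (tori, the group $\mathbb{G}_a \rtimes \mathbb{G}_m$, and Borels) together with a direct computation inside $\mathbf{GL}_2(\mathbb{K})$ to extract the short list of admissible $G$, which is the ``precise description'' cited in the statement and provided in \cite{Cheretal}. For $n \geq 3$ the result is a non-existence statement, and the expected obstruction is topological/cohomological: the variety $V \setminus \{0\}$ is ``too connected'' to be a principal homogeneous space for any $n$-dimensional linear algebraic group, since over $\mathbb{C}$ it is homotopy equivalent to $S^{2n-1}$ while connected linear algebraic groups of dimension $n$ have homotopy type built only from $\mathbb{G}_m \simeq \mathbb{C}^\times$ factors and contractible $\mathbb{G}_a$-pieces. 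The hard part of the proof, and what the argument of \cite{Cheretal} must supply in full technical generality, is transferring this rigidity from complex topology to algebraically closed fields of arbitrary characteristic, via \'{e}tale cohomology or a case analysis of the Levi/unipotent decomposition of candidate $n$-dimensional subgroups.
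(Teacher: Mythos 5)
The paper itself does not prove this statement: it is quoted verbatim from Cherlin--Grundh\"ofer--Nesin--V\"olklein \cite{Cheretal}, so there is no in-paper argument to compare yours against. Your preliminary observations are all correct and are indeed the standard entry point: the $n=1$ case, the invertibility of $g-h$ for $g\ne h$ (equivalently, no $g\ne \mathrm{id}$ has eigenvalue $1$), the copy of $\mathbb{K}^\times$ inside $G$ stabilizing the line $\mathbb{K}v_0$, and the transported near-field structure on $V$ with left distributivity coming from linearity.

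The decisive step, however, contains a genuine error. Sharp transitivity gives you trivial stabilizers \emph{in $G$}, not in the Zariski closure $\overline{G}$; a Zariski-dense subgroup can meet every point-stabilizer of $\overline{G}$ trivially while those stabilizers are positive-dimensional, so ``orbit--stabilizer plus density'' does not yield $\dim\overline{G}=n$. In fact the conclusion is false exactly in the interesting case: for $\mathbb{K}=\mathbb{C}$ and $n=2$, the group of left multiplications by $\mathbb{H}^{\times}$ on $\mathbb{H}\cong\mathbb{C}^2$ (viewed as a right $\mathbb{C}$-space) is a sharply transitive subgroup of $\mathbf{GL}_2(\mathbb{C})$, and it is Zariski-dense in $\mathbf{GL}_2(\mathbb{C})$, whose dimension is $4$, not $2$; the stabilizer of $v_0$ in the closure is $2$-dimensional. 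Consequently the proposed reduction to ``connected $n$-dimensional closed subgroups transitive on $V\setminus\{0\}$'' is unavailable, and both the $n=2$ classification and the $n\ge 3$ non-existence argument you build on it (the homotopy-type comparison with $S^{2n-1}$) collapse with it. Beyond this, the proposal explicitly delegates the remaining classification to \cite{Cheretal}, so even where it is not wrong it is a strategy outline rather than a proof. The lever you have not exploited is the one that actually drives the argument over an algebraically closed field: every $g\in G$ has \emph{some} eigenvalue $\lambda\in\mathbb{K}^\times$, which must be played off against the fact that no nonidentity element of $G$ fixes a vector, together with the structure of the associated near-field and of the centralizer algebra of $G$ (which is $\mathbb{K}$ by Schur's lemma and algebraic closedness); this is an algebraic argument valid in all characteristics, not a topological one.
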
 

The vector space problem relates to our problem as follows.
Let $V$ be an $\F$-vector space over the field $\F$, and let $\PG(V)$ be the corresponding projective space. If $K$ is a Singer group of $\PG(V)$, then if $\overline{K}$
is the corresponding automorphism group of $\PG(V)$ (that is, $\overline{K}$ contains the full scalar group $S$ and $\overline{K}/S = K$), the latter acts 
sharply transitively on the nonzero vectors. And if $K$ is linear, $\overline{K}$ is linear as well. (Of course, if $\F$ is algebraically closed, Singer groups cannot exist
due to the fact that all polynomials over $\F$ have roots in $\F$, cf. later sections.) Conversely, if $H$ acts sharply transitively on the nonzero vectors of 
the vector space $V$, and $H$ is abelian, then $H$ contains all scalar automorphisms $S$,\footnote{If an element of $H$ fixes some vector line, it must fix all vector lines as $H$ is abelian and transitive on vector lines, so due to the transitivity on nonzero vectors, $H$ contains all scalar automorphisms.} and $H/S$ induces an abelian Singer group of the associated projective space.

As we will construct Singer groups for ``most'' projective spaces, we will get the sharply transitive groups of the vector space for free.

\bigskip
\subsection{Construction of Singer groups}

In this section, we slightly generalize the result of Hughes on planar difference sets in not necessarily abelian groups,
by removing the assumption on countability. The results are taken from \cite{HyperSinger}.\\

\subsubsection{Partial difference sets}

If $G$ is a group (written multiplicatively)  and $C \subseteq G$, denote by $D(C)$ the set of ``differences'' $\{ c{c'}^{-1} \vert c, c' \in C\}$. Now assume that $K$ is a group, 
and $S$ a subset such that for any $k \in K^\times$, there is precisely one couple $(a,b) \in S \times S$ such that $k = ab^{-1}$ | in other words, the map
\begin{equation}
\phi: S \times S \setminus \mathrm{diagonal} \longrightarrow K^\times: (a,b) \longrightarrow ab^{-1}
\end{equation}
is a bijection (and as a consequence, $D(C) = K$). We call $S$ a {\em difference set}\index{difference set} in $G$. If the map $\phi$ merely is injective, we call $S$ a {\em partial difference set}\index{partial difference set}.
Then defining a ``point set'' $\mP = G$ and ``line set'' $\mB = G$, where a point $x$ is incident with a line $y$ (and we write $x \I y$) if and only if $xy^{-1} \in S$, we 
obtain a projective plane $\Gamma(G,S)$\index{$\Gamma(G,S)$} with the special property that  $G$ acts (by right translation) as a sharply point transitive automorphism group (= Singer group).\footnote{For $g \in G$, we have $a \I b$ if and only if $ab^{-1} \in S$ if and only if $(ag)(g^{-1}b^{-1}) \in S$ if and only if $ag \I bg$.} And
conversely, a projective plane admitting a Singer group $G$ can always be constructed in this way from a difference set $S \subset G$.

We will use the following easy lemma without reference.
\begin{lemma}
If ${(S_\omega)}_{\omega \in \Omega}$ is a chain of partial difference sets in a group $K$ (so $\Omega$ is well ordered and from 
$\nu < \mu$ follows that $S_{\nu} \subseteq S_{\mu}$), then $\cup_{\omega \in \Omega}S_{\omega}$ also is a partial difference set.\\ 
\end{lemma}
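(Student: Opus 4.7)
The plan is to reduce the injectivity of the difference map on the union to the injectivity of the difference map on a single member of the chain, using the fact that any finite collection of elements in a chain union already lies in one member of the chain.

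More precisely, set $S := \bigcup_{\omega \in \Omega} S_\omega$. To show $S$ is a partial difference set in $K$, I must check that the map
\begin{equation}
\phi : S \times S \setminus \mathrm{diagonal} \longrightarrow K^\times, \quad (a,b) \longmapsto ab^{-1}
\end{equation}
is injective. So suppose $(a,b), (c,d) \in S \times S$ with $a \ne b$, $c \ne d$, and $ab^{-1} = cd^{-1}$. By definition of the union there exist indices $\omega_1,\omega_2,\omega_3,\omega_4 \in \Omega$ with $a \in S_{\omega_1}$, $b \in S_{\omega_2}$, $c \in S_{\omega_3}$, $d \in S_{\omega_4}$. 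Since $\Omega$ is well ordered (hence in particular totally ordered) and the family $(S_\omega)_\omega$ is monotone, the element $\omega^* := \max\{\omega_1,\omega_2,\omega_3,\omega_4\}$ exists in $\Omega$, and the chain condition forces $S_{\omega_i} \subseteq S_{\omega^*}$ for each $i \in \{1,2,3,4\}$; hence $a,b,c,d \in S_{\omega^*}$.

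Now $S_{\omega^*}$ is itself a partial difference set, so its restricted difference map $\phi\big|_{S_{\omega^*} \times S_{\omega^*} \setminus \mathrm{diagonal}}$ is injective. The equality $ab^{-1} = cd^{-1}$ therefore forces $(a,b) = (c,d)$, which establishes the injectivity of $\phi$ on the full set $S \times S \setminus \mathrm{diagonal}$. This completes the argument.

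There is essentially no obstacle here; the whole content is the elementary observation that a finite subset of a chain-union lies in a single member of the chain. The well-ordering assumption is a slight overkill (total ordering suffices) but it is certainly enough to extract the maximum index among the four elements appearing in the putative collision. The lemma can thus be regarded as a direct-limit / compactness-style remark, and its role in the sequel will presumably be to allow the application of Zorn's lemma when constructing maximal partial difference sets inside a group $K$, en route to producing difference sets and hence Singer-type planes.
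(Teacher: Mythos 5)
Your proof is correct; the paper states this lemma as "easy" and gives no proof, and your argument (any four elements of the union already lie in a single $S_{\omega^*}$ by totality of the order, so a collision of $\phi$ on the union would be a collision on $S_{\omega^*}$) is exactly the standard one the author intends. Your side remarks are also accurate: total ordering suffices, and the lemma's role is precisely to make the transfinite/limit-ordinal steps of the subsequent construction work.
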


\medskip
\subsubsection{Ordinals}

Each {\em ordinal}\index{ordinal} is the well-ordered set of all smaller ordinals. The smallest ordinal is $0 = \emptyset$,  the next-smallest ordinal is
$1 = \{0\} = \{\emptyset\}$, followed by
$2 = \{0,1\} = \{\emptyset,\{\emptyset\}\}$, etc. After all finite ordinals have been constructed, we continue with
$\omega = \{0,1, 2,\ldots \}$, $\omega + 1 = \{0,1,2,\ldots\} \cup \{\omega\}$, and so on,
and eventually $2\omega = \{0,1,2,\ldots\} \cup \{\omega,\omega + 1,\omega + 2,\ldots \}$.
This is followed by $2\omega + 1,2\omega + 2, \ldots, \omega^2$. And so on. All the ordinals we have mentioned so far are countably infinite. After all countable ordinals have been defined, we
meet the first uncountable ordinal, denoted $\omega_1$; later we reach $\omega_2$, etc. Let $\gamma$ be an ordinal. 
Then the {\em successor}\index{successor} of $\gamma$ is 
\begin{equation}
\gamma + 1 =  \gamma \cup \{\gamma\}, 
\end{equation}
this being the smallest ordinal exceeding $\gamma$. Every ordinal is either a successor ordinal or a {\em limit ordinal}\index{limit ordinal}, but never both. 
A {\em limit ordinal}\index{limit ordinal} is an ordinal $\alpha$ such that 
\begin{equation}
\alpha = \bigcup_{\beta < \alpha}\beta.
\end{equation} 
Now an arbitrary set $S$ may be indexed as $S = \{ s_{a} \vert a \in A\}$,
where $A$ is an ordinal. Moreover we may assume $A$ to be {\em minimal} among all
ordinals of cardinality $\vert A \vert$ | otherwise we may simply re-index suitably. 

\medskip
\subsubsection{Construction}

Let $H$ be an infinite group, and let $\vert H\vert = A$ be the smallest ordinal of cardinality $\vert H\vert$;
write $H = \{ h_{\alpha} \vert \alpha \in A \}$ ($A$ is well ordened).

Define for each $\gamma \in A$ a set $S_{\gamma}$ such that
\begin{itemize}
\item[(i)]
$\vert S_{\gamma}\vert \leq \vert\gamma\vert < \vert A\vert$;
\item[(ii)]
$h_{\gamma} \in D(S_{\gamma})$ for $\gamma \in A$;
\item[(iii)]
$S_{\gamma}$ is a partial difference set of $H$;
\item[(iv)]
$S_{\gamma} \subseteq S_{\beta}$ for $\gamma < \beta$ and $\beta \in A$.
\end{itemize}

If $\alpha$ is a limit ordinal, define $S_{\alpha} = \cup_{\beta < \alpha}S_{\beta}$. (Note that $\vert S_{\alpha}\vert \leq \vert \alpha \vert \cdot \vert \alpha \vert$.) 
Now let $\alpha$ be a successor ordinal $\alpha = \beta + 1$; if $h_{\alpha} \in D(S_{\beta})$, put $S_{\alpha} = S_{\beta}$.
Otherwise, we construct $S_{\alpha}$ by adding two new elements to $S_{\beta}$ such that $h_{\alpha} \in D(S_{\alpha})$.

We seek properties for $H$ such that this particular step (and then the whole construction) can be carried out.
Let $S_{\beta} = \{ s_i \vert i \in I \}$ with $\vert I \vert < \vert A \vert$. (We suppose without loss of generality that $\vert I \vert \not\in \mathbb{N}$.) 
Suppose $d = h_{\alpha} \not\in D(S_{\beta})$.

Assume the following properties for $H$:
\begin{equation}
\left\{\begin{array}{ccccc}
(d_1) &h^2 &\ne & \id &\forall h \in H^\times \\
(d_2) &\vert h^H \vert &= &\vert H \vert &\forall h \in H \setminus Z(H) \\
(d_3) &\#\{ x \vert x^2 = h'\}&<&\vert H\vert &\forall h' \in H \\
\end{array}\right.
\end{equation}

Note that a planar  Singer group never can have involutions (if $\sigma$ would be such an involution and $L$ is a line of the plane,
$L \cap L^{\sigma}$ would be a fixed point), so the first property is necessary.\\

We have the following theorem:

\begin{theorem}[Construction, \cite{HyperSinger}]
If an infinite group $H$ satisfies the following properties, then $H$ acts as a Singer group on some projective plane.
\begin{equation}
\left\{\begin{array}{ccccc}
(d_1) &h^2 &\ne & \id &\forall h \in H^\times \\
(d_2) &\vert h^H \vert &= &\vert H \vert &\forall h \in H \setminus Z(H) \\
(d_3) &\#\{ x \vert x^2 = h'\}&<&\vert H\vert &\forall h' \in H \\
\end{array}\right.
\end{equation}
\end{theorem}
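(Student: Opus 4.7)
The plan is to execute the transfinite construction initiated in the paragraphs preceding the statement. Index $H = \{h_\alpha : \alpha \in A\}$ where $A$ is the least ordinal with $|A| = |H|$, so that $|\gamma| < |H|$ for every $\gamma \in A$, and build an increasing chain $(S_\gamma)_{\gamma \in A}$ of partial difference sets satisfying the enumerated conditions (i)--(iv), with limit stages handled by union. The heart of the argument is the successor step $\alpha = \beta + 1$ with $h_\beta \notin D(S_\beta)$: I must produce $s \in H$ such that, setting $t := h_\beta^{-1}s$, the enlargement $S_\alpha := S_\beta \cup \{s,t\}$ is again a partial difference set, with $s, t \notin S_\beta$ and $s \ne t$.

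The core task is to catalogue the equations in $s$ whose solutions must be excluded and to verify that their union does not exhaust $H$. Writing $S_\beta = \{s_i\}_{i \in I}$ with $|I| < |H|$, a direct case analysis on the possible collisions between new and old differences, and among new differences themselves, shows that every obstruction reduces to one of three types. \emph{Linear} equations pin $s$ to a single element of $H$ (for example $s = s_j s_k^{-1} s_i$ from $s s_i^{-1} = s_j s_k^{-1}$, or $s = h_\beta^{-1} s_i$ to force $t \notin S_\beta$); with at most $O(|I|^3) < |H|$ of these, the excluded set has cardinality strictly less than $|H|$. \emph{Quadratic} equations $w^2 = c$ with $w = s$ or $w = s_j^{-1} s$ and $c$ depending on $S_\beta$ and $h_\beta$ (arising for example from $s s_i^{-1} = s_j s^{-1}$, yielding $s^2 = s_j s_i$, or from $s s_i^{-1} = s_j t^{-1}$, yielding $(s_j^{-1}s)^2 = s_j^{-1} s_i h_\beta$) each have fewer than $|H|$ solutions by $(d_3)$; summing over the $|I|^2 < |H|$ such equations still yields cardinality strictly less than $|H|$. \emph{Conjugacy} equations $s^{-1} h_\beta s = c$ with $c \in D(S_\beta) \cup \{h_\beta^{\pm 1}\}$ (arising for example from $s s_i^{-1} = t s_j^{-1}$) have no solutions when $h_\beta \in Z(H)$ (the equation forces $c = h_\beta$, contradicting $h_\beta \notin D(S_\beta)$); otherwise they excise a union of at most $|I|^2$ cosets of $C_H(h_\beta)$, and axiom $(d_2)$ supplies $[H : C_H(h_\beta)] = |H| > |I|^2$, so $|H|$-many cosets, of total cardinality $|H|$, remain outside this union. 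Condition $(d_1)$ secures $h_\beta^2 \ne 1$, hence $s \ne t$ and $st^{-1} = h_\beta \ne h_\beta^{-1} = ts^{-1}$. Since the conjugacy complement still has cardinality $|H|$ and only $<|H|$ further points need be removed for the linear and quadratic obstructions, an admissible $s$ exists, completing the successor step.

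Once $S := \bigcup_{\gamma \in A} S_\gamma$ is assembled, the chain property ensures $S$ remains a partial difference set, while the requirement $h_\gamma \in D(S_\gamma)$ at every stage gives $D(S) = H$; thus $S$ is a planar difference set in $H$. Define $\Gamma(H, S)$ with $\mP = \mB = H$ and incidence $x \I y \iff xy^{-1} \in S$ exactly as in the preceding subsection, whereupon $\Gamma(H,S)$ is a projective plane on which $H$ acts as a Singer group by right translation.

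The main obstacle is the bookkeeping of the successor step: one must verify that \emph{every} collision arising from adjoining $\{s,t\}$ falls into one of the three controlled categories, and in particular that the conjugacy instances always involve $h_\beta^{\pm 1}$ itself (rather than some unrelated element of $H$), so that $(d_2)$ applies as stated. One then keeps careful track of cardinal arithmetic (namely $|I|^2 < |H|$, $|I|^2 \cdot \mu < |H|$ for every $\mu < |H|$ since $|\gamma| < |A|$ throughout, and the coset-counting argument against $[H : C_H(h_\beta)] = |H|$) so that the excluded set is a proper subset of $H$ at every successor stage, leaving an admissible choice of $s$ available.
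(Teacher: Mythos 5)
Your proposal follows the same route as the paper's: the transfinite construction of a difference set by adjoining the pair $\{x,\,d^{-1}x\}$ at each successor stage, with the conjugacy obstructions controlled by $(d_2)$, the quadratic ones by $(d_3)$, and the involution exclusions by $(d_1)$ --- exactly the division of labour the paper makes explicit in its subsequent treatment of the abelian case. The one point deserving extra care is your final cardinal-arithmetic claim when $\vert H\vert$ is singular (a union of $\vert I\vert^2$ sets each of size $<\vert H\vert$ need not have size $<\vert H\vert$), but this subtlety is present in the source argument as well and does not reflect a difference of method.
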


\medskip
\subsubsection{An example: general free groups}

In \cite{Hughes}, Hughes showed that free groups on a finite number of generators satisfy the properties (h$_1$)-(h$_2$)-(h$_3$) of Theorem \ref{Hugh}, 
so that they act on certain projective planes as Singer groups. Now let $\hF(\Omega)$\index{$\hF(\Omega)$} be a free group with generator set $\Omega$, where $\Omega$
is any infinite alphabet. For any reduced element $f \in \hF(\Omega)$, let $\pi(f)$ be the subset of $\Omega \cup \Omega^{-1}$ of letters used in $f$. 

First of all, note that $\hF(\Omega)$ cannot have involutions since if $x$ would be an involution, it also would be an involution in $\hF(\pi(x))$, which 
contradicts Hughes's result.

Next, let $h$ be any nontrivial element in $\hF(\Omega)$, and consider the equation
\begin{equation}
x^2 = h.
\end{equation}
Hughes shows in \cite{Hughes} that this equation has a unique solution in a free group $\hF(S)$ where $\pi(h) \subseteq S$ and $S$ is finite, so
it follows easily that it also has a unique solution in $\hF(\Omega)$.

Next we want to consider orbits $g^{\hF(\Omega)}$. Define $\Omega' := \Omega \setminus \pi(g)$, and define the set
\begin{equation}
\xi(g) := \{g^{\omega}:= \omega^{-1}g\omega \vert \omega \in \Omega'\} \subset g^{\hF(\Omega)}.
\end{equation}
It follows that
\begin{equation}
\vert \xi(g)\vert = \vert \Omega' \vert = \vert \Omega\vert = \vert \hF(\Omega)\vert.
\end{equation}

So indeed (d$_1$)-(d$_2$)-(d$_3$) are satisfied, and whence $\hF(\Omega)$ acts as a Singer group on some plane.

\bigskip
\subsection{Construction of difference sets | Abelian case}

By ($d_2$), one would expect that the previous section would not apply to the abelian case, but this is, in fact, not entirely true.

For suppose $H$ is abelian now, without involutions (cf. ($d_1$)).
If, as above, we want to add $x$ and $d^{-1}x$ to $S_{\beta}$ to obtain a partial difference set $S_{\beta} \cup \{ x,d^{-1}x\}$ for which $d$
is a difference, we need to find an $x$ for which $d^x \ne s_j^{-1}s_i$; but from $d^x = {s_j}^{-1}s_i$ we would have $d^x = d = s_j^{-1}s_i = s_is_j^{-1}$ since $H$ is abelian, contradiction
since $d \not\in D(S_{\beta})$. So ($d_2$) is not needed here. Secondly, suppose 
\begin{equation}
\#\{ x \not\in S_{\beta} \vert  s_j^{-1}ds_i = (s_j^{-1}x)^2\ \mbox{for some}\ s_i,s_j \in S_{\beta}\} > \vert S_{\beta} \vert.
\end{equation}
Then obviously we can find an $s_\ell \in S_{\beta}$ and different $z, z' \not\in S_{\beta}$ such that $(s_{\ell}^{-1}z)^2 = (s_{\ell}^{-1}{z'})^2$,
implying that $z{z'}^{-1}$ is an involution, contradiction. So for abelian groups, ($d_3$) need not be assumed since it follows (in the context that we need it) from
($d_1$).

So for the abelian case, we obtain the most general constructive result as possible:

\begin{theorem}[Abelian Singer groups | Characterization, \cite{HyperSinger}]
An infinite abelian group acts as a Singer group on some projective plane if and only if it contains no involutions.
\end{theorem}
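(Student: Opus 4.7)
The statement has an easy necessity direction and a sufficiency direction that amounts to adapting the transfinite construction of the previous subsection with the abelian simplifications the author already flagged. The necessity was in fact recorded in passing: if $H$ acts as a Singer group on a projective plane $\Pi$ and contains an involution $\sigma$, pick any line $L$ of $\Pi$; then $L \cap L^\sigma$ is a single point $p$, and since $\sigma$ permutes this intersection, $p^\sigma = p$, contradicting sharp transitivity on points. So I would open with this observation to dispose of the ``only if'' part in one line.

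For sufficiency, let $H$ be an infinite abelian group with no involutions. I would re-run the transfinite recursion of the previous subsection, indexing $H = \{h_\alpha \mid \alpha \in A\}$ by the smallest ordinal $A$ of cardinality $|H|$, and constructing a chain of partial difference sets $(S_\gamma)_{\gamma \in A}$ with $|S_\gamma| \le |\gamma|$, $h_\gamma \in D(S_\gamma)$, and $S_\gamma \subseteq S_\beta$ for $\gamma < \beta$. Limit ordinals are handled by unions; at a successor stage $\alpha = \beta + 1$, if $h_\alpha \in D(S_\beta)$ we set $S_\alpha = S_\beta$, otherwise we must produce $x \in H$ so that $S_\alpha = S_\beta \cup \{x, d^{-1}x\}$ (with $d = h_\alpha$) remains a partial difference set while acquiring $d$ as a new difference. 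The union $S = \bigcup_{\gamma \in A} S_\gamma$ is then a difference set in $H$, and $\Gamma(H,S)$ is the desired projective plane on which $H$ acts as a Singer group via right translation.

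The heart of the proof is showing that such an $x$ exists at each successor stage, and this is where the abelian hypothesis (together with the absence of involutions) does all the work, replacing conditions (d$_2$) and (d$_3$). Writing $S_\beta = \{s_i \mid i \in I\}$ with $|I| < |H|$, the obstructions that an arbitrary $x$ must avoid are finitely many equations of the form $d^x = s_j^{-1}s_i$ and $s_j^{-1}ds_i = (s_j^{-1}x)^2$. In the abelian setting $d^x = d$, and since $d \notin D(S_\beta)$ by hypothesis, the first family of equations is automatically violated, so (d$_2$) is not needed. For the second family: if more than $|S_\beta|$ elements $x$ satisfied such a square equation for fixed $s_i, s_j \in S_\beta$, then by pigeonhole two distinct $x, x' \notin S_\beta$ would satisfy $(s_\ell^{-1}x)^2 = (s_\ell^{-1}x')^2$ for the same $s_\ell$, whence $xx'^{-1}$ would be a nontrivial involution in $H$, contradicting (d$_1$). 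Thus the set of ``forbidden'' $x$ has cardinality at most $|S_\beta| < |H|$, leaving room to pick an admissible $x$.

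The main obstacle is really just the bookkeeping at successor stages: one must verify that adding $\{x, d^{-1}x\}$ to $S_\beta$ preserves the partial-difference-set property, i.e.\ that no new unwanted coincidences $xs_i^{-1} = s_js_k^{-1}$, $x s_i^{-1} = (d^{-1}x) s_j^{-1}$, or $(d^{-1}x) s_i^{-1} = s_j s_k^{-1}$ occur. Each such equation pins $x$ to a coset of finite cardinality (or to a solution of a quadratic equation in $H$, which again has fewer than $|H|$ solutions by the no-involution argument above), and there are at most $|S_\beta| < |H|$ such equations. Since $|H|$ is infinite, the union of all forbidden sets has cardinality strictly less than $|H|$, so an admissible $x$ exists, and the recursion terminates. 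Combining the two directions yields the characterization.
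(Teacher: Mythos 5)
Your proposal is correct and follows essentially the same route as the paper: necessity via the observation that an involution $\sigma$ would fix the point $L \cap L^{\sigma}$, and sufficiency by re-running the transfinite partial-difference-set construction, noting that commutativity makes the $(d_2)$-type obstruction vacuous (since $d^x = d \notin D(S_\beta)$) and that the square-equation count is bounded by the no-involution hypothesis in place of $(d_3)$. Nothing essential is missing.
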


\bigskip
\subsection{Singer groups for classical spaces}
\label{class}

Let $\F$ be any field,  and suppose $\F'/\F$ is a proper  field extension. Then $\F'$ can be naturally seen
as an $\F$-vector space $V(\F')$ as before. Now ${\F'}^{\times}$ acts by multiplication on $V(\F')$, and clearly this induces a subgroup of $\GL(V(\F'))$ which
acts sharply transitively on the nonzero vectors. The subgroup $\F^{\times} \leq {\F'}^{\times}$ acts as scalars, and ${\F'}^{\times}/\F^{\times}$ induces 
a sharply transitive group on the points of the projective space $\PG(V(\F'))$. 
If the degree of $\F'/\F$ is a natural nonzero number $m$ (which is at least $2$), then $\PG(V(\F')) = \PG(m - 1,\F)$. We have

\begin{theorem}[\cite{HyperSinger}]
If $\omega = [\F' : \F]$ is the not necessarily finite degree of the field extension $\F'/\F$, then $\PG(\omega - 1,\F)$ allows a linear Singer  group.
In particular, if $\omega = 3$, this applies to the Desarguesian plane $\PG(2,\F)$.
\end{theorem}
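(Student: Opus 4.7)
The plan is simply to exhibit the Singer group explicitly as the quotient ${\F'}^\times/\F^\times$ acting on $\PG(V(\F'))$ by field multiplication, since essentially all the ingredients are already spelled out in the paragraph preceding the theorem. First I would fix the identification: view $\F'$ as an $\F$-vector space $V := V(\F')$ of (possibly infinite) $\F$-dimension $\omega$, so that $\PG(V) = \PG(\omega-1,\F)$, with closed points corresponding to the $\F$-lines of $\F'$ through the origin.

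Next I would define the action of $G := {\F'}^\times$ on $V$ by $(\lambda,v) \longmapsto \lambda v$, using the internal multiplication of $\F'$. Because $\F$ is central in $\F'$, this map is $\F$-linear, so it embeds $G$ into $\GL(V)$. Sharp transitivity of $G$ on $V \setminus \{0\}$ is trivial: for nonzero $u,v \in \F'$, the unique $\lambda \in G$ with $\lambda u = v$ is $\lambda = vu^{-1}$.

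Then I would pass to projective points and check two things: first, that the kernel of the induced action on $\PG(V)$ equals the scalar subgroup, and second, that the quotient acts sharply transitively. For the kernel: $\mu \in {\F'}^\times$ fixes every $\F$-line of $V$ iff $\mu \cdot 1 \in \F \cdot 1$, i.e., $\mu \in \F^\times$. For sharp transitivity of $G/\F^\times$ on closed points: given $\F$-lines $\F u$ and $\F v$, the elements $\lambda \in G$ with $\lambda u \in \F v$ are exactly $\lambda \in \F^\times vu^{-1}$, a single coset of $\F^\times$, which gives a unique element of $G/\F^\times$. This realizes ${\F'}^\times/\F^\times$ as a linear Singer group of $\PG(\omega - 1,\F)$; the case $\omega = 3$ is the Desarguesian plane $\PG(2,\F)$.

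There is no real obstacle here: the construction is a direct verification. The only point worth a line of care is that in the possibly infinite-dimensional setting one must confirm the scheme/incidence-geometric notion of $\PG(\omega-1,\F)$ used earlier in the chapter is compatible with the projective space of $\F$-lines of $V(\F')$, and that the ${\F'}^\times$-action respects the induced subspace structure, but both are immediate since field multiplication permutes $\F$-subspaces among themselves.
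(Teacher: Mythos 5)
Your proposal is correct and is essentially the paper's own argument: the paper likewise realizes the Singer group as ${\F'}^\times/\F^\times$ acting by field multiplication on $V(\F')$ viewed as an $\F$-vector space, with $\F^\times$ acting as the scalars. Your extra verifications (that the kernel of the projective action is exactly $\F^\times$ and that the quotient acts sharply transitively on $\F$-lines) are the routine details the paper leaves implicit.
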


\begin{corollary}[Singer|Algebraic closure principle, \cite{HyperSinger}]
The farther away a field $\F$ is from its algebraic closure, the more Desarguesian projective spaces over $\F$ allow a (linear) Singer group in this 
construction. And the more isomorphism classes of such groups arise. 
\end{corollary}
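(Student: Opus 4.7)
The plan is to interpret the informal ``farther away'' statement as a direct bookkeeping consequence of the preceding theorem, which associates to every proper field extension $\F'/\F$ the linear Singer group $\F'^{\times}/\F^{\times}$ acting on $\PG([\F':\F] - 1,\F)$. The first step is therefore to let $\mathscr{E}(\F)$ denote the class of isomorphism classes of proper field extensions of $\F$, graded by degree: $\mathscr{E}(\F) = \bigsqcup_{\omega} \mathscr{E}_{\omega}(\F)$, where $\mathscr{E}_{\omega}(\F)$ collects the extensions of degree $\omega \geq 2$. By the preceding theorem each element of $\mathscr{E}_{\omega}(\F)$ produces a linear Singer group on $\PG(\omega - 1, \F)$, so both the set of dimensions $\{\omega - 1 : \mathscr{E}_{\omega}(\F) \ne \emptyset\}$ and the cardinality $|\mathscr{E}_{\omega}(\F)|$ are lower bounds for what the construction delivers.

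Next I would verify that non-isomorphic extensions typically yield non-isomorphic linear Singer groups. For $\F'/\F$ and $\F''/\F$ in $\mathscr{E}_{\omega}(\F)$, the associated groups are $\F'^{\times}/\F^{\times}$ and $\F''^{\times}/\F^{\times}$; a group isomorphism between them would in particular match torsion subgroups, i.e.\ the images of the roots of unity lying in $\F'$ and $\F''$ over those in $\F$, and match the induced $\mathrm{Gal}$-module structure on the generic quotient. In the concrete cases of interest (number fields, function fields, local fields, and their finite extensions) these torsion and valuation-theoretic invariants separate the extensions, so different isomorphism classes of $\F'$ over $\F$ genuinely produce different isomorphism classes of linear Singer groups.

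The third step is to contrast this with the algebraically closed case: if $\F = \overline{\F}$ then $\mathscr{E}_{\omega}(\F) = \emptyset$ for every finite $\omega \geq 2$, so the construction yields no finite-dimensional linear Singer group at all (the only surviving extensions are transcendental, producing infinite-dimensional spaces as in the theorem). Moving even one step away, by specializing to a subfield $\F_0 \subset \F$ with $[\F : \F_0]$ finite or infinite, each intermediate subextension $\F_0 \subseteq \F'' \subseteq \F$ reinstates a Singer group on $\PG([\F'':\F_0] - 1,\F_0)$, and the poset of intermediate fields controls the resulting family of examples. Assembling these three observations (many extensions $\Rightarrow$ many Singer projective spaces; non-isomorphic extensions $\Rightarrow$ non-isomorphic Singer groups; algebraic closure $\Rightarrow$ none of the above) gives the corollary.

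The main obstacle is the imprecise meaning of ``farther away''. The cleanest way to make it rigorous is to measure the distance of $\F$ from $\overline{\F}$ by the absolute Galois group $\mathrm{Gal}(\overline{\F}/\F)$, or equivalently by the cardinality and lattice structure of the finite subextensions of $\overline{\F}/\F$; under that reading the statement becomes the monotonicity assertion that both the set of dimensions where Singer groups are produced and the number of such groups are monotone in $|\mathrm{Gal}(\overline{\F}/\F)|$ (viewed as a profinite cardinal invariant). Making the separating argument in paragraph two fully general (i.e.\ for arbitrary abstract fields rather than the concrete classes) is the subtler point, since a priori two very different field extensions could yield abstractly isomorphic multiplicative quotients; this is where the argument requires the most care.
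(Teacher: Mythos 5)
The paper offers no proof of this corollary: it is an informal gloss on the theorem immediately preceding it, which already supplies a linear Singer group on $\PG(\omega-1,\F)$ for every proper extension $\F'/\F$ of degree $\omega$, so your first and third paragraphs reproduce exactly the intended reading (more proper extensions, hence more degrees $\omega$ and more spaces $\PG(\omega-1,\F)$ receiving Singer groups; none in finite dimension when $\F=\overline{\F}$). The only place you go beyond the paper is the attempt to show that non-isomorphic extensions yield non-isomorphic quotients $\F'^{\times}/\F^{\times}$; that is indeed the one genuinely unestablished point behind the second sentence of the corollary, and you rightly flag it as the delicate step --- the paper leaves it at the same heuristic level, so your proposal matches the paper's (non-)argument while being honest about where a rigorous version would need real work.
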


\begin{corollary}[\cite{HyperSinger}]
If a field $\F$ is not real-closed or algebraically closed, then $\PG(2,\F)$ admits a (linear) Singer group.
\end{corollary}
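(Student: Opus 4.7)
The plan is to reduce the claim directly to the immediately preceding theorem by exhibiting a field extension $\F'/\F$ of degree exactly $3$. Granting such $\F'$, set $\omega = [\F':\F] = 3$ in that theorem: the field $\F'$ is a $3$-dimensional $\F$-vector space $V(\F')$, the multiplicative group ${\F'}^{\times}$ embeds into $\GL(V(\F'))$ by left multiplication with scalar subgroup $\F^{\times}$, and the quotient ${\F'}^{\times}/\F^{\times}$ is a linear Singer group of $\PG(2,\F) = \PG(\omega - 1,\F)$.

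So everything reduces to producing the cubic extension. The main tool is the Artin--Schreier theorem: if $1 < [\overline{\F}:\F] < \infty$, then $\F$ is real-closed and $[\overline{\F}:\F] = 2$. By the standing hypothesis this case is excluded, so $[\overline{\F}:\F] = \infty$; in particular $\F$ admits finite extensions of arbitrarily large degree. From this one would exhibit an explicit irreducible cubic over $\F$: for the familiar classes of fields (number fields, local fields, function fields, fields of rational functions over an arbitrary base) this is routine, e.g.\ via a polynomial of the shape $x^3 - a$ with $a \in \F^{\times}$ not a cube, or an Artin--Schreier trinomial $x^3 - x - a$ in characteristic $3$.

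The hard part will be guaranteeing the cubic in full generality: in principle the absolute Galois group of $\F$ could be pro-$p$ with $p \ne 3$ (for example the fixed field of a Sylow $2$-subgroup of $\mathrm{Gal}(\overline{\mathbb{Q}}/\mathbb{Q})$ is neither algebraically closed nor real-closed, yet has no degree-$3$ extension), so strictly speaking one needs a supplementary argument in such pathological cases---perhaps by passing to the non-Desarguesian planes produced by the partial-difference-set construction of the previous section, which only requires an abelian (or sufficiently large) torsion-free group. For the fields of interest in the paper this pathology does not arise, and modulo this caveat the proof is a one-line reduction to the preceding theorem via the cubic extension $\F'$.
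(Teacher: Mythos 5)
Your reduction is exactly the paper's: produce a degree-$3$ extension $\F'/\F$ and invoke the preceding theorem with $\omega=3$. The paper's entire proof of the corollary is the single sentence ``If any polynomial of degree $3$ over $\F$ has a root in $\F$, then $\F$ is either real-closed or algebraically closed'' --- precisely the assertion you are suspicious of. Your suspicion is justified. Artin--Schreier only excludes $1<[\overline{\F}:\F]<\infty$ for non-real-closed fields; an infinite absolute Galois group need not have an open subgroup of index $3$. Your example is correct: if $K$ is the fixed field of a Sylow $2$-subgroup of $\mathrm{Gal}(\overline{\mathbb{Q}}/\mathbb{Q})$, every finite extension of $K$ has $2$-power degree, so every cubic over $K$ has a linear factor (its irreducible factors have degrees $1$ or $2$ summing to $3$), yet $K$ is neither algebraically closed nor real-closed, its absolute Galois group being infinite pro-$2$. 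So the lemma underlying the corollary is false as stated, and the corollary's proof --- yours and the paper's alike --- has a genuine gap for such fields. What the argument actually establishes is the weaker statement: if some cubic over $\F$ is irreducible (equivalently, $\F$ admits a field extension of degree $3$, separable or not), then $\PG(2,\F)$ admits a linear Singer group.

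Two further points. First, your proposed fallback does not repair the gap: the partial-difference-set construction of the earlier subsection produces a Singer group acting on \emph{some} plane manufactured from the group, not on the Desarguesian plane $\PG(2,\F)$, which is what the corollary asserts; and for $\PG(2,K)$ with $K$ as above no construction is offered, so the statement remains unproven (not necessarily false) there. Second, the same caveat propagates to the ``Singer--algebraic closure principle'' and to the discussion of virtually Singer planes, which quote this corollary: the correct hypothesis throughout is the existence of a cubic extension, not merely the failure of $\F$ to be real-closed or algebraically closed.
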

\begin{proof}
If any polynomial of degree $3$ over $\F$ has a root in $\F$, then $\F$ is either real-closed or algebraically closed. 
\end{proof}

(In the next section, one can find more formal information about real-closed fields.)

\bigskip
\subsection{Structural theorems and nonexistence}

For some fields $\F$, it is rather easy to exclude the existence of Singer groups for $\PG(n - 1,\F)$, $n \in \mathbb{N} \setminus \{0,1,2\}$, $n$ odd. 

\begin{theorem}[\cite{HyperSinger}]
\label{struct}
Suppose $\overline{\F}$ is such that $[\overline{\F} : \F]$ is finite of degree $m \ne 1$. Suppose furthermore that 
\begin{equation}
\vert \Aut(\F) \vert < \vert \F\vert 
\end{equation}
(where $\vert \cdot \vert$ denotes cardinality and $\Aut(\cdot)$ the automorphism group).  Then $\PG(n - 1,\F)$ does not admit a Singer group, where  $n \in \mathbb{N} \setminus \{0,1,2\}$, $n$ odd.
\end{theorem}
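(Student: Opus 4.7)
The plan is to combine the Artin--Schreier theorem with the fundamental theorem of projective geometry. First I would invoke Artin--Schreier: the hypothesis that $[\ol{\F}:\F]$ is finite and $\ne 1$ forces $[\ol{\F}:\F] = 2$ with $\F$ real-closed. Consequently $\mathrm{char}(\F) = 0$, so $\F$ is infinite, and, decisively, every polynomial of odd degree over $\F$ has a root in $\F$. Since $\PG(n-1,\F)$ has exactly $\vert\F\vert$ points, any hypothetical Singer group $S$ satisfies $\vert S\vert = \vert\F\vert$.

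Next, assume for contradiction that such an $S$ exists. Because $n-1\geq 2$, the fundamental theorem of projective geometry identifies $\Aut(\PG(n-1,\F))$ with the semilinear group $\PGL_n(\F)\rtimes\Aut(\F)$. Let $\pi$ denote the canonical projection onto $\Aut(\F)$, whose kernel is $\PGL_n(\F)$, and set $S_0 := \Ker(\pi_{|S}) = S \cap \PGL_n(\F)$.

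The main step is to show $S_0 = \{\id\}$. Given $s \in S_0$, lift it to an invertible linear map $M \in \GL_n(\F)$. Its characteristic polynomial has odd degree $n$, and since $\det(M) \ne 0$ its constant term is nonzero, so by real-closedness it admits a nonzero root $\lambda \in \F$. Hence $M$ has an eigenvector in $\F^n$, whose projective class is a fixed point of $s$. But a sharply transitive action has trivial point-stabilizers, forcing $s = \id$. Thus $\pi_{|S}$ is injective, and we obtain the cardinality contradiction $\vert\F\vert = \vert S\vert \leq \vert\Aut(\F)\vert < \vert\F\vert$.

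The hard part is really only the very first step: one has to recognize that the algebraic hypothesis ``$[\ol{\F}:\F]$ is finite and nontrivial'' is enormously restrictive and, via Artin--Schreier, pins $\F$ down to a real-closed field, so that every odd-degree matrix automatically has an eigenvalue in $\F$. Once this is in hand, the fundamental theorem of projective geometry supplies the decomposition into a linear and an automorphism part, and the cardinality hypothesis on $\Aut(\F)$ is precisely what converts the absence of a linear component into an outright impossibility.
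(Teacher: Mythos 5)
Your proof is correct, and it follows exactly the strategy the paper intends for this result: decompose a hypothetical Singer group inside $\mathrm{P\Gamma L}_n(\F)$ via the fundamental theorem of projective geometry, kill the linear part by producing eigenvectors (here from the odd-degree characteristic polynomial over the real-closed field that Artin--Schreier forces $\F$ to be), and then contradict $\vert\Aut(\F)\vert<\vert\F\vert$; this is the same semilinear-pair argument $(A,\sigma)$ the paper spells out in its proof of the companion nonexistence theorem for $\overline{\F_p}$, and it explains why the corollary immediately specializes to real-closed fields.
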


(In the above, it makes no sense to allow the extension $[\overline{\F} : \F] = 1$, since $\vert \Aut(\overline{\F})\vert = \#2^{\overline{\F}}$.)
As $\vert \Aut(\mathbb{R}) \vert = 1$, we have the following corollary. 

\begin{corollary}[\cite{HyperSinger}]
If $\F$ is real-closed and the positive odd integer $n$ is at least $3$, $\PG(n - 1,\F)$ cannot admit Singer groups if $\vert \Aut(\F) \vert < \vert \F\vert$. In particular, 
$\PG(2,\mathbb{R})$ has no Singer groups.
\end{corollary}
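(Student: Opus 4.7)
The corollary follows essentially by combining Theorem \ref{struct} with the classical Artin--Schreier theorem, so the plan is short.

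The plan is to verify the hypothesis $[\overline{\F}:\F]$ finite with $m \ne 1$ of Theorem \ref{struct} directly from the definition of a real-closed field. Recall that $\F$ is real-closed when $\F$ admits an ordering making it a formally real field and no proper algebraic extension of $\F$ is formally real. By Artin--Schreier, a field $\F$ is real-closed if and only if $\F \ne \overline{\F}$ and $[\overline{\F}:\F]$ is finite; in that case one has $[\overline{\F}:\F] = 2$ and $\overline{\F} = \F(\sqrt{-1})$. Hence the degree $m := [\overline{\F}:\F]$ equals $2 \ne 1$, so Theorem \ref{struct} applies, giving the nonexistence of Singer groups for $\mathrm{PG}(n-1,\F)$ for odd $n \geq 3$, provided the cardinality hypothesis $|\Aut(\F)| < |\F|$ holds.

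For the specific case $\F = \mathbb{R}$ with $n = 3$, I would justify the two hypotheses separately. First, $\mathbb{R}$ is real-closed (it is ordered and $\overline{\mathbb{R}} = \mathbb{C}$ has degree $2$ over it), so the first part applies. Second, one has $\Aut(\mathbb{R}) = \{\id\}$: any field automorphism of $\mathbb{R}$ fixes $\mathbb{Q}$, preserves squares and hence the order (since $x \geq 0$ iff $x$ is a square in $\mathbb{R}$), and an order-preserving automorphism of $\mathbb{R}$ fixing $\mathbb{Q}$ must be the identity by the density of $\mathbb{Q}$ in $\mathbb{R}$. Thus $|\Aut(\mathbb{R})| = 1 < 2^{\aleph_0} = |\mathbb{R}|$, and the hypothesis of Theorem \ref{struct} is met; the theorem then immediately yields that $\mathrm{PG}(2,\mathbb{R})$ has no Singer group.

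Since both ingredients (Artin--Schreier and the rigidity of $\mathbb{R}$) are completely standard, there is no real obstacle in this proof; the whole content lies in Theorem \ref{struct}. The only issue worth flagging is that the statement of the corollary still requires $|\Aut(\F)| < |\F|$ as a hypothesis for a general real-closed $\F$, which one cannot drop: real-closed fields of large transcendence degree over $\mathbb{Q}$ (for example a real closure of $\mathbb{Q}(\{x_i\}_{i \in I})$ for a large index set $I$) can have enormous automorphism groups, so the corollary is genuinely conditional in the abstract real-closed setting and only becomes unconditional for concrete rigid examples such as $\mathbb{R}$ or the field of real algebraic numbers.
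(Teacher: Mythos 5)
Your proof is correct and follows exactly the route the paper intends: the corollary is stated as an immediate consequence of Theorem \ref{struct}, using Artin--Schreier to get $[\overline{\F}:\F]=2\ne 1$ for real-closed $\F$, and the triviality of $\Aut(\mathbb{R})$ (via order-preservation, the same argument the paper itself gives later for real-closed subfields of $\mathbb{R}$) for the special case. Your closing remark that the cardinality hypothesis cannot be dropped for general real-closed fields is a sensible observation consistent with how the paper phrases the statement.
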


Now let $\F$ be a real-closed field. Then for any $k \in \F$, either $k$ or $-k$ is in $\F^2$ (the set of squares). There is a unique 
total order $\leq$ on $\F$ defined by:
\begin{equation}
0 \leq y \ \mbox{if and only if}\ y \in \F^2.
\end{equation}
(The order is unique as squares must be positive with respect to a total order.) Let $\alpha \in \Aut(\F)$; as $\alpha(k^2) = \alpha(k)^2$ for any
$k \in \F$, $\alpha$ preserves the order (as $a < b$ if and only if there is a nonzero square $c^2$ such that $a + c^2 = b$).

The next result detects certain real-closed fields with trivial automorphism groups.

\begin{theorem}[\cite{HyperSinger}]
Let $\F$ be a real-closed field which is a subfield of $\mathbb{R}$. Then $\Aut(\F)$ is trivial.
\end{theorem}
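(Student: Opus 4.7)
The plan is to combine two facts: (a) any field automorphism $\alpha$ of $\F$ fixes the prime subfield $\mathbb{Q}$ pointwise (note $\F$ has characteristic $0$, being a subfield of $\mathbb{R}$), and (b) by the discussion immediately preceding the statement, $\alpha$ preserves the unique ordering $\leq$ on $\F$ which is characterized by $0 \leq y$ iff $y \in \F^2$. The key geometric input is that this intrinsic order on $\F$ coincides with the restriction of the standard order on $\mathbb{R}$: if $y \in \F$ and $y \geq 0$ in $\mathbb{R}$, then $y = (\sqrt{y})^2$ with $\sqrt{y} \in \mathbb{R}$, and since $\F$ is real-closed the element $\sqrt{y}$ already lies in $\F$, so $y \in \F^2$; conversely squares in $\F$ are obviously nonnegative in $\mathbb{R}$. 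So ``order-preserving on $\F$'' in the intrinsic sense is the same as ``order-preserving as a map $\F \hookrightarrow \mathbb{R}$''.

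Now I would finish by a density-squeeze argument. Fix $x \in \F$ and $\alpha \in \Aut(\F)$. For every pair of rationals $q, q' \in \mathbb{Q}$ with $q < x < q'$ (in $\mathbb{R}$), the same inequalities hold in $\F$; applying $\alpha$ and using $\alpha(q) = q$, $\alpha(q') = q'$ together with order preservation yields
\begin{equation}
q \;<\; \alpha(x) \;<\; q'.
\end{equation}
Since $\mathbb{Q}$ is dense in $\mathbb{R}$, the intervals $(q,q')$ above can be chosen of arbitrarily small length around $x$, which forces $\alpha(x) = x$ in $\mathbb{R}$, and hence in $\F$. As $x$ was arbitrary, $\alpha = \id$.

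The only nontrivial step is verifying that the intrinsic real-closed order on $\F$ agrees with the restriction of the order on $\mathbb{R}$, which I would expect to be the main (minor) obstacle; everything else is just the classical proof that $\Aut(\mathbb{R}) = 1$ transcribed verbatim. No further structure on $\F$ beyond real-closedness and the embedding $\F \hookrightarrow \mathbb{R}$ is needed.
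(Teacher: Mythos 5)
Your proof is correct and follows essentially the same route as the paper: fix $\mathbb{Q}$ pointwise, note that the automorphism preserves the unique square-defined order, and conclude by squeezing each element between rationals (the paper phrases this via the cuts $\mathbb{Q}^{\pm}(\kappa)$, which is the same density argument). Your explicit verification that the intrinsic order on $\F$ agrees with the order inherited from $\mathbb{R}$ is a point the paper only asserts parenthetically, so including it is a welcome touch of extra care rather than a deviation.
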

\begin{proof}
Let $\beta \in \Aut(\F)$; then $\mathbb{Q} \leq \F$ is fixed elementwise, and $\beta$ preserves the unique total order $\leq$ on $\F$ (which is 
the one inherited by $\mathbb{R}$). For $\kappa \in \F$, define $\mathbb{Q}^+(\kappa) := \{ q \in \mathbb{Q} \vert q \geq \kappa \}$ and 
$\mathbb{Q}^-(\kappa) := \{ q \in \mathbb{Q} \vert q \leq \kappa \}$. Note that both $\mathbb{Q}^+(\kappa)$ and $\mathbb{Q}^-(\kappa)$ are uniquely defined
by $\kappa$; if $\kappa \ne \kappa'$ are elements of $\F$ and $\kappa < \kappa'$, then there is a rational number $q$ such that $\kappa < q < \kappa'$.
Whence $q \in \mathbb{Q}^+(\kappa) \cap \mathbb{Q}^-(\kappa')$. It follows that all $\kappa \in \F$ must be fixed by $\beta$.
\end{proof}

\begin{corollary}[\cite{HyperSinger}]
\label{exmp}
 For the positive odd integer $n$ which is at least $3$, $\PG(n - 1,\F)$ cannot admit Singer groups if $\F$ is either the field of real algebraic numbers, the field of computable numbers or the field of (real) definable numbers. In particular, 
$\PG(2,\F)$ has no Singer groups in these cases.
\end{corollary}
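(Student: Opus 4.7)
The plan is to invoke Theorem \ref{struct} after verifying its hypotheses for each of the three listed fields $\F$. There are three conditions to check: (a) $\F$ is a real-closed subfield of $\mathbb{R}$, (b) $[\overline{\F}:\F]$ is finite and different from $1$, and (c) $\vert \Aut(\F)\vert < \vert \F\vert$. Once these are in hand, Theorem \ref{struct} applies verbatim and yields nonexistence of Singer groups for $\PG(n-1,\F)$ with $n \geq 3$ odd.

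Condition (b) is a uniform consequence of the Artin--Schreier theorem: if $\F$ is real-closed then $\overline{\F} = \F(\sqrt{-1})$, giving $[\overline{\F}:\F] = 2$. Condition (c) follows from the preceding theorem: since each of the three fields is a real-closed subfield of $\mathbb{R}$, one has $\vert \Aut(\F)\vert = 1$, whereas $\F \supseteq \mathbb{Q}$ forces $\vert \F\vert$ to be infinite. So the only remaining work is to confirm (a) in each of the three cases, and to cite the preceding theorem.

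For the real algebraic numbers $\F = \mathbb{R} \cap \overline{\mathbb{Q}}$, this is classical: it is the real closure of $\mathbb{Q}$, visibly contained in $\mathbb{R}$. For the computable reals, one notes that the computable numbers form a subfield of $\mathbb{R}$ closed under the extraction of roots of polynomials with computable coefficients (via a uniformly effective interval-bisection procedure driven by Sturm's theorem), hence they form a real-closed subfield. For the (real) definable numbers, quantifier elimination for $\mathrm{Th}(\mathbb{R},+,\cdot,0,1)$ (Tarski--Seidenberg) shows that the set of first-order definable reals is closed under the arithmetic operations and under root-extraction of polynomials with definable coefficients, so it is again a real-closed subfield of $\mathbb{R}$.

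The main point where care is needed is the verification of real-closedness for the computable and definable subfields; neither is deep, but both rely on effectiveness/definability of Sturm-sequence root-isolation rather than on anything algebraic beyond Artin--Schreier. Assembling these items, the corollary follows at once: each listed $\F$ meets the hypotheses of Theorem \ref{struct}, so $\PG(n-1,\F)$ admits no Singer group for odd $n \geq 3$, and in particular $\PG(2,\F)$ admits no Singer group.
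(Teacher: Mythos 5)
Your proof is correct and follows the same route as the paper: verify that each of the three fields is a real-closed subfield of $\mathbb{R}$, invoke the preceding theorem to get $\vert\Aut(\F)\vert = 1 < \vert\F\vert$, note $[\overline{\F}:\F]=2$ by Artin--Schreier, and conclude via Theorem \ref{struct}. The paper simply asserts real-closedness of the three fields without the Sturm-sequence/Tarski--Seidenberg details you supply, so your write-up is a fleshed-out version of the same argument.
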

\begin{proof}
Each of these fields is real-closed and a subfield of the reals. By the previous theorem, $\Aut(\F)$ is always trivial. The statement then follows from Theorem \ref{struct}.
\end{proof}

\bigskip
\subsection{Possible strategy for classification?}
\label{strat}

One is tempted to study the following property (which we formulate for planes, but which is easily generalized to other spaces):\\

\quad(E)\quad {\em Let $\F$ be a field and $\mathbb{K}\vert\F$ a field extension. 
If $S$ is a Singer group of $\PG(2,\F)$, then $\PG(2,\mathbb{K})$ also allows a Singer group  $S'$ such that $S'_{\vert \F} = S$.}\\

In a category $\mathbf{E}$ of fields for which (E) is true (completed by the appropriate fields), we have:\\

\quad(AC)\quad {\em If $\F$ is an object in $\mathbf{E}$, then $\PG(2,\cup_{\F' \in \mathbf{E}, \F \leq \F'} \F')$ also allows a Singer group.}\\

(Consider an arbitrary filtration
\begin{equation}
\F = \F^{(0)} \leq \F^{(1)} \leq \cdots 
\end{equation}
such that $\cup_{i \in \mathbb{N}}\F^{(i)} = \cup_{\F \leq \F' \in \mathbf{E}}\F'$, and take a direct limit of the induced directed system of Singer groups.)

Let us for instance define a category $\mathbf{E}$ as having as objects a fixed finite field $\F_p$, $p$ a prime, and all finite extensions $\F_{p^i}$ with
$(i,3) = 1$. (Morphisms are natural.) Then by \S \ref{class}, we can construct a canonical Singer group $S(\PG(2,q))$ for each object $\F_q$ in $\mathbf{E}$.
As we will later see, the property $(3,i) = 1$ translates in the fact that if $m$ divides $n$, $m, n \in \mathbb{N} \setminus 3\mathbb{N}$, then 
$S(\PG(2,p^m)) \leq S(\PG(2,p^n))$. Taking the direct limit of the naturally defined directed system of groups, we obtain a Singer group of 
$\PG(2,\cup_{\F \in \mathbf{E}}\F)$. Of course, this specific Singer group can also be obtained directly by using \S \ref{class} (since this 
limit has field extensions of degree $3$). \\

If (E) would be true for a sufficiently large category of field extensions of a fixed field $K$, and $\overline{K}$ is an algebraically closed field for which $\PG(2,\overline{K})$ does not have a Singer group, then 
$\PG(2,K)$ also does not have a Singer group. Unfortunately, even for the category for finite fields (with completions)
(E) is not satisfied, although almost.  Still, all categories $\mathbf{E}$ of field extensions of some fixed field $\F$ that enjoy (E) also enjoy (AC), so that 
Singer groups for the ``$\mathbf{E}$-closures'' also exist.

\bigskip
\subsection{Algebraically closed fields}

In the case of algebraically closed fields, we can say the following.

\begin{theorem}[\cite{HyperSinger}]
\label{torsion}
Let $S$ be a Singer group of $\PG(m,\F)$, $m \in \mathbb{N}^{\times}$, $m + 1$ odd, $\F$ algebraically closed. Then 
$S$ is torsion-free if $\mathrm{char}(\F) \ne 0$. If $\mathrm{char}(\F) = 0$, then $S$ is torsion-free if $m = 2$.  
\end{theorem}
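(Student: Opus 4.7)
The plan is to suppose, for contradiction, that $\sigma\in S$ is a nontrivial
torsion element and exhibit a fixed point of $\sigma$ on $\PG(m,\F)$, violating
sharp transitivity. Since $m\geq 2$, the Fundamental Theorem of Projective Geometry
presents $\sigma$ as a semi-linear transformation $\sigma=[A]\phi$ with
$A\in\GL_{m+1}(\F)$ and $\phi\in\Aut(\F)$; the argument splits according to
whether the companion field automorphism $\phi$ is trivial.

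First I would dispose of the purely linear case $\phi=\id$: because $\F$ is
algebraically closed, $A$ admits an eigenvector, whence $\sigma$ fixes a
projective point, and sharp transitivity forces $\sigma=\id$. This single
observation already handles the positive-characteristic statement, for by the
Artin--Schreier theorem an algebraically closed field $\F$ can admit a proper
subfield of finite index only when $\mathrm{char}(\F)=0$. Consequently, in
positive characteristic $\Aut(\F)$ has no nontrivial torsion, so every torsion
element of $S$ is automatically linear and the argument above concludes.

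Now assume $\mathrm{char}(\F)=0$ and $\phi\neq\id$. Artin--Schreier again forces
$\phi$ to have order exactly $2$ and $\F_0:=\F^{\phi}$ to be real-closed with
$\F=\F_0(\sqrt{-1})$. The crucial reduction is that $\sigma^{2}$ lies in
$\PGL_{m+1}(\F)$ and is a torsion element of $S$; by the linear case combined
with sharp transitivity it must be trivial, so $\sigma$ itself has order $2$.
The relation $\sigma^{2}=\id$ translates in $\GL_{m+1}(\F)$ to $A\phi(A)=\lambda I$
for some $\lambda\in\F^{\times}$. Applying $\phi$ to both sides yields
$\phi(\lambda)=\lambda$, so $\lambda\in\F_0$, and taking determinants yields
$N_{\F/\F_0}(\det A)=\lambda^{m+1}$. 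Since $N_{\F/\F_0}$ takes only strictly
positive values, $\lambda^{m+1}$ is positive in $\F_0$. Here the assumption that
$m+1$ is odd becomes indispensable: it forces $\lambda$ itself to be positive,
and surjectivity of $N_{\F/\F_0}$ onto the positive cone of $\F_0$ allows one to
rescale the lift $A$ by a suitable $\nu\in\F^{\times}$ so that $A\phi(A)=I$. The
map $\tau:=A\circ\phi$ is then a genuine semi-linear involution of
$V=\F^{m+1}$, and the averaging decomposition
$V=V^{\tau}\oplus\sqrt{-1}\,V^{\tau}$ of $\F_0$-spaces shows $V^{\tau}$ has
$\F_0$-dimension $m+1\geq 3$; any nonzero vector in $V^{\tau}$ gives the
required fixed projective point, contradiction.

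The main obstacle is exactly this last reduction and it is the reason the
parity hypothesis appears: were $m+1$ even, $\lambda$ could be negative, in
which case one would be left with a ``quaternionic'' involution $\tau^{2}=-I$
which admits \emph{no} semi-linear eigenvectors over the real-closed field
$\F_0$ (no $\mu\in\F^\times$ satisfies $\mu\phi(\mu)=-1$, since the norm is
strictly positive). Thus the odd-parity condition is being used genuinely, and
this is why the characteristic-zero half of the theorem is not a formal
consequence of the positive-characteristic half.
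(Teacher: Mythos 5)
Your argument is correct. Note that the chapter itself gives no proof of this theorem --- it is only quoted from the preprint \cite{HyperSinger} --- so there is nothing in the text to compare against line by line; the only visible trace of the source's strategy is the remark following the statement (that a point-free involution in characteristic $0$ must fix a parallel class of lines elementwise), which hints at a more geometric handling of the order-two case. Your route is clean and complete: since $m+1$ is odd we have $m\ge 2$, so the Fundamental Theorem of Projective Geometry writes any collineation as $[A]\phi$; over an algebraically closed field every nontrivial element of $\PGL_{m+1}(\F)$ fixes a point (eigenvector of $A$), so sharp transitivity forces $S\cap\PGL_{m+1}(\F)=\{\mathrm{id}\}$; Artin--Schreier together with Artin's lemma $[\F:\F^{\langle\phi\rangle}]=\mathrm{ord}(\phi)$ kills every finite-order companion automorphism in positive characteristic and pins $\phi$ down to order $2$ over a real-closed fixed field $\F_0$ in characteristic $0$; and the normalization $A\phi(A)=I$ --- which is exactly where the parity enters, via $N_{\F/\F_0}(\det A)=\lambda^{m+1}>0$ forcing $\lambda>0$ --- yields a real form $V^{\tau}$ of positive $\F_0$-dimension and hence a fixed projective point. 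Two small bonuses of your proof are worth recording: the parity hypothesis is not used at all in the positive-characteristic half, and in characteristic $0$ you obtain torsion-freeness for every even $m$, not merely $m=2$ as stated. Your closing remark that for $m+1$ even the obstruction is genuine (the quaternionic case $\tau^2=-I$, realized for instance by the fixed-point-free antiholomorphic involution $z\mapsto -1/\bar{z}$ of $\PG(1,\mathbb{C})$) correctly explains why the hypothesis cannot simply be dropped.
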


\begin{remark}
{\rm Note that if an involution $\sigma$ of some projective space in characteristic $0$ does not fix any point, it must fix a parallel class of lines elementwise.}
\end{remark}

\medskip
\subsubsection{Singer groups of $\PG(2,\overline{\mathbb{F}_p})$ do not exist}
\label{finac}

Put $\mathbb{N}^\times =: I$, and make the latter into
a directed set, by writing that $n \preceq m$ if $n \vert m$ and $(m/n,3) = 1$. We will first explain the motivation for this definition.  
Let $p$ be a prime, and consider $\F_i := \F_{p^i} \leq \F_j  := \F_{p_j}$ with $i \preceq j \ne i$. Let $\F_j' = \F_j[X]/(f(X))$ be an extension of degree $3$ of $\F_j$
($f(X)$ having degree $3$), and define $\F_i' := \F_i[X]/(f(X))$ | this extension is also of degree $3$, and is a subfield of $\F_j'$. Then $\F_j \cap \F_i' = \F_i$ as 
$(j/i,3) = 1$. We have that 

\begin{equation}
\label{eqcalc}
{\F_i'}^{\times}/\F_i^{\times} = {\F_i'}^{\times}/({\F_i'}^{\times} \cap \F_j^{\times}) \cong  {\F_i'}^{\times}{\F_j^{\times}}/\F_j^{\times} \leq {\F_j'}^{\times}/\F_j^{\times}.
\end{equation}

In other words, we have an inclusion of cyclic groups $C_{p^{2i} + p^i + 1} \leq C_{p^{2j} + p^j + 1}$.\footnote{Notice that this formula gives an easy, calculation-free, proof of the following property:
\begin{lemma}
If $j \equiv 0\mod{i}$, $i$ and $j$ being positive nonzero integers, and $(j/i,3) = 1$, then $p^{2i} + p^i + 1$ divides $p^{2j} + p^j + 1$ for any prime $p$.
\end{lemma}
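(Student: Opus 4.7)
The plan is to obtain the divisibility as a direct consequence of Lagrange's theorem applied to the inclusion of cyclic groups exhibited in equation (\ref{eqcalc}), bypassing any direct manipulation of the integers $p^{2k}+p^k+1$ themselves.

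First, I would set up the fields of (\ref{eqcalc}) so as to match the lemma. Since $i\mid j$, we have $\F_i:=\F_{p^i}\leq \F_j:=\F_{p^j}$. Pick any irreducible polynomial $f(X)\in \F_i[X]$ of degree $3$ (one exists over every finite field), and put $\F_i':=\F_i[X]/(f)$ and $\F_j':=\F_j[X]/(f)$. The crucial technical point is that $f$ remains irreducible over $\F_j$: its roots generate the cubic extension $\F_{p^{3i}}$ of $\F_i$, and
\begin{equation*}
\F_{p^{3i}}\cap \F_j \;=\; \F_{p^{\gcd(3i,j)}},
\end{equation*}
which equals $\F_i$ precisely when $\gcd(3,j/i)=1$. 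Under our hypothesis, $\F_j'$ is therefore a genuine cubic extension of $\F_j$ with $\F_j\cap \F_i'=\F_i$, so that the chains $\F_i\leq \F_j\leq \F_j'$ and $\F_i\leq \F_i'\leq \F_j'$ are both valid.

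Second, equation (\ref{eqcalc}) then yields the inclusion of cyclic groups
\begin{equation*}
{\F_i'}^{\times}/\F_i^{\times} \;\cong\; {\F_i'}^{\times}\F_j^{\times}/\F_j^{\times} \;\leq\; {\F_j'}^{\times}/\F_j^{\times}.
\end{equation*}
The orders of these two groups are computed once and for all from $|\F_k'^{\times}|=p^{3k}-1$, $|\F_k^{\times}|=p^k-1$ and the identity $(p^{3k}-1)/(p^k-1)=p^{2k}+p^k+1$, giving $p^{2i}+p^i+1$ and $p^{2j}+p^j+1$ respectively.

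Finally, Lagrange's theorem applied to this inclusion yields $p^{2i}+p^i+1\mid p^{2j}+p^j+1$, which is the lemma. The only real obstacle is the irreducibility of $f$ after extending scalars from $\F_i$ to $\F_j$; this is where the hypothesis $(j/i,3)=1$ enters, and it is exactly what is needed for the degree of $\F_i'/\F_i$ to be preserved upon joining with $\F_j$. Everything else is formal, which justifies the adjective ``calculation-free''.
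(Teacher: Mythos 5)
Your proof is correct and is essentially the paper's own argument: the footnote derives the lemma precisely from the inclusion of cyclic groups in (\ref{eqcalc}) together with Lagrange's theorem. The only difference is that you explicitly verify that an irreducible cubic $f$ over $\F_i$ remains irreducible over $\F_j$ (via $\F_{p^{3i}}\cap\F_{p^{j}}=\F_{p^{\gcd(3i,j)}}$ and $\gcd(3i,j)=i$), a point the paper leaves implicit when it passes between $\F_j[X]/(f)$ and $\F_i[X]/(f)$.
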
}

This is exactly what we need for having a good definition for the directed system above | for the infinite case, we will use the form of $(\ref{eqcalc})$.
Unfortunately, our system is not directed anymore due to the divisibility constraint: if $i, j \in I$ and $3^n \vert i$ but not $3^{n + 1}$, and $3^m \vert j$ but not 
$3^{m + 1}$ and $n \ne m$, then there is no $k \in I$ such that $i \preceq k$ and $j \preceq k$. (Similar obstructions arise when going to higher dimensions.)

The next theorem explains that we can not adapt the construction.

\begin{theorem}[Nonexistence for the fields $\overline{\F_p}$, \cite{HyperSinger}]
For any prime $p$ and any positive integer $m \geq 1$ with $m + 1$ odd, the space $\PG(m,\overline{\F_p})$ does not admit Singer groups. In particular, this result 
applies to the planar case.
\end{theorem}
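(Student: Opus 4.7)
The plan is to argue by contradiction, combining Theorem \ref{torsion} with the Fundamental Theorem of Projective Geometry. Since $m+1$ is odd and $m \geq 1$, $m$ is even, so $m \geq 2$, and every automorphism of $\PG(m,\overline{\F_p})$ is semi-linear. Suppose $S$ is a Singer group of $\PG(m,\overline{\F_p})$. By Theorem \ref{torsion}, $S$ is torsion-free, since $\mathrm{char}(\overline{\F_p}) = p \ne 0$.

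I would then analyze $S$ through the short exact sequence
\begin{equation}
1 \longrightarrow \PGL(m+1,\overline{\F_p}) \longrightarrow \mathbf{P\Gamma L}(m+1,\overline{\F_p}) \longrightarrow \Aut(\overline{\F_p}) \longrightarrow 1
\end{equation}
by setting $S_0 := S \cap \PGL(m+1,\overline{\F_p})$. The first key step is the observation that $\PGL(m+1,\overline{\F_p})$ is a torsion group: any representative matrix has only finitely many entries, each algebraic over $\F_p$, so all entries lie in a common finite subfield $\F_{p^k}$, placing the matrix in the finite group $\GL(m+1,\F_{p^k})$ and giving its class finite order in $\PGL$. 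Because $S$ is torsion-free, this forces $S_0 = \{1\}$, so the projection injects $S$ into $\Aut(\overline{\F_p})$; its elements thereby act on the projective space as field automorphisms with trivial linear part.

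The second key step is a fixed-point obstruction: every $\sigma \in \Aut(\overline{\F_p})$ fixes $\F_p$ pointwise, and hence (with trivial linear part) fixes every $\F_p$-rational point of $\PG(m,\overline{\F_p})$. Since $m \geq 1$, $\PG(m,\F_p)$ is nonempty, so each non-identity element of $S$ has a fixed point, contradicting sharp transitivity. The group $S$ is nontrivial because it acts transitively on the infinite point set $\PG(m,\overline{\F_p})$, so this is a genuine contradiction, completing the argument.

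The hard part, conceptually, is identifying the right dichotomy within the ambient collineation group: the torsion-freeness supplied by Theorem \ref{torsion} is designed precisely to kill the $\PGL$-part (which is entirely torsion by the finite-subfield argument), after which one only needs the elementary fact that Galois automorphisms of $\overline{\F_p}/\F_p$ cannot act fixed-point freely on $\PG(m,\overline{\F_p})$. This bypasses the ``directed system'' obstruction discussed just before the theorem statement; rather than attempting to assemble Singer groups of $\PG(m,\F_{p^i})$ into a Singer group of $\PG(m,\overline{\F_p})$ (which fails because the index system is not directed), the proof rules out their existence directly from the finite-order constraints intrinsic to the algebraic closure.
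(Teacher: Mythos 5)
Your first step is correct and is in fact a clean variant of what the paper does: the observation that $\PGL_{m+1}(\overline{\F_p})$ is a torsion group (entries of any representative matrix lie in a common finite subfield), combined with the torsion-freeness from Theorem \ref{torsion}, correctly shows that no non-identity element of $S$ can have trivial field-automorphism part. (The paper reaches the same conclusion for a single element $\gamma=(A,\sigma)$ by noting that a purely linear collineation over an algebraically closed field has an eigenvector, hence a fixed point.)

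The gap is in your second step. From $S\cap\PGL_{m+1}(\overline{\F_p})=\{1\}$ you conclude that the quotient map injects $S$ into $\Aut(\overline{\F_p})$ --- true as an abstract group statement --- but then you assert that the elements of $S$ ``thereby act on the projective space as field automorphisms with trivial linear part.'' This is a non sequitur: a non-identity $\gamma\in S$ is still a semilinear collineation $(A,\sigma)$ with $\sigma\neq\id$ and, in general, a nontrivial matrix part $A$; injectivity of the projection only says that $\sigma$ determines $\gamma$ within $S$, not that $A$ is scalar. Consequently your fixed-point obstruction (that $\gamma$ fixes every $\F_p$-rational point) does not apply, and indeed a semilinear map $v\mapsto A\,v^{\sigma}$ with $\sigma$ the Frobenius need not fix any point of $\PG(m,\F_p)$. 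The paper closes exactly this gap differently: since $A$ has entries in some finite subfield $\F_q$ and $\sigma$ stabilizes every finite subfield, $\gamma$ stabilizes the \emph{finite} subgeometry $\PG(m,q)$ setwise; hence some positive power $\gamma^N$ fixes a point of $\PG(m,q)$, and $\gamma^N\neq\id$ because $\langle\gamma\rangle\cong\mathbb{Z}$ by torsion-freeness. That finite-orbit argument (or an equivalent substitute) is the missing ingredient; without it your proof does not go through.
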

\begin{proof}
Let $p, m$ and $\PG(m,\overline{\F_p})$ be as in the statement, and suppose that $S$ is a Singer group. We represent an element $\gamma$ of $S$  by a couple $(A,\sigma)$, where $A \in \GL_{m + 1}(\overline{\F_p})$ and $\sigma \in \Aut(\overline{\F_p})$. If $\sigma = \id$, we know that $\gamma$ has fixed points, so $\sigma \ne \id$ and by Theorem \ref{torsion} we have that $\langle \gamma \rangle \cong \mathbb{Z}, +$. Write $\overline{\F_p}$ as $\cup_{i = 1}^{\infty}\F_{p^i}$. As $A$ has a finite number of entries, and 
as each element of $\overline{\F_p}$ is contained in some finite subfield, there is a finite subfield $\F_q$ such that  $A \in \GL_{m + 1}(q)$. For each $i \in \mathbb{N}^\times$, $\overline{\F_p}$ contains a unique subfield of size $\F_{p^i}$, so $\sigma$ stabilizes all these subfields | in particular, it stabilizes $\F_q$. So $\gamma$ fixes $\PG(m,q) \subseteq
\PG(m,\overline{\F_p})$. As $\langle \gamma \rangle$ is not finite, some power of $\gamma$ fixes points (and even all points) of $\PG(m,q)$, contradiction. (Another way of finishing the proof, knowing that $A \in \GL_{m + 1}(\F_q)$, is to use the remark after Theorem \ref{torsion}.)
\end{proof}

\subsubsection{More on structure}

Recalling the result of Karzel stated in the beginning of this section, we deduce the following now.

\begin{theorem}[\cite{HyperSinger}]
\label{strfree}
Let $\F$ be a field which is not finite.
Let $S$ be a Singer group of $\PG(r,\F)$, $r \in \mathbb{N} \setminus \{ 0,1\}$,
and suppose $X \subseteq S$ is an independent set of generators  of $S$ of minimal size $\omega$.
Then either $\omega = \vert \F\vert$, or $\F$ is countably infinite. If in the latter case $w \ne \vert \F\vert$ (so if $\omega$ is finite), $\F$ is not isomorphic to $\overline{\mathbb{Q}}$.
\end{theorem}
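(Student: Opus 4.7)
The plan is to split the statement into two parts: the dichotomy by a pure cardinality count, and the exclusion of $\F\cong\overline{\mathbb{Q}}$ via a Galois-closure argument forcing $S$ to stabilise a proper projective subspace, contradicting sharp transitivity.

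For the dichotomy, since $r\ge 2$ and $\F$ is infinite, sharp transitivity of $S$ gives $|S|=|\PG(r,\F)|=|\F|$. A group generated by a set of cardinality $\omega$ has cardinality at most $\max(\omega,\aleph_{0})$, so $|\F|=|S|\le\max(\omega,\aleph_{0})$. If $\omega$ is infinite, combined with the trivial bound $\omega=|X|\le|S|$ coming from $X\subseteq S$, this forces $\omega=|\F|$; if $\omega$ is finite we get $|\F|\le\aleph_{0}$, and since $\F$ is not finite, it must be countably infinite.

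For the exclusion, assume toward contradiction that $\omega$ is finite and $\F=\overline{\mathbb{Q}}$. By the Fundamental Theorem of Projective Geometry, each generator of $S$ can be written as a pair $(A,\sigma)\in\PGL_{r+1}(\overline{\mathbb{Q}})\rtimes\mathrm{Gal}(\overline{\mathbb{Q}}/\mathbb{Q})$ acting on $\PG(r,\overline{\mathbb{Q}})$ by $[v]\mapsto[Av^{\sigma}]$. Only finitely many matrix entries appear among the $\omega$ generators, so the subfield $L_{0}\subseteq\overline{\mathbb{Q}}$ that they generate over $\mathbb{Q}$ is a finite extension; I would then take $L$ to be its Galois closure over $\mathbb{Q}$, still a finite extension. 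Since $L/\mathbb{Q}$ is finite and normal, every element of $\mathrm{Gal}(\overline{\mathbb{Q}}/\mathbb{Q})$ stabilises $L$ setwise, and combining this with the semidirect-product rules $(A,\sigma)(B,\tau)=(AB^{\sigma},\sigma\tau)$ and $(A,\sigma)^{-1}=((A^{-1})^{\sigma^{-1}},\sigma^{-1})$, one checks that the collection of pairs $(A,\sigma)$ with $A\in\GL_{r+1}(L)$ and $\sigma(L)=L$ is a subgroup; it contains every generator of $S$, hence all of $S$.

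Consequently every $g\in S$ maps $\PG(r,L)\subseteq\PG(r,\overline{\mathbb{Q}})$ into itself, because $Av^{\sigma}\in L^{r+1}$ whenever $v\in L^{r+1}$. But $L$ is a number field, so for any $\alpha\in\overline{\mathbb{Q}}\setminus L$ the point $[\alpha:1:0:\cdots:0]$ lies in $\PG(r,\overline{\mathbb{Q}})\setminus\PG(r,L)$, exhibiting $\PG(r,L)$ as a proper non-empty $S$-invariant subset of points---which contradicts the sharp transitivity of $S$ on $\PG(r,\overline{\mathbb{Q}})$. The most delicate point is that $L$ must be chosen Galois, and not merely finite, over $\mathbb{Q}$: otherwise the automorphisms $\sigma_{i}$ coming from the generators could send $L$ to a distinct conjugate subfield, and the invariance argument would break down.
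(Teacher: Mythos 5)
Your proof is correct and follows essentially the route the paper indicates: the cardinality dichotomy is a direct count ($|S|=|\PG(r,\F)|=|\F|$ against the bound $\max(\omega,\aleph_0)$ for a group generated by $\omega$ elements), and the exclusion of $\overline{\mathbb{Q}}$ rests on writing the finitely many generators as semilinear pairs $(A_i,\sigma_i)$ and producing a proper $S$-invariant subspace $\PG(r,L)$ with $L$ a number field --- exactly the mechanism behind the paper's follow-up theorem that $\langle X \rangle$ stabilizes $\PG(r,\F(X))$. Your use of the Galois closure of the field generated by the matrix entries is a harmless variant of the paper's field generated by the orbit of those entries under the relevant automorphisms: both are finite over $\mathbb{Q}$, hence proper in $\overline{\mathbb{Q}}$.
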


The proof also leads to the following general formulation.

\begin{theorem}[\cite{HyperSinger}]
Let $S$ be a Singer group of $\PG(r,\F)$, $r \in \mathbb{N} \setminus \{ 0,1\}$,
and suppose $X \subseteq S$ is an arbitrary subset. Suppose $X = \{(A_i,\sigma_i) \vert i \in I\}$ ($A_i \in \GL_{r + 1}(\F)$, $\sigma_i \in \Aut(\F)$).
Let $\Omega$ and $\Sigma$ be as above, and let $\F(X)$ be the subfield of $\F$ generated by $\mathbb{P}(\Omega)^\Sigma$, where $\mathbb{P}$ is the prime field 
of $\F$. Then the subspace $\PG(r,\F(X))$ is stabilized by $\langle X \rangle$.
\end{theorem}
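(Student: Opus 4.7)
The plan is to realise $\langle X\rangle$ inside the full collineation group of $\PG(r,\F)$, which up to scalars is the semidirect product $\GL_{r+1}(\F)\rtimes \Aut(\F)$: a pair $(A,\sigma)$ acts on $[v]\in \PG(r,\F)$ by $[v]\mapsto [A\cdot\sigma(v)]$, with $\sigma$ applied coordinate-wise. A one-line verification gives the product and inverse laws
\begin{equation}
(A_1,\sigma_1)(A_2,\sigma_2)=(A_1\cdot\sigma_1(A_2),\sigma_1\sigma_2),\qquad (A,\sigma)^{-1}=(\sigma^{-1}(A^{-1}),\sigma^{-1}),
\end{equation}
where $\sigma$ is applied entry-wise to matrices. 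Each $A_i$ has entries in $\Omega$, and since $A_i\in \GL_{r+1}(\F)$ has nonzero determinant, the adjugate formula shows $A_i^{-1}$ has entries in any subfield of $\F$ containing $\Omega$. I will read the notation $\mathbb{P}(\Omega)^{\Sigma}$ as the orbit of $\mathbb{P}(\Omega)$ under the subgroup $\langle\Sigma\rangle\le\Aut(\F)$ generated by the $\sigma_i$, so that $\F(X)$ is the smallest subfield of $\F$ containing $\Omega$ and stable under every $\sigma_i^{\pm 1}$.

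With this in place I would proceed by induction on the length of an expression $g=g_{i_1}^{\epsilon_1}\cdots g_{i_k}^{\epsilon_k}$ of an arbitrary $g\in\langle X\rangle$ as a word in $X\cup X^{-1}$. The base case reduces to noting that each generator $(A_i,\sigma_i)$ and its inverse $(\sigma_i^{-1}(A_i^{-1}),\sigma_i^{-1})$ both have their matrix component with entries in $\F(X)$ and their automorphism component preserving $\F(X)$. Iterating the multiplication formula, every $g\in\langle X\rangle$ can be written as $(B,\tau)$ where $\tau=\sigma_{i_1}^{\epsilon_1}\cdots\sigma_{i_k}^{\epsilon_k}$ is a composition of elements of $\Sigma\cup\Sigma^{-1}$, and
\begin{equation}
B=C_{i_1}^{\epsilon_1}\cdot \sigma_{i_1}^{\epsilon_1}(C_{i_2}^{\epsilon_2})\cdots \bigl(\sigma_{i_1}^{\epsilon_1}\cdots\sigma_{i_{k-1}}^{\epsilon_{k-1}}\bigr)(C_{i_k}^{\epsilon_k}),
\end{equation}
with $C_i^{+1}=A_i$ and $C_i^{-1}=\sigma_i^{-1}(A_i^{-1})$. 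Every partial composition $\sigma_{i_1}^{\epsilon_1}\cdots\sigma_{i_j}^{\epsilon_j}$ preserves $\F(X)$, hence applying it entry-wise to a matrix whose entries lie in $\F(X)$ again yields a matrix over $\F(X)$; products of such matrices remain so, so $B$ has entries in $\F(X)$. For $[v]\in \PG(r,\F(X))$ one then has $\tau(v)\in \F(X)^{r+1}$, whence $B\tau(v)\in \F(X)^{r+1}$, and $g\cdot[v]\in \PG(r,\F(X))$.

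The main obstacle I anticipate is strictly notational: pinning down the meaning of $\mathbb{P}(\Omega)^{\Sigma}$ so that the resulting subfield $\F(X)$ is genuinely stable under every $\sigma_i^{\pm 1}$ and not only under the $\sigma_i$ themselves (recall that $\sigma$-stability of a subfield does not automatically imply $\sigma^{-1}$-stability when $\sigma$ has infinite order). Once one commits to the natural reading above (closure under the group generated by $\Sigma$), the argument is pure bookkeeping on the semidirect-product structure of $\GL_{r+1}(\F)\rtimes\Aut(\F)$; the Singer hypothesis plays no role, since one is merely exploiting the abstract action of $\langle X\rangle$ on $\PG(r,\F)$.
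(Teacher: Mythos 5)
Your argument is correct and is the intended one: the paper states this theorem without proof (deferring to \cite{HyperSinger} with the remark that it falls out of the proof of the preceding structure theorem), but its neighbouring arguments — notably the nonexistence proof for $\PG(m,\overline{\F_p})$ — use exactly your representation of collineations as pairs $(A,\sigma)$ together with the observation that the subfield generated by the matrix entries and closed under the relevant field automorphisms is stabilized. Two minor remarks: your reading of $\mathbb{P}(\Omega)^{\Sigma}$ as closure under the group $\langle\Sigma\rangle$ (so that $\F(X)$ is stable under the $\sigma_i^{-1}$ as well, which is genuinely needed) is the right one; and the induction on word length is superfluous, since the stabilizer of $\PG(r,\F(X))$ in the collineation group is a subgroup, so checking the generators and their inverses — your base case — already suffices.
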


\medskip
\subsubsection{Infinite dimension}

When $\omega$ is any infinite cardinal number, then for $\PG(\omega,\F)$, with $\F$ either real-closed or algebraically closed, there {\em are} Singer groups.
For,  let $\chi$ be a set of indeterminates such that $\F(\chi)/\F$ has degree $\omega$.
Then as in \S \ref{class}, $\F(\chi)^\times$ acts naturally, linearly and sharply transitively on the vector space $\F^\omega$. Passing to the 
corresponding projective spave yields the construction.\\

\bigskip
\subsection{Virtual Singer groups and virtual fields}

Let $\Gamma$ be a projective plane, and $Y \leq \Aut(\Gamma)$. We say that $Y$ is {\em virtually Singer} (or $Y$ is a {\em virtual Singer group}\index{virtual!Singer group}) if $Y$ acts 
freely on the points of the plane, and if the number of $Y$-orbits on the point set is finite. (One could also relax this condition by asking that 
the cardinality of $Y$ and of the point set are the same.) We say that $\Gamma$ is {\em virtually Singer}\index{virtual!Singer plane} if $\Gamma$ admits a virtual Singer group.
We also say that a field $\K$ is {\em (virtually) Singer}\index{virtual!Singer field}\index{Singer!field} in {\em degree}\index{degree} $m$ ($m \in \mathbb{N}^\times$) if the space $\PG(m,\K)$ is (virtually) Singer. If we do not 
mention the degree, we mean the planar case.\\

\begin{theorem}[Examples, \cite{HyperSinger}]
We have the following for virtual Singer groups.
\begin{itemize}
\item
All Singer groups are virtually Singer.
\item
All groups acting freely on finite planes are virtually Singer. 
\item
All planes $\PG(2,\F)$ with $\F$ not real-closed or algebraically closed are virtually Singer.
\item
No plane of the statement of Corollary \ref{exmp} can be virtually Singer.
\end{itemize}
\end{theorem}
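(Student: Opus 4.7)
The plan is to take the four bullets in order; parts (1)--(3) are essentially immediate from the definitions and the earlier constructions, while part (4) requires a new argument exploiting the real-closed structure of the coefficient field together with the triviality of its automorphism group.

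For (1), a Singer group of a projective plane $\Gamma$ acts sharply transitively on the points, hence freely with a single point-orbit; one orbit is a finite number of orbits, so it is virtually Singer by definition. For (2), if $G$ acts freely on a finite plane $\Gamma$, the orbit-stabilizer theorem forces every orbit to have size $|G|$, and the number of orbits is therefore finite. For (3), the construction of \S \ref{class} produces, for each field $\F$ that is neither real-closed nor algebraically closed, a linear Singer group of $\PG(2,\F)$ coming from a cubic extension $\F'/\F$ and the quotient action of ${\F'}^{\times}/\F^{\times}$; combined with (1), this already gives that $\PG(2,\F)$ is virtually Singer.

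The heart of the argument is (4). Let $\F$ be one of the three fields from Corollary \ref{exmp}, all of which are real-closed subfields of $\mathbb{R}$ with trivial automorphism group. By the Fundamental Theorem of Projective Geometry one has $\Aut(\PG(2,\F)) = \PGL_3(\F) \rtimes \Aut(\F) = \PGL_3(\F)$. I would then show that every element $\gamma \in \PGL_3(\F)$ has a fixed projective point: if $\gamma$ is represented by $A \in \GL_3(\F)$, its characteristic polynomial is a cubic over the real-closed field $\F$ and hence has a root in $\F$, and the corresponding eigenvector yields a fixed projective point of $\gamma$. Consequently no nontrivial subgroup of $\Aut(\PG(2,\F))$ acts freely on points, and the trivial subgroup has infinitely many orbits on the infinite point set; hence $\PG(2,\F)$ admits no virtual Singer group.

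The only genuinely nontrivial step is (4), and the main obstacle is to ensure that two ingredients cooperate: the triviality of $\Aut(\F)$, which reduces the full geometric automorphism group to the linear part $\PGL_3(\F)$, and the root-existence property of odd-degree polynomials over real-closed fields, which forces every element of $\GL_3(\F)$ to have an $\F$-rational eigenvector. Neither alone would suffice, and this is exactly the contrast with part (3), where a cubic extension $\F'/\F$ exists to supply the multiplicative Singer action. The same argument in fact handles the higher-dimensional odd cases covered by Corollary \ref{exmp}, since the characteristic polynomial of any element of $\GL_n(\F)$ still has odd degree $n$ and thus an $\F$-rational root.
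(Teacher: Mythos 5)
Your proof is correct, and it uses exactly the machinery the paper sets up for this purpose (the triviality of $\Aut(\F)$ for real-closed subfields of $\mathbb{R}$, so that every collineation lies in $\PGL_3(\F)$, together with the existence of an $\F$-rational root of the odd-degree characteristic polynomial, which forces a fixed projective point); the paper itself only cites \cite{HyperSinger} for this theorem, but your argument is the intended one behind Theorem \ref{struct} and Corollary \ref{exmp}. Parts (1)--(3) are handled correctly as immediate consequences of the definitions and of \S\ref{class}.
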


Two questions which arise are: (1) {\em do there exist real-closed or algebraically closed fields which are virtually Singer ?}; (2) {\em do there exist real-closed or algebraically closed fields which are virtually Singer but not Singer ?}\\

\bigskip
\subsection{Singer groups of $\mathbb{F}_1^m$-spaces}

Let $\mathbf{P}$ be an $m$-dimensional projective space over $\mathbb{F}_{1^n}$, where $(n,m) \in \mathbb{N}^\times \times \mathbb{N}^\times$. It is a set 
of $m + 1$ sets $X_i$ of size $n$, each endowed with a free action of the multiplicative group $\mu_n^i \cong C_n$, together with the induced subspace structure.
The linear automorphism group of $\mathbf{P}$  is $\mathbf{S}_{m + 1} \wr (C_n)^{m + 1}$ (elements consist of $(m + 1)\times(m + 1)$-matrices with only one nonzero
entry per row and column, and each such entry is an element of $C_n$). 

\subsubsection{First construction}

It is clear that once we fix a sharply transitive subgroup $S$ of $\mathbf{S}_{m + 1}$, 
we can construct a Singer group $S(n)$ of $\P$ by taking the direct product with a diagonal group $\langle (\nu_1,\nu_2,\ldots,\nu_{m + 1}) \rangle \cong C_n = \langle \nu \rangle$, 
where each $\nu_i$ is a copy of $\nu$ (acting on $X_i$), all $X_j$ being fixed. Now note that 

\begin{equation}
i\ \ \vert\ \ j \ \ \mbox{implies}\ \ S(i) \leq S(j). 
\end{equation}
(For each $S(j)$, we use the same group $S$.) So we obtain a directed system of Singer groups of projective $m$-spaces over all finite extensions of $\mathbb{F}_1$,
and after passing to the limit we obtain a Singer group $S(\infty)$ of $\PG(m,\overline{\Fun})$.

As we fix $S$ in the construction above, passing to the direct limit amounts to constructing $\overline{\Fun}$ by taking unions of all the cyclic groups $C_n$ in the 
appropriate way. As we have seen, there is no way this approach can be pulled to other finite fields (due to the fact that at the $\Fun$-level, we cannot see
the extra needed divisibility condition ``$(j/i,3) = 1$'').

\subsubsection{General construction}

There is a more generic construction, which in fact captures all possible Singer groups of the spaces $\PG(m,\Fun^d)$.
Let $\mathbf{P}$ be an abstract set (say suggestively of $m + 1$ elements with $m \in \mathbb{N}^\times$), and let $S \leq \mathrm{Sym}(\mathbf{P})$ be a transitive group.
We require that
\begin{itemize}
\item[(CY)] For some element $x \in \mathbf{P}$ (and then all elements), $S_x$ is cyclic. 
\end{itemize}
Let $S_x \cong C_n$; then clearly $S$ has a natural action as a Singer group on $\PG(m,\Fun^n)$, and all Singer groups of this space arise in this way.

\newpage
\frenchspacing

\newpage
\printindex
\newpage\mbox{}\newpage
%
%
%





\end{document}